\newcounter{maincounter}
\numberwithin{maincounter}{section}
\numberwithin{equation}{section}
\newtheorem{lemma}[maincounter]{Lemma}
\newtheorem{proposition}[maincounter]{Proposition}
\newtheorem{remark}[maincounter]{Remark}
\newtheorem{theorem}[maincounter]{Theorem}
\newtheorem{definition}[maincounter]{Definition}
\newtheorem{thm}{Theorem}[section]
\newcommand{\neutralize}[1]{\expandafter\let\csname c@#1\endcsname\count@}
\newenvironment{thmbis}[1]
  {%
   \neutralize{thm}\phantomsection
   \begin{thm}}
  {\end{thm}}
\def\AA{\mathbb{A}}
\def\NN{\mathbb{N}}
\def\RR{\mathbb{R}}
\def\TT{\mathbb{T}}
\def\CC{\mathbb{C}}
\def\ZZ{\mathbb{Z}}
\def\PP{\mathbb{P}}
\def\QQ{\mathbb{Q}}
\newcommand{\cal}{\mathcal}
\newcommand{\scrZ}{\mathscr{Z}}
\newcommand{\cA}{\cal{A}}
\newcommand{\cU}{\cal{U}}
\newcommand{\cB}{\cal{B}}
\newcommand{\sman}[1]{{#1}^{\mathrm{sm,an}}}
\newcommand{\sm}[1]{{#1}^{\mathrm{sm}}}
\newcommand{\anE}{\mathrm{an}}
\newcommand{\an}[1]{{#1}^{\anE}}
\newcommand{\stab}[1]{{\mathrm{Stab}(#1)}}
\newcommand{\IP}{{\PP}}
\newcommand{\IC}{{\CC}}
\newcommand{\IR}{{\RR}}
\newcommand{\IT}{{\TT}}
\newcommand{\IRan}{{{\RR}_{\rm an}}}
\newcommand{\IQbar}{{\overline{\QQ}}}
\newcommand{\Kbar}{{\overline{K}}}
\newcommand{\IZ}{{\ZZ}}
\newcommand{\IN}{{\NN}}
\newcommand{\IA}{{\AA}}
\newcommand{\IQ}{{\QQ}}
\newcommand{\cC}{{\mathcal C}}
\newcommand{\cL}{{\mathcal L}}
\newcommand{\cM}{{\mathcal M}}
\newcommand{\cO}{{\mathcal O}}
\newcommand{\cX}{{\mathcal{X}}}
\newcommand{\cY}{{\mathcal Y}}
\newcommand{\cZ}{{\mathcal Z}}
\newcommand{\imS}{{\rm Im}}
\newcommand{\im}[1]{\imS({#1})}
\newcommand{\imageS}{{\rm im}}
\newcommand{\image}[1]{\imageS({#1})}
\newcommand{\volS}{{\rm vol}}
\newcommand{\vol}[1]{\volS({#1})}
\newcommand{\mat}[2]{{\rm Mat}_{#1}({#2})}
\newcommand{\ssm}{\setminus}
\newcommand{\trdeg}[1]{{\mathrm{trdeg}}({#1})}
\newcommand{\en}[1]{{\rm End}({#1})}
\newcommand{\Hom}[1]{{\rm Hom}({#1})}
\newcommand{\codim}{{\rm codim}}
\renewcommand{\subset}{\subseteq} 
\renewcommand{\supset}{\supseteq}
\newcommand{\gra}[1]{\mathrm{Gr}({#1})}
\newcommand{\gl}[2]{{\mathrm {GL}}_{#1}({#2})}
\newcommand{\autS}{{\mathrm {Aut}}}
\newcommand{\aut}[1]{\autS({#1})}
\newcommand{\spec}[1]{\mathrm{Spec}\,{#1}}
\newcommand{\tor}[1]{{#1}_{\mathrm{tor}}}
\newcommand{\gal}[1]{{\mathrm{Gal}}({#1})}
\newcommand{\zeroset}[1]{\mathscr{Z}({#1})}
\newcommand{\unitdisc}{\Delta}
\begin{document}
\title[]{Heights in Families of Abelian Varieties and the Geometric Bogomolov Conjecture}
\author{Ziyang Gao and Philipp Habegger}

\address{CNRS, IMJ-PRG, 4 place de Jussieu, 75005 Paris, France;  Department of Mathematics, Princeton University, Princeton, NJ 08544, USA}
\email{ziyang.gao@imj-prg.fr}
\address{Department of Mathematics and Computer Science, University of Basel, Spiegelgasse 1, 4051 Basel, Switzerland}
\email{philipp.habegger@unibas.ch}

\subjclass[2000]{11G10, 11G50, 14G25, 14K15}

\maketitle
\begin{abstract}
On an abelian scheme over a smooth curve over $\IQbar$ 
a symmetric relatively ample line bundle defines a fiberwise
 N\'{e}ron--Tate height.
If the base curve is inside a projective space, we also have a height on
its $\IQbar$-points that serves as a measure of each fiber, 
an abelian variety.  
Silverman proved an asymptotic equality between these two heights 
on a curve in the abelian scheme. 
In this paper we prove an inequality between these heights on a
 subvariety of any dimension of the abelian scheme.
As an application we prove the
 Geometric Bogomolov Conjecture for
the function field of a curve
defined over $\IQbar$. 
Using Moriwaki's height we sketch how to extend our result when the
base field of the curve 
has characteristic $0$.
\end{abstract}
\tableofcontents

\section{Introduction}

In 1998, Ullmo \cite{Ullmo} and S.~Zhang \cite{ZhangEquidist} proved
the Bogomolov Conjecture over number fields. However its analog over
function fields, which came  to be  known as the Geometric Bogomolov
Conjecture, remains  open in full generality. 

The main goal of this paper is to prove a height inequality on a
subvariety of an abelian scheme over a smooth curve over $\IQbar$,
Theorem~\ref{TheoremMainResultHeightBound}. It is then not hard to
deduce the Geometric Bogomolov Conjecture over the function field of a
curve in the characteristic $0$ case. See
$\mathsection$\ref{SectionGBC} and Appendix~\ref{sec:appendixA}. Our
height inequality 
may be of independent interest and does not seem to
follow from the Geometric Bogomolov Conjecture. It can
 serve as a substitute in higher dimension of Silverman's Height Limit
Theorem \cite{Silverman}, used by Masser and Zannier \cite{MZ:AJM10}
to prove a first case of the relative Manin--Mumford Conjecture for
sections of the base curve; we refer to Pink's work \cite{Pink} and
Zannier's book \cite{ZannierBook} on such problems.

Let $k$ be an algebraically closed field of characteristic $0$, $K$ a
field extension of $k$, and $\Kbar$ a fixed algebraic closure of $K$.
Let $A$ be an abelian variety over $K$. We let $A^{\Kbar/k}$ denote
the $\Kbar/k$-trace of $A\otimes_{K} \Kbar$; it is an abelian variety
over $k$ and we have a trace map $\tau_{A,\Kbar/k}\colon
A^{\Kbar/k}\otimes_k \Kbar\rightarrow A\otimes_K \Kbar$, which is a
closed immersion since $\mathrm{char} (k) =0$. By abuse of notation we
consider $A^{\overline{K}/k} \otimes_k \overline{K}$ as an abelian
subvariety of $A\otimes_K {\overline{K}}$. We refer
to \S \ref{sec:basicnotation} for references and more information on
the trace.

Suppose now that $K$ is the function field of a
smooth projective curve over $k$. In particular, we have
$\trdeg{K/k}=1$. 

Let $L$ be a symmetric ample line bundle on $A$. We can attach to
$A,L,$ and $K$ the N\'{e}ron--Tate height $\hat{h}_{K,A,L}\colon
A(\overline{K})\rightarrow [0,\infty)$, see \S\ref{sec:heights} for
additional background on the N\'eron--Tate height.
This height satisfies: For any $P\in A(\overline{K})$ we have
\begin{equation*}
\hat{h}_{K,A,L}(P)=0\text{ if and only if }P \in 
(A^{\overline{K}/k} \otimes_k \overline{K})(\Kbar) + \tor{A},
\end{equation*}
here $\tor{A}$ denotes the subgroup of points of finite order of
$A(\Kbar)$.

A \textit{coset} in an abelian variety is the translate of an abelian
subvariety, we call it a \textit{torsion coset} if it contains a point
of finite order.

Our main result towards the Geometric Bogomolov Conjecture is the
following theorem. We first concentrate on the important case
$k=\IQbar$.

\begin{theorem}
\label{TheoremGBC}
We keep the notation from above and assume $k=\IQbar$. Let $X$ be an
irreducible, closed subvariety of $A$ defined over $K$ such that
$X\otimes_K \overline{K}$ is irreducible and not of the form
$B+({Z} \otimes_k {\overline{K}})$ for some closed irreducible
subvariety ${Z}$ of $A^{\overline{K}/k}$ and some torsion coset $B$ in
$A\otimes_K \overline{K}$. Then there exists a constant $\epsilon>0$
such that
\[
\{x \in X(\overline{K}): \hat{h}_{K,A,L}(x) \le \epsilon \}
\]
 is not Zariski dense in $X$.
\end{theorem}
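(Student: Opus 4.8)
The plan is to argue by contradiction, assuming that for every $\epsilon>0$ the set $\{x\in X(\Kbar):\hat{h}_{K,A,L}(x)\le\epsilon\}$ is Zariski dense in $X$, and to play Theorem~\ref{TheoremMainResultHeightBound} off against Silverman's specialization theorem \cite{Silverman}. First I would spread out: since $K$ is the function field of a smooth projective curve over $k=\IQbar$, there is a nonempty open subset $C^{\circ}$ of that curve and an abelian scheme $\pi\colon\mathcal{A}\to C^{\circ}$ with generic fibre $A$, a symmetric relatively ample line bundle $\mathcal{L}$ on $\mathcal{A}$ extending $L$, and an irreducible closed subvariety $\mathcal{X}\subseteq\mathcal{A}$ with generic fibre $X$, all defined over $\IQbar$, with $\mathcal{X}$ dominating $C^{\circ}$. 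A projective embedding of the smooth compactification of $C^{\circ}$ gives a height $h$ on $C^{\circ}(\IQbar)$, and for $t\in C^{\circ}(\IQbar)$ we write $\hat{h}_{\mathcal{A}_t,\mathcal{L}}$ for the fibrewise N\'eron--Tate height on $\mathcal{A}_t$. In the form used here, Theorem~\ref{TheoremMainResultHeightBound} then provides constants $c_1>0$, $c_2$ and a proper closed subset $Z\subsetneq\mathcal{X}$ such that $\hat{h}_{\mathcal{A}_{\pi(P)},\mathcal{L}}(P)\ge c_1\,h(\pi(P))-c_2$ for all $P\in(\mathcal{X}\setminus Z)(\IQbar)$, \emph{provided} $\mathcal{X}$ satisfies the non-degeneracy hypothesis of that theorem.

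Verifying this last proviso from the hypothesis on $X$ is the step I expect to be the main obstacle in the present deduction. Here one exploits that the N\'eron--Tate height, the notion of small points, and the conclusion we want are all insensitive to a finite extension of $K$ and to an isogeny of $A$; by Poincar\'e reducibility one may thus pass to a finite extension of $K$ after which $A$ is isogenous to a product $A_0\times A_1$ with $A_0=A^{\Kbar/k}\otimes_k\Kbar$ the (now constant) trace and $A_1$ of trivial trace, and one may further replace $A$ by the abelian subvariety generated by $X-X$. Under these reductions the assumption that $X\otimes_K\Kbar$ is not of the form $B+(Z\otimes_k\Kbar)$ translates into the geometric non-degeneracy condition required by Theorem~\ref{TheoremMainResultHeightBound} for the image of $\mathcal{X}$ in the non-constant factor; the part of $X$ lying over $A_0$ is handled by the known case of the Geometric Bogomolov Conjecture for constant abelian varieties, which follows from the theorem of Ullmo and S.~Zhang over $\IQbar$ by specialization. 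In particular, in the case $k=\IQbar$ treated here one may, after these reductions, assume $A$ has trivial trace and $X\otimes_K\Kbar$ is not a torsion coset, and then Theorem~\ref{TheoremMainResultHeightBound} applies directly.

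With $\mathcal{X}$, $Z$, $c_1$, $c_2$ in place the core argument is short. Let $x\in X(\Kbar)$ with $\hat{h}_{K,A,L}(x)\le\epsilon$, where $\epsilon>0$ is chosen small in terms of $c_1$ and the chosen normalization, and let $\overline{\{x\}}\subseteq\mathcal{X}$ be the Zariski closure of $x$, an irreducible curve finite and dominant over $C^{\circ}$; normalizing the cover $\overline{\{x\}}\to C^{\circ}$ yields $C'\to C^{\circ}$ and a section $\sigma\colon C'\to\mathcal{A}$. Silverman's specialization theorem gives $\hat{h}_{\mathcal{A}_{t},\mathcal{L}}(\sigma(t'))=\hat{h}_{K,A,L}(x)\cdot h(t)+o(h(t))$ as $h(t)\to\infty$, with $t$ the image of $t'$; hence for all $t'$ with $h(t)$ large enough (depending on $x$) the point $P=\sigma(t')\in\overline{\{x\}}$ satisfies $\hat{h}_{\mathcal{A}_{\pi(P)},\mathcal{L}}(P)<c_1\,h(\pi(P))-c_2$, so by the contrapositive of Theorem~\ref{TheoremMainResultHeightBound} we get $P\in Z$. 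Thus $\overline{\{x\}}\cap Z$ meets the fibre of $\overline{\{x\}}\to C^{\circ}$ over a Zariski dense set of $t\in C^{\circ}(\IQbar)$, so it dominates $C^{\circ}$; since $\overline{\{x\}}$ is an irreducible curve this forces $\overline{\{x\}}\subseteq Z$, and in particular $x$ lies in the generic fibre $Z_K$ of $Z\to C^{\circ}$, which is a proper closed subvariety of $X$ because $\mathcal{X}$ is irreducible and dominant over $C^{\circ}$. Hence every $x\in X(\Kbar)$ with $\hat{h}_{K,A,L}(x)\le\epsilon$ lies in $Z_K\subsetneq X$, contradicting Zariski density; this proves the theorem with this $\epsilon$. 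Note that the $o(h(t))$ error in Silverman's asymptotic automatically absorbs the points of height $0$, so the only arithmetic inputs beyond Theorem~\ref{TheoremMainResultHeightBound} are Silverman's theorem and the known case entering the trace reduction.
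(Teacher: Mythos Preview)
Your core argument---combining the height inequality of Theorem~\ref{TheoremMainResultHeightBound} with Silverman's specialization theorem to force every small point into a fixed proper closed subset---is exactly the paper's approach in \S\ref{SectionGBC}. The contradiction framing is unnecessary but harmless; what differs is your second paragraph.

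The step you flag as ``the main obstacle'' is in fact no obstacle at all, and your proposed workaround is both unnecessary and, as written, problematic. By Definition~\ref{DefinitionSpecialSubvariety}, the Zariski closure $\cX$ of $X$ in $\cA$ is generically special precisely when its geometric generic fiber $X\otimes_K\overline{K}$ is a finite union of sets $B+(Z\otimes_k\overline{K})$ with $B$ a torsion coset and $Z\subset A^{\overline{K}/k}$. Your hypothesis says $X\otimes_K\overline{K}$ is irreducible and not of this form; hence $\cX$ is not generically special, so $\cX^*$ is nonempty (indeed Zariski open dense) by Proposition~\ref{TheoremStarSetOpen}, and Theorem~\ref{TheoremMainResultHeightBound} applies directly with $Z=\cX\setminus\cX^*$. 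The definition of ``generically special'' was built to absorb the trace, and Theorem~\ref{TheoremMainResultHeightBound} is proved for abelian schemes with arbitrary isotrivial part---no Poincar\'e reducibility, no passage to the traceless quotient, and no appeal to Ullmo--Zhang are needed. Your sketch ``the part of $X$ lying over $A_0$ is handled by\ldots'' is also vague: for a general $X\subset A_0\times A_1$ there is no clean decomposition into an $A_0$-part and an $A_1$-part, so making that route rigorous would require substantially more work than the one-line translation via Definition~\ref{DefinitionSpecialSubvariety}.
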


In Appendix~\ref{sec:appendixA} we sketch a proof for when $k$ is any
 algebraically closed field of characteristic $0$ using Moriwaki's
 height.

Yamaki~\cite[Conjecture 0.3]{Yamaki:GeoBogo13} proposed a general
conjecture over function fields which we will call the Geometric
Bogomolov Conjecture; it allows $\trdeg{K/k}$ to be greater than $1$
and $k$ algebraically closed of arbitrary characteristic. The
reference to geometry distinguishes Yamaki's Conjecture from the
arithmetic counterpart over a number field.

The Geometric Bogomolov Conjecture was proven by
Gubler~\cite{Gubler:Bogo} when $A$ is totally degenerate at some place
of $K$. He has no restriction on the characteristic of $k$ and does
not assume that $K/k$ has transcendence degree $1$. When $X$ is a
curve embedded in its Jacobian $A$ and when $\trdeg{K/k}=1$, Yamaki
dealt with nonhyperelliptic curves of genus $3$
in \cite{Yamaki:nonhyperellipticcurve} and with hyperelliptic curves
of any genus in \cite{Yamaki:hyperellipticcurve}. If moreover
$\mathrm{char}(k)=0$, Faber~\cite{Faber:GBC} proved the conjecture for
$X$ of small genus (up to $4$, effective) and
Cinkir~\cite{CinkirBogomolov} covered the case of arbitrary genus.
Prior to these work, Moriwaki also gave some partial results
in \cite{Moriwaki:BogomolovInequality}.
Yamaki~\cite{YamakiBogomolovTrace} reduced the Geometric Bogomolov
Conjecture to the case where $A$ has good reduction everywhere and has
trivial $\overline{K}/k$-trace. He also proved the cases
$\mathrm{(co)dim} X=1$ \cite{YamakiNonDensity} and $\dim (A\otimes_K
{\overline{K}}/ 
(A^{\overline{K}/k} \otimes_k \overline{K})) \le
5$ \cite{YamakiGeoBogoDim5}. As in Gubler's setup, Yamaki works in
arbitrary characteristic and has no restriction on $K/k$. These
results involve techniques ranging from analytic tropical
geometry \cite{Gubler:trop} to Arakelov theory; the latter method
overlaps with Ullmo and S.~Zhang's original approach for number
fields.

Our approach differs and is based on a height inequality on a model of
$A$ to be stated below in Theorem~\ref{TheoremMainResultHeightBound}
(see Appendix~A for a version involving the Moriwaki height). In a
recent collaboration with Cantat and Xie \cite{CGHX:18} we were able
to resolve the Geometric Bogomolov Conjecture completely in
characteristic $0$. While the methods in \cite{CGHX:18} there were motivated by those
presented here, they do not bypass
through or recover a height inequality such as Theorem~\ref{TheoremMainResultHeightBound}.

To prove Theorem \ref{TheoremGBC} we must work in the relative
setting. Let us setup the notation. Let $S$ be a smooth irreducible
curve over $k$, and let $\pi \colon \cA \rightarrow S$ be an abelian
scheme of relative dimension $g\ge 1$. Let $A$ be the generic fiber of
$\cA \rightarrow S$; it is an abelian variety over $k(S)$, the
function field of $S$. We will prove the Geometric Bogomolov
Conjecture for $A$ and $K=k(S)$. Let us also fix an algebraic closure
$\overline{k(S)}$ of $k(S)$.

\begin{definition}\label{DefinitionSpecialSubvariety}
An irreducible closed subvariety $Y$ of $\cA$ is 
called a \textbf{generically special} subvariety of $\cA$, or just
generically special, if
it dominates $S$ and if
its geometric generic fiber $Y\times_S \spec{\overline{k(S)}}$ 
is a finite
 union of $
 (Z\otimes_k \overline{k(S)})+B$, where $Z$ is a closed
irreducible subvariety of $A^{\overline{k(S)}/k}$ and $B$ is a torsion
coset in $A\otimes_{k(S)} \overline{k(S)}$.
\end{definition}

For any irreducible closed subvariety $X$ of $\cA$, we set
\[
X^*=X\setminus \bigcup_{\substack{Y \subset X \\ Y\text{ is a generically
special}\\ \text{subvariety of $\cA$}}} Y.
\]
We start with the following proposition which clarifies the structure
of $X^*$. 
Its proof  relies on a uniform version of 
 Raynaud's \cite{Raynaud:MM} resolution of  the Manin--Mumford
Conjecture in characteristic $0$
as well as the Lang--N\'eron Theorem, the generalization of the
Mordell--Weil Theorem to finitely generated fields. 

\begin{proposition}\label{TheoremStarSetOpen}
Let $X$ and $\cA$ be as above. 
There are at most finitely many generically special subvarieties of $\cA$ that are contained in 
$X$, maximal with respect to the inclusion for this property. In particular, 
 $X^*$ is Zariski open in $X$ and it is empty if and only if $X$ is generically special.
\end{proposition}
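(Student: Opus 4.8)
The plan is to reduce the proposition to a finiteness assertion on the geometric generic fibre of $X$ and then to resolve that assertion by combining Raynaud's theorem with the Lang--N\'eron theorem. First, if $X$ does not dominate $S$ then, $S$ being a curve, $X$ is contained in a single fibre of $\pi$; since every generically special subvariety dominates $S$, no generically special subvariety of $\cA$ is then contained in $X$, so $X^{*}=X$ and the proposition holds trivially. Assume henceforth that $X$ dominates $S$, write $F=\overline{k(S)}$, and let $X_F\subset A_F:=A\otimes_{k(S)}F$ denote the geometric generic fibre of $X$. Taking Zariski closures in $\cA$ gives an inclusion-preserving bijection between the generically special subvarieties of $\cA$ contained in $X$ and the $k(S)$-subvarieties of the generic fibre of $X$ whose base change to $F$ is a finite union of subvarieties of the form $(Z\otimes_k F)+B$, with $Z$ a closed irreducible subvariety of $A^{\overline{k(S)}/k}$ and $B$ a torsion coset in $A_F$; I will call an $F$-subvariety of the latter shape \emph{special}. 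A Galois-orbit argument then matches maximal generically special subvarieties of $\cA$ in $X$ with Galois-orbit unions of maximal special subvarieties of $A_F$ in $X_F$, so it suffices to prove that there are only finitely many maximal special subvarieties of $A_F$ contained in $X_F$.

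For this it is enough to show that the Zariski closure $T$ of the union of \emph{all} special subvarieties contained in $X_F$ is itself a finite union of special subvarieties: then each special subvariety in $X_F$ lies in one of the finitely many irreducible components of $T$, and the maximal ones are precisely the maximal components. Since $T$ has finitely many components, it remains to see that each component of $T$ is special. Here I would use the exact sequence
\[
0\longrightarrow A^{\overline{k(S)}/k}\otimes_k F\longrightarrow A_F\xrightarrow{\ q\ }\mathcal{Q}\longrightarrow 0,
\]
where $\mathcal{Q}$ has trivial $F/k$-trace and, after a finite extension $L/k(S)$, descends to an abelian variety over $L$ all of whose Mordell--Weil groups over finite extensions of $L$ are finitely generated (Lang--N\'eron). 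The image under $q$ of a special subvariety is a torsion coset of $\mathcal{Q}$; as torsion points are Zariski dense in any torsion coset, Raynaud's theorem implies that the Zariski closure of any union of torsion cosets of $\mathcal{Q}$ is again a finite union of torsion cosets. Hence $q(T)$, and so the image of each component of $T$, is a torsion coset of $\mathcal{Q}$. Translating by a torsion point lifting the translation part (only finitely many translations occur, one per component), we reduce to a component $T_0$ with $q(T_0)$ an abelian subvariety $C_0$ of $\mathcal{Q}$, so that $T_0$ lies in the abelian subvariety $W:=q^{-1}(C_0)\supset A^{\overline{k(S)}/k}\otimes_k F$ and is the closure of a union of special subvarieties of $W$.

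It then remains to prove $T_0$ is special. Fibering $W$ over $C_0=W/(A^{\overline{k(S)}/k}\otimes_k F)$, the fibres of a special subvariety are, after translating by a torsion point, of the form $(Z\otimes_k F)+H$ with $H$ an abelian subvariety of $A^{\overline{k(S)}/k}\otimes_k F$; the torsion points that arise lie in $A^{\overline{k(S)}/k}\otimes_k F$ and, since $k$ is algebraically closed of characteristic $0$, descend to $A^{\overline{k(S)}/k}(k)$, so they can be absorbed into $Z$. Combining this with Noetherianity over $k$ (which turns Zariski closures of unions of subvarieties of $A^{\overline{k(S)}/k}$ into finite unions) and an induction on $\dim\mathcal{Q}$ should yield that $T_0$ is a finite union of special subvarieties, hence, being irreducible, special. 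I expect the real difficulty to lie in making this reduction terminate: one must control, uniformly over the a priori infinite family of candidate special subvarieties in $X_F$, both the torsion cosets $B$ and the abelian subvarieties $H$ that occur. This is exactly where a uniform version of Raynaud's resolution of the Manin--Mumford conjecture (bounding torsion cosets across a family of subvarieties) and the finite generation supplied by the Lang--N\'eron theorem (pinning the relevant torsion sections to a fixed finite cover of $S$) enter; the careful bookkeeping, rather than any single geometric ingredient, is the crux.

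Finally, for the ``in particular'' clause: ascending chains of generically special subvarieties contained in $X$ are ascending chains of closed subsets and hence stabilize, so every generically special subvariety of $X$ lies in a maximal one, and $X^{*}=X\setminus\bigcup Y$, the union being over the finitely many maximal generically special $Y\subset X$, is the complement in $X$ of a finite union of closed sets; thus $X^{*}$ is Zariski open. If $X$ is generically special then $X$ is itself one of these $Y$, so $X^{*}=\emptyset$; conversely, if $X^{*}=\emptyset$ then the irreducible $X$ equals a finite union of closed subsets $Y$, hence equals one of them, so $X$ is generically special.
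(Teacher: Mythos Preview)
Your reduction to the geometric generic fibre and the closing ``in particular'' paragraph are correct, and you rightly name uniform Manin--Mumford and Lang--N\'eron as the two ingredients. The gap is that you never actually deploy them: the sentence ``an induction on $\dim\mathcal{Q}$ should yield that $T_0$ is special'' is not a proof, and as written the induction need not terminate. If the torsion cosets $q(S_j)$ happen to be Zariski dense in $\mathcal{Q}$ then $C_0=\mathcal{Q}$, $W=A_F$, and you are back where you started. Moreover, up to that point you only invoke the ordinary form of Raynaud's theorem (on $q(T)$), whereas the finiteness you want---only finitely many maximal specials inside a \emph{fixed} $X_F$---requires uniformity across the infinite family of candidate specials; you acknowledge this at the end, but it never enters the argument.

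The paper organises things differently, and this difference is what makes the proof close. Instead of the quotient $\mathcal{Q}$, it chooses a Poincar\'e complement $A''\subset A_F$ of the trace part $A'=A^{\overline{k(S)}/k}\otimes_k F$, so that $A'\times A''\to A_F$ is an isogeny and $A''$ itself has trivial $F/k$-trace. Pulled back along this isogeny, a special subvariety becomes a finite union of genuine products $(W_0\otimes_k F)\times(P+B)$ with $W_0\subset A^{\overline{k(S)}/k}$, $P\in\tor{(A'')}$, $B\subset A''$. The task is then Proposition~\ref{prop:specialpoints}: for $V=V_0\otimes_k F$ and $A$ of trivial trace, an irreducible $Y\subset V\times A$ in which $\Sigma:=V_0(k)\times\tor{A}$ is Zariski dense must equal $(W_0\otimes_k F)\times(P+B)$. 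This reformulation is the key move you are missing. It replaces ``$Y$ contains many special subvarieties'' by the pointwise condition ``$\Sigma$ is dense in $Y$,'' which is exactly what uniform Manin--Mumford (Theorem~\ref{thm:MMuniform}) applies to: over the parameter $V$ it yields a finite set of abelian subvarieties $B$ and a bound $D$ on the number of maximal torsion cosets in each fibre $Y_v$. A Galois-orbit count then shows the relevant torsion points in $A/B$ have degree $\le D$ over a fixed finite extension of $K$; since $A/B$ inherits trivial trace, Lang--N\'eron bounds their orders uniformly, and the product structure follows. Your quotient-and-fibre scheme never isolates a trace-free abelian variety on which this degree/order bound can be run, which is why the ``careful bookkeeping'' you anticipate cannot be completed as stated.
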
 


Let us now assume $k=\IQbar$ and turn to height functions.
We write $h(\cdot)$ for the absolute logarithmic Weil height on
projective space. 

Let $\overline{S}$ be a smooth projective curve over $\IQbar$ containing $S$ as a Zariski open and dense subset. Let $\overline \cM$ be an ample line bundle on $\overline{S}$ and let $\cM = \overline{\cM}|_S$. The Height Machine \cite[Chapter~2.4]{BG} attaches to $(\overline S,\overline{\cM})$ a function $\overline{S}(\IQbar)\rightarrow \IR$ that is well defined up to addition of a bounded function. Let $h_{S,\mathcal{M}}$ be the restriction to $S(\IQbar)$ of a representative of this class of functions. As $\overline \cM$ is ample on $\overline{S}$, we may take such a representative that $h_{S,\mathcal{M}}(s) \ge 0$ for each $s \in S(\IQbar)$.

Let $\mathcal{L}$ be a relatively ample and symmetric line bundle on $\cA/S$ defined over $\IQbar$. Then for any $s \in S(\IQbar)$, the line bundle $\mathcal{L}_s$ on the abelian variety $\cA_s= \pi^{-1}(s)$ is symmetric; note that $\cA_s$ is defined over $\IQbar$. Tate's Limit Process provides a fiberwise N\'eron--Tate height $\hat{h}_{\cA_s,\mathcal{L}_s} \colon \cA_s(\IQbar) \rightarrow [0,\infty)$. It is determined uniquely by the restriction of $\cL$ to $\cA_s$, there is no need to fix a representative here. Finally define $\hat{h}_{\cA,\cL} \colon \cA(\IQbar) \rightarrow [0,\infty)$ to be the \textit{total N\'eron--Tate height} given by $P \mapsto \hat{h}_{\cA_{\pi(P)},\cL_{\pi(P)}}(P)$ for all $P\in \cA(\IQbar)$.

These two height functions are unrelated in the following sense. It is
not difficult to construct an infinite sequence of points
$P_1,P_2, \ldots \in \cA(\IQbar)$ such that
$\hat{h}_{\cA,\cL}(P_i)$ is constant and
$h_{S,\cM}(\pi(P_i))$ unbounded; just take
$P_i$ of finite order  in $\cA_{\pi(P_i)}$ and the
sequence $\pi(P_i)$ of unbounded height. 

The main technical result of this paper is a height inequality that
relates these two heights on an irreducible  subvariety $X$ of $\cA$. 
The discussion in the last paragraph suggests that we should at least 
remove all curves in $X$ that dominate $S$ and contain infinitely many
points of finite order. This turns out to be insufficient and we must
also remove subvarieties that are contained in
constant  abelian subschemes of $\cA$. 
In fact, we must remove precisely 
generically the special subvarieties of $\cA$ from
Definition~\ref{DefinitionSpecialSubvariety} that are contained in
$X$. 


\begin{theorem}\label{TheoremMainResultHeightBound}
Let $\pi\colon \cA\rightarrow S$, $\cL$ and $\cM$ be as above with
$k=\IQbar$ and $\dim S = 1$. Let $X$ be a closed irreducible subvariety of $\cA$ over $\IQbar$ and let $X^*$ be as above Proposition~\ref{TheoremStarSetOpen}. Then there exists $c>0$
such that
\begin{equation}
\label{eq:TheoremMainResultHeightBound}
h_{S,\cM}(\pi(P)) \le
c \left(1+\hat{h}_{\cA,\cL}(P) \right) \quad \text{for all}\quad P \in X^*(\IQbar).
\end{equation}
\end{theorem}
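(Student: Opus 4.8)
\emph{Plan and reductions.} The plan is to reduce to the case of curves --- where \eqref{eq:TheoremMainResultHeightBound} follows from Silverman's Height Limit Theorem together with the structure of the $\overline{k(S)}/k$-trace --- and then to extract the required \emph{uniformity} over all of $X^*$ by complex-analytic means (the Betti map) at the archimedean places and by tropical means at the bad places. If $\pi|_X\colon X\to S$ is not dominant, then $\pi(X)$ is a single closed point, $h_{S,\cM}\circ\pi$ is bounded on $X$, and the inequality is trivial; so assume $\pi|_X$ dominant and set $d=\dim X$. If $d=1$, the geometric generic fibre of $X\to S$ is a $\gal{\overline{k(S)}/k(S)}$-stable finite set of points of $A$; when $X$ is not generically special these are not special, hence of positive N\'eron--Tate height over $k(S)$, and Silverman's theorem for a multisection through $X$ gives, writing $s=\pi(P)$, $\hat h_{\cA_s,\cL_s}(P)=\beta\,h_{S,\cM}(s)+O(\sqrt{h_{S,\cM}(s)}+1)$ with a fixed $\beta>0$, which yields \eqref{eq:TheoremMainResultHeightBound}; when $X$ is generically special, $X^*=\emptyset$ by Proposition~\ref{TheoremStarSetOpen}. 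For $d\ge2$ this is not enough: through each $P\in X^*(\IQbar)$ one can pass an irreducible curve $C\subseteq X$ dominating $S$ and not generically special (otherwise, by function-field Manin--Mumford, $X$ itself would be generically special), so the case $d=1$ bounds that $P$ with a constant $c_C$ --- but $c_C$ is of the size of the reciprocal of a N\'eron--Tate height that tends to $0$ as $C$ degenerates towards the generically special locus, so the whole point of the theorem is to obtain one constant $c$ for all of $X^*$.

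\emph{Splitting by place; the degenerate fibres.} Fix a number field $F_0$ over which $\cA\to S$, $\cL$, $\cM$, $X$ and $X^*$ (Proposition~\ref{TheoremStarSetOpen}) are defined, fix a flat projective model $\overline\cA\to\overline S$, and take $S$ maximal so that $\Sigma=\overline S\ssm S$ is the bad-reduction locus. Writing $\hat h_{\cA,\cL}(P)$ as an average of local N\'eron--Tate heights over places and embeddings, a large value of $h_{S,\cM}(\pi(P))$ is detected --- up to $O(\sqrt{\,\cdot\,})$-errors and bounded factors --- either by $\pi(P)$ lying close to $\Sigma$ at some place, or, if $\Sigma=\emptyset$ (which forces $\overline S$ of genus $\ge1$ and $\cA$ non-isotrivial with everywhere good reduction), archimedean-ly on the compact curve $\overline S$. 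Near a point of $\Sigma$ the fibres of $\cA$ degenerate to semiabelian varieties and the local N\'eron--Tate height there acquires a contribution comparable to the proximity of $\pi(P)$ to $\Sigma$ --- the monodromy-pairing (tropical) term at a non-archimedean place, the degenerate analogue of Silverman's theorem at an archimedean one; this step also uses the hypothesis on $X$, so that the contribution does not vanish identically. Summing over all places disposes of the part of $h_{S,\cM}(\pi(P))$ coming from the bad places.

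\emph{The archimedean part: the Betti map.} It remains to control the rest of $h_{S,\cM}(\pi(P))$ --- all of it when $\Sigma=\emptyset$ --- and this is the heart of the proof. On simply connected opens $\Delta_\alpha\subset S^{\anE}$, local symplectic frames of $R^1\pi_*\IZ$ furnish real-analytic trivializations $\cA^{\anE}_{\Delta_\alpha}\cong\Delta_\alpha\times\IR^{2g}/\IZ^{2g}$ and Betti maps $b_\alpha$, which --- on fundamental sets and restricted to $\sman{X}$ --- are definable in $\IRanexp$ by o-minimality of period maps. The crucial geometric fact is that $b_\alpha$ restricted to $\sman{X}\cap\cA^{\anE}_{\Delta_\alpha}$ has maximal rank $\min(2d,2g)$ on a Zariski-dense subset of $X^*$ --- an Ax-type (bi-algebraicity) statement for the universal abelian variety, which fails precisely on generically special subvarieties and is what singles out $X^*$ as the correct exceptional locus. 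Combined with the $-\log$-singularity of the archimedean local N\'eron function along the zero section (so that smallness of $\hat h_{\cA_s,\cL_s}(P)$ localizes $P$, in Betti coordinates, near the torsion structure of $\cA_s$), this lets one feed the conjugates of $P$ lying over a fixed $\Delta_\alpha$ into the Pila--Wilkie counting theorem (or a gap principle): a violation $h_{S,\cM}(\pi(P_i))>i(1+\hat h_{\cA,\cL}(P_i))$ along a sequence would force infinitely many of these points onto a positive-dimensional block of the Betti graph, which Ax--Lindemann for the universal abelian variety identifies with the graph over a proper generically special subvariety $Y\subsetneq X$ of $\cA$, contradicting $P_i\in X^*$. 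This is what delivers the uniform constant.

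\emph{Main obstacle.} I expect the archimedean part to be the main obstacle: establishing the maximal-rank (Ax-type) property of the Betti map on $X^*$ in arbitrary dimension, with the matching identification of its rank-drop locus as generically special subvarieties, and then combining it --- uniformly in $P$ --- with the definability/counting machinery and with the degenerate-fibre estimates, so that a single constant $c$ simultaneously handles the everywhere-good-reduction case, the bad-reduction case, and the mixed contributions. A further delicate point is the quantitative strength of ``small height $\Rightarrow$ close to torsion'': the Betti error must beat the spacing of the torsion points in play, whose order grows along the sequence, so that Pila--Wilkie can be applied with a moving parameter. Throughout one uses that $X^*$ is Zariski open with the maximality asserted in Proposition~\ref{TheoremStarSetOpen}, which itself rests on a uniform version of Raynaud's Manin--Mumford theorem and on the Lang--N\'eron theorem.
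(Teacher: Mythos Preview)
Your proposal takes a route that is genuinely different from the paper's, and it has real gaps.

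\textbf{The main gap: the Pila--Wilkie step.} You want to feed ``the conjugates of $P$'' into Pila--Wilkie, but Pila--Wilkie counts \emph{rational} points on a definable set. The conjugates of a point $P$ of small (but nonzero) N\'eron--Tate height have Betti coordinates that are \emph{close to} rational numbers, not equal to them. You acknowledge this (``the Betti error must beat the spacing of the torsion points''), but no mechanism is offered. Even if you replaced each conjugate by a nearby torsion point, you would need (a) a bound on the torsion order in terms of $h_{S,\cM}(\pi(P))$, (b) a guarantee that the resulting torsion points actually lie on (or near, in a definable sense) the graph of $b|_{\sman X}$, and (c) a count growing faster than any power of the height bound --- none of which is supplied. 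This is not a technicality: without an honest source of rational points, Pila--Wilkie simply does not apply. The paper does use a Pila--Wilkie variant, but in a completely different place (Lemma~\ref{lem:invariant}): it counts \emph{monodromy matrices} in $\gl{2g}{\IZ}$ acting on a fixed fibre, where integrality is automatic from $\pi_1$ acting on $H_1$.

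\textbf{The non-archimedean part is a placeholder.} ``Monodromy-pairing (tropical) term'' and ``the hypothesis on $X$ so that the contribution does not vanish identically'' are labels, not an argument; there is no indication of how to make this uniform in $P$ or how to sum the local contributions to recover $h_{S,\cM}$. The paper avoids local decomposition entirely.

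\textbf{What the paper actually does.} The paper's argument is global and height-theoretic, not place-by-place. First (Theorem~\ref{PropositionDegenerate}) it shows that if $X$ is not generically special then the Betti map $b|_{\sman X}$ has generically discrete fibres; this is where monodromy, Deligne's Theorem of the Fixed Part, Tits' alternative, and the Pila--Wilkie variant enter. With non-degeneracy in hand, it constructs an auxiliary subvariety $Z\subset\cA$ of complementary dimension (Proposition~\ref{PropositionConstructionOfAuxiliarySubvariety}) and uses a volume estimate plus Blichfeldt's theorem (\S\ref{SectionLatticePoints}) to produce $\gg N^{2\dim X}$ isolated points in $X\cap [N]^{-1}(Z')$ for a suitable translate $Z'$ (Proposition~\ref{PropositionIntersectionNumberPolynomialGrowth}). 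These points witness a lower bound $\deg(\varphi\circ[N])\gg N^{2\dim X}$ for a rational map $\varphi\colon\cA\dashrightarrow\IP^{\dim X}$, and a height-theoretic lemma (Lemma~\ref{LemmaChangeOfHeightUnderRationalMap}) converts this into $h([2^N]P)\ge c\,4^N h(P)-c'(N)$ on a dense open (Proposition~\ref{PropositionHeightChangeUnderScalarMultiplication}). Finally, a ``killing Zimmer constants'' trick --- stop Tate's limit after finitely many steps once $4^N$ dominates the Silverman--Tate error --- gives $h(P)\le c(1+\hat h_{\cA}(P))$ on a dense open (Proposition~\ref{PropositionBeforeMainTheorem}), and induction on $\dim X$ extends this to all of $X^*$. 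Silverman's Height Limit Theorem is \emph{not} used in the proof of Theorem~\ref{TheoremMainResultHeightBound}; it enters only afterwards, in \S\ref{SectionGBC}, to deduce the Geometric Bogomolov Conjecture.
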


Suppose $X$ dominates $S$, so  we think of $X$ as a family of $(\dim
X-1)$-dimensional varieties. Then our height inequality \eqref{eq:TheoremMainResultHeightBound} can be
interpreted as a uniform
version of the Bogomolov Conjecture along the $1$-dimension base $S$
if  $h_{S,\cM}(\pi(P))\ge 2c$. Indeed, then
$\hat{h}_{\cA,\mathcal{L}}(P) \ge \frac{1}{2c} h_{S,\cM}(\pi(P))$. 
From this point of view
it would be interesting to have
an extension of Theorem \ref{TheoremMainResultHeightBound} to $\dim S
> 1$. For the main obstacle to pass from $\dim S = 1$ to the general case, we refer to $\mathsection$\ref{SubsectionStrategyIntro}, Part~1 and above.

Theorem~\ref{TheoremMainResultHeightBound} was proven by the
second-named author \cite{Hab:Special} when $\cA$ is a fibered power
of a non-isotrivial $1$-parameter family of elliptic curves. This
theorem had applications towards special points
problems \cite[Theorems 1.1 and 1.2]{Hab:Special} and towards some
cases of the relative Manin--Mumford
Conjecture \cite{hab:weierstrass}.

After this work was submitted, Ben Yaacov and Hrushovski informed the
authors of their similar height inequality for a $1$-parameter family
of genus $g\ge 2$ curves in an unpublished
note \cite{BenYaacovHrushovksi} by reducing it to Cinkir's result \cite{CinkirBogomolov}.



In this paper we treat arbitrary abelian schemes over algebraic
curves, possibly with non-trivial isotrivial part, and hope to extend
the aforemention applications in future work.

Before proceeding, we point out that we shall prove
Theorem~\ref{TheoremMainResultHeightBound} for a particular 
relatively ample line bundle $\cL$ on $\cA/S$ that is fiberwise
symmetric and a particular ample
line bundle $\cM$ on $\overline{S}$. Then
Theorem~\ref{TheoremMainResultHeightBound} holds for arbitrary such
$\cL$ and $\cM$ by formal properties of the  Height Machine. 
Moreover we will prove the following slightly stronger form of Theorem~\ref{TheoremMainResultHeightBound}. 

We may attach a third height function on $\cA$ in the following way. Let $\cL'
= \cL \otimes \pi^*\cM$. By \cite[Th\'eor\`eme~XI~1.4]{Raynaud:LNM119}
and  \cite[Corollaire~5.3.3 and Proposition~4.1.4]{EGA2}, our abelian
scheme admits a closed immersion
$\iota \colon \cA\rightarrow\IP_{\IQbar}^M\times S$ over $S$ arising
from $(\cL')^{\otimes n}$ for some $n \gg 1$. As we will see in \S\ref{SubsectionEmbeddingAbSch} the existence of a closed immersion is more straight-forward if we allow ourselves to remove finitely many points from $S$, a procedure that is harmless in view of our application. Define the \textit{naive height} of $P$ to be $h_{\cA,\cL'}(P) = \frac{1}{n}h(P')+h_{S,\cM}(\pi(P))$ where $\iota(P) = (P',\pi(P)) \in \IP^M_\IQbar(\IQbar)\times S(\IQbar)$.


\begin{thmbis}{TheoremMainResultHeightBound}\label{TheoremMainResultHeightBoundStronger}
Let $\pi\colon \cA\rightarrow S$ and $\iota$ be as above with $k
= \IQbar$ and $\dim S = 1$. Let $X$ be a closed irreducible subvariety of $\cA$ over $\IQbar$ and let $X^*$ be as above Proposition~\ref{TheoremStarSetOpen}. Then there exists $c>0$ such that
\[
h_{S,\cM}(\pi(P)) \le h_{\cA,\cL'}(P) \le c \left(1 + \hat{h}_{\cA,\cL}(P) \right) \quad \text{for all}\quad P \in X^*(\IQbar) .
\]
\end{thmbis}


\subsection{Outline of Proof of
Theorems \ref{TheoremGBC} and \ref{TheoremMainResultHeightBound} and Organization of the Paper}\label{SubsectionStrategyIntro}

We give an overview of the proof of
Theorem \ref{TheoremMainResultHeightBound} in three parts. 

Consider an abelian scheme $\pi\colon \cA\rightarrow S$ over a smooth
algebraic curve $S$ 
of relative dimension $g \ge 1$.

The Ax--Schanuel Theorem \cite{Ax72} is a function theoretic version of the famous
and open Schanuel Conjecture in transcendence theory. Stated for algebraic
groups, the case of an abelian variety deals with algebraic
independence of functions defined using
the uniformizing map. It has seen many
applications to problems in diophantine geometry~\cite{ZannierBook}. 

For our purpose we need an Ax-Schanuel property for families of abelian varieties which is not yet available. However the assumption $\dim S = 1$ simplifies the situation: Instead of the full power of functional transcendence, we only need to study a functional constancy property. The first part of the proof  deals with this functional constancy property 
where the so-called Betti map plays the role of the uniformizing map.
We briefly explain this map and refer to \S\ref{SectionBetti} for more
details.

\medskip
\textbf{Part 1: The Betti Map and a Functional Constancy Property.}
Any point of $S(\IC)$ has a
complex neighborhood that we can biholomorphically identify with the
open unit disc $\unitdisc\subset\IC$. The fiber of $\cA\rightarrow S$
above a point $s\in \unitdisc$ is biholomorphic to a complex torus
$ \IC^{g}/\Omega(s)\IZ^{2g}$ where the columns of
$\Omega(s) \in \mat{g,2g}{\IC}$
are a period lattice basis. Of course $\Omega(s)$ is not
unique. The choice of a period lattice basis $\Omega(s)$ enables us to
identify $\cA_s$ with $\IT^{2g}$ as real Lie groups, with $\IT$ the
unit circle in $\IC$. As $\unitdisc$ is simply connected, we can arrange
that the period map $s\mapsto \Omega(s)$ is holomorphic on $\unitdisc$.
In turn we can identify $\cA_\unitdisc = \pi^{-1}(\unitdisc)$ with the
constant family $\unitdisc \times \IT^{2g}$ as families over $\unitdisc$.
This can be done in a way that we get group isomorphisms
$\cA_s(\IC)\rightarrow\IT^{2g}$ fiberwise. 
Note that the complex structure is lost, and that the isomorphism
in play is only real analytic.

The \textit{Betti map} $b \colon \cA_\unitdisc\rightarrow\IT^{2g}$ is the
composite of the isomorphism $\cA_\unitdisc \cong \unitdisc\times\IT^{2g}$
with the projection to $\IT^{2g}$. It depends on several
choices, but two different Betti maps on
$\cA_\unitdisc$ differ at most by composing with a continuous
automorphism of $\IT^{2g}$.


Let $X$ be an irreducible closed subvariety of $\cA$ that dominates
$S$. 
In  \S\ref{SectionDegenerate} we
study the restriction of $b$ to $\an{X}$; the superscript ${}^\anE$ denotes complex analytification. 
We say that $X$
is \textit{degenerate} if the restriction of
$b\colon \cA_{\unitdisc} \rightarrow \IT^{2g}$ to $\an{X}\cap\cA_\unitdisc$ has positive
dimensional fibers on a non-empty, open subset $\an{X}$; being open
refers to the complex topology. The main result of \S\ref{SectionDegenerate},
Theorem~\ref{PropositionDegenerate}, states that a degenerate
subvariety is generically special. This will allow us to explain the
analytic notion of degeneracy in purely algebraic terms. 

Let us give some ideas of what goes into the proof of Theorem
\ref{PropositionDegenerate}.

In general, the period mapping $s\mapsto \Omega(s)$ cannot extend to
the full base due to monodromy. This obstruction is a powerful tool in
our context. Indeed, fix a base point $s\in \an{S}$.
Monodromy induces a representation on the fundamental group
$\pi_1(\an{S},s)\rightarrow \mathrm{Aut}(H_1(\cA_s^\anE,\IZ))$. 
 Moreover, by transporting along the
fibers of the Betti map above a loop in $\an{S}$ we
obtain a representation 
$\pi_1(\an{S},s)\rightarrow \mathrm{Aut}(\cA_s^{\anE})$
whose  target is the group of real analytic automorphisms of $\cA_s$.
This new representation induces the representation on homology.
Moreover, we can identify $\mathrm{Aut}(\cA_s^{\anE})$ with $\gl{2g}{\IZ}$ because $\cA_s$
and $\IT^{2g}$ are isomorphic in the real analytic category.
So the canonical mapping
$\mathrm{Aut}(\cA_s^{\anE})\rightarrow \mathrm{Aut}(H_1(\cA_s^{\anE},\IZ))$
is an isomorphism of groups. 

 
Now suppose that $X$ is degenerate. The assumption $\dim S = 1$ forces that $X^{\anE} \cap \cA_{\Delta} = b^{-1}(b(X^{\anE} \cap \cA_{\Delta}))$.\footnote{This is no longer true if $\dim S > 1$, making the remaining argument in this part fail for $\dim S > 1$.} In other words the fibers $X_s=\pi|_{X}^{-1}(s)$ do not depend on $s \in \Delta$ for the identification $\cA_s^{\anE} = \mathbb{T}^{2g}$. So the action of 
$\pi_1(S^{\anE},s)$ on $\cA_s^{\anE}$ leaves $X_s$
invariant. Thus it suffices to understand subsets of $\cA_s$ that are invariant
under the action of a subgroup of $\gl{2g}{\IZ}$. We use Deligne's
Theorem of the Fixed Part \cite{Deligne:Hodge2} and the Tits
Alternative \cite{tits:freesubgroups} to extract information from this subgroup. Indeed, under a
natural hypothesis on $\cA$, the image of the
representation in $\gl{2g}{\IZ}$ contains a free subgroup on two
generators. In particular, the image is a group of exponential growth.
We then use a variant of the Pila--Wilkie Counting
Theorem \cite{PilaWilkie}, due to Pila and the second-named author,
and Ax's Theorem \cite{Ax72} for a constant abelian variety. From this
we will be able to conclude that $X$ is generically special if it is
degenerate.

Let us step back and put some of these ideas into a historic
perspective. In the special case where $\cA$ is the fibered power of
the Legendre family of elliptic curves, the second-named
author \cite{Hab:Special}  used local monodromy to
investigate degenerate subvarieties. In the current work local
monodromy is insufficient as $S$ could be complete to begin with. So
we need global information. Zannier introduced the point
counting strategy and together with Pila gave a new proof of the
Manin--Mumford Conjecture \cite{PilaZannier} using the Pila--Wilkie
Theorem \cite{PilaWilkie}. Masser and Zannier \cite{MZ:AJM10} showed
the usefulness of the Betti coordinates for problems in diophantine
geometry by solving a first case of the relative Manin--Mumford
Conjecture. Ullmo and Yafaev \cite{UllmoYafeav:14} exploited
exponential growth in groups in combination with the Pila--Wilkie
Theorem to prove their hyperbolic Ax--Lindemann Theorem for projective
Shimura varieties. A recent result of Andr\'e, Corvaja, and
Zannier \cite{ACZ:Betti} also deals with the rank of the Betti map on the
moduli space of principally polarized abelian varieties of a given
dimension. More recently, Cantat, Xie, and the authors \cite{CGHX:18} gave
a different approach to the functional constancy problem that does not rely on
the Pila--Wilkie Theorem but rather on  results from
dynamical systems.


\medskip
\textbf{Part 2: Eliminating the N\'eron--Tate Height.}
The second part of the proof deals with reducing the height inequality
in Theorem \ref{TheoremMainResultHeightBound} to one that only
involves Weil heights; we refer to \S\ref{sec:heights} for
nomenclature on heights.

We will embed $S$ into $\IP^m$ and $\cA$ into $\IP^M\times\IP^m$ such
that $\pi\colon \cA\rightarrow S$ is compatible with the projection
$\IP^M\times\IP^m\rightarrow\IP^m$ and other technical conditions are fulfilled. 
Let $h(P,Q) = h(P)+h(Q)$ for
$P\in\IP^M(\IQbar), Q\in \IP^m(\IQbar)$, where $h$ denotes
the absolute logarithmic Weil height on projective space. 
For an integer $N$ let $[N]$ denote the multiplication-by-$N$ morphism $\cA\rightarrow\cA$. 

Let $N\ge 1$ be a sufficiently large integer (which we assume to be a
power of $2$ for convenience). 
If $X$ is not
generically special we show in 
Proposition \ref{PropositionHeightChangeUnderScalarMultiplication} that
\begin{equation}
\label{eq:introheightlb}
 N^2 h(P) \le c_1 h([N](P)) + c_2(N)
 \end{equation}
for all $P\in U(\IQbar)$ 
where $U$ is Zariski open and dense in $X$ and
where $c_1>0$ and $c_2(N)$ are both
independent of $P$. Note that $U$ and $c_2(N)$  may depend on $N$. 

One is tempted to divide (\ref{eq:introheightlb}) by $N^2$
and take the limit $N\rightarrow\infty$ as in Tate's Limit Process.
However, this is not possible \textit{a priori},  as $U$
and $c_2(N)$ could both depend on $N$. So we mimic Masser's strategy of
``killing Zimmer constants''
explained in \cite[Appendix~C]{ZannierBook}.
This step is carried out in \S\ref{SectionNTBase} where we 
terminate Tate's Limit Process after finitely many steps when $N$ is
large enough in terms of $c_1$; for this it is
crucial that  $c_1$ is independent of $N$. 

\medskip
{\bf Part 3: Counting Lattice Points and an Inequality for the Weil Height.}
At this stage we have reduced proving
Theorem \ref{TheoremMainResultHeightBound} to 
 (\ref{eq:introheightlb})
if $X$ is not generically special.
Recall that from Part 1 of the proof we know that $X$ is not
degenerate.
Therefore,  the restricted Betti map
$b|_{\an{X}}:\an{X}\rightarrow\IT^{2g}$  has
discrete fibers. The image of this restriction has the same real
dimension as $\an{X}$ (the dimension is well-defined as the image is
subanalytic).

\smallskip
{\bf Part 3a: The Hypersurface Case.}
To warm up let us assume for the moment that $X$ is a hypersurface in $\cA$, so
$\dim X = g$. In this case the image  $b(\an{X}\cap\cA_\unitdisc)$ contains a
non-empty, open subset of $\IT^{2g}$. By a simple Geometry of Numbers
argument in the covering $\IR^{2g}\rightarrow\IT^{2g}$, the image contains $\gg N^{2g}$ points of  order
dividing $N$; the implicit constant is independent of $N$.
As the Betti map is a group isomorphism on each fiber of
$\cA$ we find that $X$ contains $\gg N^{2g}$ points of order dividing
$N$.

Obtaining
(\ref{eq:introheightlb}) requires an auxiliary rational map
$\varphi\colon \cA\dashrightarrow\IP^{g}$. Suppose for simplicity that
we can choose $\varphi$ such that $\varphi^{-1}([1:0:\cdots:0])$ is the image of the zero section
$S\rightarrow \cA$. Then
the composition $\varphi \circ [N]$ restricts to a rational
map $X\dashrightarrow\IP^{g}$
that maps the $\gg N^{2g}$ torsion points constructed above to
$[1:0:\cdots:0]$.

If we are lucky and all these torsion points are
isolated in the fiber of $\varphi\circ [N]$, then
$\deg(\varphi\circ[N]) \gg N^{2g}$.
A height-theoretic
lemma \cite{Hab:Special}, restated here as
Lemma~\ref{LemmaChangeOfHeightUnderRationalMap}, implies
(\ref{eq:introheightlb}) for $N$ a power of $2$.
The factor $N^2$ on the left in
(\ref{eq:introheightlb}) equals $N^{2\dim X}/N^{2(\dim X-1)}$ and has the
following interpretation. The numerator comes from the degree lower
bound as $\dim X = g$. The
denominator is a consequence of the following fact.
Given a suitable embedding of an abelian variety into some projective
space, the duplication morphism can be described by a collection of homogeneous
polynomials of degree $2^2=4$. So $[N]$ can be described  by homogeneous
polynomials of degree  $\le N^2$. 

If we are less lucky and some torsion point is not isolated in
$\varphi\circ [N]$, then an irreducible component of $\ker
[N]\subset\cA$
is contained in $X$. This situation is quite harmless, as roughly
speaking, it  cannot happen too often for a variety that is not generically
special.

The restriction $\dim X = g$ is more serious, however.
The second-named author was able to reduce \cite{Hab:Special}
to the hypersurface case  inside a fibered power of the Legendre
family of elliptic curves.  This is not possible
for general $\cA$,
 so we must proceed differently. 

\smallskip
{\bf Part 3b: The General Case.}
For general $X$ we will still  construct a suitable
$\varphi:X\dashrightarrow\IP^{\dim X}$ as above and apply
Lemma~\ref{LemmaChangeOfHeightUnderRationalMap}. As a stepping stone
we first construct in \S\ref{SectionAuxiliarySubvariety} an
auxiliary subvariety $Z$ of $\cA$ in sufficiently general position
such that
\begin{equation}
\label{eq:dimauxvariety}
\dim X +\dim Z =\dim \cA =
g+1.
\end{equation}
The rational map $\varphi$ is constructed using $Z$ 
in \S\ref{SectionHtTotalSpace}, and one should think of $Z$ as an irreducible
component of $\varphi|_X^{-1}([1:0:\cdots:0])$.
Being in general position and (\ref{eq:dimauxvariety}) mean that
$\varphi|_X$ has finite generic fiber on its domain.

Now we  want $\varphi\circ[N] \colon X\rightarrow \IP^{\dim X}$ to have degree  $\gg N^{2\dim X}$ and
for this 
it suffices to find  $\gg N^{2\dim X}$  isolated 
points in the preimage of $[1:0:\cdots:0]$.
As $\varphi(Z) = [1:0:\cdots:0]$ we  need to find
$\gg N^{2\dim X}$ isolated points in 
$X\cap [N]^{-1}(Z)$. (Additional verifications
must be made to ensure 
that isolated intersection points lead to isolated fibers.)



We ultimately construct these points using  the Geometry of Numbers.
More precisely, we need a  volume estimate and 
 Blichfeldt's Theorem.
Since $X$ is not degenerate we have a point around which the local
behavior of $X$ is similar to the local behavior of its image in
$\IT^{2g}$ under the Betti map. This allows us to linearize the
problem as follows. In order to count the number of such $X\cap
[N]^{-1}(Z)$, we can instead count points $x\in\IT^{2g}$, coming from
points of $X(\IC)$ under the Betti map, such that $Nx=z$ lies in the
image of $Z(\IC)$. If we let $x,z$ range over the image under the
Betti map of small enough open subsets of $X(\IC)$ and $Z(\IC)$, then
$Nx-z$ ranges over an open subset of $\IT^{2g}$.
This conclusion makes crucial use of the fact that $X$ is not
degenerate  and that $Z$ is in general position. Lifting via
the natural map $\IR^{2g}\rightarrow\IT^{2g}$ we are led to the
  counting lattices points. Indeed, we must construct elements of 
$N\tilde
x-\tilde z \in \IZ^{2g}$ where $\tilde x,\tilde z$ are
lifts of points $x,z$ as before.
We denote  the set of all possible $N \tilde x - \tilde z$ by $U_N$.
A careful volume estimate done in   \S\ref{SectionLatticePoints}
leads to
 $\vol{U_N}\gg N^{2\dim X}$. So we expect to find this many
lattice points.
But there is no reason to believe that $U_N$ is convex and
it is not hard to imagine open subsets of $\IR^{2g}$ of
arbitrary large volume that meet $\IZ^{2g}$ in the empty set. 
To solve this problem we apply Blichfeldt's Theorem, which claims that 
 some translate $\gamma+U_N$ of $U_N$ contains at least 
$\mathrm{vol}(U_N)$ lattice points in $\IZ^{2g}$. 

This approach  ultimately constructs
enough points to prove a suitable lower bound for the degree of
$\varphi\circ [N]$ and to complete the proof.
However, additional difficulties arise. For example, we must  deal with non-zero $\gamma$ and making sure that the
points constructed  are isolated  in $X\cap [N]^{-1}(Z)$.
These technicalities  are addressed
in \S\ref{SectionIntersection}.




\medskip
\textbf{The Remaining Results.}
In $\mathsection$\ref{SectionStarSetOpen} we prove
Proposition~\ref{TheoremStarSetOpen}. This section is mainly
self-contained and the main tool is a uniform version of the
Manin--Mumford Conjecture in characteristic $0$.

The proof of Theorem~\ref{TheoremGBC} in \S\ref{SectionGBC} follows
the blueprint laid out in~\cite{Hab:Special}. We need to combine our
height bound, Theorem~\ref{TheoremMainResultHeightBound}, with
Silverman's Height Limit Theorem \cite{Silverman} used in his
specialization result.

In Appendix~\ref{sec:appendixA} we sketch how to adapt our height
inequality, Theorem~\ref{TheoremMainResultHeightBound}, to more
general 
fields in characteristic $0$. 
This shows how to deduce Theorem~\ref{TheoremGBC} for any algebraically closed field
of
characteristic $0$. Appendix~\ref{sec:appendixB} contains some
comments on the situation when $\dim S>1$. Finally, in
Appendix~\ref{sec:appendixC} we give a self-contained and quantitative
version of Brotbek's Hyperbolicity Theorem~\cite{DamienHyperbolicity}
in the case of an abelian variety (which is much simpler than the
general case).

\subsection*{Acknowledgements} Both authors would like to thank the
organizers of the conference ``On Lang and Vojta's conjectures'' in
Luminy 2014, where our collaboration began. They would like to thank
Bas Edixhoven for providing the current proof of
Proposition~\ref{prop:uniformintersectionbound}. They would like to
thank Shouwu Zhang for providing the proof using the essential minimum
at the end of Appendix~\ref{sec:appendixA}. The authors also thank
Gabriel Dill and Walter Gubler for useful remarks. ZG would like to
thank the Department of Mathematics and Computer Science at the
University of Basel for the invitation and for the stimulating
atmosphere and SNF grant Nr. 165525 for financial support.


\section{Notation}
\label{sec:basicnotation}

Let $\IN = \{1,2,3,\ldots\}$ denote the set of positive integers.
Let $\IQbar$ be the algebraic closure of $\IQ$ in $\IC$. 


If $X$ is a variety defined over $\IC$, then we write
$\an{X}$ for $X(\IC)$ with its structure as a complex analytic space,
we refer to Grauert and Remmert's book \cite{CAS} for the theory
of complex analytic spaces. 

Given an abelian scheme $\cA$ over any base scheme 
and an integer $N$, we let
 $[N]$ denote the multiplication-by-$N$
morphism $\cA\rightarrow\cA$.
The kernel of $[N]$ is $\cA[N]$, it is a group scheme over the base of
 $\cA$. An endomorphism of $\cA$ is a morphism $\cA\rightarrow\cA$
 that takes the zero section to itself. 

If $A$ is an abelian variety over field $K$ and if $\overline K\supset
K$ is a given algebraic closure of $K$, then 
$\tor{A}$ denotes the group of points of finite order of $A(\overline K)$. 

Suppose $k$ is a subfield of $K$ whose algebraic closure in $K$ equals
$k$ and $\mathrm{char}(k)=0$.
We write $A^{K/k}$ for the $K/k$-trace of $A$ and let
$\tau_{A,K/k}\colon A^{K/k}\otimes_k K\rightarrow A$
denote the associated trace map,
we refer to \cite[$\mathsection$6]{Conrad} 
 for general facts and the universal property. 
Note that our notation $A^{K/k}$ is denoted by $\mathrm{Tr}_{K/k}(A)$ in \textit{loc.cit.}
By \cite[Theorem~6.2 and below]{Conrad}  $\tau_{A,K/k}$ is a closed immersion
since $\mathrm{char}(k)=0$.
We sometimes consider $A^{K/k}\otimes_k K$ as an abelian subvariety of
$A$. 

By abuse of notation we sometimes abbreviate $(A\otimes_K\overline
K)^{\overline K/k}$ 
by $A^{\overline K/k}$ 
and, in this notation,  consider $A^{\overline{K}/k} \otimes_k \overline{K}$ as an abelian subvariety of $A\otimes_k \overline{K}$.

\subsection{Heights}
\label{sec:heights}

A \textit{place} of a number field $K$ is an absolute value
$|\cdot|_v:K\rightarrow [0,\infty)$ whose restriction to $\IQ$ is
either the standard absolute value or a $p$-adic absolute value for
some prime $p$ with $|p|=p^{-1}$.
We set $d_v = [K_v:\IR]$ in the former and
$d_v = [K_v:\IQ_p]$ in the latter case.
The \textit{absolute, logarithmic, projective Weil height}, or just \textit{height}, of a
point $P = [p_0:\ldots:p_n]\in \IP_\IQ^n(K)$ with
$p_0,\ldots,p_n\in K$ is 
\begin{equation*}
h(P) = \frac{1}{[K:\IQ]}\sum_{v} d_v\log\max\{|p_0|_v,\ldots,|p_n|_v\}
\end{equation*}
where the sum runs over all places $v$ of $K$.
The value $h(P)$ is independent of the choice of projective
coordinates by the product formula. For this and other basic facts we
refer to \cite[Chapter 1]{BG}. 
Moreover, the height
does not change when replacing $K$ by another number
field that contains the $p_0,\ldots,p_n$.
Therefore, $h(\cdot)$ is well-defined on $\IP_\IQ^n(\overline K)$
where $\overline K$ is an algebraic closure of $K$.

In this paper we also require heights in a function field $K$. With our
 results in mind, we restrict to the case where $K=k(S)$ and
$S$ is a smooth projective irreducible curve over an algebraically
closed field $k$.
Let $\overline K$ be an algebraic closure of $K$. 
In this case, we can construct a height
$h_{K} : \IP_K^n(\overline K)\rightarrow \IR$ as follows.

The points
$S(k)$ correspond to the set of places $|\cdot|_v$ of $K$. They  extend in the usual manner to finite extensions of
$K$. If $P = [p_0:\cdots:p_n]\in \IP_K^n(\overline K)$ with
$p_0,\ldots,p_n\in K'$, where $K'$ is a finite extension of $K$, then we set
\begin{equation*}
h_K(P) = \frac{1}{[K':K]}\sum_{v} d_v \log\max\{|p_0|_v,\ldots,|p_n|_v\}
\end{equation*}
where $d_v$ are again local degrees such that the product formula
holds. We refer to  \cite[$\mathsection$1.3. and 1.4.6]{BG} for more details 
 or
\cite[$\mathsection$8]{Conrad} on 
generalized global fields.
In the function field case we keep $K$ in the subscript of $h_K$ to emphasize that $K$ is our
base field. 
Indeed, in the function field setting one must  keep
track of the base field ``at the bottom'' that plays the role
of $\IQ$ in the number field setting.

Now let $K$ be either a number field or a function field as above.
Suppose that $A$ is an abelian variety defined over $K$ 
that is embedding in some projective space $\IP_K^M$ with a symmetric line
bundle. 
Tate's Limit Process
induces the
\emph{N\'eron--Tate} or \emph{canonical height} on $A(\overline K)$.
If $K$ is a number field, we write
\begin{equation}
\label{eq:NTheight}
\hat h_{A}(P) = \lim_{N\rightarrow \infty} \frac{h([2^N](P))}{4^N}
\end{equation}
for the N\'eron--Tate height on $A(\overline K)$, we refer to 
\cite[Chapter 9.2]{BG} for details. The N\'eron--Tate height depends
also on the choice of the symmetric, ample line bundle, but we
do not mention it  in $\hat h_{A}$.

The construction in the function field is the same. For the same
reason as above
we  retain the symbol $K$
and write
$\hat h_{A,K}$ for the N\'eron--Tate height on $A(\overline K)$.

\subsection{Embedding our Abelian Scheme}\label{SubsectionEmbeddingAbSch}
In the paper we are often in the following situation. Let $k$ be an algebraically closed subfield of $\IC$. Let $S$ be a smooth irreducible algebraic curve over $k$ and
let $\cA$ be an abelian scheme of relative dimension $g\ge 1$
over $S$ with
structural morphism $\pi \colon \cA\rightarrow S$.

Let us now  see how
to  embed  $\cA$ into $\IP_S^M = \IP_k^M \times S$ for some $M>0$
after possibly removing finitely many points from $S$.
Note that removing finitely many points is harmless in the context of
our problems. Indeed, our  Theorem \ref{TheoremMainResultHeightBound} is not weakened by this 
 action and so we do it at leisure.

The generic fiber $A$ of $\cA \rightarrow S$ is an abelian variety defined
over the function field of $S$. 
Let $L$ be a symmetric ample line bundle on $A$. Then $L^{\otimes 3}$ is very ample. Replace $L$ by
$L^{\otimes 3g}$. A basis of $H^0(A,L)$ gives a projectively normal
closed immersion $A \rightarrow \IP^M_{k(S)}$ for some $M>0$.

We take the scheme theoretic image $\cA'$ of
$A\rightarrow\IP_{k(S)}^M\rightarrow \IP_S^M$,
hence $\cA'$ is the Zariski closure of
the image of $A$ in $\IP_S^M$ with the reduced induced structure.
After removing finitely many points of $S$ we obtain an abelian scheme
 $\cA'\subset \IP_S^M$ such that the morphism from $A$ to the generic
 fiber
 of $\cA'\rightarrow S$ is an isomorphism.
An  abelian scheme over $S$ is the
N\'eron model of its generic fiber, so the N\'eron mapping property
holds.
Therefore the canonical morphism
$\cA\rightarrow \cA'$ is an isomorphism
and we have thus constructed a closed immersion
\begin{equation*}
\iota_S \colon \cA \rightarrow \IP^M_S = \IP_k^M \times S.
\end{equation*}
Note that $L$ is the generic fiber of the relatively very ample line bundle $\cL = \iota_S^* \cO_S(1)$ on $\cA/S$. Moreover  for any $s \in S(k)$, we have that $\cL_s$ is the $g$-th tensor-power of a very ample line bundle on $\cA_s$. 

We may furthermore find an immersion (which need not be open or
closed) of $S$ into some $\IP_k^m$.
Composing yields the desired immersion
$\cA\rightarrow\IP_k^M\times\IP_k^m$.
By abuse of notation we consider $A\subset \IP_{k(S)}^M$ and
$\cA\subset\IP_k^M\times\IP_k^m$ from now on. Let us recapitulate.
\begin{enumerate}
\item [(A1)]We have an immersion
$\cA\rightarrow\IP_k^M\times\IP_k^m$ such that
the diagram involving $\pi\colon \cA\rightarrow S$
and the projection
$\IP_k^M\times\IP_k^m\rightarrow\IP_k^m$ commutes.
Moreover, for all $s\in S(k)$ the closed immersion $\cA_s\rightarrow \IP_k^M$
 is induced by a symmetric very ample line bundle. 
\end{enumerate}

Of course this immersion depends on the choice of the immersions of $A$
and of $S$.

The image of $A$ in $\IP_{k(S)}^M$ is projectively normal
and $[2]^* L$ is isomorphic to $L^{\otimes 4}$. 
Therefore, $[2]$ is represented globally by $M+1$ homogeneous polynomials of
degree $4$ on the image of $A$. Here the base field is the function
field $k(S)$. But we can extend it to the model after possibly
removing finitely many points of $S$. 
So 
 we may assume the following.
\begin{enumerate}
\item [(A2)] The morphism
$[2]$ is
represented globally on $\cA\subset\IP_k^M\times\IP_k^m$ by
$M+1$ bi-homogeneous polynomials, homogeneous of degree $4$ in
the projective coordinates of $\IP_k^M$
and homogeneous of a certain degree in the
projective coordinates of $\IP_k^m$.
\end{enumerate}

Finally, we explain why we took the additional factor $g$ in the
exponent $3g$ of $L^{\otimes 3g}$. By Proposition \ref{prop:abvarhyperbolic} 
 we have the additional and useful property. 
\begin{enumerate}
\item [(A3)] For given $s\in S(k)$ and $P\in \cA_s$, any generic hyperplane section of 
$\cA_s$ passing through $P$ does not contain a positive dimensional coset in
 $\cA_s$. 
\end{enumerate}
At the cost of possibly increasing the factor $g$ we could
also refer to Brotbek's deep result \cite{DamienHyperbolicity} for
more general projective varieties.

An immersion  $\iota \colon \cA\rightarrow\IP_k^M\times\IP_k^m$
for which (A1), (A2), and (A3) above are
satisfied
 will be called \textit{admissible}.

The construction above adapts easily to show the following fact. Let
$A$ be an abelian variety defined over $k(S)$. After possibly
shrinking $S$ we can realize $A$ as
the generic fiber of an abelian scheme $\cA\rightarrow S$
with an admissible immersion $\cA\rightarrow \IP_k^M\times\IP_k^m$.

If $k=\IQbar$   we have two  height
functions on $\cA(\IQbar)$. 

Say $P\in \cA(\IQbar)$,  we write $P=(P',\pi(P))$ with
$P'\in \IP_\IQbar^M(\IQbar)$ and
$\pi(P)\in \IP_\IQbar^m(\IQbar)$. Then
\begin{equation}\label{EqHeightTotal}
h(P) = h(P')+h(\pi(P))
\end{equation}
defines our first height $\cA(\IQbar)\rightarrow [0,\infty)$ which
we call the \textit{naive height} on $\cA$ (relative to the immersion
$\cA\subset\IP_\IQbar^M\times\IP_\IQbar^m$).

The second height is the \textit{fiberwise N\'eron--Tate}
or \textit{canonical height}
\begin{equation}
\label{eq:fiberwiseNT}
\hat h_{\cA}(P) = \hat h_{\cA_{\pi(P)}} (P),
\end{equation}
\textit{cf.} (\ref{eq:NTheight}).
We obtain a function
$\hat h_{\cA} : \cA(\IQbar)\rightarrow [0,\infty)$. It is quadratic
on each fiber
as the  line bundle on the generic fiber $A$ is symmetric and this 
extends along the fibers of $\cA\rightarrow S$. 

In the end we explain these height functions in terms of Height
Machine. Let $\overline{\cA}$, resp. $\overline{S}$, be the Zariski
closure of the image of the immersion
$\cA \subset \IP_\IQbar^M \times \IP_\IQbar^m$, resp.
$S \subset \IP_\IQbar^m$. If we let $\cL'= \cO(1,1)|_{\overline{\cA}}$
and $\cM = \cO(1)|_{\overline{S}}$, then
$h(\cdot)$ represents the class of functions $h_{\cA,\cL'}$ defined
up to $O(1)$ and $h\circ\pi$ represents $h_{S,\cM}\circ \pi$. And the
fiberwise N\'eron--Tate height $\hat{h}_{\cA}$ is the map $P \mapsto \hat{h}_{\cA_{\pi(P)},\cL_{\pi(P)}}(P)$, where $\cL = \iota_S^* \cO_S(1)$ as above.


\section{Proof of Proposition~\ref{TheoremStarSetOpen}}\label{SectionStarSetOpen}
The goal of this section is to prove Proposition~\ref{TheoremStarSetOpen}. In fact we will prove a statement of independent interest that implies Proposition~\ref{TheoremStarSetOpen}.

Let $k$ be an algebraically closed field of characteristic $0$. Let
$S$ be a smooth irreducible curve over $k$ and fix an algebraic
closure $\overline{K}$ of the function field $K=k(S)$. Let $A$ be an abelian variety over $\overline{K}$.

Furthermore, let $V_0$ be an irreducible variety defined over $k$ and $V = V_0\otimes_k  \overline{K}$. We consider $V_0(k)$ as a subset of $V(\overline K)$. 

The next proposition characterizes subvarieties $V\times A$ that contain a Zariski dense set of points in $\Sigma = V_0(k)\times A_{\mathrm{tor}} \subset V(\overline
K)\times A(\overline K)$. See Yamaki's \cite[Proposition 4.6]{YamakiBogomolovTrace} for a related statement. 

\begin{proposition}
\label{prop:specialpoints}
Suppose $A^{\overline K/k}=0$
and let $Y$ be
  an irreducible closed subvariety of $V\times A$. 
\begin{enumerate}
\item [(i)]
If $Y(\overline K) \cap
  \Sigma$ is Zariski dense in $Y$,
then 
$Y = (W_0\otimes_k  \overline K)\times (P+B)$ with $W_0\subset V_0$  an
irreducible closed subvariety, $P\in \tor{A},$ and
$B$ an abelian subvariety of $A$. 
\item[(ii)] 
There are at most finitely many subvarieties of the form 
$(W_0\otimes_k  \overline K)\times (P+B)$ (with $W_0,P,$ and $B$
as in (i)) that are contained in $Y$, maximal with respect to the
inclusion 
for this property.
\end{enumerate}
\end{proposition}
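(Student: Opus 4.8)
The plan is to treat the two statements by first establishing (i) and then bootstrapping to the finiteness assertion (ii). For (i), suppose $Y(\overline{K})\cap\Sigma$ is Zariski dense in $Y$. I would write $p\colon Y\to V$ and $q\colon Y\to A$ for the two projections; since $\Sigma = V_0(k)\times\tor{A}$, Zariski density of $Y\cap\Sigma$ forces $q(Y)$ to contain a Zariski dense set of torsion points of $A$. The key input here is the Manin--Mumford Conjecture (Raynaud's theorem, available in characteristic $0$): the Zariski closure of a set of torsion points is a finite union of torsion cosets $P_i+B_i$. Since $Y$ is irreducible, after passing to a suitable component I may assume the closure of $q(Y)$ is a single torsion coset $P+B$. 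Translating by $-P$ and using that the statement is invariant under such translation (a torsion translate carries $\Sigma$ to itself), I reduce to the case where $q(Y)\subset B$ for an abelian subvariety $B$, and $Y$ meets $V_0(k)\times\tor{B}$ in a Zariski dense set. The crucial structural point is then that $B$ has trivial $\overline{K}/k$-trace (as $A^{\overline K/k}=0$ implies $B^{\overline K/k}=0$), so $B$ has no nonconstant... more precisely, $\tor{B}$ is "spread out" enough that density of $Y\cap(V_0(k)\times\tor{B})$ should force $Y$ to be a product. To make this precise I would consider, for a general point $v\in V_0(k)$ in the image $p(Y)$, the fiber $Y_v = q(p^{-1}(v))\subset B$; density gives that $Y_v$ contains a Zariski dense set of torsion points for $v$ ranging over a dense subset of $p(Y)\cap V_0(k)$, hence each such $Y_v$ is a finite union of torsion cosets by Manin--Mumford again. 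A rigidity/constancy argument — using that $V_0$ is defined over $k$ while the cosets in $B$ live over $\overline{K}$, and that there are only countably many torsion cosets in $B$ over any fixed finitely generated field — then shows $Y_v$ is independent of $v$, say equal to a fixed coset $P'+B'$; this is where one genuinely uses $A^{\overline K/k}=0$, since otherwise $Y_v$ could vary algebraically with $v$ through the constant part. Concluding, $Y = \overline{p(Y)}\times(P'+B')$ and $\overline{p(Y)}$, being the closure of a set of $k$-points in $V = V_0\otimes_k\overline K$, descends to an irreducible closed $W_0\subset V_0$.

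For part (ii), I would argue by Noetherian induction on $Y$ together with a counting/uniformity argument. The subvarieties of the prescribed form $(W_0\otimes_k\overline K)\times(P+B)$ contained in $Y$ are partially ordered by inclusion; I want to show there are finitely many maximal ones. Since $W_0$ ranges over closed subvarieties of the fixed variety $V_0$ and $B$ ranges over abelian subvarieties of $A$ (of which there are finitely many up to the choice of translate $P$, and the relevant $P$ can be taken in a finite set once $B$ is fixed, because two torsion cosets $P_1+B$ and $P_2+B$ both inside $Y$ with $P_1-P_2\in B$ coincide), the only source of potential infinitude is the $W_0$-component. But for each fixed abelian subvariety $B\subset A$ and each relevant torsion point $P$, the set of $W_0$ with $(W_0\otimes_k\overline K)\times(P+B)\subset Y$ has a unique maximal element, namely the $k$-descent of the closure of $\{v : \{v\}\times(P+B)\subset Y\}$ — this is a closed condition defining a subvariety over $k$. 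Combining over the finitely many $(P,B)$ gives finitely many maximal special subvarieties in $Y$.

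**Main obstacle.** The hard part will be the constancy step inside the proof of (i): showing that the fiber $Y_v\subset A$ is independent of the base point $v\in V_0(k)$. Density only gives that $Y_v$ is a union of torsion cosets for $v$ in a dense subset, a priori with the cosets varying with $v$; one must rule out that the cosets move in an algebraic family, and this is precisely what the hypothesis $A^{\overline K/k}=0$ (trivial trace) is there to prevent. I expect the clean way to do this is to invoke the Lang--N\'eron theorem — the finitely-generatedness of $A(\overline{K})/\tau_{A,\overline K/k}(A^{\overline K/k}(k)) = A(\overline K)$, which here has no infinitely-generated "constant" part — so that the torsion subgroup $\tor{A}$ meets any subvariety of $A$ defined over $\overline{K}$ in a set whose Zariski closure is defined over the (countable) field of definition, forcing rigidity when the parameter space $V_0$ is defined over the algebraically closed $k$. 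The interplay between "defined over $k$" and "defined over $\overline{K}$", mediated by the trace being trivial, is the conceptual crux.
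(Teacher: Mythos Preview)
Your overall strategy for (i) is reasonable and you correctly isolate the constancy of the fibers $Y_v$ as the crux, but the mechanism you propose for it does not work in the situation that actually matters. You suggest using that there are only countably many torsion cosets in $B$, together with the dichotomy ``$V_0$ over $k$'' versus ``cosets over $\overline K$'', to force rigidity. But the principal case in the paper is $k=\IQbar$, where $V_0(k)$ is itself countable, so a pigeonhole or Baire-type argument over $v\in V_0(k)$ against countably many coset configurations has no force. What the paper does instead is to invoke the \emph{uniform} Manin--Mumford theorem (Theorem~\ref{thm:MMuniform}): there is a fixed finite set $M$ of abelian subvarieties and a bound $D$ such that for every $v$ the closure of $Y_v\cap\tor{A}$ is a union of at most $D$ torsion translates of members of $M$. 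One then fixes the maximal $B\in M$ with $\Sigma_B$ dense in $Y$, and shows via a Galois argument that for each contributing torsion point $P_{v,i}$ the image $\varphi(P_{v,i})\in A/B$ has $[F(\varphi(P_{v,i})):F]\le D$ for a suitable finite extension $F/K$. The last and essential step---where $\dim S=1$ and $A^{\overline K/k}=0$ genuinely enter---is that points of $\tor{(A/B)}$ of bounded degree over $F$ have uniformly bounded order: bounded degree plus good reduction away from a fixed finite set gives an \'etale covering of bounded degree of a fixed punctured curve, and there are only finitely many such coverings, so all these torsion points lie in a fixed finite extension $L/F$; then Lang--N\'eron (using trivial trace) makes $(A/B)(L)$ finitely generated, bounding the torsion. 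Only after this does one get a morphism killing $\Sigma'$ and conclude the product structure. Your Lang--N\'eron invocation is in the right spirit but skips the uniformity and the Galois degree bound that make it applicable.

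Your argument for (ii) has two gaps. First, $A$ can have infinitely many abelian subvarieties (take $A=E^2$ with $E$ CM), so ``finitely many $B$'' is false as stated. Second, even for fixed $B$ you do not explain why only finitely many torsion classes $P\bmod B$ occur. The paper avoids both issues by deriving (ii) directly from (i): each subvariety of the form $(W_0\otimes_k\overline K)\times(P+B)$ has $\Sigma$ Zariski dense in it, hence is contained in the Zariski closure of $Y(\overline K)\cap\Sigma$ in $Y$; conversely each irreducible component of this closure has $\Sigma$ dense in it, so by (i) is itself of the required form. The maximal special subvarieties are therefore exactly these finitely many components.
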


Part (i) implies (ii) for the following reason. 
If $W_0,P,$ and $B$ are as in the conclusion of (i), then it suffices
to observe that
$((W_0\otimes_k \overline K)\times (P+B))(\overline{K})\cap\Sigma$
is Zariski dense in $(W_0\otimes_k \overline K)\times (P+B)$. 

The assumption $\dim S=1$ is used only at one place in the proof. In Appendix~B, we will explain how to remove it.

\subsection{Proposition~\ref{prop:specialpoints} implies Proposition~\ref{TheoremStarSetOpen}}
Now we go back to the setting of Proposition~\ref{TheoremStarSetOpen}:
$S$ is a smooth irreducible curve over $k$ and
$\pi \colon \mathcal{A} \rightarrow S$ is an abelian scheme of
relative dimension $g\ge 1$.
Let $A$ denote the geometric generic fiber of $\pi$, it is an abelian
variety over $\overline K$. 

By \cite[Theorem~6.4 and below]{Conrad} there is a unique abelian subvariety
$A'\subset A$ such that $(A/A')^{\overline K/k}=0$
and such that we may identify $A'$ with 
$A^{\overline K/k}\otimes_k\overline K$. 

We  fix an 
abelian subvariety $A''\subset A$
with $A'+A''=A$ and such that $A'\cap A''$ is finite.
Then the addition morphism restricts to an isogeny 
$\psi \colon A' \times A''\rightarrow A$ and $(A'')^{\overline K/k}=0$.



Let $W_0$ be an irreducible closed subvariety of $A^{\overline K/k}$, $B$
 an abelian subvariety of $A''$, and $P\in A''(\overline K)$. 
We can map 
 $\psi((W_0\otimes_k\overline K)\times (P+B))\subset A$
to the generic fiber of $\cA$; its Zariski closure is a generically
special subvariety of $\cA$. 
Conversely, any generically special subvariety of $\cA$ arises this
 way.

Let $\cX\subset \cA$ be an irreducible closed subvariety that
dominates $S$ 
and $X\subset A$ its geometric generic fiber. 
We apply Proposition \ref{prop:specialpoints} where $A^{\overline
K/k},A''$
play the role of $V_0,A$ respectively.
There are at most finitely many subvarieties of $A$ of the form 
$(W_0\otimes_k\overline K)\times (P+B)$ 
that are contained in $\psi^{-1}(X)$, maximal for this property. 
This shows that there are at most finitely many generically
special subvarieties of $\cA$ that are contained in $\cX$, maximal for this property. 
\qed

\subsection{Proof of Proposition~\ref{prop:specialpoints}} Now we prove Proposition~\ref{prop:specialpoints}. To do this we require a uniform version of the
Manin--Mumford Conjecture in characteristic $0$.

\begin{theorem}[Raynaud, Hindry, Hrushovski, Scanlon] 
\label{thm:MMuniform}
Let $\overline K$ be as above, let $A$ be  an abelian variety, and
let $V$ be  an irreducible,
quasi-projective variety, both  defined over
  $\overline K$. 
Suppose  $Y$ is an irreducible closed
  subvariety of $V\times A$. For $v\in V(\overline K)$ we let
$Y_v$ denote the projection of $V\cap(\{v\}\times A)$ to $A$. 
Then there exists a finite set $M$ of abelian subvarieties of $A$ and
$D\in\IZ$ with
the following property. For all $v\in V(\overline K)$ the Zariski
closure 
of 
$Y_v(\overline K) \cap \tor{A}$ in $Y_v$ 
is a union of at most $D$ translates of members of $M$ by points of
finite order in $\tor{A}$. 
\end{theorem}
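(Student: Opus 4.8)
The plan is to deduce this uniform statement from the non-uniform Manin--Mumford Conjecture (Raynaud) together with a compactness/Noetherianity argument; the reference to Hindry, Hrushovski, and Scanlon in the attribution suggests that the intended route is either via Hrushovski--Scanlon's model-theoretic uniform Manin--Mumford or via Hindry's approach using the structure of torsion. I would organize it as follows. First, I would reduce to the case where $V$ and $A$ are defined over a field $F$ finitely generated over $\QQ$: indeed, $Y$, $V$, and $A$ are each cut out by finitely many equations, so they descend to some finitely generated $F \subset \overline{K}$, and every $v \in V(\overline{K})$ is defined over a finite extension of $F$, while the set of abelian subvarieties of $A$ and the torsion translates appearing in the conclusion are insensitive to the ground field. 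Second, I would set up the standard spreading-out argument: the family $\{Y_v\}_{v \in V}$ is the fiber family of the projection $Y \to V$, so by generic flatness and Noetherian induction on $V$ it suffices to prove the bound over a dense open $V^{\circ} \subset V$ and then stratify.

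The heart of the matter is then the uniform bound over the generic point. Over the function field $F(V)$ (or better, over the generic fiber $Y_{\eta} \subset A_{F(V)}$ viewed as a subvariety of an abelian variety over the finitely generated field $F(V)$), I would invoke the Manin--Mumford Conjecture for abelian varieties over fields of characteristic $0$: the Zariski closure of $Y_{\eta}(\overline{F(V)}) \cap \tor{A}$ is a finite union of torsion cosets, say of abelian subvarieties $B_1, \dots, B_r$ of $A$. The key point is that there are only countably many — in fact only finitely many up to the bound we seek — abelian subvarieties involved as $v$ varies, because $A$ is fixed; this is where the finite set $M$ comes from. To pass from the generic fiber to all special fibers with a \emph{uniform} count $D$, I would use the uniform version due to Hrushovski--Scanlon (definability of the torsion in the Zariski closure, plus compactness in an appropriate theory) or, alternatively, Hindry's quantitative form of Manin--Mumford which already gives bounds depending only on the geometry of $Y$ and not on $v$. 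Concretely: stratify $V$ so that on each stratum the closure of $Y_v(\overline K) \cap \tor A$ has constant "shape" — a fixed number of components, each a translate of a fixed $B_i \in M$ — and bound the number of strata and the number of components by Noetherianity and by the degree of $Y$.

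The main obstacle I expect is the uniformity of the \emph{number} $D$ of torsion translates, as opposed to the finiteness of the set $M$ of abelian subvarieties, which is comparatively soft. Controlling $D$ uniformly in $v$ is exactly the content that goes beyond plain Manin--Mumford and requires either the model-theoretic machinery (working in $\mathrm{ACFA}$ or with the Manin kernel and using compactness to turn a pointwise finiteness into a uniform bound) or Hindry's explicit estimates; I would cite \cite{Raynaud:MM} for the base case and the Hrushovski--Scanlon circle of ideas for the uniform passage, rather than reproving it. A secondary technical point is ensuring the stratification of $V$ interacts well with the scheme structure of the family $Y \to V$ (e.g.\ handling non-reduced or reducible fibers $Y_v$), but this is routine once the generic-fiber statement and Noetherian induction are in place.
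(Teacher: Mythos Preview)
Your proposal is correct and aligns with the paper's approach: both treat this as a citation rather than a new argument, invoking Raynaud for the pointwise statement and then the Hrushovski--Scanlon circle of ideas (or Hindry's quantitative version) for uniformity. The paper is slightly more streamlined than your sketch: instead of setting up the Noetherian induction and stratification by hand, it directly invokes Scanlon's \emph{automatic uniformity} theorem \cite[Theorem~2.4]{Scanlon:AU}, which packages exactly the compactness passage you describe (pointwise finiteness in a definable family implies a uniform bound), and then notes separately that (a) the number of irreducible components in an algebraic family is uniformly bounded, giving $D$, and (b) only finitely many abelian subvarieties of $A$ can occur as stabilizers of cosets in an algebraic family, giving the finite set $M$. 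Your reduction to a finitely generated ground field and the generic-fiber-plus-stratification scaffolding are not wrong, but they are absorbed into the black box the paper cites.
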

\begin{proof}
  Raynaud proved the Manin--Mumford Conjecture  in
  characteristic zero. 
Automatical uniformity then follows from Scanlon's 
\cite[Theorem 2.4]{Scanlon:AU}; see also 
 work of Hrushovski \cite{HrushovskiUniformMM} and Hindry's  \cite[Th\'eor\`eme~1]{Hindry:Lang} for $k=\overline{\IQ}$. 
 Indeed, the number of irreducible components 
 is uniformly bounded in an algebraic family. 
Moreover, it is well-known that
if an irreducible component of a fiber of an algebraic family is
a coset in $A$, then only
 finitely many possible underlying
 abelian subvarieties arise as one varies over the fibers.
\end{proof}

\begin{proof}[Proof of Proposition \ref{prop:specialpoints}]
We have already seen that it suffices to prove (i). 
We keep the notation $Y_v$ for fibers of $Y$ above $v\in V(\overline K)$ introduced in
Theorem \ref{thm:MMuniform}. Let $M$ and $D$ be as in this theorem.


For all $v\in V(\overline K)$ we have 
\begin{equation*}
  \overline{Y_v(\overline K)\cap \tor{A}} = \bigcup_{i} (P_{v,i} + B_{v,i})
\end{equation*}
where $P_{v,i}\in \tor{A}, B_{v,i} \in M,$ and the union has at most $D$
members. 
Note that for $v\in V(\overline K)$ any torsion coset contained in
$Y_v$ is contained in some 
$P_{v,i}+B_{v,i}$. 
Moreover, 
\begin{equation}
\label{eq:YcapSigmadoubleunion}
  Y(\overline K)\cap \Sigma = \bigcup_{v\in V_0(k)}\bigcup_{i}
\{v\}\times \left(P_{v,i} + \tor{(B_{v,i})}\right).
\end{equation}
We decompose $Y(\overline K)\cap \Sigma$ into a finite union of
\begin{equation*}
  \Sigma_B = \bigcup_{v\in V_0(k)  }\bigcup_{\substack{i \\  B_{v,i}=B}}
  \{v\}\times (P_{v,i} +\tor{B}) 
\end{equation*}
by collecting 
entries on the right of \eqref{eq:YcapSigmadoubleunion} that come from
$B\in M$.
The set $\Sigma_B$ must be Zariski dense in $Y$ 
for  some $B\in M$.
There is possibly more than one such 
 $B$  so 
we  choose  one that is maximal with respect to  inclusion.

We fix a finite field extension $F/K$ with $F\subset\overline K$, such
that $Y,A,$ and  $B$ are stable under the action of $\gal{\overline
K/F}$. 
For this proof we consider these three varieties and $V$ as over $F$.
Note that $A^{\overline K/k}=0$ remains valid.

Now suppose $v\in V_0(k)$ and let  $P_{v,i}$ be as in the definition of $\Sigma_B$, hence $B_{v,i}=B$ and
$P_{v,i}+B\subset Y$.

For all $\sigma \in \gal{\overline K/F}$ we have 
$\sigma(P_{v,i})+B=\sigma(P_{v,i}+B)  \subset \sigma(Y_v)=Y_v$, by our
choice of $F$ and since $\sigma$ acts trivially on $V_0(k)$.
So the torsion coset $\sigma(P_{v,i})+B$ is contained in  $P_{v,j} + B'$ for some $B'\in M$ and some
$j$. This implies $B\subset B'$. If $B\subsetneq B'$, then by maximality of
$B$ the Zariski closure $\overline{\Sigma_{B'}}$ is not all of $Y$.
After replacing $F$ by a finite extension of itself we may
assume $\sigma(\overline{\Sigma_{B'}}) =\overline{\Sigma_{B'}}$
for all $\sigma\in \gal{\overline K/F}$. 
In particular, $\{v\}\times (P_{v,i}+B) \subset \overline{\Sigma_{B'}}$. 
We remove such torsion cosets from the
union defining $\Sigma_B$ to obtain a
set  $\Sigma'\subset Y(\overline K)\cap \Sigma$ that remains Zariski
dense in $Y$. 

If $P_{v,i}+B$ is in the union defining $\Sigma'$,
then $B=B'$ and 
 $\sigma(P_{v,i}+B) = P_{v,j}+B$ for some $j$
and there are at most $D$ possibilities for $\sigma(P_{v,i}+B)$
with $\sigma \in \gal{\Kbar/F}$. 

Let $\varphi \colon A\rightarrow A/B$ be the canonical map, then
$\sigma(\varphi(P_{v,i})) =
\varphi(\sigma(P_{v,i})) = \varphi(P_{v,j})$.
We have proven that 
the  Galois orbit of $\varphi(P_{v,i})$ has at most $D$ elements, 
in particular $[F(\varphi(P_{v,i})):F]\le D$; recall that $D$
 is
 independent of $v$ and $i$. 

\medskip

\noindent {\bf Claim:} 
Without loss of generality we may assume that the torsion points $P_{v,i}$ 
contributing to $\Sigma'$ have uniformly bounded order. 
\medskip

Indeed, we may replace each $P_{v,i}$ by an element of $P_{v,i}+\tor{B}$. So by a
standard argument involving a complement of $B$ in $A$ it is
enough to show the following statement: 
The order of any point  in 
\begin{equation}
\label{eq:torsionboundedset}
  \{ P\in\tor{(A/B)} : [F(P):F]\le D \}
\end{equation}
 is
bounded in terms of $A/B$ and $D$ only. 

This is the only place in the proof of
 Proposition~\ref{prop:specialpoints}
 where we  use the hypothesis $\dim S=1$. In Appendix~B we will explain how to remove this hypothesis.

Let $\overline{S}'$ be an irreducible smooth projective curve with
$k(\overline{S}') = F$ and $P$ as in (\ref{eq:torsionboundedset}). The inclusion
$F \subset F(P)$ corresponds to a  finite covering
$\overline{S}'' \rightarrow \overline{S}'$ of degree $[F(P):F]$
where $\overline{S}''$ is another smooth projective curve with
function field $F(P)$. 
Then
$A/B$ has good reduction above $S'(k)\setminus Z$ for some finite
subset $Z$ of $S'(k)$, where we have identified $\overline{S'}(k)$ with
the set of places of $F$. Note that $S'$ and $Z$ are independent of $P$. All
residue characteristics are zero, so by general reduction theory of
abelian varieties we find that $F(P)/F$ is unramified above the places
in $S'(k) \setminus Z$. In other words, the finite morphism
$\overline{S}'' \rightarrow \overline{S}'$ is unramified above
$S' \setminus Z$. So we get a finite \'{e}tale covering of $S \setminus Z$ of degree $[F(P):F]$.

Let $L$ be the compositum  in $\overline K$ of all extensions of $F$ of degree
at most $D$ that are unramified above $S' \setminus Z$. Then
$L/F$  a finite
field extension
by \cite[Corollary~7.11]{Voelklein} if $k\subset\IC$
and for general $k$ of characteristic $0$
since the \'etale fundamental group of $S'\setminus Z$
is topologically finitely generated by
\cite[Expos\'e~XIII Corollaire 2.12]{SGA1}. 
In particular, $P \in (A/B)(L)$ for all $P$ in \eqref{eq:torsionboundedset}.

Now $(A/B)^{\overline K/k}=0 $  as the same holds for $A$.
The extension $L/k$ is finitely generated,
 so the Lang--N\'eron Theorem, cf. \cite[Theorem 1]{LangNeron} or  \cite[Theorem~7.1]{Conrad}, implies that $(A/B)(L)$ is a finitely generated group. Thus  $[N](P)=0$ for some $N\in\IN$ that is independent of $P$. Our claim follows. 

\medskip

Define a morphism
$\psi \colon V\times A\rightarrow V\times (A/B)$
by $\psi(v,t) = (v, [N]\circ\varphi(P))$. By choice of $N$ we
have $\psi(\Sigma') \subset V\times \{0\}$, so $\Sigma'
\subset V \times (\Theta + B)$ where $\Theta \subset \tor{A}$ is
finite. 
We pass to the Zariski closure and  find $Y\subset V\times (P+B)$ for
some $P\in \tor{A}$ as $Y$ is irreducible.

Let $p \colon V\times A\rightarrow V$ be the first projection, 
it is proper and $p(Y)$ is
Zariski closed in $V$. 
A fiber of $p|_{Y}$ containing a point of $\Sigma'$ contains a
subvariety of dimension $\dim B$. 
We use  that $\Sigma'$ is Zariski dense in $Y$ one last time together
with the Fiber Dimension Theorem \cite[Exercise II.3.22]{Hartshorne} to conclude $\dim B \le \dim Y - \dim
p(Y)$. As $Y\subset p(Y) \times (P+B)$ 
we conclude
\begin{equation}
\label{eq:Yisaproduct}
Y =  p(Y)  \times (P+B). 
\end{equation}

Finally,
$p(\Sigma')$ is Zariski dense in $p(Y)\subset V$.  But $p(\Sigma')$ consists of elements in
$V_0(k)$, with $k$  the base field of $V_0$. We conclude
$p(Y) = W_0\otimes_k K$ for some irreducible subvariety $W_0\subset
V_0$.
We conclude the proposition from (\ref{eq:Yisaproduct}). 
\end{proof}


\section{The Betti Map}\label{SectionBetti}

In this section we describe the construction of the Betti map. 

Let $S$ be a smooth, irreducible, algebraic curve over $\IC$
and suppose $\pi\colon \cA\rightarrow S$ is an abelian scheme
 of relative dimension $g$. We construct:
 \begin{proposition}\label{PropositionBetti}
Let $\cA$ and $S$ be as above. 
For all  $s\in S(\IC)$ there exists an open neighborhood $\unitdisc$ of $s$ in
$\an{S}$ and a real analytic mapping $b \colon \cA_\unitdisc \rightarrow
\IT^{2g}$, called Betti map,
with the following properties. 
\begin{enumerate}
\item [(i)] For each $s\in \unitdisc$ the restriction
$b|_{\cA^{\anE}_s} \colon \cA^{\anE}_s\rightarrow\IT^{2g}$
is a  group isomorphism.
\item[(ii)] For each $\xi\in\IT^{2g}$ 
the preimage $b^{-1}(\xi)$ is a complex analytic subset of $\cA_{\unitdisc}^{\anE}$.
\item[(iii)] The product $(b, \pi|_{\cA_{\unitdisc}}) \colon \cA_\unitdisc \rightarrow \IT^{2g}\times \unitdisc$ is
  real bianalytic.
\end{enumerate}
\end{proposition}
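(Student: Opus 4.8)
The strategy is to build the Betti map from a holomorphic frame for the relative homology and invoke the classical Riemann description of the fibers as complex tori. First I would work locally: fix $s_0 \in S(\IC)$ and choose a contractible open neighborhood $\unitdisc$ of $s_0$ in $\an{S}$ (biholomorphic to a disc). Over $\unitdisc$ the local system $R^1\pi^{\anE}_*\IZ$ (or equivalently $R_1\pi^{\anE}_*\IZ$, the relative $H_1$ of the fibers) is trivial since $\unitdisc$ is simply connected, so I can pick a basis $\gamma_1(s),\dots,\gamma_{2g}(s)$ of $H_1(\cA^{\anE}_s,\IZ)$ varying continuously (indeed locally constantly) with $s$. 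Dually, the relative de Rham bundle $\pi_*\Omega^1_{\cA_\unitdisc/\unitdisc}$ is a holomorphic vector bundle of rank $g$ on $\unitdisc$, hence trivial after shrinking $\unitdisc$; fix a holomorphic frame $\omega_1(s),\dots,\omega_g(s)$. The period matrix $\Omega(s) = \left(\int_{\gamma_j(s)} \omega_i(s)\right)_{i,j} \in \mat{g,2g}{\IC}$ then depends holomorphically on $s\in\unitdisc$, and for each $s$ its columns span a lattice $\Lambda_s = \Omega(s)\IZ^{2g}$ in $\IC^g$ with $\cA^{\anE}_s \cong \IC^g/\Lambda_s$ as complex Lie groups, via the Abel--Jacobi / exponential map of the fiberwise universal cover. (The universal cover of $\cA_\unitdisc$, or rather the relative Lie algebra bundle $\mathrm{Lie}(\cA_\unitdisc/\unitdisc)$ with its exponential, gives a holomorphic map $\exp\colon \mathrm{Lie}(\cA_\unitdisc/\unitdisc)\to\cA_\unitdisc$ over $\unitdisc$; trivializing $\mathrm{Lie}$ by the frame dual to the $\omega_i$ identifies it with $\unitdisc\times\IC^g$.)

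Next I would define the Betti map. Writing a point of $\cA_\unitdisc$ over $s$ as $\exp_s(v)$ with $v\in\IC^g$, the coordinates of $v$ in the real basis given by the columns of $\Omega(s)$ give a well-defined element $b(\exp_s(v)) = \Omega(s)^{-1}v \in \IR^{2g}/\IZ^{2g} = \IT^{2g}$; here $\Omega(s)^{-1}$ means the inverse of the $\IR$-linear isomorphism $\IR^{2g}\to\IC^g$ whose matrix (with respect to standard bases) has columns the columns of $\Omega(s)$, which is invertible because $\Lambda_s$ is a full lattice. This is independent of the choice of lift $v$ precisely modulo $\IZ^{2g}$, so $b$ is well-defined, and it is real analytic because $\Omega(s)$ and $\exp$ are holomorphic (hence real analytic) in $s$ and $v$ and matrix inversion is real analytic on $\gl{2g}{\IR}$. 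For (i): fixing $s$, the map $\exp_s\colon \IC^g/\Lambda_s \to \cA^{\anE}_s$ is a group isomorphism and $v\mapsto\Omega(s)^{-1}v$ descends to a group isomorphism $\IC^g/\Lambda_s\to\IT^{2g}$, so the composite $b|_{\cA^{\anE}_s}$ is a group isomorphism. For (ii): fix $\xi\in\IT^{2g}$ and lift it to $\tilde\xi\in\IR^{2g}$; then $b^{-1}(\xi) = \{\exp_s(\Omega(s)\tilde\xi) : s\in\unitdisc\}$, which is the image of the holomorphic section $s\mapsto \exp_s(\Omega(s)\tilde\xi)$ of $\cA_\unitdisc\to\unitdisc$ (well-defined independently of the lift of $\xi$, since changing $\tilde\xi$ by an integer vector changes $\Omega(s)\tilde\xi$ by a lattice element), hence a one-dimensional complex analytic subset; it is closed since the section is proper onto its image. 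For (iii): the map $(b,\pi)\colon \cA_\unitdisc\to\IT^{2g}\times\unitdisc$ has the explicit real-analytic inverse $(\xi,s)\mapsto \exp_s(\Omega(s)\tilde\xi)$ where $\tilde\xi$ is any local lift of $\xi$; both composites are the identity by construction, so $(b,\pi)$ is a real-analytic bijection with real-analytic inverse, i.e.\ real bianalytic. Finally I would record the indeterminacy: a different choice of homology frame changes $\Omega(s)$ on the right by a fixed matrix in $\gl{2g}{\IZ}$, a different choice of holomorphic frame $\omega_i$ changes it on the left by a holomorphic $\gl{g}{\cO(\unitdisc)}$-factor which does not affect $\Omega(s)^{-1}v \bmod \IZ^{2g}$, and a different contractible neighborhood restricts compatibly — so any two Betti maps differ by post-composition with a continuous (in fact $\gl{2g}{\IZ}$-linear) automorphism of $\IT^{2g}$, as claimed in the surrounding text.

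The only genuinely non-formal points are the holomorphic triviality of the relative de Rham bundle $\pi_*\Omega^1_{\cA_\unitdisc/\unitdisc}$ after shrinking (standard: a holomorphic vector bundle on a disc, or more elementarily on a Stein contractible base, is trivial) and the compatibility of the fiberwise exponential maps into a single holomorphic family $\exp\colon \unitdisc\times\IC^g\to\cA_\unitdisc$ over $\unitdisc$ — this follows from the existence of the relative exponential for the abelian scheme (the relative Lie algebra bundle with its canonical connection, or simply the fact that $\cA_\unitdisc^{\anE}\to\unitdisc$ is fiberwise a complex torus and the relative universal cover exists since $\unitdisc$ is simply connected). I expect the main obstacle, such as it is, to be bookkeeping: being careful that all the identifications ($\mathrm{Lie}$-trivialization dual to $\{\omega_i\}$, the real-linear isomorphism $\IR^{2g}\cong\IC^g$ given by $\Omega(s)$, the descent mod $\IZ^{2g}$) are mutually compatible and genuinely holomorphic/real-analytic in the parameter, so that the explicit inverse in (iii) is manifestly real analytic; none of this requires the hypothesis $\dim S = 1$, which only enters later.
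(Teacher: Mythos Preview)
Your proposal is correct and follows essentially the same route as the paper: trivialize the integral homology local system and the holomorphic bundle $\pi_*\Omega^1$ over a contractible neighborhood, form the holomorphic period matrix $\Omega(s)$, and define $b$ by expressing points in the real basis given by the columns of $\Omega(s)$ via the fiberwise Abel--Jacobi/exponential. The paper packages the same ingredients slightly differently---it invokes Ehresmann's Theorem to transport cycles and cites Griffiths for holomorphicity of periods, and proves (ii) by arranging the Ehresmann trivialization to have complex analytic fibers---whereas your argument for (ii) via the image of the holomorphic section $s\mapsto\exp_s(\Omega(s)\tilde\xi)$ is a little more direct; but the underlying construction is the same.
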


\begin{remark}\label{RemarkEssUniquenessOfBettiMap}
We remark that $b$ from the proposition above is not unique as we can compose it
with a continuous group endomorphism of $\IT^{2g}$. However, if $b,b' \colon \cA_{\unitdisc}\rightarrow\IT^{2g}$  both satisfy the conclusion of the proposition and if $\unitdisc$ is path-connected, then using homotopy and (iii) we find $b' = \alpha \circ b$ for some $\alpha \in \gl{2g}{\IZ}$.

\end{remark}

Before giving the concrete construction, let us explain the idea. Assume $S = \mathbb{A}_g$ is the moduli space of principally polarized abelian varieties with level-$3$-structure, and $\cA = \mathfrak{A}_g$ is the universal abelian variety. The universal covering $\mathfrak{H}_g^+ \rightarrow \mathbb{A}_g$, where $\mathfrak{H}_g^+$ is the Siegel upper half space, gives a polarized family of abelian varieties $\cA_{\mathfrak{H}_g^+} \rightarrow \mathfrak{H}_g^+$
\[
\xymatrix{
\cA_{\mathfrak{H}_g^+} := \mathfrak{A}_g \times_{\mathbb{A}_g} \mathfrak{H}_g^+ \ar[r] \ar[d] & \mathfrak{A}_g \ar[d] \\
\mathfrak{H}_g^+ \ar[r] & \mathbb{A}_g
}.
\]
For the universal covering $u \colon \IC^g \times \mathfrak{H}_g^+ \rightarrow \cA_{\mathfrak{H}_g^+}$ and for each $\tau \in \mathfrak{H}_g^+$, the kernel of $u|_{\IC^g \times \{\tau\}}$ is $\IZ^g + \tau \IZ^g$. Thus the map $\IC^g \times \mathfrak{H}_g^+ \rightarrow \IR^g \times \IR^g \times \mathfrak{H}_g^+ \rightarrow \IR^{2g}$, where the first map is the inverse of $(a,b,\tau) \mapsto (a+\tau b, \tau)$ and the second map is the natural projection, descends to a map $\cA_{\mathfrak{H}_g^+} \rightarrow \IT^{2g}$. Now for each $s \in S(\IC) = \mathbb{A}_g(\IC)$, there exists an open neighborhood $\unitdisc$ of $s$ in $\mathbb{A}_g^{\anE}$ such that $\cA_{\unitdisc} = (\mathfrak{A}_g)|_{\unitdisc}$ can be identified with $\cA_{\mathfrak{H}_g^+}|_{\unitdisc'}$ for some open subset of $\mathfrak{H}_g^+$. The composite $b \colon \cA_{\unitdisc} \cong \cA_{\mathfrak{H}_g^+}|_{\unitdisc'} \rightarrow \IT^{2g}$ is clearly real analytic and satisfies the three properties listed in Proposition~\ref{PropositionBetti}. Thus $b$ is the desired Betti map in this case. Note that for a fixed (small enough) $\unitdisc$, there are infinitely choices of $\unitdisc'$; but for $\unitdisc$ small enough, if $\unitdisc'_1$ and $\unitdisc'_2$ are two such choices, then $\unitdisc'_2 = \alpha \cdot \unitdisc'_1$ for some $\alpha \in \mathrm{Sp}_{2g}(\IZ)$.

Now let us give the concrete construction. Let $s_0\in \an{S}$. By Ehresmann's Theorem \cite[Theorem 9.3]{voisin:hodge1}, there is an open neigborhood $\unitdisc$ of $s_0$ in $\an{S}$ such that $\cA_{\unitdisc}=\pi^{-1}(\unitdisc)$ and $\cA_{s_0}\times\unitdisc$  are diffeomorphic as families over $\unitdisc$. 
The map $f$ in
\begin{equation}
\label{eq:trivialization}
  \begin{tikzcd}
\cA_{s_0} \times\unitdisc \arrow{rd} \arrow{r}{f} &   \cA_\unitdisc  \arrow{d}  \\
& \unitdisc
  \end{tikzcd}
\end{equation}
is  a diffeomorphism, the diagonal arrow is the natural projection,
and the vertical arrow is the restriction of the structural morphism. 
After translating 
we may assume that $(0,s)$ maps to the unit element in $\cA_s$ for all
$s\in\unitdisc$. We may assume that $\unitdisc$ is simply connected. 
Fiberwise we obtain a diffeomorphism $f_s : \cA_{s_0}\rightarrow
\cA_s$. 

As $\an{\cA}$ is a complex analytic space we may assume that the fibers of $f^{-1}$ in  \eqref{eq:trivialization} are complex analytic, see \cite[Proposition~9.5]{voisin:hodge1}.


We fix a basis $\gamma_1,\ldots,\gamma_{2g}$ of the $\IZ$-module $H_1(\cA^{\anE}_{s_0},\IZ)$. Each $\gamma_i$ is represented by a loop $\widetilde{\gamma_i}:[0,1]\rightarrow \cA_{s_0}^{\anE}$ based at the origin of $\cA_{s_0}^{\anE}$.


For all $s\in\unitdisc$ we
 have a map $H^1(\cA^{\anE}_{s},\IR)\rightarrow
H^1(\cA^{\anE}_{s_0},\IR)$ resp.
$H^1(\cA^{\anE}_{s},\IC)\rightarrow
H^1(\cA^{\anE}_{s_0},\IC)$ induced by $f_s$,
it is an isomorphism of  $\IR$- resp. $\IC$-vector spaces.
We denote the latter by $f_s^*$ and note that 
$f_s^*(\overline v) = \overline {f_s^*(v)}$ where
complex conjugation $\overline{\cdot}$ is induced by the real structure.

The Hodge decomposition yields 
\begin{equation*}
 H^1(\cA^{\anE}_{s},\IC)
=H^{0}(\cA^{\anE}_{s},\Omega^1)
\oplus \overline{H^{0}(\cA^{\anE}_{s},\Omega^1)}
\end{equation*}
where 
$H^{0}(\cA^{\anE}_{s},\Omega^1)$ is
the $g$-dimensional vector space of global holomorphic $1$-forms on
$\cA^{\anE}_s$. 
As $s$ varies over $\unitdisc$  we obtain a
collection
\begin{equation*}
  f_s^*  H^{0}(\cA^{\anE}_{s},\Omega^1)
\end{equation*}
 of subspaces of
$H^{1}(\cA^{\anE}_{s_0},\IC)$.
As $f_s^*$ commutes with complex conjugation we have
\begin{equation*}
H^1(\cA_{s_0}^{\anE},\IC) = f_s^* 
H^{0}(\cA^{\anE}_{s},\Omega^1) \oplus \overline{f_s^* 
(H^{0}(\cA^{\anE}_{s},\Omega^1))}.
\end{equation*}

For $s\in\unitdisc$ the image $f_s^*H^{0}(\cA^{\anE}_{s},\Omega^1)$ corresponds 
 to a point in the Grassmannian variety of
$g$-dimensional subspaces of $H^{1}(\cA^{\anE}_{s_0},\IC)$.
As a particular case of Griffith's Theorem, this association is a holomorphic function.
We  draw the following conclusion from  Griffith's result. 

Fix a basis
$\omega_1^0,\ldots,\omega^0_g$ of $H^{0}(\cA^{\anE}_{s_0},\Omega^1)$; 
then
$\omega_1^0,\ldots,\omega^0_g,\overline{\omega_1^0},\ldots,\overline{\omega_g^0}$
is a basis $H^1(\cA^{\anE}_{s_0},\IC)$. 
There exist holomorphic functions 
\begin{equation*}
a_{ij}:\unitdisc\rightarrow\IC\quad\text{and}\quad
b_{ij}:\unitdisc\rightarrow\IC \quad (1\le i,j\le g)
\end{equation*}
such that
\begin{equation*}
  f_s^* \omega_i(s) = \sum_{j=1}^g \left(
a_{ij}(s) \omega_j^0 + b_{ij}(s) \overline{\omega_j^0}\right)
\end{equation*}
for all $i\in \{1,\ldots,g\}$ and all $s\in\unitdisc$ where
$\omega_1(s),\ldots,\omega_g(s)$ is a basis
of $H^{0}(\cA^{\anE}_{s},\Omega^1)$
with $\omega_i(s_0) = \omega_i^0$ for all $i$. 

For $s\in\unitdisc$ we define the period matrix
\begin{equation*}
  \Omega(s) = \left( \int_{f_{s*}\widetilde{\gamma_j}}\omega_i(s)
  \right)_{\substack{1\le i\le g \\ 1\le j\le 2g}}\in \mat{g,2g}{\IC}
\end{equation*}
for all $s\in \unitdisc$; the integral is taken over the loop in
$\cA_{s_0}^{\anE}$ fixed above. 
Note that
\begin{equation*}
  \int_{f_{s*}\widetilde{\gamma_j}}\omega_i(s) 
 = \int_{\widetilde{\gamma_j}} f_s^* {\omega_i(s)}
= \sum_{j=1}^g \left(a_{ij}(s) \int_{\widetilde{\gamma}_j} \omega_j^0
  + b_{ij}(s) \int_{\widetilde{\gamma}_j} \overline{\omega_j^0}\right)
\end{equation*}
by a change of variables. 
So $\Omega(s)$ is holomorphic in $s$. 
In this notation and with 
$A(s) = (a_{ij}(s))\in \mat{g}{\IC}$
and 
 $B(s) = (b_{ij}(s))\in \mat{g}{\IC}$ we can abbreviate the above by
\begin{equation}
\label{eq:periodtransformation}
\left(
  \begin{array}{c}
    \Omega(s) \smallskip \\  \overline{\Omega(s)}
  \end{array}\right)
 =
\left(
\begin{array}{cc}
A(s) & B(s) \smallskip \\
\overline{B(s)} & \overline{A(s)}
  \end{array}\right)
\left(
\begin{array}{c}
    \Omega(0) \smallskip \\  \overline{\Omega(0)}
  \end{array}\right)
\end{equation}
here $\Omega(0) = \Omega(s_0)$. 
So the first matrix on the right of (\ref{eq:periodtransformation}) is invertible.

Let $P\in \cA^{\anE}$ where $s=\pi(P)\in\unitdisc$
and suppose $\gamma_P$ is a path in $\cA^{\anE}_s$ connecting $0$ and
$P$. 
Let $Q\in \cA^{\anE}_{s_0}$ with $f_s(Q)=P$
and $\gamma_Q$ the path in $\cA^{\anE}_{s_0}$ such that 
$f_{s*}\gamma_Q = \gamma_P$. 
We define 
\begin{equation}
  \label{eq:defcalLP}
  \mathcal{L}(P) = 
\left(
  \begin{array}{c}
    \int_{\gamma_P} \omega_1(s) \\
\vdots\\
\int_{\gamma_P} \omega_g(s) \\
  \end{array}\right)
=
\left(
  \begin{array}{c}
    \int_{\gamma_Q} f_s^*\omega_1(s) \\
\vdots\\
\int_{\gamma_Q} f_s^*\omega_g(s) \\
  \end{array}\right)
= (
  A(s)  B(s) 
)
\left(
\begin{array}{c}
  \mathcal{L}^*(Q)\smallskip \\
  \overline{\mathcal{L}^*(Q)}
\end{array}\right)
\end{equation}
where
\begin{equation*} 
  \mathcal{L}^*(Q) = 
\left(
  \begin{array}{c}
    \int_{\gamma_Q} \omega_1^0(s) \\
\vdots\\
\int_{\gamma_Q}\omega_g^0 (s) \\
  \end{array}\right).
\end{equation*}

Replacing $\gamma_P$ by another path connecting $0$ and $P$ in
$\cA_s^{\anE}$ will translate the value of $\mathcal{L}(P)$ by a
period in $\Omega(s)\IZ^{2g}$. By passing to the quotient we obtain
the
Albanese map $\cA_s^{\anE}\rightarrow\IC^g / \Omega(s)\IZ^{2g}$. It is
a group isomomorphism.

We set further
\begin{equation*}
  \tilde b(P) =  
\left(
  \begin{array}{c}
    \Omega(s) \smallskip \\  \overline{\Omega(s)}
  \end{array}\right)^{-1}
\left(
  \begin{array}{c}
    \mathcal{L}(P) \smallskip  \\    \overline{\mathcal{L}(P)}
  \end{array}\right).
\end{equation*}
and observe $\tilde b(P)\in\IR^{2g}$ as these are the coordinates of
$\mathcal{L}(P)$ in terms of the period lattice basis $\Omega(s)$. 

By replacing $\gamma_P$ by another path connecting $0$ and $P$ we find
that $\tilde b(P)$ is translated by a vector in $\IZ^{2g}$. 
Therefore, $\tilde b$ induces a real analytic map $b \colon \cA_\unitdisc \rightarrow
\IT^{2g}$, where $\IT$ is the circle group which we identify with $\IR/\IZ$. We will prove that $b$ satisfies the three properties listed in Proposition~\ref{PropositionBetti}.

On a given fiber, \textit{i.e.} for fixed $s$, the map $b$ restricts to a group
isomorphism $\cA_{s}\rightarrow\IT^{2g}$ as we have seen above. So part (i) of Proposition~\ref{PropositionBetti} holds.

Let us investigate such a fiber. For this 
we  recall (\ref{eq:defcalLP}).
By the period transformation formula (\ref{eq:periodtransformation}) we see
\begin{equation*}
  \tilde b(P) = \left(
  \begin{array}{c}
    \Omega(0) \smallskip \\ \overline{\Omega(0)}
  \end{array}\right)^{-1} \left(
  \begin{array}{c}
    \mathcal{L}^*(Q) \smallskip  \\    \overline{\mathcal{L}^*(Q)}
  \end{array}\right).
\end{equation*}

Fixing the value of $\tilde b$ amounts to fixing the value of $\mathcal{L}^*(Q)$.
As $\mathcal{L}^*$ induces the Albanese map on $\cA_s^{\anE}$, fixing
$b$ amounts to 
fixing $Q$. 
Recall that $Q$ maps to $P$ under the trivialization (\ref{eq:trivialization}). 
Therefore, a fiber of $b$ equals a fiber
of the trivialization. As these fibers are complex analytic we obtain
part (ii) of Proposition~\ref{PropositionBetti}.

Finally, the association
\begin{equation*}
  (\xi+\IZ^{2g},s)\mapsto 
\left(\left(
  \begin{array}{c}
    \Omega(0) \smallskip \\ \overline{\Omega(0)}
  \end{array}\right)(\xi+\IZ^{2g}),s\right) \mapsto (Q,s) \mapsto f_s(Q) \in \cA_\unitdisc
\end{equation*}
induces  the inverse of the product
$\cA_\unitdisc \rightarrow\IT^{2g}\times \unitdisc$. This is part (iii) of Proposition~\ref{PropositionBetti}. 


\section{Degenerate Subvarieties}\label{SectionDegenerate}

Let $S$ be a smooth irreducible algebraic curve over
$\mathbb{C}$ and let $\pi \colon \mathcal{A} \rightarrow S$ be an
abelian scheme of relative dimension $g\ge 1$. We define and
characterize the degenerate subvarieties of $\mathcal{A}$ in this
section. Let $Y$ be an irreducible  closed subvariety of $\mathcal{A}$
that dominates $S$.

Let $s_0 \in S(\IC)$ and let $\unitdisc \subset S^{\anE}$ be an open
neighborhood of $s_0$ in $S^{\anE}$ with the Betti map
$b \colon \cA_{\unitdisc} =\pi^{-1}(\unitdisc) \rightarrow \IT^{2g}$ as in
Proposition~\ref{PropositionBetti} with $\IT\subset\IC$ the circle group. We say that a point $P\in \sman{Y}\cap\cA_\unitdisc$ 
is \emph{degenerate}\index{Degenerate Point} for $Y$ if it is not isolated in
$b|_{\sman{Y}\cap\cA_\unitdisc}^{-1}(b(P))$. 
We say that $Y$ is \emph{degenerate}\index{Degenerate Subvariety} if 
there is a non-empty and open subset of 
$\sman{Y}\cap\cA_\unitdisc$
consisting  of points that are degenerate for $Y$. 

For technical purposes  our notation of degeneracy formally depends on
the choice of $\unitdisc$. But this dependency is harmless as we will see.

Recall that
generically special
subvarieties of $\cA$ were introduced in 
 Definition~\ref{DefinitionSpecialSubvariety}.
A generically special subvariety is degenerate.
In this section we
prove the converse.

\begin{theorem}\label{PropositionDegenerate}
An irreducible closed subvariety of $\mathcal{A}$
that is degenerate
is a generically special subvariety of $\mathcal{A}$.
\end{theorem}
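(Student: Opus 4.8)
The goal is to show that if $Y\subset\cA$ is degenerate then it is generically special. The strategy is to exploit monodromy on the Betti fibers, following the outline in $\mathsection\ref{SubsectionStrategyIntro}$, Part~1. First I would reduce to a convenient situation: $Y$ dominates $S$, so after shrinking we may work over a small disc $\unitdisc\subset S^{\anE}$ with the Betti map $b\colon\cA_\unitdisc\to\IT^{2g}$ from Proposition~\ref{PropositionBetti}. The key first observation is that since $\dim S=1$ and $Y$ is degenerate, the fiberwise image $b(Y^{\anE}\cap\cA_\unitdisc)$ has dimension strictly less than $\dim Y$, and in fact (using irreducibility of $Y$ and that $\pi|_Y$ is dominant with $1$-dimensional base) one gets $Y^{\anE}\cap\cA_\unitdisc = b^{-1}(b(Y^{\anE}\cap\cA_\unitdisc))$ on a suitable open set — i.e.\ the ``Betti slices'' $Y_s$ do not vary with $s$ under the real-analytic identifications $\cA_s^{\anE}\cong\IT^{2g}$. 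This is exactly the functional constancy phenomenon, and it is where $\dim S=1$ is essential.

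**Using monodromy.** Fix a base point $s\in S^{\anE}$. Transporting Betti fibers along loops gives a monodromy representation $\rho\colon\pi_1(S^{\anE},s)\to\aut{\cA_s^{\anE}}\cong\gl{2g}{\IZ}$ compatible with the representation on $H_1(\cA_s^{\anE},\IZ)$. Because $Y_s$ is independent of the point of $\unitdisc$ in Betti coordinates, analytic continuation shows $Y_s$ (as a subset of $\IT^{2g}$) is invariant under the image $\Gamma=\rho(\pi_1(S^{\anE},s))$. So the problem becomes: understand subsets of the real torus $\IT^{2g}$ that are (i) complex-analytic in the fiber $\cA_s$ (being a Betti fiber of an algebraic variety) and (ii) invariant under a subgroup $\Gamma\le\gl{2g}{\IZ}$. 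The plan is then to split according to the structure of $\Gamma$, via Deligne's Theorem of the Fixed Part \cite{Deligne:Hodge2} and the Tits Alternative \cite{tits:freesubgroups}. Deligne's theorem handles the part of the cohomology on which monodromy acts trivially (this is where the isotrivial/$\Kbar/k$-trace part comes from, giving the $Z\otimes_k\overline{k(S)}$ factor), while on the complement the monodromy image is ``large'': under a suitable hypothesis on $\cA$ it contains a free group on two generators, hence has exponential growth.

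**The counting step.** On the large-monodromy part, the invariance of $Y_s$ under an exponentially growing subgroup of $\gl{2g}{\IZ}$ is very restrictive. Here I would invoke the point-counting machinery: a variant of the Pila--Wilkie theorem \cite{PilaWilkie} due to Pila and the second-named author, applied to the definable (subanalytic) set $Y_s\subset\IT^{2g}$, together with the exponential growth of $\Gamma$. If $Y_s$ were not a finite union of subtori (translates), one produces too many algebraic points of bounded degree and height lying in $\Gamma\cdot(\text{a fixed point})$, contradicting the polynomial bound from Pila--Wilkie against the exponential growth of $\Gamma$. Thus $Y_s$ is a finite union of torsion cosets in $\IT^{2g}$, which by (i) and Ax's theorem \cite{Ax72} for the constant abelian variety corresponds to a finite union of torsion cosets of abelian subvarieties in $\cA_s$. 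Translating this back along $\unitdisc$ and spreading out over $S$ (the cosets and their torsion translates being rigid, they glue to an algebraic subvariety of $\cA$), one identifies $Y$'s geometric generic fiber with a finite union of $(Z\otimes_k\overline{k(S)})+B$ as in Definition~\ref{DefinitionSpecialSubvariety}, i.e.\ $Y$ is generically special.

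**Main obstacle.** The crux — and the step I expect to be hardest — is the combination in the ``large monodromy'' case: making precise that invariance of the subanalytic set $Y_s$ under an exponentially growing arithmetic group forces $Y_s$ to be a finite union of subtori. This requires (a) verifying the hypothesis under which the monodromy image contains a free group on two generators, which uses Deligne semisimplicity plus the Tits alternative and must be set up carefully when $\cA$ has a nontrivial isotrivial part, and (b) running the Pila--Wilkie-type counting argument with the correct uniformities so that the polynomial point count genuinely contradicts the exponential orbit growth, then upgrading ``contains many torsion cosets'' to ``equals a finite union of torsion cosets'' via an induction on dimension. The passage from the analytic/local statement over $\unitdisc$ back to an algebraic statement over $S$ (invoking Ax's theorem and a spreading-out argument) is comparatively routine but must be done with care regarding which choices of Betti map and basis are in play, cf.\ Remark~\ref{RemarkEssUniquenessOfBettiMap}.
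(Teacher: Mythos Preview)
Your overall strategy is exactly the paper's: Betti constancy from $\dim S=1$, monodromy invariance of a fiber, Deligne's Fixed Part and Lang--N\'eron for the trace, the Tits alternative to get a free subgroup, and a Pila--Wilkie type count against exponential orbit growth. The architecture is right.

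There is, however, a genuine gap in your ``counting step''. You assert that invariance of $Y_s$ under an exponentially growing $\Gamma\subset\gl{2g}{\IZ}$ forces $Y_s$ to be a \emph{finite union of torsion cosets}. That is not what the counting argument delivers, and in general it is false: if $Y_s$ is stable under translation by a positive-dimensional abelian subvariety $B\subset\cA_s$ (think $Y_s=\varphi^{-1}(W)$ for $\varphi\colon\cA_s\to\cA_s/B$), then $Y_s$ is $\Gamma$-invariant once $B$ is, yet is typically an infinite union of cosets and certainly not a finite union of torsion cosets. What the Pila--Wilkie variant together with Ax actually yields (this is the paper's Lemma~\ref{lem:invariant} and Proposition~\ref{prop:invariance}) is a \emph{dichotomy}: either $Y_s$ sits inside a proper torsion coset, or there exists an abelian subvariety $B\subset\cA_s$ with $\dim B\ge 1$ and $Y_s+B=Y_s$.

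Consequently the induction is not the one you sketch (``upgrade contains-many-cosets to equals-finite-union''). It is an induction on $\dim\cA$, carried out in the paper's Lemma~\ref{lem:degenerate}: in the first branch one uses extendability of $B_s$ (at Hodge-generic fibers, via Lemma~\ref{lem:endgeneric}) to push $[N]Y$ into a proper abelian subscheme $\cB\subsetneq\cA$; in the second branch one quotients by the abelian subscheme extending $B$ and replaces $Y$ by its image in $\cA/\cB$. Both branches strictly drop $\dim\cA$, and the base case $\dim\cA=2$ is immediate. Only after this induction does one conclude that $Y$ lies in the kernel of a homomorphism $\cA\to\cC$ of the correct dimension, whence $Y$ is generically special. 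Your plan will go through once you replace the claimed direct conclusion with this dichotomy-plus-induction structure; without it the argument stalls precisely at the stabilizer case.
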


This proposition, which has  a
definite Ax--Schanuel flavor,
is proved using
 a variant of the Pila--Wilkie Counting
Theorem for definable sets in an o-minimal structure. 
 Abundantly many rational points arise
from the exponential growth of a  
certain monodromy group.

\subsection{Invariant Subsets of the Torus}
We write $|\cdot|_2$ for the $\ell^2$-norm on $\IR^{n}$.

For $n\in\IN$ we consider 
 the real $n$-dimensional torus  $\IT^n$ equipped with the
standard topology. 
We will use the continuous left-action of $\gl{n}{\IZ}$ on $\IT^n$
and use the additive notation for $\IT^n$. 
Suppose $X$ is a closed subset of $\IT^n$
such that
\begin{equation*}
  \gamma(X)= X
\end{equation*}
for all $\gamma$ in a subgroup $\Gamma$ of $\mathrm{GL}_n(\IZ)$. 
What can we say about $X$?

To rule out subgroups that are too small
we ask that $\Gamma$  contains a (non-abelian) free subgroup on $2$ generators. 
Moreover, we will assume that  $X$ is sufficiently ``tame'' as a set. 

To formulate the last property precisely, let
 $\exp \colon \IR^n \rightarrow \IT^n$ denote the exponential map 
$(t_1,\ldots,t_n)\mapsto (e^{2\pi i t_1},\ldots,e^{2\pi i t_n})$. 
Let $X\subset \IT^n$ be a subset and 
\begin{equation*}
  \cX = \exp|_{[0,1]^{n}}^{-1}(X).
\end{equation*}
We will work in a fixed o-minimal structure and
 call $X$ definable if $\cX$
is a definable subset of $\IR^n$ in the given o-minimal structure. We
 refer to van den Dries' book \cite{D:oMin} for the theory of o-minimal
 structures. We will  work with $\IRan$, the o-minimal structure
 generated by restricting real analytic functions on $\IR^n$ to $[-1,1]^n$.

We say that  $X\subset\IT^n$ is  of \emph{Ax-type} if it satisfies the
following property. For any continuous, semi-algebraic map
$y \colon [0,1]\rightarrow \cX$ 
that is real-analytic on $(0,1)$, there is a closed subgroup
$G\subset\IT^n$ such that $\exp\circ y([0,1]) \subset
 y(0)+G \subset X$. 

The main example comes from a $g$-dimensional
 abelian variety $A$  defined over $\IC$. Indeed, then there is a real
 bianalytic map $\an{A}\rightarrow\IT^{2g}$. Moreover, the image of
 $X(\IC)$
is definable  and of Ax-type for 
 any Zariski closed subset of $A$; for the latter claim we refer to
Ax's Theorem \cite{AxSchanuel}.

\begin{lemma}
\label{lem:invariant}
Let $X\subset \IT^n$ be a closed definable set  of Ax-type.
Let $\Gamma$ be a free subgroup of $\gl{n}{\IZ}$   
on $2$ generators
such that $\gamma(X)= X$ for all $\gamma\in\Gamma$.
Then one of the following properties holds true:
\begin{enumerate}
\item [(1)] 
The set $X$ is contained in a finite union of closed and proper
 subgroups of  $ \IT^n$.
\item[(2)] 
There are a non-empty, open subset $U$ of $X$ and
 a closed, connected, infinite subgroup $G\subset\IT^n$ with 
$U+G \subset X$. 
\end{enumerate}
\end{lemma}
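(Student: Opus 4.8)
\emph{Strategy.} Assume (1) fails; the plan is to produce (2). The driving idea is a Pila--Wilkie style point count: since $\Gamma$ is free on two generators it grows exponentially, so it furnishes an abundance of integer points in a cleverly chosen definable set, and the Pila--Wilkie Counting Theorem \cite{PilaWilkie} (in the variant of Pila and the second-named author) forces a connected, positive-dimensional semialgebraic set $\Sigma$ to lie in this definable set and to meet infinitely many of those integer points. Transporting $\Sigma$ into $\cX$ produces a non-constant semialgebraic arc there, and the Ax-type property of $X$ converts such an arc into a coset of a closed infinite subgroup $G\subset\IT^n$ contained in $X$. A Baire-category argument over the countably many subtori of $\IT^n$ then upgrades one such coset to one through a non-empty open subset of $X$, which is (2). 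Throughout one may assume that $X$ is not contained in a single proper closed subgroup of $\IT^n$, since otherwise we are already in case (1).

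\emph{Growth and the count.} Because $\Gamma\cong F_2$ embeds in $\gl{n}{\IZ}$, distinct reduced words give distinct matrices, the number of reduced words of length $\le\ell$ is $\gg 3^\ell$, and a word of length $\le\ell$ has entries bounded by $C^\ell$ for a constant $C=C(n,\Gamma)$; hence $\#\{\gamma\in\Gamma:\|\gamma\|\le T\}\gg T^{\delta}$ for some $\delta>0$. Fix $x_*\in\cX$. For each $\gamma\in\Gamma$ the class $\gamma x_*\bmod\IZ^n$ lies in $X$, so there is a unique $v_\gamma\in\IZ^n$ with $\gamma x_*-v_\gamma\in\cX$, and $\|v_\gamma\|\ll\|\gamma\|$ since $x_*\in[0,1]^n$. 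Thus the pairs $(\gamma,v_\gamma)$ are pairwise distinct integer points of height $\ll T$ lying in
\[
\cE=\{(g,v)\in\gl{n}{\IR}\times\IR^n:\ gx_*-v\in\cX\},
\]
which is definable in $\IRan$, being the preimage of the definable set $\cX$ under the affine map $(g,v)\mapsto gx_*-v$; this fixed-base-point device is exactly what keeps everything definable, the lattice $\IZ^n$ never appearing as such. Applying the counting theorem to $\cE$: since $\gg T^{\delta}$ eventually exceeds $O_\epsilon(T^\epsilon)$, infinitely many of the pairs $(\gamma,v_\gamma)$ lie on a single connected positive-dimensional semialgebraic subset $\Sigma\subset\cE$.

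\emph{From $\Sigma$ to a subgroup.} Consider the semialgebraic map $\phi\colon\Sigma\to\cX$, $\phi(g,v)=gx_*-v$, which on a special point $(\gamma,v_\gamma)$ returns the fundamental representative of $\gamma x_*\bmod\IZ^n$. Choosing $x_*$ with finite $\Gamma$-stabilizer makes the orbit map $\gamma\mapsto\gamma x_*\bmod\IZ^n$ finite-to-one, so $\phi$ takes infinitely many values on $\Sigma$ and in particular is non-constant; such an $x_*$ exists unless $X$ is contained in the countable union $\bigcup_{\gamma\neq\mathrm{id}}\ker(\gamma-\mathrm{id})$ of proper closed subgroups, a degenerate possibility disposed of separately (a Baire argument on the compact metric space $X$ reduces it to the excluded case that $X$ lies in one proper closed subgroup). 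Now join two points of $\Sigma$ with different $\phi$-values by a semialgebraic arc---connected semialgebraic sets are semialgebraically path-connected---and compose with $\phi$ to obtain $y\colon[0,1]\to\cX$ semialgebraic, real-analytic on $(0,1)$, non-constant, and, after shrinking, with image in the interior of $[0,1]^n$ so that $\exp$ is injective along it. The Ax-type property yields a closed subgroup $G\subset\IT^n$ with $\exp\circ y([0,1])\subset \exp(y(0))+G\subset X$; non-constancy of $\exp\circ y$ forces $G$ infinite, and replacing $G$ by its identity component $G^{0}$ gives an infinite subtorus with $\exp(y(0))+G^{0}\subset X$.

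\emph{Spreading out, and the crux.} It remains to upgrade the single coset just produced to one through a non-empty open subset of $X$. One way is to let $x_*$ range over a dense subset of $\cX$, observe that the orbit points of $x_*$ are dense in $\overline{\Gamma x_*}$, and combine this with the $\Gamma$-invariance of $X$; the upshot should be that, up to a subset of empty interior, $X$ is covered by the closed sets $X^{(H)}=\{P\in X:P+H\subset X\}$ as $H$ runs over the countably many subtori of $\IT^n$, whence Baire's theorem produces a single positive-dimensional $G$ with $X^{(G)}$ of non-empty interior $U$ and $U+G\subset X$, which is (2). I expect the crux of the whole argument to be not the point count but this surrounding analysis: ruling out the degenerate scenarios ($\phi$ forced to be constant, $x_*$ unavoidably of infinite $\Gamma$-stabilizer, $X$ trapped in a countable-but-not-finite union of proper closed subgroups) and carrying out the passage from one coset to an open family of cosets. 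This is precisely where the failure of (1) is consumed, and where the exponential growth of $\Gamma$ (free on two generators) is what prevents $X$ from being an exotic invariant set with no positive-dimensional structure.
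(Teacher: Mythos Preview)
The overall strategy (exponential growth $\to$ Pila--Wilkie $\to$ Ax-type $\to$ Baire) matches the paper's, but two steps fail as written. First, Pila--Wilkie does not hand you a single connected semialgebraic $\Sigma$ containing \emph{infinitely many} of the $(\gamma,v_\gamma)$; it only places the height-$\le T$ integer points, up to $O_\epsilon(T^\epsilon)$ exceptions, on an algebraic locus whose components may proliferate and vary with $T$. Worse, even on a component carrying several such points, nothing prevents $\phi(g,v)=gx_*-v$ from being constant: the curve could lie in a fiber $\{(g,v):gx_*-v=c\}$. The paper avoids both issues by applying the \emph{semi-rational} variant of Pila--Wilkie (Habegger--Pila) to the family $\cZ=\{(x,\gamma,a,y)\in\cX\times\gl n{\IR}\times\IR^n\times\cX:\gamma x-a=y\}$ with $x\in\cX$ as parameter, treating $(\gamma,a)$ as the rational coordinates and $y\in\cX$ as free; the output of that variant is precisely a curve along which $y$ is non-constant. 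For the count to apply one needs many \emph{distinct} $y$-values, and this is where case~(1) is actually consumed: at one fixed large $T$, either some $y_\gamma$ collide, which forces $\exp(x)$ into one of finitely many explicit proper subgroups $\ker(\gamma^{-1}\gamma'-1)$ with $H(\gamma),H(\gamma')\le T$ (and if this traps every $x$ we are in case~(1)), or they are all distinct and the semi-rational theorem fires.

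Second, your ``spreading out'' is not an afterthought but the heart of the construction, and the paper achieves it by design rather than by a separate argument. Because the count is run with $x$ as a parameter and the semi-rational variant can be arranged so that the curve begins at an integral point $\gamma(0)\in\Gamma$, the coset produced by the Ax-type hypothesis can be translated by $\gamma(0)^{-1}$ (using $\Gamma$-invariance of $X$) to pass through $\exp(x)$ itself. Hence every $x$ in the open set $V=X\setminus(\text{those finitely many subgroups})$ lies on a coset $\exp(x)+G_x\subset X$, and Baire applies directly to $V=\bigcup_G\{z\in V:z+G\subset X\}$ over the countably many closed subgroups $G\subset\IT^n$. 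In your setup the coset lands at an uncontrolled point of $X$, and varying $x_*$ gives no mechanism to cover an open set. (Your Baire disposal of the ``every $x_*$ has infinite stabilizer'' scenario also does not work: Baire on $X=\bigcup_{\gamma\neq 1}X\cap\ker(\gamma-1)$ only yields that some $X\cap\ker(\gamma_0-1)$ has nonempty interior in $X$, not that $X$ lies in a finite union of proper subgroups.)
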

\begin{proof}
By assumption, $\Gamma$ is generated by elements $\gamma_1,\gamma_2$ 
 that do not satisfy any non-trivial relation.
Any element $\gamma\in \Gamma$ is uniquely
represented by a reduced word in $\gamma_1^{\pm 1},\gamma_2^{\pm 1}$
whose length is 
 $l(\gamma)$.
For all real $t\ge 1$ we have
\begin{equation*}
  \#\{\gamma\in \Gamma : l(\gamma)\le t \} \ge 2^t.
\end{equation*}

We define
$c_1 = \max\{2,|\gamma_1|_2,|\gamma_1|_2\}\ge 2$
and observe  $|\gamma|_2\le c_1^{l(\gamma)}$ for all 
$\gamma\in \Gamma$. 
The height $H(b)$ of any integral vector $b=(b_1,\ldots,b_m)\in\IZ^m$ 
is $\max\{1,|b_1|,\ldots,|b_m|\}$.
So
\begin{equation*}
  H(\gamma) \le c_1^{l(\gamma)}.
\end{equation*}

Let $T\ge c_1$ and let   $t = (\log T)/(\log c_1)\ge 1$.
There are at least
$2^{t} = T^{(\log 2) / \log c_1}$ elements $\gamma\in \Gamma$ 
with $H(\gamma) \le T$.

Let $x\in \cX=\exp|_{[0,1]^n}^{-1}(X)$. For all $\gamma\in \Gamma$ there is
$a=a_\gamma\in\IZ^n$ such that  $y_\gamma=\gamma x-a_\gamma\in \cX$.
Then $(x,\gamma,a_\gamma,y_\gamma)$ lies in the  definable set
\begin{equation*}
  \cZ = \left\{(x,\gamma,a,y) \in \cX \times\mathrm{GL}_n(\IR)\times
  \IR^n\times \cX : 
\gamma x - a = y\right\}.
\end{equation*}
We view it as a family of definable sets parametrized by $x\in \cX$
with fibers $\cX_x \subset\IR^{n^2+n+n}$. 
Moreover,
\begin{alignat}1
\label{eq:Habound}
H(a_\gamma) &\le 
  \max\{1,|a_\gamma|_2\} = \max\{1,|\gamma x -
y_\gamma|_2\}\\ &\le 
\max\{1,|\gamma|_2 |x|_2 + |y_\gamma|_2\}
\nonumber
 \le \sqrt{n}
  (|\gamma|_2 + 1)\le 2n^2 H(\gamma).
\end{alignat}


Let $c_2$ be the constant from the semi-rational variant of the
Pila--Wilkie Theorem \cite[Corollary~7.2]{HabeggerPilaENS} applied to the family $\cZ$ and $\epsilon =
(\log 2) / (2\log c_1)$. 
Here the coordinates assigned to $(\gamma,a)$ are treated as rational
and the coordinates assigned to $y$ are not. 
We fix $T$ large enough in terms of $c_1$ and
$c_2$, more precisely we will assume that $T\ge c_1$ and
\begin{equation}
\label{eq:manypoints}
  T^{(\log 2) / \log c_1} > c_2  (2n^2T)^{(\log 2) / (2\log c_1)}.
\end{equation}

We keep $x$ fixed and
vary $\gamma$. Let us first see how to reduce to the case  that many different $y_\gamma$ must arise
this way if $H(\gamma)\le T$.

Indeed, suppose $\gamma'\in\Gamma$ satisfies $H(\gamma')\le T$. Then $y'_{\gamma'} = \gamma' x - a'_{\gamma'}\in\cX$ for some
$a'\in\IZ$. 
If $y_\gamma=y'_{\gamma'}$, then $\gamma x - a_\gamma = \gamma' x -
a'_{\gamma'}$, so $\gamma x-\gamma' x \in\IZ^{n}$. Then $\exp(x)$ lies
in the closed subgroup of $\IT^n$ defined  by the kernel of
$\gamma^{-1}\gamma'-1\not=0$, \textit{i.e.} the largest subgroup of $\IT^n$ stabilized by $\gamma^{-1}\gamma'$. 
 So it lies in a finite union 
$G_1\cup\cdots\cup G_N$ of closed proper subgroups of $\IT^n$, each
defined as the subgroup stabilized by some $\gamma^{-1}\gamma'$ as above. 
Here $N$ is bounded only in terms of $T$ and thus only in terms of
$c_1,c_2,$ and $n$. It is independent of $x$. 

If $X\subset G_1\cup\cdots\cup G_N$, then we are in case (1). 

Otherwise $V = X\setminus (G_1\cup\cdots \cup G_N)$
lies open in $X$ and is non-empty. 

Now suppose  $x\in \cX$ with $\exp(x)\in V$
and $\gamma\in G$ with $H(\gamma)\le T$. 
Recall that 
 $y_\gamma = \gamma x - a_\gamma\in \cX$. 
By our choice of $V$ and the arguments above  
 the number of $y_\gamma$ that arise is the number of elements in
 $\Gamma$ of height at most $T$. 
This number is at least $T^{(\log 2) / \log c_1}$. 
Note that the height of $(\gamma,a_\gamma)$ equals $\max\{H(\gamma),H(a_\gamma)\}$
and this is at most $2n^2 T$ by (\ref{eq:Habound}).

By (\ref{eq:manypoints}) we have enough $y_\gamma$ to apply the
counting result \cite[Corollary~7.2]{HabeggerPilaENS}. 
We thus obtain continuous, definable maps
$\gamma\colon [0,1]\rightarrow \mathrm{GL}_n(\IR)$, 
$a\colon [0,1]\rightarrow \IR^n$, and
$y\colon [0,1]\rightarrow \cX$ 
such that $\gamma$ and $a$ are semi-algebraic, $y$ is
non-constant, and
\begin{equation*}
  \gamma(s)x - a(s) = y(s) 
\end{equation*}
for all $s\in [0,1]$. So $s\mapsto y(s)$ is semi-algebraic too
and $\exp\circ y([0,1]) \subset X$.
After rescaling $[0,1]$ we may
assume that $y$ is
real-analytic on $(0,1)$.
By looking at the proof of \cite[Corollary~7.2(iii)]{HabeggerPilaENS} we may  arrange 
 $\gamma(0)\in \Gamma$ and $a(0)\in\IZ^n$. 
Recall that $X$ is of Ax-type. So there is a closed subgroup $G'_x
\subset \IT^n$ with  
$\exp\circ y([0,1]) \subset  \exp(y(0)) +G'_x \subset X$.
We may assume that  $G'_x$ is connected.
Observe that $G'_x$
is infinite as $\exp\circ y$ is continuous and non-constant.
We find $\exp(x) + G_x \subset \gamma(0)^{-1}(X)=X$ where $G_x = \gamma(0)^{-1} G'_x$. 

We have proved that for any $x\in \cX$ with $\exp(x)\in V$ we have
\begin{equation*}
  \exp(x) + G_x \subset X
\end{equation*}
for some connected, closed, infinite subgroup $G_x\subset \IT^n$. 

For any connected closed subgroup $G\subset\IT^n$ we define
\begin{equation*}
  E(G) = \{z\in V : z+ G \subset X\}  = V\cap \bigcap_{g\in G} (X-g). 
\end{equation*}
Then $E(G)$ is closed in $V$.
Our conclusion from above can be restated as
\begin{equation*}
  V = \bigcup_{x\in \exp|_{\cX}^{-1}(V)}  E(G_x). 
\end{equation*}

By Kronecker's Theorem $\IT^n$ has  countably  many  closed subgroups.
So this union countains at most countably many different members. 
Now $V$, being non-empty, Hausdorff, and locally compact 
satisfies the hypothesis of 
 Baire's Theorem. 
Hence there exists an  connected, closed, infinite subgroup
$G\subset\IT^n$ 
such that $V\setminus E(G)$ 
is not dense in $V$. 
So $E(G)$ contains a non-empty and open subset of $X$, as claimed in
(2). 
\end{proof}

Now suppose that $A$ is an 
abelian variety of dimension $g\ge 1$ defined over $\IC$.




We attach to $A$ the associated 
  complex manifold  $\an{A}$ whose underlying set of points is $A(\IC)$. There  is a real bi-analytic    
map   $b \colon \an{A}\rightarrow\IT^{2g}$ which is a group isomorphism,
 we will not need to vary $A$ in a family here as in 
 Proposition~\ref{PropositionBetti}.


Suppose a group $\Gamma$ acts faithfully and continuously
 on $\an{A}$; we do not 
ask for elements of $\Gamma$ to act by  holomorphic maps.
Any continuous group automorphism of $\IT^{2g}$ can be identified with
an element of $\gl{2g}{\IZ}$. 
So using $b$ we may consider $\Gamma$ as a subgroup of
$\gl{2g}{\IZ}$.

We say that the action of $\Gamma$ is of \emph{monodromy-type} if $\gamma(B(\IC)) = B(\IC)$ for all $\gamma\in\Gamma$
and all abelian subvarieties $B\subset A$. 


Later we will study the action of the fundamental group of an
abelian scheme on a fixed fiber in sufficiently general position. 
This action will leave the abelian subvarieties of the said fiber
invariant and is thus of monodromy-type. 



\begin{proposition}
\label{prop:invariance}
  Let $A,g,b,$ and $\Gamma\subset\gl{2g}{\IZ}$ be above, so in particular 
  $\Gamma$ acts continuously on $\an{A}$ and is of monodromy-type.
  We assume in addition that $\Gamma$ 
 contains a free subgroup on $2$ generators and
 that there are no $\Gamma$-invariant elements in $\IZ^{2g}\setminus\{0\}$.
Let $Z$ be an
  irreducible closed subvariety
  of $A$ with $\gamma(Z(\IC))= Z(\IC)$ for all $\gamma \in
\Gamma$. 
Then one of the following properties holds:
\begin{enumerate}
\item [(1)] The subvariety $Z$ is contained in 
a proper torsion coset in $A$. 
\item[(2)] There exists an abelian subvariety $B\subset A$ with $\dim
  B\ge 1$ and $Z+B = Z$. 
\end{enumerate}
\end{proposition}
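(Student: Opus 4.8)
The plan is to deduce this from Lemma~\ref{lem:invariant} applied to $X = b(Z(\IC)) \subset \IT^{2g}$, using the hypotheses on $\Gamma$. First I would check that $X$ is a closed definable set of Ax-type: this is exactly the ``main example'' recorded after the statement of Lemma~\ref{lem:invariant}, since $Z$ is Zariski closed in $A$ and $b\colon\an{A}\to\IT^{2g}$ is real bianalytic, so definability and the Ax-type property follow from Ax's Theorem \cite{AxSchanuel}. Since $\gamma(Z(\IC)) = Z(\IC)$ for all $\gamma\in\Gamma$ and $b$ conjugates the $\Gamma$-action on $\an{A}$ to the linear action on $\IT^{2g}$, we get $\gamma(X) = X$ for all $\gamma\in\Gamma$. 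The hypothesis that $\Gamma$ contains a free group on two generators lets us invoke Lemma~\ref{lem:invariant} (after passing to such a free subgroup, which still stabilizes $X$), giving us alternative (1) or (2) of that lemma.

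In case (2) of Lemma~\ref{lem:invariant} we obtain a non-empty open $U\subset X$ and a closed connected infinite subgroup $G\subset\IT^{2g}$ with $U+G\subset X$. The next step is to promote this to the algebraic statement (2) of the proposition. I would argue as follows: the preimage $b^{-1}(U+G)$ is contained in $Z(\IC)$ and contains $b^{-1}(U)$, which is a non-empty open subset of $Z(\IC)$ in the complex topology (as $b$ is a local homeomorphism); since $Z$ is irreducible, $b^{-1}(U)$ is Zariski dense in $Z$. Let $B_0 \subset A$ be the smallest abelian subvariety such that $b(B_0(\IC)) \supset G$ — more precisely, $G$ is a connected closed subgroup of the compact group $\an{A}\cong \IT^{2g}$, and since $G$ is \emph{real} one must be slightly careful: $G$ need not be the real torus underlying a complex abelian subvariety. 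Here is where the monodromy-type hypothesis on $\Gamma$ must enter more seriously.

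The hard part will be exactly this last point: extracting a genuine \emph{abelian} subvariety $B$ with $Z+B=Z$ out of a merely real connected subgroup $G$ fixed by nothing in the argument so far. The plan is to use that $\Gamma$ acts on $\IT^{2g}$ preserving $X$ and is of monodromy-type, so it permutes the (countably many) stabilizer subgroups $\{g\in\IT^{2g} : X+g = X \text{ near a generic point}\}$; combined with Deligne's Theorem of the Fixed Part spirit — though here it is cleaner to argue directly — one shows the maximal such $G$ is $\Gamma$-invariant, hence by the no-invariant-vectors hypothesis and a Hodge-theoretic/semisimplicity argument it must be the real torus of an abelian subvariety $B$ with $\dim B\ge 1$. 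Then $Z(\IC) + B(\IC)$ agrees with $Z(\IC)$ on a Zariski dense set, so $Z+B=Z$, which is alternative (2) of the proposition. In case (1) of Lemma~\ref{lem:invariant}, $X$ lies in a finite union of proper closed subgroups of $\IT^{2g}$; pulling back and using irreducibility of $Z$, the variety $Z(\IC)$ lies in a single such subgroup, and a standard argument (using that the $\Gamma$-orbit structure forces this subgroup to be, up to torsion translation, the real torus of a proper abelian subvariety, again via the no-invariant-vectors hypothesis) yields that $Z$ is contained in a proper torsion coset, giving alternative (1). I expect the routine verifications (Zariski density of complex-open sets, local-homeomorphism properties of $b$) to be quick, and the passage from real subgroups of $\IT^{2g}$ to complex abelian subvarieties of $A$ — leveraging monodromy-invariance — to be the genuine content.
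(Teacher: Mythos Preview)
Your overall strategy is right---reduce to Lemma~\ref{lem:invariant} and treat its two alternatives---but you have misidentified where the monodromy-type and no-invariant-vectors hypotheses enter, and this creates genuine gaps in both cases.

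In case~(2) of Lemma~\ref{lem:invariant}, you correctly worry that the real closed connected subgroup $G\subset\IT^{2g}$ need not be the real torus of an abelian subvariety, but your proposed fix (show the maximal such $G$ is $\Gamma$-invariant, then use no-invariant-vectors plus a Hodge-theoretic argument to force $G$ to be complex) does not work and is unnecessary. The paper's argument is much simpler and uses neither hypothesis: from $U+G\subset X$ one gets $b^{-1}(U)\subset\bigcap_{P\in b^{-1}(G)}(Z-P)$, and since $b^{-1}(U)$ is Zariski dense in the irreducible $Z$ while the right-hand side is Zariski closed, one concludes $Z-P=Z$ for every $P\in b^{-1}(G)$. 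The stabilizer $\{P\in A:Z-P=Z\}$ is Zariski closed, so it contains the \emph{Zariski closure} $B$ of $b^{-1}(G)$; since $b^{-1}(G)$ is a connected subgroup of $\an{A}$, its Zariski closure $B$ is a positive-dimensional abelian subvariety, and $Z+B=Z$ follows.

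In case~(1) of Lemma~\ref{lem:invariant}, your plan to argue that ``the $\Gamma$-orbit structure forces $G_1$ to be, up to torsion translation, the real torus of a proper abelian subvariety'' fails because the subgroups $G_i$ produced in the proof of Lemma~\ref{lem:invariant} are kernels of $\gamma^{-1}\gamma'-1$ and are not $\Gamma$-invariant. The paper instead, after getting $b(\an Z)\subset G_1$ via Baire and analytic continuation on $\sman Z$, defines $B$ to be the abelian subvariety generated by $Z-Z$ (a sum of enough copies); this is automatically algebraic and is proper since $B(\IC)\subset b^{-1}(G_1)$. Then $Z\subset P+B$, and \emph{now} the monodromy-type hypothesis is used: $\gamma(B(\IC))=B(\IC)$ gives $\gamma(P)-P\in B(\IC)$ for all $\gamma$, and choosing $P$ in a complementary abelian subvariety $B'$ forces $\gamma(P)-P\in(B\cap B')(\IC)$, a finite group. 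Hence $Q=[\#(B\cap B')]P$ is $\Gamma$-fixed in $\an A$, and the no-invariant-vectors hypothesis (via a linear algebra argument over $\IZ$) forces the Betti coordinates of $Q$ to lie in $\IQ^{2g}$, so $P$ is torsion.
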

\begin{proof}
  We write $X$ for the image of $Z(\IC)$ under the real analytic
  isomorphism
$b\colon \an{A}\rightarrow\IT^{2g}$. 
Then 
$X$ is closed and definable  in the sense as introduced before Lemma \ref{lem:invariant}. 
By Ax's Theorem \cite{AxSchanuel}, the set $X$ is 
of Ax-type. We apply Lemma \ref{lem:invariant} to a free subgroup of
  $\Gamma$ on $2$ generators.

If we are in case (1) of Lemma \ref{lem:invariant}, then $X$ is
contained in a finite union  of proper closed subgroups
$G_1,\ldots,G_N\subsetneq \IT^{2g}$. By the Baire Category Theorem we may assume that $X\cap G_1$ has non-empty interior in
$X$. 

The analytification $\an{Z}$ is an irreducible
complex analytic space and $\sman{Z}$ is
arc-wise connected by \cite[Theorems~9.1.2 and 9.3.2]{CAS}. 
Moreover, $\sman{Z}$ is an open and dense subset of $\an{Z}$. 

Let $P,Q\in \sman{Z}$ and suppose $b(P)$ lies in the interior of
$X\cap G_1$.
We can connect $P$ and $Q$ via an arc $[0,1]\rightarrow \sman{Z}$
whose restriction to $(0,1)$ is piece-wise real analytic on finitely
many pieces. A neighborhood of 
$b(P)$ in $X$ lies in $G_1$ 
and $G_1$ is defined globally by  relations in integer
coefficients. 
By analytic continuation
we find that $b(Q)\in G_1$. In
particular,
$b(\sman{Z})\subset G_1$ and thus $b(\an{Z})\subset G_1$.
So $\an{Z}$ is contained in the proper
subgroup $b^{-1}(G_1)\subsetneq \an{A}$. 

 The sum of
sufficiently many copies of $Z - Z$ is an abelian subvariety $B$ of
$A$. 
 We have $B\not=A$ because $B(\IC)$ lies in $b^{-1}(G_1)$. 
So $Z\subset P+B$ for some $P\in A(\IC)$.
Moreover, any coset in $A$
containing $Z$ must contain $P+B$. 

Let $B'$ be the complementary abelian subvariety of $B$ in $A$ with
respect to a fixed polarization, see \cite[$\mathsection$5.3]{CAV}.
So $B+B'=A$ and $B\cap B'$ is finite. By the former property 
 we may assume $P\in B'(\IC)$. 

By hypothesis we have 
$Z(\IC) = \gamma(Z(\IC)) \subset \gamma(P)+\gamma(B(\IC)) = \gamma(P)+ B(\IC)$
for all $\gamma\in \Gamma$. Thus $\gamma(P)-P \in B(\IC)$ for all
$\gamma\in\Gamma$. 
As $B'$ is invariant under $\gamma$  we find 
 $\gamma(P)-P\in (B\cap B')(\IC)$. 
So $\gamma(Q)-Q=0$ for all $\gamma\in\Gamma$ where $Q=[\#B\cap B'](P)$. 

The point $b(Q)\in\IT^{2g}$ is the image of some $t\in\IR^{2g}$ under
the canonical map $\IR^{2g}\rightarrow\IT^{2g}$. 
Our action of $\Gamma$ on $\an{A}$ was defined using $b$ and $\Gamma$
acts on $\IT^{2g}$ via a matrix in $\mat{2g}{\IZ}$. 
We find 
that $\gamma(t)-t\in\IZ^{2g}$ for all $\gamma\in\Gamma$
with the standard action of $\gl{2g}{\IZ}$ on $\IR^{2g}$. 

Thus $t\in\IR^{2g}$ is the solution of a system of inhomogenous linear equations,
parametrized by $\Gamma$,
with integral coefficients and integral solution vector.
The corresponding homogeneous equation has only the trivial solution
as
there are no non-trivial $\Gamma$-invariant vectors in $\IZ^{2g}$. 
So $t$ was the unique solution and we conclude $t\in\IQ^{2g}$.
Therefore $Q$ and thus $P$ have finite order. 
So $P+B$ is a torsion coset in $A$ and we are
in case (1) of the current proposition.

Now suppose we are in case (2) of Lemma \ref{lem:invariant} and $U$
and $G$ are as given in therein. Then 
 $\bigcap_{P\in b^{-1}(G)} (Z-P)$ 
is Zariski closed in $Z$ since $0\in G$. By Lemma \ref{lem:invariant} 
 its complex points contain 
$b^{-1}(U)$, which is Zariski dense in $Z$. 
So $Z-P = Z$ for all $P\in b^{-1}(G)$. 
This equality continues to hold for $\IC$-points in the Zariski closure $B$
of $b^{-1}(G)$ in $A$. 
As $G$ is a connected subgroup of $\an{A}$ we find that $B$ is an
abelian subvariety of $A$. Moreover, $\dim B\ge 1$ since $G$ is
infinite. So we are in case (2) of the proposition. 
\end{proof}

\subsection{Degeneracy and Global
Information}

\newcommand{\bp}{s} 

Let $S$ be an irreducible and smooth curve over $\IC$ and
let $\cA$ be an abelian scheme over $S$
of relative dimension $g\ge 1$. 

Recall that Betti maps were  introduced in \S\ref{SectionBetti}.
Around  each point of $ \an{S}$ we fix an open neighborhood 
 in $\an{S}$ and a Betti map as in Proposition \ref{PropositionBetti}. This yields
 an open cover of $\an{S}$ which we now refine for our application
 later on. After shrinking each member
 we may assume that each member
  is bounded and diffeomorphic to an open subset of $\IR^2$. 
 As $\an{S}$ is paracompact we may refine this cover to obtain
  an open cover  of $\an{S}$
that is locally finite. Each member of this cover is relatively
compact. We may refine the
cover again and assume that a finite intersection of members is
empty or contractible, see Weil's treatment \cite[\S 1]{Weil:deRham}. A non-empty open subset of $\an{S}$ is 
naturally a Riemann surface; if it is contractible then
it is homeomorphic to the open unit disc. Therefore, a finite
intersection of members of our cover is empty or homeomorphic to the
open unit disc. 

Let $\bp \in \an{S}$ be a base point. We describe the monodromy representation of $\pi_1(\an{S},s)$ using the Betti map.

Let $\gamma \colon [0,1]\rightarrow \an{S}$ be  a loop around $\bp$.  We can find a Betti map in a neighborhood around each point of
$\gamma([0,1])$. As this image is compact we find
$0=a_0<a_1<\cdots <a_n = 1$  such that
 $\gamma([a_{i-1},a_i])\subset \unitdisc_i$ 
where $\unitdisc_i$ is a member of the cover above and $b_i$
is its associated Betti map.

We can glue the Betti maps as follows. For each $i\in \{1,\ldots,n-1\}$
we have $s_i= \gamma(a_i) \in \unitdisc_i\cap\unitdisc_{i+1}$. 
So ${b_{i}}|_{\cA_{s_i}^{\anE}}\circ ({b_{i+1}}|_{\cA_{s_i}^{\anE}})^{-1}$ 
is a continuous group isomorphism
$M \colon \IT^{2g}\rightarrow \IT^{2g}$,
thus represented by a matrix in $\gl{2g}{\IZ}$. 
On replacing  $b_{i+1}$ by
$M\circ b_{i+1}$ we may arrange that 
$b_i$ and $b_{i+1}$ coincide on $\cA_{s_i}^{\anE}$.  

Now $\gamma(0)=\gamma(1)=\bp$ and both $b_1$ and $b_n$  define
homeomorphisms $\cA_{\bp}^{\anE}\rightarrow\IT^{2g}$. 
By composing we obtain a homeomorphism $\cA_{\bp}^{\anE}\rightarrow\cA_{\bp}^{\anE}$
that is a group isomomorphism. 
This homeomorphism induces an automorphism of 
the $\IZ$-module $H^1(\cA_{\bp}^{\anE},\IZ)$ that depends on the loop
$\gamma$.
Another  loop that is
homotopic to $\gamma$ relative $\{0,1\}$ will lead to the same
automorphism of  $H^1(\cA_{\bp}^{\anE},\IZ)$. The
induced mapping 
 $\pi_1(\an{S},\bp) \rightarrow \mathrm{Aut}(H^1(\cA_{\bp}^{\anE},\IZ))$
is the monodromy representation from
\cite[\S 3.1.2]{voisin:hodge2}. We denote its dual by
\begin{equation}
\label{eq:monodromyrep}
  \rho \colon \pi_1(\an{S},\bp)\rightarrow \mathrm{Aut}(H_1(\cA_{\bp}^{\anE},\IZ)).
\end{equation}

\begin{proposition}
\label{prop:globalinfo}
In the notation above there is a group homomorphism
\begin{equation}\label{eq:monoaction}
  \widetilde\rho=  \widetilde\rho_{\cA} \colon \pi_1(\an{S},\bp) \rightarrow \{ \text{homeomorphisms
} \an{\cA}_{\bp}\rightarrow \an{\cA}_{\bp}\text{ that are group homomorphisms}\}
\end{equation}
 that satisfies 
\begin{equation}
\label{eq:functoriality}
  \widetilde \rho(h)_* = \rho(h)\quad\text{for all}\quad
h\in \pi_1(\an{S},\bp)
\end{equation}
with the following properties.
\begin{enumerate}
\item [(i)]
There exists a
path-connected open neighborhood 
 $\unitdisc \subset \an{S}$ 
of $\bp$ and $b$ a Betti map on $\cA_\unitdisc$ as
in Proposition \ref{PropositionBetti}. 
Let $Y \subset \cA$ be an irreducible closed subvariety such that
 $P\in \an{Y}$ with $\pi(P)=\bp$ is not isolated in the fiber of 
$b|_{\an{Y}\cap \cA_\unitdisc}$. 
Then $\widetilde\rho(h)(P) \in \an{Y}$
for all
$h\in \pi_1(\an{S},\bp)$. Moreover, if $P$ has finite order $N$ in
$\cA_{s}(\IC)$ then $\dim_{P} Y\cap \cA[N]\ge 1$. 
\item[(ii)]
Let $\cB$ be a further abelian scheme over $S$ 
and $\alpha \colon \cA\rightarrow\cB$ a morphism of abelian schemes over
$S$. 
Then 
\begin{equation*}
\widetilde \rho_\cB(h)(\alpha|_{\an{\cA}_{s}}) = 
(\alpha|_{\an{\cA}_{s}})\widetilde \rho_\cA(h)
\end{equation*}
for all $h\in \pi_1(\an{S},s)$ 
\end{enumerate}
\end{proposition}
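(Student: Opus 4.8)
The plan is to realize $\widetilde\rho$ as the monodromy of the flat $\IT^{2g}$-bundle underlying $\cA\rightarrow S$, expressed in Betti coordinates, and then to read off (i) and (ii) by transporting Betti fibres around loops. Keep the locally finite cover of $\an{S}$ by Betti charts $(\unitdisc_i,b_i)$ set up before the proposition, all members and their nonempty finite intersections being path-connected; by Remark~\ref{RemarkEssUniquenessOfBettiMap} the transition maps $b_j\circ b_i^{-1}$ are constant elements of $\gl{2g}{\IZ}$ on overlaps. Given $h\in\pi_1(\an{S},s)$ represented by a loop $\gamma$, subdivide $[0,1]$ as in the paragraph before the proposition so that $\gamma([a_{i-1},a_i])\subset\unitdisc_i$, with the first and last chart being one fixed chart $\unitdisc_1\ni s$, and glue the Betti maps: replacing $b_{i+1}$ by $M_i\circ b_{i+1}$ for suitable $M_i\in\gl{2g}{\IZ}$ makes $b_i=b_{i+1}$ on all of $\cA_{\unitdisc_i\cap\unitdisc_{i+1}}$, not merely over $s_i$, precisely because the transition maps are locally constant. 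Set $\widetilde\rho(h)=(b_n|_{\an{\cA}_s})^{-1}\circ(b_1|_{\an{\cA}_s})\colon\an{\cA}_s\rightarrow\an{\cA}_s$. Since each $b_i$ is fibrewise a group isomorphism (Proposition~\ref{PropositionBetti}(i)), this is a homeomorphism and a group homomorphism, so it lies in the target of \eqref{eq:monoaction}. Finally, a continuous group automorphism of the real torus $\an{\cA}_s\cong\IT^{2g}$ is determined by the matrix it induces on $H_1(\an{\cA}_s,\IZ)$; for $\widetilde\rho(h)$ this matrix is, up to the standard convention, the one used to define $\rho$ in \eqref{eq:monodromyrep}. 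Hence $\widetilde\rho(h)$ depends only on $h$ (by the homotopy invariance already recorded in the excerpt), $h\mapsto\widetilde\rho(h)$ is a homomorphism, and $\widetilde\rho(h)_*=\rho(h)$, which is \eqref{eq:functoriality}.

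For (i) take $\unitdisc=\unitdisc_1$ and $b=b_1$. Let $P\in\an{Y}$ with $\pi(P)=s$ be non-isolated in $b_1|_{\an{Y}\cap\cA_{\unitdisc_1}}^{-1}(b_1(P))$. The Betti fibre $F_1:=b_1^{-1}(b_1(P))$ is a complex analytic subset of $\cA_{\unitdisc_1}$ (Proposition~\ref{PropositionBetti}(ii)) mapping homeomorphically onto $\unitdisc_1$ under $\pi$ (Proposition~\ref{PropositionBetti}(iii)), hence a connected complex curve; as $\an{Y}\cap F_1$ is a closed analytic subset of $F_1$ with a non-isolated point, it is all of $F_1$, so $F_1\subset\an{Y}$. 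Now propagate along $\gamma$: writing $P_1$ for the point of $\an{\cA}_{s_1}$ with $b_1(P_1)=b_1(P)$, the fibre $F_2:=b_2^{-1}(b_2(P_1))$ agrees with $F_1$ over $\unitdisc_1\cap\unitdisc_2$ (since $b_1=b_2$ there after gluing), so $P_1\in F_1\cap\an{Y}$ is non-isolated in $F_2\cap\an{Y}$, whence $F_2\subset\an{Y}$ by the same argument; iterating through $\unitdisc_2,\dots,\unitdisc_n$ yields $F_n\subset\an{Y}$, and the point of $F_n$ over $s$ is $(b_n|_{\an{\cA}_s})^{-1}(b_1(P))=\widetilde\rho(h)(P)$ because the Betti coordinate is preserved along the chain. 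Thus $\widetilde\rho(h)(P)\in\an{Y}$ for all $h$. If moreover $P$ has order $N$, then $b_1(P)$ has order $N$ in $\IT^{2g}$, so every point of $F_1$ has order $N$ (each $b_1|_{\an{\cA}_{s'}}$ is a group isomorphism), i.e. $F_1\subset\cA[N](\IC)$; with $F_1\subset\an{Y}$, $P\in F_1$ and $\dim F_1=1$ this gives $\dim_P Y\cap\cA[N]\ge 1$.

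For (ii), since $b$ is built from a period-lattice basis, a morphism $\alpha\colon\cA\rightarrow\cB$ of abelian schemes over $S$ induces a flat morphism of the associated homology local systems; hence in any pair of Betti charts $(\unitdisc_i,b_i^\cA)$, $(\unitdisc_i,b_i^\cB)$ the map $\alpha$ reads $(\xi,s')\mapsto(\alpha_{*,i}(\xi),s')$ with $\alpha_{*,i}\colon\IT^{2g_\cA}\rightarrow\IT^{2g_\cB}$ independent of $s'\in\unitdisc_i$; in particular $\alpha$ carries Betti fibres of $\cA_{\unitdisc_i}$ into Betti fibres of $\cB_{\unitdisc_i}$. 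Running the transport of (i) in parallel for $\cA$ and $\cB$, with the $\alpha_{*,i}$ intertwining the two gluings by functoriality of $\alpha_*$, gives $\widetilde\rho_\cB(h)(\alpha|_{\an{\cA}_s})=(\alpha|_{\an{\cA}_s})\widetilde\rho_\cA(h)$. I expect the main obstacle to be part (i): upgrading the purely analytic non-isolation hypothesis at the single point $P$ to a statement valid all along the loop. This works only because a Betti fibre is a connected complex curve — so "non-isolated" forces "the whole fibre" — and because, after the gluing, the Betti maps of consecutive charts agree on the entire overlap, which lets leaf-containment propagate chart by chart; the rest is bookkeeping with the homology representation.
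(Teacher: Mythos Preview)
Your proof is correct and follows essentially the paper's approach: construct $\widetilde\rho$ by gluing Betti charts along a loop, then use that each Betti fibre is a connected one-dimensional complex analytic set (the paper invokes the Identity Lemma from \cite{CAS} explicitly, after noting the smooth locus is connected, where you argue more tersely via the homeomorphism to $\unitdisc$) to propagate containment in $\an{Y}$ chart by chart. For (ii) the paper takes a slightly different tack---checking that both compositions induce the same map on $H_1$ and using that a continuous group endomorphism of a real torus is determined by its action on homology---but this is the same equivariance principle underlying your Betti-coordinate argument.
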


Although the Betti map $b$ in
 Proposition \ref{PropositionBetti} is not uniquely determined, Remark~\ref{RemarkEssUniquenessOfBettiMap} 
implies that the non-isolation condition in the hypothesis above is
 independent of any choice of $b$. 

Before we come to the proof we will patch together the Betti maps and extract global
information.

Suppose $i\in \{1,\ldots,n-1\}$ and set $\unitdisc=\unitdisc_i\cap \unitdisc_{i+1}\ni\gamma(a_i)$. We consider the two real bi-analytic maps
\begin{equation*}
  b^*_i |_{\cA_\unitdisc}\text{ and }b^*_{i+1} |_{\cA_\unitdisc} \colon \cA_\unitdisc\rightarrow\IT^{2g}\times\unitdisc
\end{equation*}
where the star signifies passing to the product as in
Proposition \ref{PropositionBetti}(iii). By composing we obtain
\begin{equation}
\label{eq:T2gfiberedXi}
  b^*_{i+1} |_{\cA_\unitdisc}\circ (b^*_{i}|_{\cA_\unitdisc})^{-1} \colon \IT^{2g}\times \unitdisc\rightarrow\IT^{2g}\times\unitdisc
\end{equation}
which is, over each fiber of $\unitdisc$, a continuous group isomorphism $\IT^{2g}\rightarrow\IT^{2g}$. By construction it is the identity over $\gamma(a_i)\in\unitdisc$. Each continuous group isomorphism of $\IT^{2g}$ is represented by a matrix in $\gl{2g}{\IZ}$. By homotopy, \eqref{eq:T2gfiberedXi} is the identity above all points in the path component of $\unitdisc$ containing $\gamma(a_i)$. But $\unitdisc$ is path connected by construction, and therefore $b_i |_{\cA_\unitdisc}=b_{i+1} |_{\cA_\unitdisc}$ for all $i\in \{1,\ldots,n-1\}$.

\begin{proof}[Proof of Proposition \ref{prop:globalinfo}]
Let $\bp,Y,$ and $P$ be as in the hypothesis. 
We abbreviate
$Y_{\unitdisc_1} = \an{Y}\cap\cA_{\unitdisc_1}$, it is a complex analytic
space.

We will  transport $P$ in $\an{\cA}$ above along a loop
$\gamma$ in $\an{S}$ based at $\bp$ and keep the  Betti coordinates fixed.
After completing  the loop we will have returned to the fiber
$\cA_{\bp}$. But $P$ will have transformed according to the monodromy
representation \eqref{eq:monodromyrep}. 
The degeneracy condition imposed on $P$ implies that this new point lies
again in $Y$. This is guaranteed by the fact that the Betti fibers are
complex analytic, see (ii) of Proposition~\ref{PropositionBetti} and
our hypothesis $\dim S = 1$. 

Let us check the details.
We set $P_0=P$ and  $\xi = b_1(P_0)$ and define
\begin{equation*}
  Z_1 = b_1^{-1}(\xi).
\end{equation*}
So $Z_1$ is a complex analytic 
 subset of the complex analytic space $\cA_{\unitdisc_1}$ by (ii) of Proposition~\ref{PropositionBetti}. Therefore, $Z_1 \cap Y_{\unitdisc_1}$ is complex analytic in $Y_{\unitdisc_1}$.
As $P_0$ is not isolated in $Z_1\cap Y_{\unitdisc_1}$, we find
  $\dim_{P_0} Z_1 \cap Y_{\unitdisc_1} \ge 1$, see \cite[Chapter~5]{CAS} for the
dimension theory of complex analytic spaces. 

If $P=P_0$ happens to be a point of finite order $N$ in $\cA_{\pi(P)}(\IC)$,
then all points of $Z_1$  have order $N$ in their respective
fibers as $\beta$ is fiberwise a group isomorphism. 
From the degeneracy of $P$  we conclude $\dim_{P} Y\cap\cA[N]\ge 1$ and this yields the
second claim of (i).

The natural projection $Z_1 \mapsto \unitdisc_1$ is holomorphic
and a homeomorphism.
So $\dim_Q Z_1 \le \dim_{\pi(Q)}\unitdisc_1 = 1$ for all $Q\in Z_1$. So we conclude 
  $\dim_{P_0} Z_1 \cap Y_{\unitdisc_1} = \dim_{P_0} Z_1 = 1$ and $\dim Z_1 = 1$.
The singular points of $Z_1$ are isolated in $Z_1$, see \cite[Chapter~6, \S
2.2]{CAS}. Since $Z_1$ is homeomorphic to $\unitdisc_1$  and
the latter is homeomorphic to the open unit disc we conclude that the
smooth locus of $Z_1$ is path connected. Therefore, we can apply the
Identity Lemma \cite[Chapter 9, \S 1.1]{CAS} to conclude that 
$Z_1\cap Y_{\unitdisc_1} = Z_1$, hence
\begin{equation*}
  Z_1\subset Y_{\unitdisc_1}. 
\end{equation*}
In particular, the point $P_1 = b_1^{-1}(\xi,\gamma(a_1))\in Z_1$ also lies in
$Y_{\unitdisc_1}$.

Observe that we used 
the fact that $\an{S}$ is a curve  in a crucial
way. 
Indeed, for higher dimensional $S$ we cannot exclude $\dim Z_1\cap
Y_{\unitdisc_1} < \dim Z_1$ in the paragraph above. This makes applying the Identity Lemma
impossible. 

We have reached $\gamma(a_1)$ and will continue on 
 the circuit along $\gamma$.
However,  by construction $b^*_1$ and $b^*_2$ agree on $\cA_{s_1}^{\anE}$ where $s_1 =
\gamma(a_1)$. They also agree on $\cA_s^{\anE}$ for all $s$ sufficiently
close to $s_1$. 
Let $t_1,t_2,\ldots$ be a sequence of elements in $[0,a_1]$ with limit
$a_1$. Then ${b^*_1}^{-1} (\xi,\gamma(t_k))$ converges to $P_1$ as $k\rightarrow\infty$. For $k$
sufficiently large we have $\gamma(t_k)\in \unitdisc_2$ and therefore
${b^*_1}^{-1}(\xi,\gamma(t_k)) = {b^*_2}^{-1}(\xi,\gamma(t_k))$.
So $P_1\in \pi_1^{-1}(\unitdisc_1\cap \unitdisc_2)$
 is not isolated in the fiber of $b_2 :
 \cA_{\unitdisc_2}\rightarrow\IT^{2g}$ restricted to $Y_{\unitdisc_2}$ above $\xi$. 

Now we repeat the process and transport $P_1$ along
$\gamma([a_1,a_2])$ to obtain $P_2\in Y_{\unitdisc_2}$ with $\pi(P_2) =
\gamma(a_2)$ that is not isolated
in $b_3|_{Y_{\unitdisc_3}}$. Eventually, we will have returned to the fiber $\cA_{\bp}$.
The final point lies in $Y_{\bp}^{\anE}$ and it is obtained from
$P_0 \in Y^{\anE}$ by a continuous group automorphism of
$\cA_{\bp}^{\anE}$ that depends on the homotopy class of $\gamma$
relative to $\{0,1\}$. More precisely, by construction the final point
is $\widetilde\rho([\gamma])(P_0)$ where
\begin{equation*}
  \widetilde\rho \colon \pi_1(\an{S},\bp) \rightarrow \{ \text{homeomorphisms
} \an{\cA}_{\bp}\rightarrow \an{\cA}_{\bp}\text{ that are group homomorphisms}\}
\end{equation*}
is a group homomorphism that is compatible with the monodromy representation \eqref{eq:monodromyrep}, indeed
\begin{equation*}
  \widetilde \rho(h)_* = \rho(h)\quad\text{for all}\quad
h\in \pi_1(\an{S},\bp)
\end{equation*}
and part (i) follows. 

The proof of  (ii) relies on (\ref{eq:functoriality}) and 
some  basic functoriality. Let $\bp\in \an{S}$. 
 A homomorphism
$\alpha:\cA\rightarrow\cB$ of abelian schemes over $S$ induces  a
group homomorphism $(\alpha|_{\an{\cA}_\bp})_*:H_1(\an{\cA}_\bp,\IZ)\rightarrow
H_1(\an{\cB}_{\bp},\IZ)$. Moreover, this group homomorphism
 is equivariant with respect to the action
of $\pi_1(\an{S},\bp)$ on both homology groups. 
By abuse of notation let $\widetilde \rho$ denote the continuous action of
$\pi_1(\an{S},\bp)$ on $\cA_{\bp}$ and $\cB_{\bp}$
and $\rho$ the induced action on homology. 
We find 
\begin{equation*}
\left(\widetilde \rho(h)\alpha|_{\cA_{\bp}^{\anE}}\right)_*
= \rho(h)(\alpha|_{\cA_{\bp}^{\anE}})_*
= (\alpha|_{\cA_{\bp}^{\anE}})_*\rho(h)
= (\alpha|_{\cA_{\bp}^{\anE}}\widetilde \rho(h))_*
\end{equation*}
for all $h\in \pi_1(\an{S},\bp)$, the first and third
equality follow from (\ref{eq:functoriality}), the second one follows since the monodromy action commutes
with homomorphisms of abelian varieties.
Both self-maps $\widetilde \rho(h)\alpha|_{\an{\cA}_{\bp}}$
and $\alpha|_{\an{\cA}_{\bp}}\widetilde \rho(h)$ 
are continuous group endomorphisms of $\an{\cA}_\bp$,
which is homeomorphic to $\IT^{2g}$. 
As their induced maps on homology coincide, they must coincide as well.
\end{proof}

\subsection{Monodromy on Abelian Schemes}

Let $S$ be an irreducible and smooth curve over $\IC$ and
let $\cA$ be an abelian scheme over $S$
of relative dimension $g\ge 1$. 
We write $\overline{\IC(S)}$ for an algebraic closure
of the function field $\IC(S)$  of $S$. 


For a base point $\bp\in S(\IC)$ the monodromy representation is (\ref{eq:monodromyrep}).
Let $G_s$ denote the  Zariski closure of $\Gamma_s = \rho(\pi_1(\an{S},s))$ in 
$\autS_\IQ{H_1(\an{\cA}_s,\IQ)}$ and let $G^0_s$ be its connected
component containing the unit element.
Deligne proved in \cite[Corollaire~4.2.9]{Deligne:Hodge2}
that
$G^0_s$ is a semisimple algebraic group. 

The next lemma uses  a Theorem of  Tits
 connected to his famous ``alternative''.


\begin{lemma}
\label{lem:expgrowth}
In the notation above suppose that $G^0_s$ is not trivial, 
then any finite index subgroup of  $\Gamma_s$ has 
a free subgroup on $2$ generators. 
\end{lemma}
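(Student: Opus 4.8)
The plan is to invoke the Tits alternative and combine it with Deligne's semisimplicity theorem cited above. Let $\Gamma'$ be a subgroup of finite index in $\Gamma_s$. Since $S$ is a smooth algebraic curve, $\pi_1(\an{S},s)$ is finitely generated, hence so are $\Gamma_s=\rho(\pi_1(\an{S},s))$ and, being of finite index in $\Gamma_s$, also $\Gamma'$. Thus $\Gamma'$ is a finitely generated subgroup of the linear group $\autS_\IQ{H_1(\an{\cA}_s,\IQ)}\cong\gl{2g}{\IQ}$ over a field of characteristic $0$, so by Tits' Theorem \cite{tits:freesubgroups} either $\Gamma'$ contains a free subgroup on $2$ generators, in which case we are done, or $\Gamma'$ is virtually solvable. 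I will show that the second alternative contradicts the hypothesis $G^0_s\neq 1$.

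Assume then that $\Gamma'$ has a solvable subgroup $\Delta$ of finite index. Since $\Gamma'$ has finite index in $\Gamma_s$, so does $\Delta$. Its Zariski closure $\overline{\Delta}$ in $\autS_\IQ{H_1(\an{\cA}_s,\IQ)}$ is a solvable algebraic group, and it has finite index in $G_s=\overline{\Gamma_s}$. A closed subgroup of finite index in an algebraic group is open, hence contains the connected component of the identity; therefore $G^0_s\subset\overline{\Delta}$, and so $G^0_s$ is a connected solvable algebraic group.

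On the other hand, Deligne \cite[Corollaire~4.2.9]{Deligne:Hodge2} proved that $G^0_s$ is semisimple, and a connected algebraic group that is both semisimple and solvable is trivial. This contradicts $G^0_s\neq 1$, so the second alternative cannot occur and $\Gamma'$ contains a free subgroup on $2$ generators.

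Once the two external ingredients — the Tits alternative and Deligne's theorem — are in hand, the argument is essentially bookkeeping; the only point that will require a little care is the passage from ``$\Gamma'$ virtually solvable'' to ``$G^0_s$ solvable'', for which one uses that the Zariski closure of a solvable group is solvable together with the fact that a finite-index closed subgroup of an algebraic group contains the identity component.
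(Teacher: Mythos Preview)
Your proof is correct and follows essentially the same approach as the paper, combining Deligne's semisimplicity of $G_s^0$ with Tits' alternative. The paper's version is marginally more direct: it observes that $\Gamma'\cap G_s^0(\IQ)$ is Zariski dense in the connected semisimple (hence non-solvable) group $G_s^0$ and applies \cite[Theorem~3]{tits:freesubgroups} straight to that pair, bypassing the contrapositive through the virtually-solvable branch and the finite-generation remark (which, in characteristic~$0$, Tits' theorem does not require anyway).
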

\begin{proof}
Let $\Gamma'$ be a finite index subgroup of $\Gamma_s$.
As $G^0_s$ is of finite index in $G_s$
we see that   $\Gamma'\cap G_s^0(\IQ)$
 lies Zariski dense in $G_s^0$.
Our lemma follows from \cite[Theorem~3]{tits:freesubgroups}
applied to $G_s^0$ and 
$\Gamma'\cap G_s^0(\IQ)$. 
\end{proof}

Certainly, $G_s$ and $G^0_s$ etc. depend on $s$. 
However their isomorphism classes do not
and  the index $[G_s: G^0_s]$ is independent of $s\in \an{S}$, see
the comments  before Zarhin's \cite[Theorem~3.3]{Zarhin:08}.

\begin{lemma}
\label{lem:zerotrace}
Let $A$ be the generic fiber of
$\cA\rightarrow S$, it is an abelian variety over $\IC(S)$. 
If $s\in S(\IC)$ and $H_1(\an{\cA}_s,\IZ)$ has a non-zero
element that is invariant under the monodromy action
(\ref{eq:monodromyrep}), then the $\IC(S)/\IC$-trace of $A$ is non-zero.
\end{lemma}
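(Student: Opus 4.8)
The plan is to derive this from Deligne's Theorem of the Fixed Part. Write $\mathbb{V}$ for the $\IQ$-local system on $\an{S}$ whose fibre at $s'$ is $H_1(\an{\cA}_{s'},\IQ)$; it underlies the polarizable variation of Hodge structure of weight $-1$ attached to $\cA\rightarrow S$, and its monodromy representation is the one in \eqref{eq:monodromyrep} after tensoring with $\IQ$. By hypothesis there is a non-zero element of $H_1(\an{\cA}_s,\IZ)\subset H_1(\an{\cA}_s,\IQ)$ fixed by $\rho(\pi_1(\an{S},s))$, so the $\IQ$-subspace $V=H_1(\an{\cA}_s,\IQ)^{\rho(\pi_1(\an{S},s))}$ is non-zero.

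First I would apply Deligne's Theorem of the Fixed Part \cite{Deligne:Hodge2}, using a smooth projective compactification $\overline{S}$ of $S$: the invariant subspace $V$ is a sub-Hodge structure of $H_1(\an{\cA}_s,\IQ)$, and the associated sub-local system $\underline{V}\subset\mathbb{V}$ --- which is \emph{constant}, since $\rho$ acts trivially on $V$ --- is a sub-variation of Hodge structure of $\mathbb{V}$. As a sub-object of a polarizable variation it is polarizable, and as a non-zero sub-Hodge structure of a weight $-1$ structure of type $\{(-1,0),(0,-1)\}$ it is again of this type (a non-zero polarizable Hodge structure of weight $-1$ must have both Hodge numbers positive). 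Since $\underline{V}$ has trivial monodromy, its Hodge filtration is flat, so $\underline{V}$ is the pull-back to $S$ of the Hodge structure on a single fibre; by Riemann's theorem this Hodge structure is $H_1(\an{B_0},\IQ)$ for an abelian variety $B_0$ over $\IC$, and $\underline{V}$ is the variation attached to the constant abelian scheme $B_0\times_\IC S\rightarrow S$.

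Next I would promote the inclusion $\underline{V}\hookrightarrow\mathbb{V}$, which is a morphism of polarizable variations of Hodge structure of type $\{(-1,0),(0,-1)\}$ over $S$, to a morphism of abelian schemes: by the relative form of Riemann's theorem --- the functor $\cB\mapsto R_1\pi_{\cB,*}\IZ$ from abelian schemes over $S$ to such variations is fully faithful up to isogeny, full faithfulness again resting on the Theorem of the Fixed Part applied to the internal Hom --- this inclusion is induced by a non-zero homomorphism of abelian schemes up to isogeny $B_0\times_\IC S\rightarrow\cA$ over $S$. Multiplying by a suitable positive integer makes it an honest non-zero homomorphism; restricting to the generic fibre yields a non-zero homomorphism $B_0\otimes_\IC\IC(S)\rightarrow A$ of abelian varieties over $\IC(S)$. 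By the universal property of the $\IC(S)/\IC$-trace \cite[\S6]{Conrad} it factors through $\tau_{A,\IC(S)/\IC}\colon A^{\IC(S)/\IC}\otimes_\IC\IC(S)\rightarrow A$, whence $A^{\IC(S)/\IC}\neq 0$, as desired.

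I expect the delicate point to be the passage from ``constant sub-variation of Hodge structure of $\mathbb{V}$'' to ``constant abelian subscheme of $\cA$ up to isogeny'': this requires invoking the Theorem of the Fixed Part twice --- once to see that the invariants form a sub-variation, and once (through full faithfulness of $\cB\mapsto R_1\pi_{\cB,*}\IZ$) to realise the resulting morphism of variations by a morphism of abelian schemes --- and care that it is applied over the open curve $S$ via its compactification $\overline{S}$. Once this dictionary is in place, descending to the generic fibre and appealing to the universal property of the trace is purely formal; one should also record that \eqref{eq:monodromyrep} is the standard monodromy of $R_1\pi_*\IZ$, which is already built into the construction in \S\ref{SectionBetti}.
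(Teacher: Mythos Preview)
Your proposal is correct and follows essentially the same route as the paper: both apply Deligne's Theorem of the Fixed Part to see that the monodromy invariants carry a sub-Hodge structure, realise this as $H_1$ of an abelian variety $B$ over $\IC$, and then promote the resulting inclusion to a non-zero morphism from the constant abelian scheme $B\times_\IC S$ to $\cA$, whose generic fibre witnesses the non-vanishing of the trace. The only packaging difference is that the paper cites Grothendieck's theorem \cite{grothendieck:absch} (equivalently \cite[4.1.3.2]{Deligne:Hodge2}) directly for this last extension step, whereas you phrase it as full faithfulness of $\cB\mapsto R_1\pi_{\cB,*}\IZ$ up to isogeny --- but these are the same statement.
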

\begin{proof}
We write 
$H_1(\an{\cA}_s,\IZ)^{\rho}$
for the elements in $H_1(\an{\cA}_s,\IZ)$ that are invariant under
(\ref{eq:monodromyrep}). 
A conclusion of  Deligne's Theorem of the Fixed Part, see
\cite[Corollaire~4.1.2]{Deligne:Hodge2}, 
implies that
the weight $-1$ Hodge structure on 
$H_1(\an{\cA}_s,\IZ)$ restricts to a Hodge structure  on
$H_1(\an{\cA}_s,\IZ)^{\rho}$.

It is well-known that Hodge substructures of 
$H_1(\an{\cA}_s,\IZ)$ come from abelian subvarieties of $\cA_s$. 
Hence
$H_1(\an{\cA}_s,\IZ)^{\rho}$ 
gives rise to an abelian subvariety
$B \subset \cA_s$ of dimension $\frac 12 \mathrm{Rank}\,
H_1(\an{\cA}_s,\IZ)^{\rho}$. 
As $H_1(\an{\cA}_s,\IZ)^{\rho}\not=0$ by hypothesis  we have $\dim B\ge 1$. 

Then $\cB = B\times_{\spec{(\IC)}} S$ is a constant abelian scheme over $S$. 
The monodromy representation
$\pi_1(\an{S},s) \rightarrow \aut{H_1(\cB_s^{\anE},\IZ)}$ is
certainly trivial. The inclusion $\cB_s \rightarrow\cA_s$ induces a
 homomorphism
$H_1(\cB_s^{\anE},\IZ)\rightarrow H_1(\cA_s^{\anE},\IZ)$ and
the restriction of $\rho$ from \eqref{eq:monodromyrep} to the image of this homomorphism 
 is trivial.
A theorem of Grothendieck \cite{grothendieck:absch} implies, that any element in 
\begin{equation*}
\Hom{\cB_s,\cA_s}\cap \Hom{H_1(\cB_s^{\anE},\IZ),H_1(\cA_s^{\anE},\IZ)}
\end{equation*}
is induced by the restriction of a morphism $\varphi
\colon  \cB \rightarrow \cA$ over $S$
to $\cB_s$ such that $\varphi\circ 0_{\cB} = 0_{\cA}$
where $0_{\cA}\colon S\rightarrow \cA$ and $0_{\cB}\colon S\rightarrow\cB$ are the
zero sections. See also \cite[4.1.3.2]{Deligne:Hodge2}.

The  restriction of $\varphi$ to the generic fiber of $\cB$ 
is a homomorphism
$B\otimes_{\IC} \IC(S) \rightarrow \cA\times_{S} \spec{\IC(S)}=A$
of abelian varieties over $\IC(S)$. 
If the $\IC(S)/\IC$-trace of 
$A$ is trivial, then the said  homomorphism is trivial.
In this case, the morphism $\varphi$ and the zero section both extend
$B\otimes_{{\IC}} {\IC(S)} \rightarrow A$
to a morphism $\cB\rightarrow \cA$.
As the generic fiber lies Zariski dense in $\cA$ we find that
 $\varphi$ is the zero section. But then $B$ must be
trivial and this is a contradiction. 
\end{proof}


For us, an abelian subscheme of $\cA$ is the image of an
endomorphism of $\cA$.
We call $s \in S(\IC)$ \emph{extendable} for $\cA$ if any abelian
subvariety $B_s \in \cA_s$ extends to an abelian subscheme $\cB$ of
$\cA$, \textit{i.e.} there exists an abelian subscheme $\cB$ of $\cA$
such that $\cB \cap \cA_s = B_s$.

For readers who are familiar with Hodge theory, extendable points of $S$ are closely related to Hodge generic points. We shall not go into details and but state the following corollary of a result of Deligne for our purpose.
 \begin{lemma}
\label{lem:endgeneric}
In the notation above suppose $G^0_s = G_s$ for some $s\in S(\IC)$.
There is an at most countable infinite subset
of $S(\IC)$ whose complement consists only of 
extendable points of $\cA$. 
\end{lemma}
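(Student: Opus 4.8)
The plan is to match the non-extendable points of $\cA$ against a Hodge-theoretic exceptional locus and then quote the standard algebraicity/countability statements for such loci. Write $\mathbb{V}$ for the polarizable $\IZ$-variation of Hodge structure of weight $-1$ on $S$ with fibres $H_1(\an{\cA_s},\IZ)$, fix a polarization $\phi$ of $\mathbb{V}$ (one exists, since $\cA/S$ carries a relatively ample line bundle as in \S\ref{SubsectionEmbeddingAbSch}, or after a harmless shrinking of $S$), and let $\mathbf{H}\subseteq\mathrm{GL}(H_1(\an{\cA_s},\IQ))$ be the generic Mumford--Tate group of the family; call $u\in S(\IC)$ \emph{Hodge--generic} if the Mumford--Tate group of $\cA_u$ equals $\mathbf{H}$. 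First I would recall that the condition on a given flat integral class of $\mathbb{V}$ (or of $\mathrm{End}\,\mathbb{V}$) to be of Hodge type is closed analytic; on a contractible chart it therefore holds either identically or on a finite set, and there are countably many such classes and countably many charts covering $\an{S}$. Hence (using only this analytic statement, $\dim S=1$; or the theorem of Cattani, Deligne, and Kaplan in general) the complement $\Xi$ of the Hodge--generic locus is an at most countable subset of $S(\IC)$, which we may pad to a countably infinite one since $S(\IC)$ is infinite. It thus suffices to show that every Hodge--generic $u$ is extendable.

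So fix a Hodge--generic $u$ and an abelian subvariety $B_u\subseteq\cA_u$. The homology $W=H_1(B_u,\IQ)\subseteq H_1(\an{\cA_u},\IQ)$ is a sub-Hodge structure, hence a submodule for $MT(\cA_u)=\mathbf{H}$. By a theorem of Deligne (see \cite[\S4.2]{Deligne:Hodge2}) the connected monodromy group $G^0_u$ is contained in $\mathbf{H}$; by the hypothesis $G_s=G^0_s$ together with the fact that $[G_v:G^0_v]$ is independent of $v$ (see the remarks before \cite[Theorem~3.3]{Zarhin:08}) we get $G_u=G^0_u$; and since $\Gamma_u=\rho(\pi_1(\an{S},u))$ is Zariski dense in $G_u\subseteq\mathbf{H}$, the subspace $W$ is $\Gamma_u$-invariant, i.e.\ invariant under the monodromy representation \eqref{eq:monodromyrep}. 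Therefore $W$ extends to a flat local subsystem $\mathbb{W}\subseteq\mathbb{V}_\IQ$; its $\phi$-orthogonal complement $\mathbb{W}^{\perp}$ is again flat, and since at $u$ one has $W\cap W^{\perp}=0$ and $W+W^{\perp}=H_1(\an{\cA_u},\IQ)$ by the Hodge--Riemann relations, the equality $\mathbb{V}_\IQ=\mathbb{W}\oplus\mathbb{W}^{\perp}$ holds as local systems. The associated projector $e$ is a flat idempotent endomorphism of $\mathbb{V}_\IQ$ that restricts at $u$ to a morphism of Hodge structures, so by Deligne's Theorem of the Fixed Part \cite[Corollaire~4.1.2]{Deligne:Hodge2} $e$ is a morphism of variations of Hodge structure.

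It remains to turn $e$ into an honest abelian subscheme. Since $e$ is a flat Hodge endomorphism of $\mathbb{V}_\IQ$, Grothendieck's theorem \cite{grothendieck:absch} (used already in the proof of Lemma~\ref{lem:zerotrace}) shows that for a suitable $N\in\IN$ the endomorphism $Ne$ of $\mathbb{V}_\IZ$ is induced by an endomorphism $\psi\colon\cA\rightarrow\cA$ of abelian schemes over $S$. Let $\cB=\psi(\cA)$ be the scheme-theoretic image; as the image of an endomorphism of $\cA$ it is an abelian subscheme in the sense of this paper, and it is flat over $S$ because $\dim\psi(\cA_v)=\tfrac12\dim_\IQ W$ is independent of $v$. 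Finally, writing $\an{\cA_u}=\mathrm{Lie}(\cA_u)/H_1(\an{\cA_u},\IZ)$ one computes $\cB_u(\IC)=(Ne)(\mathrm{Lie}(\cA_u))/\bigl((Ne)(\mathrm{Lie}(\cA_u))\cap H_1(\an{\cA_u},\IZ)\bigr)=(W\otimes_\IQ\IR)/(W\cap H_1(\an{\cA_u},\IZ))=B_u(\IC)$, so that $\cB\cap\cA_u=B_u$. Hence $u$ is extendable and the lemma follows.

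The main obstacle I expect lies not in this deduction but in pinning down the precise ``result of Deligne'' one quotes: one needs both that the Mumford--Tate exceptional locus is a (countable union of a) proper analytic subvariety of $S$ and the inclusion $G^0_u\subseteq\mathbf{H}$, the latter being exactly what makes a Hodge substructure at a Hodge-generic point automatically monodromy invariant. This is where the hypothesis $G_s=G^0_s$ is genuinely used: without it an abelian subvariety of $\cA_u$ may only extend after a finite base change (the component group of monodromy can permute the blocks). A secondary point, handled cleanly by the complex-torus computation above, is that the image of $\psi$ meets the fibre over $u$ in $B_u$ exactly and not merely up to isogeny, because the image of a homomorphism of complex tori automatically carries the saturated period lattice.
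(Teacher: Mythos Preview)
Your proof is correct and essentially unpacks the reference \cite[Corollary~3.5]{Zarhin:08} that the paper simply cites: both arguments identify the exceptional set with the non--Hodge-generic locus and then extend abelian subvarieties at Hodge-generic points via Grothendieck's theorem, the paper by quoting Zarhin's statement that every $\alpha_s\in\en{\cA_s}$ has a multiple extending to $\en{\cA}$, you by building the projector onto $H_1(B_u,\IQ)$ directly from the polarization and invoking the Theorem of the Fixed Part. The only difference is one of presentation rather than content; your route is more self-contained and makes explicit where the hypothesis $G_s=G_s^0$ enters (to pass from $G^0_u\subset\mathbf{H}$ to $\Gamma_u\subset\mathbf{H}(\IQ)$), while the paper hides this inside the citation.
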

\begin{proof}
We refer to \cite[Corollary~3.5 and the preceding comments]{Zarhin:08} for this result. In fact in the reference, it is pointed out that the extendable points are precisely the Hodge generic points under this mild assumption ($G^0_s = G_s$ for some $s\in S(\IC)$).

More precisely \cite[Corollary~3.5 and the preceding
comments]{Zarhin:08} says that any $s \in S(\IC)$ outside an at most
countably infinite set $\Sigma$ satisfies the following property: For
any $\alpha_s\in \en{\cA_s}$  there exists $n\in \IN$ such that
$n\alpha_s$ is the restriction of an endomorphism of $\cA$.
 Now for any $s \in S(\IC)\setminus \Sigma$, any abelian subvariety $B_s$ of $\cA_s$ is the image of some $\alpha_s \in \en{\cA_s}$. There exists $n \in \IN$ such that $n \alpha_s$ is the restriction of an element $\alpha \in \en{\cA}$. And then we can take $\cB$ to be the image of $\alpha$.
\end{proof}



Let $Y$ be an irreducible closed subvariety of $\cA$ that dominates
$S$. Then $Y$ is flat  over $S$ by  \cite[Proposition III.9.7]{Hartshorne}.
 We write $Y_s$ for the fiber of
 $Y\rightarrow S$ above $s$ with the reduced induced  structure.
By \cite[Corollary III.9.6]{Hartshorne} we see that  $Y_s$ is equidimensional of dimension
$\dim Y-1$. 

We say that $Y$ is \emph{virtually monodromy invariant} above $s\in
S(\IC)$ if there exists an irreducible component $Z$ of $Y_s$ and a
subgroup $G \subset \pi_1(\an{S},s)$ of finite index such that
\begin{equation*}
\widetilde\rho(\gamma) (Z(\IC)) = Z(\IC)\quad\text{for all}\quad
\gamma\in G
\end{equation*}
for the representation $\widetilde\rho$ defined in 
Proposition \ref{prop:globalinfo}.

\begin{lemma}
\label{lem:degenerate}
In the notation above we suppose $Y$ is an irreducible closed
subvariety of $\cA$ that dominates $S$. 
We assume  that there is an uncountable set
$M\subset S(\IC)$ satisfying all of the following properties:
\begin{enumerate}
\item [(i)]
for all irreducible $S'$
that are finite and \'etale over $S$ the  generic fiber of
$\cA\times_S S'\rightarrow S'$ has trivial $\IC(S')/\IC$-trace,  
\item[(ii)] all elements in $M$ are extendable for $\cA$ (see Lemma~\ref{lem:endgeneric} and above for definition),
\item[(iii)] and the variety $Y$ is virtually monodromy invariant above all
elements in $M$. 
\end{enumerate}
 Then there exist an abelian
scheme $\cC$ over $S$ and a homomorphism $\cA\rightarrow\cC$ of
abelian schemes over $S$
whose kernel contains $Y$ and  has dimension $\dim Y$.
\end{lemma}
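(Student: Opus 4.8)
The plan is to extract, for each $s\in M$, an irreducible component $Z_s$ of $Y_s$ that is invariant under a finite-index subgroup of $\pi_1(\an{S},s)$, and to run the structure theory of \S\ref{SectionDegenerate} on $Z_s$ inside the fiber $\cA_s$. First I would fix $s\in M$ and such a $Z_s$, together with a finite-index subgroup $G\subset \pi_1(\an{S},s)$ with $\widetilde\rho(\gamma)(Z_s(\IC))=Z_s(\IC)$ for all $\gamma\in G$. Passing to the covering $S'\to S$ corresponding to $G$ (finite and \'etale), hypothesis (i) guarantees the generic fiber of $\cA\times_S S'\to S'$ still has trivial $\IC(S')/\IC$-trace. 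Now Lemma~\ref{lem:zerotrace} (applied over $S'$) says $H_1(\an{\cA}_s,\IZ)$ has no nonzero element invariant under the monodromy of $S'$; equivalently there are no $\Gamma_s'$-invariant vectors in $\IZ^{2g}\setminus\{0\}$ where $\Gamma_s'=\rho(\pi_1(\an{S'},s))$. In particular $G^0_s$ is nontrivial (a trivial identity component would force all of $H_1$ to be fixed by a finite-index subgroup, hence a nonzero fixed vector), so by Lemma~\ref{lem:expgrowth} the group $\Gamma_s'$ contains a free subgroup on two generators. The action of $\Gamma_s'$ on $\an{\cA}_s$ via $\widetilde\rho$ is of monodromy-type, because monodromy preserves the homology of every abelian subvariety, hence preserves the abelian subvarieties themselves.

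Next I would apply Proposition~\ref{prop:invariance} to the irreducible closed subvariety $Z_s\subset\cA_s$ with this group $\Gamma_s'$. Case (1) of that proposition says $Z_s$ lies in a proper torsion coset of $\cA_s$; case (2) gives an abelian subvariety $B_s\subset\cA_s$ with $\dim B_s\ge 1$ and $Z_s+B_s=Z_s$. The key point is that, since $Z_s$ is an irreducible component of the fiber $Y_s$ of a variety that \emph{dominates} $S$, its dimension is $\dim Y-1$, which I should use to rule out (or rather exploit) these alternatives. In case (2), $\dim B_s\ge \dim Z_s+1 = \dim Y$ would be false in general, so $B_s$ is a proper nonzero abelian subvariety of $\cA_s$; by hypothesis (ii), $s$ is extendable, so $B_s$ extends to an abelian subscheme $\cB\subset\cA$ over $S$. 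Here I would then want to argue that, because $M$ is uncountable while the set of abelian subschemes of $\cA$ is at most countable (Grothendieck's theorem / the semisimplicity picture, as already invoked in Lemma~\ref{lem:endgeneric}), there is a single abelian subscheme $\cB$ of $\cA$ occurring for uncountably many $s\in M$. Setting $\cC=\cA/\cB$ gives a homomorphism $\cA\to\cC$ of abelian schemes over $S$ whose kernel is $\cB$; one then checks $Y\subset\cB$ by noting $Z_s\subset \cB_s$ for uncountably many $s$, hence the image of $Y$ in $\cC$ is a subvariety dominating $S$ whose fibers over this uncountable set are points, forcing $Y\to\cC$ to be constant on the zero section, i.e.\ $Y\subset\cB$; and $\dim\cB=\dim\cB_s+1=\dim B_s+1$ together with $Z_s+B_s=Z_s$ and $\dim Z_s=\dim Y-1$ forces $\dim\cB=\dim Y$. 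The case-(1) alternative, $Z_s\subset$ proper torsion coset, must be shown to be incompatible with $Z_s$ dominating — more precisely with $Z_s$ being a full-dimensional fiber component of a dominating $Y$ — for uncountably many $s$; the translation part varies but the underlying abelian subvariety again lies in a countable set, and a torsion-coset conclusion for uncountably many fibers would, via a specialization/Lang--N\'eron argument as in \S\ref{SectionStarSetOpen}, say something about the generic fiber that one bootstraps into case (2)-type information with $\dim B_s=\dim Z_s$, in which case $\cB$ has relative dimension $\dim Y$ and $\ker(\cA\to\cC)=\cB$ already contains $Y$.

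I expect the main obstacle to be the bookkeeping needed to pass from "for each $s\in M$ there is \emph{some} component $Z_s$ and \emph{some} finite-index $G$ and \emph{some} abelian subscheme/coset" to a \emph{single} $\cB$ (equivalently $\cC$ and $\cA\to\cC$) that works simultaneously: one must invoke countability of the relevant moduli (abelian subschemes of $\cA$, components of $Y_s$, and — for the torsion coset case — the relevant finite data) against the uncountability of $M$, and then verify that the resulting $\cB$ genuinely contains $Y$ and has the right dimension, using that $Z_s+B_s=Z_s$ on uncountably many fibers and that $Y$ is irreducible and dominates $S$. The transcendence/fixed-part input (Lemmas~\ref{lem:zerotrace}, \ref{lem:expgrowth}, \ref{lem:endgeneric} and Proposition~\ref{prop:invariance}) does the geometric heavy lifting fiber by fiber; the remaining work is the uniformization-in-$s$ argument and the dimension count, which is routine but needs care.
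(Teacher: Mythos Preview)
Your setup (verifying $G_s^0\neq 1$, applying Lemma~\ref{lem:expgrowth}, checking the monodromy-type condition via extendability, invoking Proposition~\ref{prop:invariance}, and using countability of abelian subschemes against the uncountability of $M$) matches the paper. The gap is in how you conclude after the case split: you try to finish in one shot, but the paper's proof is by \emph{induction on $\dim\cA$}, and your direct argument does not go through.

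In Case~(2) you write that $Z_s+B_s=Z_s$ gives $Z_s\subset\cB_s$ and that $\dim B_s=\dim Z_s$, hence $\dim\cB=\dim Y$. Neither is correct: the stabilizer condition $Z_s+B_s=Z_s$ only says $Z_s$ is a union of $B_s$-cosets, so $\dim B_s\le\dim Z_s$ with no equality in general, and certainly not $Z_s\subset B_s$. What the paper does instead is pass to the quotient $\varphi\colon\cA\to\cA/\cB$: the image $\varphi(Y)$ is still virtually monodromy invariant above $M_2$ (Proposition~\ref{prop:globalinfo}(ii)), conditions (i)--(ii) persist for $\cA/\cB$, and since $\dim(\cA/\cB)<\dim\cA$ the induction hypothesis yields a homomorphism $\psi\colon\cA/\cB\to\cC$ with $\varphi(Y)\subset\ker\psi$ and $\dim\ker\psi=\dim\varphi(Y)$; then $\psi\circ\varphi$ does the job after a dimension count using $\dim Y=\dim\cB-1+\dim\varphi(Y)$.

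Case~(1) likewise needs induction rather than a Lang--N\'eron bootstrap. Here $Z_s\subset P_s+B_s$ with $P_s$ torsion and $B_s\subsetneq\cA_s$; after pigeonholing the torsion order $N$ and the abelian subscheme $\cB$ extending $B_s$, one has $[N]Z_s\subset\cB_s$ for uncountably many $s$, hence $[N]Y\subset\cB$. The minimal case $\dim\cA=2$ is handled directly (then $B_s=0$ and $Y\subset\ker[N]$), while for $\dim\cA\ge 3$ one applies the induction hypothesis to $[N]Y$ inside $\cB$ (noting $[N]Z_s$ is a component of $([N]Y)_s$, so virtual monodromy invariance persists) and then pulls the resulting map back to $\cA$. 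Without this inductive structure your argument does not reach a kernel of the required dimension.
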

\begin{proof}
Our proof is by induction on 
\begin{equation*}
\dim \cA.
\end{equation*}
The small possible value is $2$ as we
require $g\ge 1$. We call this the minimal case and we treat it
directly below.

Let $s\in S(\IC)$ be arbitrary for the moment. If  $G_s^0$, defined near
 the beginning of this subsection, is trivial
 then the image of $\pi_1(\an{S},s)$ under (\ref{eq:monodromyrep}) is finite.
By the Riemann Existence Theorem there is an irreducible curve $S'$ that is finite and \'etale
 over $S$ such that the monodromy representation of the fundamental
 group of $S'$ at some base point $s'\in S'(\IC)$ on 
 $H_1(\an{(\cA\times_{S} S')}_{s'},\IZ)$ is trivial.
Recall that  $g\ge 1$. By
 Lemma \ref{lem:zerotrace}  the generic fiber of
 $\cA\times_S S'\rightarrow S'$ has non-zero $\IC(S')/\IC$-trace, contradicting our
 hypothesis. Therefore, $\dim G_s^0 \ge 1$. 

Let $s\in M$ with $M$ as in the hypothesis. A finite index subgroup of $\pi_1(\an{S},s)$ acts
on $Z_s(\IC)$  via
(\ref{eq:monoaction}) where $Z_s$ is an irreducible component of $Y_s$. 
We write  $\Gamma'_s$ for  the image of this finite index subgroup under
the monodromy representation (\ref{eq:monodromyrep}).
Then $\Gamma_s'$ has a free subgroup on $2$ generators
 by Lemma~\ref{lem:expgrowth}. 
We invoke Lemma \ref{lem:zerotrace} by using hypothesis (i) and
passing to a covering of $S$   and find that no non-zero
element of $H_1(\an{\cA}_s,\IZ)$ is invariant under the action
of $\Gamma'_s$. 

We aim to apply 
 Proposition \ref{prop:invariance}. 
But first let us verify that $\Gamma'_s$ is of monodromy type with
 respect to corresponding Betti map. Indeed, an abelian
 subvariety $B$ of $\cA_s$ 
extends to an abelian subscheme $\cB$ of 
$\cA$ by hypothesis (ii). 
Then $\widetilde\rho_\cA(\gamma)(\an{B})=\widetilde\rho_\cA(\gamma)(\iota(\an{B}))
= \iota(\widetilde\rho_\cB(\gamma)(\an{B}))=\an{B}$ by
 Proposition \ref{prop:globalinfo}(ii)
for all $\gamma\in\pi_1(\an{S},\bp)$
where $\iota:\cB\rightarrow\cA$ is the inclusion. 

By Proposition \ref{prop:invariance}. 
we are in one of two
cases for any given $s\in M$. Let $M_{1,2}$ be the set of $s\in M$
such that we are in case $1,2$, respectively. As $M= M_1\cup M_2$ one among
 $M_1, M_2$ is uncountable. 

\medskip
\noindent \fbox{Case 1:} The set $M_1$ is uncountable.

For all $s\in M_1$ the subvariety $Z_s$ is contained in the translate
 of a proper abelian subvariety $B_s$ of $\cA_s$ by a point $P_s$ of
 finite order $N_s\in\IN$. As $M_1$ is
 uncountable and $\IN$ is countable, we may replace $M_1$ by an
 uncountable subset and assume that there exists $N\in\IN$ such that
 $[N] P_s = 0$ for all $s\in M_1$.

Let us treat  the minimal case $\dim\cA=2$ now. 
Then
 $B_s=\{0\}$ and thus
 $Z_s=\{P_s\}$ for all $s\in M_1$. But then 
$Y$ contains an infinite, and hence Zariski dense, set of points lying
 in $\ker([N]\colon \cA \rightarrow \cA)$. 
This completes the proof in the
 minimal case as we can take $\cC = \cA$ and $[N]\colon \cA \rightarrow \cA$. 

We now treat the non-minimal  case $\dim\cA\ge 3$. 
By condition (ii) there exists an abelian subscheme $\cB(s)$ of $\cA$ such that $\cB(s) \cap \cA_s = B_s$ for any $s \in M_1$. But $M_1$ is uncountable and $\cA$ has only countably many abelian subschemes, so we may replace $M_1$ by an uncountable subset and assume that there exists an abelian subscheme $\cB$ of $\cA$ with $\cB(s) = \cB$, \textit{i.e.} $\cB \cap \cA_s = B_s$, for all $s \in M_1$.

We have $[N]Z_s \subset \cB \cap \cA_s$ for all $s \in M_1$. But $\bigcup_{s \in M_1} [N]Z_s$ is Zariski dense in $[N]Y$ by dimension reasons, so $[N]Y \subset \cB$ by taking the Zariski closures on both sides.

Clearly $\cB$ satisfies the analog trace condition (i) of the current
lemma by basic properties of the trace. Any $s \in M_1$ is
extendable for $\cB$ because it is extendable for $\cA$ and $\cB$ is
an abelian subscheme of $\cA$.
Finally $[N]Y$, as a subvariety of $\cB$, is virtually monodromy invariant at each $s \in M_1$. To see this it suffices to prove that $[N]Y$ is virtually monodromy invariant as a subvariety of $\cA$ by Proposition~\ref{prop:globalinfo}(ii). But then it suffices to show that $[N]Z_s$ is an irreducible component of $[N]Y_s$. This is true because $[N]Z_s$ is Zariski closed (as $[N]$ is proper) and $\dim [N]Y_s = \dim [N]Y - 1 = \dim Y -1 = \dim Z_s = \dim [N]Z_s$.

We observe that
$\dim \cB = \dim B_s + 1 \le (\dim \cA_s-1)+1  = \dim \cA - 1$.
By induction  there is an abelian scheme $\cC$ over
$S$ and a homomorphism $\psi \colon  \cB\rightarrow\cC$ of abelian
schemes 
over $S$ 
whose kernel contains $[N]Y$ and 
$\dim\ker\psi = \dim [N]Y = \dim Y$. Then $(\ker\psi)^{\circ}$, 
the identity component \cite[\S 6.4]{NeronModels} of $\ker\psi$,
 has dimension $\dim Y$ and is an abelian subscheme of $\cB$ and hence of $\cA$. There exists an integer $m \in \IN$ such that $[m]\ker\psi \subset (\ker\psi)^{\circ}$. In particular $[mN]Y \subset (\ker\psi)^{\circ}$. Note that $\dim (\ker\psi)^{\circ} = \dim \ker\psi = \dim Y$.

Now it suffices to take $\cA \rightarrow \cC$ to be the composition $\cA\xrightarrow{[mN]} \cA \rightarrow \cA/(\ker\psi)^{\circ}$.

 
\medskip
\noindent \fbox{Case 2:} The set $M_2$ is uncountable.

For all $s\in M_2$ there exists an abelian subvariety $B_s\subset \cA_s$
with $\dim B_s\ge 1$ and $Z_s+B_s = Z_s$. 
Note that $\dim Y\ge 2$ since $\dim Z_s \ge 1$, so we are
not in the minimal case. 

By condition (ii) there exists an abelian subscheme $\cB(s)$ of $\cA$
such that $\cB(s) \cap \cA_s = B_s$ for any $s \in M_2$. Since $M_2$
is uncountable and $\cA$ has only countably many abelian subschemes,
we may replace $M_2$ by an uncountable subset and assume that there
exists an abelian subscheme $\cB$ of $\cA$ with $\cB(s)
= \cB$, \textit{i.e.} $\cB \cap \cA_s = B_s$, for all $s \in M_2$.

We shall work with the abelian scheme $\cA/\cB$ over $S$. Let $\varphi \colon \cA \rightarrow \cA/\cB$ be the natural quotient. Then any fiber of $\varphi$ has dimension $\dim_S \cB = \dim \cB -1$.  The
condition $Z_s+B_s = Z_s$ implies that the fibers of $\varphi|_{Z_s}$
have dimension $\dim B_s = \dim\cB -1$ for all $s \in M_2$. Since $\bigcup_{s \in M_2} Z_s$ is Zariski dense in $Y$ by dimension reasons, we see that a general fiber of $\varphi|_Y$ has dimension $\dim \cB-1$. Thus by Fiber Dimension Theorem we have
\[
\dim Y = \dim \cB -1+ \dim \varphi(Y). 
\]

Clearly $\cA/\cB$ satisfies the analog trace condition (i) of the current lemma by basic properties of the trace. Any $s \in M_2$ is extendable for $\cA/\cB$ because any abelian subvariety of $(\cA/\cB)_s$ is the quotient of an abelian subvariety of $\cA_s$ and $s$ is extendable for $\cA$. Finally $\varphi(Y)$ is virtually monodromy invariant above all points in $M_2$ by Proposition~\ref{prop:globalinfo}(ii).

Now since $\dim(\cA/\cB) = \dim \cA - \dim B_s \le \dim \cA -1$, there exist by induction an abelian scheme $\cC$ over $S$ and a homomorphism $\psi \colon \cA/\cB \rightarrow \cC$ whose kernel has dimension $\varphi(Y)$ and contains $\varphi(Y)$. Then $Y \subset \ker(\psi\circ \varphi)$ since $\varphi(Y) \subset \ker(\psi)$. But
\begin{equation*}
\dim \ker(\psi \circ \varphi) 
= \dim_S \cB+ \dim \ker(\psi) = \dim\cB -1 +\dim\varphi(Y) =\dim Y. 
\end{equation*}
So $\psi\circ \varphi \colon \cA \rightarrow \cC$ is what we desire.
\end{proof}

\subsection{End of  the Proof of Theorem~\ref{PropositionDegenerate}}
Now we are ready to prove Theorem~\ref{PropositionDegenerate}.

Let $Y$ be an irreducible closed
subvariety that is degenerate. We want to prove that $Y$ is generically special.

Note that being generically special is a property on the geometric
generic fiber. Moreover, it is enough to show that one irreducible
component on the geometric generic fiber of $Y$ has the property
stated in Definition~\ref{DefinitionSpecialSubvariety}.
We will remove finitely points from 
 $S$ and replace $S$ by a finite and \'etale covering $S'$ which we
 assume to be
 irreducible throughout this proof. 
Observe that the base change $Y'$ of $Y$ may no longer be irreducible. 
But it is \'etale over $Y$ and thus reduced. 
In particular, $Y'$ is flat over $Y$ and thus over $S$. 
It follows that $Y'$ is
equidimensional of dimension $\dim Y$
by \cite[Corollary~III.9.6]{Hartshorne}.
Note that if $U$ is an open subset of $\an{Y}$ consisting of
degenerate points for $Y$, then its preimage will be open in $\an{Y'}$
and consist of degenerate points. 

So to ease notation we will write $S'=S$ below and take $Y$ to be an
irreducible component of $Y'$.

Let $A$ be the generic fiber of $\mathcal{A}$. 
After possibly removing finitely many points from $S$ and replacing by
a finite \'etale covering   we may assume 
 that $A^{\overline{\IC(S)}/\IC} =
A^{\IC(S)/\IC}$. We also assume
that all abelian subvarieties of $A\otimes_{\IC(S)}\overline{\IC(S)}$
are defined over $\IC(S)$. 
By passing to a further finite \'etale covering we may assume that 
$\cA$ 
 satisfies the
hypothesis of Lemma \ref{lem:endgeneric}. 
Let $\Sigma\subset S(\IC)$ be a countable subset such that
any element in $S(\IC)\setminus \Sigma$ is extendable for $\cA$.

Let $U$ be a non-empty, open subset of $\an{Y}\cap\cA_\unitdisc$
consisting of degenerate points for $Y$;
here is $\unitdisc$ as
above Theorem~\ref{PropositionDegenerate}.

If $s\in \an{S}$, then $\pi_1(S^{\anE},s)$ 
acts on $\an{\cA}_s$ via
 (\ref{eq:monoaction}). 
We have proven in Proposition \ref{prop:globalinfo} 
that $\widetilde\rho(\gamma)(P) \in \an{Y}$ for all $P\in U$ and all $\gamma\in\pi_1(S^{\anE},s)$. This property continues to hold with $U$ replaced by the  union $\bigcup_{\gamma} \widetilde\rho(\gamma)(U)$ over $\pi_1(S^{\anE},s)$. Note that $U$ is open and invariant under the action of the fundamental group.

Let $Z$ be an
 irreducible
component  of $Y_s$ 
with $\an{Z}\cap U\not=\emptyset$. 
The representation $\widetilde\rho$ maps $\an{Z}\cap U$ into 
$Y^{\anE}_s$. As everything is real analytic we see that 
for each $\gamma\in\pi_1(S^{\anE},s)$ there is an irreducible
 component $Z^{\prime}$ of $Y_s$ such that 
$\widetilde\rho(\gamma) (\an{Z}\cap U) \subset Z^{\prime\anE}\cap U$.
Because all irreducible components of $Y_s$ have dimension equal to
 $\dim Y-1$ and  by the Invariance of Domain Theorem 
we conclude that $Z^\prime$ is uniquely determined by 
$\widetilde\rho(\gamma) (\an{Z}\cap U)\subset Z^{\prime\anE}\cap U$
among all irreducible components of $Y_s$. 
We conclude that $\pi_1(S^{\anE},s)$
 acts on the finite set of irreducible components of $Y_s$ that meet
 $U$. 
Therefore, 
$\widetilde\rho(\gamma) (\an{Z}\cap U) \subset\an{Z}\cap U$
for all $\gamma$ in
a finite index subgroup  of $\pi_1(S^{\anE},s)$.

The smooth locus of $\an{Z}$ is path-connected,
lies dense in $\an{Z}$, and contains a point of
 $\an{Z}\cap U$. 
By fixing  piece-wise real analytic paths
we find that 
$\widetilde\rho(\gamma) (\an{Z}) \subset\an{Z}$
for all $\gamma$ in the finite index subgroup mentioned before.


The arguments above show that  $Y$ is virtually monodromy invariant
above  $s$. %
Clearly, $U\setminus\Sigma$ 
is an uncountable set as $U$ is open in $\an{S}$ and
non-empty.

Let us suppose
$A^{\overline{\IC(S)}/\IC} = 0$
for the moment. 
We  can apply Lemma~\ref{lem:degenerate} to $Y,\cA,$
and $M$ equal to the set of $s$ obtained from  $U\backslash \Sigma$
and conclude that $Y$ is an irreducible component of a subgroup
scheme of $\cA$ that is generically special. 
This completes the proof of  Theorem~\ref{PropositionDegenerate}
in the current case. 

Let us turn to the general case. Recall that 
 $\pi \colon \cA\rightarrow S$
 is an abelian scheme with generic fiber $A$ whose 
 ${\IC(S)/\IC}$-trace is $A^{{\IC(S)}/\IC}$.
We take $A_0$ to be
 $A^{{\IC(S)}/\IC} \otimes_\IC \IC(S) \subset A$.
So the ${\IC(S)}/\IC$-trace of $A/A_0$ vanishes,
\textit{cf.} \cite[Theorem~6.4 and the following comment]{Conrad}.
 Moreover, $(A/A_0)^{\overline{\IC(S)}/\IC}=0$
as $A^{\overline{\IC(S)}/\IC} = A^{\IC(S)/\IC}$. 

By \cite[Proposition 3 \S 7.5]{NeronModels}, the N\'eron model $\cB$ of $A/A_0$ is an abelian scheme over $S$ and 
sits in the short exact sequence of
abelian schemes over $S$
\begin{equation*}
  0 \rightarrow A^{{\IC(S)}/\IC} \times S  \rightarrow  \cA \overset{\varphi}{\rightarrow} \cB\rightarrow
 0.
\end{equation*}

In $A$ we fix an abelian subvariety $C$ that meets
$A_0$
in a finite set and with $A_0 + C=A$. Let $\cC$ be the
N\'eron model of $C$. It is an abelian scheme over $S$ and 
we may assume $\cC\subset\cA$. 
The restriction $\varphi|_{\cC}\colon \cC\rightarrow\cB$ is dominant
and proper, hence surjective. It is fiberwise an isogeny of abelian
varieties. We conclude $(A^{{\IC(S)}/\IC} \times S)+\cC = \cB$. 

As $Y$ is degenerate there exists an open and non-empty
$U$ subset of $\an{Y}$ of degenerate points. 
By shrinking $U$ we may assume that
$\varphi|_{Y}:Y\rightarrow \varphi(Y)$ is smooth at all points of $U$. So $\varphi(U)$ is  open in
$\an{\varphi(Y)}$.
This set consists of degenerate points for $\varphi(Y)$.
By the previous case ($A^{\overline{\IC(S)}/\IC} = 0$),
the set of $P\in U$ such that $\varphi(P)$ has finite
order in the corresponding fiber of $\cB$ lies Zariski dense in $Y$.

We consider such a $P$ and suppose $\varphi(P)$ has  order $N$
and write 
$P = Q+R$ with  $Q\in (A^{{\IC(S)}/\IC} \times S)(\IC)$ and $R\in\cC(\IC)$, where
$Q,R$ lie in the same fiber above $S$ as $P$. So
$0=[N](\varphi(P))=\varphi([N](R))$. As $R\in \cC(\IC)$  it
must have finite order $N'$.
Moreover, $R \in Y - \sigma_Q$ where
$\sigma_Q$ is 
the image of a constant section $S \rightarrow A^{{\IC(S)}/\IC} \times
S$
with value $Q$.

The Betti map is constant on sufficiently small open subsets of
$\sigma_Q$ as $A^{{\IC(S)}/\IC} \times S$ is a constant abelian scheme. Therefore, $R$ is a degenerate point of $Y-\sigma_Q$. 

Recall that the  order of a point is constant on a fiber of the Betti
map. By the second claim in Proposition \ref{prop:globalinfo}(i) there exists an
irreducible component $C\subset \cC[N']$ containing $R$ with 
$C\subset Y -\sigma_Q$.  

We conclude that $P$ is a point of $\sigma_Q+C$, a generically special
subvariety of $\cA$. As this holds for a Zariski dense set of $P$ in
$Y$ we conclude from Proposition \ref{TheoremStarSetOpen} that $Y$ is generically special.


\section{Construction of the Auxiliary Variety}\label{SectionAuxiliarySubvariety}
In this section we work in the category of schemes over an algebraically closed subfield $F$
of $\IC$. 
We abbreviate $\IP_F^m$ by $\IP^m$ throughout this section. 
Suppose  $S$ is  a smooth irreducible algebraic curve.
Let $\cA$ be an abelian scheme of relative dimension $g\ge 1$
over $S$ with
structural morphism $\pi \colon \cA\rightarrow S$.
For a closed subvariety $X\subset \cA$ and $s\in S(F)$ we 
write $X_s = \pi^{-1}(s)$.

We assume that $\cA$ comes equipped with  an admissible immersion 
$\cA\rightarrow\IP^M\times \IP^m$ 
as in \S \ref{SubsectionEmbeddingAbSch}, \textit{i.e.}, it satisfies conditions (A1), (A2), and (A3) in $\mathsection$\ref{SubsectionEmbeddingAbSch}. 
In particular,  each fiber $\cA_s$ of $\pi$ with $s\in S(F)$
is an abelian variety in $\IP^M$. On this projective space we 
let $\deg(\cdot)$ denote the degree of an algebraic set.

In this section $X$ will denote  an  irreducible, closed subvariety of $\mathcal{A}$ 
that dominates $S$ and with $X\not=\cA$. 
Hence $\pi|_X \colon X\rightarrow S$ is surjective as $\pi|_X$ is proper. 
We write $\dim X = \dim \cA-n=g+1-n$ where $n\ge 1$ is the codimension
of $X$ in $\cA$.

Let $\unitdisc\subset S^{\mathrm{an}}$ be a non-empty open subset with
Betti map $b \colon \pi^{-1}(\unitdisc)=\cA_{\unitdisc} \rightarrow \IT^{2g}$. See
Proposition~\ref{PropositionBetti}, we recall that $\IT$ denotes the circle group.
It is convenient to write $X_{\unitdisc} = \an{X}\cap \cA_{\unitdisc}$.


The following convention will be used in this section. If $P$ is a point on a real (resp. complex) analytic manifold $Y$, then 
$T_P(Y)$ denotes the tangent space of $Y$ at $P$. This is an $\IR$-
resp. 
 $\IC$-vector space, depending on whether $Y$ is a real
or complex analytic manifold. If $Z$ is another real (resp.  complex)
analytic manifold  and $f \colon Y\rightarrow Z$
is a real (resp. complex) analytic mapping, then $T_P(f)$ denotes the 
differential $T_P(Y)\rightarrow T_{f(P)}(Z)$. It is $\IR$- (resp.
$\IC$-)linear. Let $\image{T_P(f)}$ denote the image of $T_P(f)$ in
$T_{f(P)}(Z)$. 

Recall that $\sman{X}$ is the complex analytic space
attached to  the smooth locus $\sm{X}$ of $X$. 
If $P\in \cA_{\unitdisc}\cap \cA(F)$ then $b|_{\sman{X}\cap\cA_{\unitdisc}}
\colon \sman{X}\cap\cA_{\unitdisc}\rightarrow\IT^{2g}$ is a real analytic map. The
condition in the proposition below concerns the image of its
differential.

\begin{proposition}\label{PropositionConstructionOfAuxiliarySubvariety}
We keep the notation from above and assume that $X$ is not generically
special. 
Suppose $P \in \cA_\unitdisc\cap\cA(F)$ with $\pi(P)=s$ such that $P$ is a
smooth point of 
$X_{s}$ and of $X$ with
\begin{equation}
\label{eq:dimTPhyp}
  \dim \image{T_P(b|_{\sman{X}\cap\cA_{\unitdisc}})} = 2 \dim X. 
\end{equation}
Then
there exists a closed irreducible subvariety $Z\subset \cA$ over $F$ with the following properties. 
\begin{enumerate}
\item [(i)] We have $\dim Z = n$ and $Z$ dominates $S$. 
 \item[(ii)] We have that $P$ is a smooth point of
   $Z_s$ and of $Z$. 
 \item[(iii)] The fiber $Z_s$ does not contain any positive dimensional coset in $\cA_s$.
  \item[(iv)] There exists $D\ge 1$ such that  $\deg  Z_t \le D$
for all $t\in S(\IC)$. 
\item[(v)] We have $\image{T_P(b|_{\sman{X}\cap\cA_{\unitdisc}})} \cap
  \image{T_P(b|_{\sman{(Z_s)}})} = 0$ in $T_{b(P)}(\IT^{2g})$. 
\end{enumerate}
Moreover, the set
\[
\{ t \in S(\IC) :  \text{$Z_t$ contains a positive dimensional coset
  in $\cA_t$}\}.
\]
is finite.
\end{proposition}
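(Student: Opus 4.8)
\emph{Plan.} Set $c=\dim\cA-\dim X=g+1-n\ge 1$, so $n-1=g-c$, and recall $\cA\subset\IP^M\times S$ with each fibre $\cA_t$ sitting in $\IP^M$. The plan is to produce $Z$ as an irreducible component of $\cZ:=\cA\cap(\Lambda\times S)$ for a suitable linear subspace $\Lambda\subset\IP^M$ of codimension $c$ through $P$, defined over $F$. Write $\beta=T_P(b|_{\cA_s^{\anE}})$; by Proposition~\ref{PropositionBetti}(i) it is an $\IR$-linear isomorphism $T_P(\cA_s^{\anE})\to T_{b(P)}(\IT^{2g})$. Put $R=\image{T_P(b|_{\sman{X}\cap\cA_{\unitdisc}})}$, of real dimension $2\dim X=2c$ by~\eqref{eq:dimTPhyp}, and $R'=\beta^{-1}(R)\subset T_P(\cA_s^{\anE})=T_P(\cA_s)\otimes_F\IC$, again of real dimension $2c$. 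Since $Z_s$ will be smooth at $P$ and $b|_{\sman{(Z_s)}}$ is the restriction of the fibrewise Betti map, condition (v) is equivalent to requiring that $U:=T_P(Z_s)$, a complex $(n-1)$-dimensional subspace of $T_P(\cA_s^{\anE})$, satisfy $U\cap R'=0$; note $\dim_\IR U+\dim_\IR R'=2(n-1)+2c=2g=\dim_\IR T_P(\cA_s^{\anE})$, so this is a genuine transversality.

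\emph{Choice of $\Lambda$.} First I would pick a hyperplane $H_1\subset\IP^M$ through $P$, defined over $F$, that is generic in three respects: $\cA_s\cap H_1$ contains no positive-dimensional coset of $\cA_s$; $\cA_s\cap H_1$ is smooth at $P$ of pure dimension $g-1$ near $P$; and the complex hyperplane $V_1:=T_P(H_1)\cap T_P(\cA_s^{\anE})$ meets $R'$ in real dimension $2c-2$, the trivial lower bound (attained for $H_1$ generic). The first property holds on a dense subset of hyperplanes through $P$ by (A3) (equivalently Proposition~\ref{prop:abvarhyperbolic}), the other two on dense open subsets; since $F$ is algebraically closed it is dense in $\IC$ for the complex topology, so a common $F$-rational choice exists. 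Next I choose a complex $(n-1)$-dimensional subspace $U\subset V_1$, over $F$, with $U\cap R'=U\cap(R'\cap V_1)=0$: here $\dim_\IR U+\dim_\IR(R'\cap V_1)=2(n-1)+(2c-2)=2g-2=\dim_\IR V_1$, and the set of such $U$ is non-empty and complex-open in $\mathrm{Gr}(n-1,V_1)$ — its complement is $\bigcup_{0\ne v\in R'\cap V_1}\{U:\IC v\subset U\}$, of strictly smaller real dimension when $c\ge 2$, while for $c=1$ one has $R'\cap V_1=0$ and takes $U=V_1$; density of $F$ in $\IC$ again yields an $F$-point. Finally I choose hyperplanes $H_2,\dots,H_c\subset\IP^M$ through $P$, over $F$, with $T_P(H_i)\supset U$ and otherwise generic, and set $\Lambda=H_1\cap\cdots\cap H_c$. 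Genericity forces $T_P(\cA_s^{\anE})\cap T_P(\Lambda)=U$; it makes $\cA_s\cap\Lambda$ smooth at $P$ of dimension $n-1$ near $P$ and of dimension $n-1$ everywhere; and it makes $\Lambda\times S$ meet $\cA$ transversally at $P$, so $\cZ$ is smooth there of dimension $(g+1)-c=n$. Let $Z$ be the unique irreducible component of $\cZ$ through $P$; as $\cZ$ is smooth at the $F$-point $P$ and $F$ is algebraically closed, $Z$ is defined over $F$, smooth at $P$, of dimension $n$.

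\emph{Verification.} Since $\dim(\cA_s\cap\Lambda)=n-1<n=\dim Z$, $Z$ is not contained in $\cA_s$ and hence dominates $S$; with $\dim Z=n$ this is (i). Near $P$, $Z$ coincides with $\cZ$ and $Z_s$ with $\cA_s\cap\Lambda$, the latter smooth at $P$ of dimension $n-1$, so $P$ is a smooth point of both $Z$ and $Z_s$; this gives (ii), and moreover $T_P(Z_s)=T_P(\cA_s^{\anE})\cap T_P(\Lambda)=U$, whence (v) by the reduction above. As $Z_s\subset\cA_s\cap\Lambda\subset\cA_s\cap H_1$ and $\cA_s\cap H_1$ contains no positive-dimensional coset of $\cA_s$, neither does $Z_s$; this is (iii). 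For (iv), the flattening stratification applied to $Z\to S$, viewed as a family of closed subschemes of $\IP^M$, shows that the Hilbert polynomials of the scheme-theoretic fibres — and a fortiori the degrees of the reduced fibres $Z_t$ — take only finitely many values; alternatively one bounds $\deg(\cA_t\cap\Lambda)$ by $\deg\cA_t$ via the refined B\'ezout theorem, $\deg\cA_t$ being independent of $t$ by flatness of $\cA/S$. Finally, the exceptional set in the last assertion is contained in $\{t\in S(\IC):\cA_t\cap\Lambda\text{ contains a positive-dimensional coset of }\cA_t\}$. The cosets occurring here have dimension at most $g$ and, by (iv) and standard bounds for cosets inside a subvariety of bounded degree, degree bounded uniformly in $t$; they therefore form a bounded family, and the locus of $t$ for which such a coset is contained in $\cA_t\cap\Lambda$ is closed in $S$ by properness of the corresponding Chow variety. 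Since this locus omits $s$ by (iii) and $\dim S=1$, it is finite.

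\emph{Main difficulty.} The heart of the proof is the choice of $\Lambda$, specifically reconciling (v) with (iii): property (A3) supplies only \emph{one} coset-killing hyperplane, so $H_1$ must be fixed before the Betti tangent space $U=T_P(Z_s)$, and $U$ must then be located inside $V_1=T_P(\cA_s\cap H_1)$. The dimension bookkeeping that recasts this as a complementary-dimension problem in the Grassmannian of $V_1$, together with controlling the position of the a priori transcendental, merely real subspace $R'$, is where the genuine work lies; the ancillary matters of $F$-rationality and of a uniform degree bound over all of $S(\IC)$ are dispatched, respectively, by density of $F$ in $\IC$ and by the flattening stratification (or refined B\'ezout).
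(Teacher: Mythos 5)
Your construction of $Z$ and the verification of (i)--(v) follow essentially the same route as the paper: a generic linear space of codimension $\dim X$ through $P$, with (A3) (Proposition~\ref{prop:abvarhyperbolic}) supplying the coset-free hyperplane for (iii), B\'ezout/flatness for (iv), and a transversality argument in a Grassmannian, made $F$-rational via density of $F$ in $\IC$, for (v). You organize the linear algebra slightly differently (fixing $H_1$ first, then choosing $U\subset V_1$ complementary to $R'$, then forcing $T_P(H_i)\supset U$), whereas the paper proves Lemmas~\ref{lem:genericCvs} and \ref{LemmaGenericSprime} once and then argues that a generic choice of all the linear forms lands in the resulting archimedean-open set; this is a cosmetic difference, and your dimension counts ($2(n-1)+2c=2g$, generic $\dim_\IR(R'\cap V_1)=2c-2$) are correct.

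The ``Moreover'' part is where you genuinely diverge from the paper, and it is also where your argument has a gap. You claim the locus $\{t: \cA_t\cap\Lambda \text{ contains a positive-dimensional coset}\}$ is \emph{closed} in $S$ ``by properness of the corresponding Chow variety,'' and then conclude finiteness because it misses $s$. Two points are being assumed rather than proved. First, properness of the Chow/Hilbert scheme only gives that the parameter space of bounded-degree cycles in the fibres is proper over $S$; you still need the sublocus of cycles that are (or contain) positive-dimensional \emph{cosets} contained in $Z_t$ to be closed there. Being a coset is not an obviously closed condition (geometric integrality, for instance, is open, not closed), so one must actually prove a limit statement of the form ``a limit of cosets $P_i+B_i\subset Z_{t_i}$ contains a positive-dimensional coset of $\cA_{t_0}$'' (e.g.\ via continuity of the group law over $S$ applied to Hausdorff limits of the $B_i$). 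Without closedness, the locus is merely constructible, and a constructible subset of a curve avoiding the single point $s$ can still be cofinite, so your conclusion does not follow. Second, to have a bounded family at all you invoke ``standard bounds for cosets inside a subvariety of bounded degree''; this is a Bogomolov/Ueno-type bound that must be \emph{uniform in $t$} as the ambient abelian variety $\cA_t$ varies, which again requires an argument (the paper gets the uniformity it needs from the David--Hindry stabilizer bound). The paper avoids all of this by a different mechanism (\S7.3--7.4): it introduces $\ell$-torsion sections after an \'etale base change, forms the auxiliary intersections $Z(\sigma)=\bigcap_{k}(Z-[k]\circ\sigma)$, bounds their fibre degrees uniformly in $\ell$ via the uniform B\'ezout bound of Proposition~\ref{prop:uniformintersectionbound}, and shows $s\notin\pi(Z(\sigma))$ for $\sigma\neq 0$, so each $\pi(Z(\sigma))$ is a proper closed, hence finite, subset of $S$ containing the bad locus. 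Your compactness approach could probably be repaired along the lines sketched above, but as written the key closedness and uniformity steps are missing.
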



Condition (v) implies that $Z_s$ and $X_s$ intersect transversally in $\cA_s$. Condition (i) implies that $Z_t$ is equidimensional of dimension $n-1$ for all $t\in S(F)$
by \cite[Corollary III.9.6 and Proposition III.9.7]{Hartshorne}.


We will prove this proposition in the next few subsections, see $\mathsection$\ref{SubsectionConstructionStep1}-\ref{SubsectionConstructionStep2} for the construction of $Z$ and $\mathsection$\ref{SubsectionControlOfBadFibers} for the ``Moreover'' part. But first, let
us relate  its hypothesis \eqref{eq:dimTPhyp} to our notion of
generically special. A crucial point is to use Theorem~\ref{PropositionDegenerate}.

\begin{lemma}\label{LemmaNonDegeneratePoint}
Suppose that $X$ is not generically special.
Then there exists $P\in \sm{X}(F)$ with $\pi(P)\in \unitdisc$ and $P\in (X_{\pi(P)})^{\mathrm{sm}}(F)$ that satisfies
(\ref{eq:dimTPhyp}). 
\end{lemma}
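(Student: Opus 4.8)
The plan is to produce the point by a maximal-rank argument for the real-analytic map
$\beta := b|_{\sman{X}\cap\cA_{\unitdisc}}\colon \sman{X}\cap\cA_{\unitdisc}\rightarrow \IT^{2g}$, feeding in Theorem~\ref{PropositionDegenerate} at the crucial step. First I would record that $\sman{X}\cap\cA_{\unitdisc}$ is a non-empty complex manifold of complex dimension $\dim X$ — non-empty because $\pi|_X$ is surjective and $\sm{X}$ is Zariski dense in $X$ — hence a real-analytic manifold of real dimension $2\dim X$. Consequently $\rank T_P(\beta)\le 2\dim X$ at every $P$, and by lower semicontinuity of the rank the locus $\Omega_r$ on which this rank attains its maximum value $r$ is a non-empty open subset in the analytic topology.

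The key point is that $r=2\dim X$. If instead $r<2\dim X$, then $\beta$ has locally constant rank $r$ on $\Omega_r$, so by the constant rank theorem the fibre $\beta^{-1}(\beta(P))$ contains, near each $P\in\Omega_r$, a real submanifold of positive dimension $2\dim X-r$; in particular every point of $\Omega_r$ is degenerate for $X$ in the sense of \S\ref{SectionDegenerate}. Since $\Omega_r$ is non-empty and open, this would make $X$ a degenerate subvariety, whence $X$ would be generically special by Theorem~\ref{PropositionDegenerate}, contrary to the hypothesis. Therefore $r=2\dim X$ and
\[
\Omega := \{P\in\sman{X}\cap\cA_{\unitdisc} : \dim\image{T_P(\beta)}=2\dim X\}
\]
is non-empty and open.

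It then remains to arrange the remaining requirements on $P$. Since $\mathrm{char}(F)=0$, generic smoothness applied to $\pi|_{\sm{X}}\colon \sm{X}\rightarrow S$ yields a non-empty $W\subseteq\sm{X}$, Zariski open in $X$ and defined over $F$, such that each $P\in W$ is a smooth point of its own fibre $X_{\pi(P)}$; thus any $P\in W(F)$ with $\pi(P)\in\unitdisc$ automatically lies in $\sm{X}(F)$ and in $(X_{\pi(P)})^{\mathrm{sm}}(F)$. Now $W(F)$ is dense in $\an{W}$ for the analytic topology (a standard fact, via étale coordinates and the density of $F^{d}$ in $\IC^{d}$, which holds since $\IQbar\subseteq F$), and $\an{W}$ is open and dense in $\an{X}$; hence $W(F)$ meets every non-empty open subset of the open set $\sman{X}\cap\cA_{\unitdisc}\subseteq\an{X}$, in particular it meets $\Omega$. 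Any $P\in W(F)\cap\Omega$ then satisfies all assertions of the lemma, because $\Omega\subseteq\cA_{\unitdisc}\cap\sman{X}$ forces $\pi(P)\in\unitdisc$ and $P$ to be a smooth point of $X$, while membership in $\Omega$ is exactly \eqref{eq:dimTPhyp}.

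The only genuinely non-formal step is the passage from ``$X$ is not generically special'' to ``$\beta$ has full differential rank somewhere'', and this is precisely where Theorem~\ref{PropositionDegenerate} enters, through the constant rank theorem. One must be slightly careful that the positive-dimensional pieces produced by the constant rank theorem over $\Omega_r$ are fibres of $\beta$ on all of $\sman{X}\cap\cA_{\unitdisc}$ and not merely on $\Omega_r$, but this is immediate since $\Omega_r$ is open. The verifications that $W$ is Zariski open and defined over $F$, and that $F$-points are analytically dense, are routine.
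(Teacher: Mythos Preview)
Your argument is correct and essentially identical to the paper's own proof: both pass to the maximal-rank locus of $b|_{\sman{X}\cap\cA_\unitdisc}$, invoke Theorem~\ref{PropositionDegenerate} to rule out rank $<2\dim X$ via the constant rank theorem (the paper cites Whitney's Appendix for the same fact), and then intersect the resulting open set with the $F$-points of the Zariski open locus where $\pi|_{\sm X}$ is smooth, using density of $F$ in $\IC$. The only difference is cosmetic: the paper phrases the key step as ``some point of the maximal-rank locus is non-degenerate, hence $r=2\dim X$'', while you phrase the contrapositive ``if $r<2\dim X$ then every such point is degenerate''.
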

\begin{proof}
Let us consider the restriction 
\begin{equation*}
  b|_{\sman{X}\cap\cA_\unitdisc }:\sman{X}\cap\cA_\unitdisc\rightarrow\IT^{2g}. 
\end{equation*}
Observe that domain and target are smooth manifolds of
dimension $2\dim X$ and $2g$, respectively. 

Let $r \in \{0,\ldots,2g\}$ denote the largest possible rank of 
$T_{P}(b|_{\sman{X}\cap\cA_\unitdisc })$ as $P$ ranges over the domain. 
Then there exists an open and non-empty subset $\mathfrak{U}$ of
$\sman{X}\cap\cA_\unitdisc$ on which the rank is $r$. It follows from
\cite[Appendix II,~Corollary 7F]{Whitney}, that any fiber
of $b|_{\mathfrak{U}} \colon \mathfrak{U}\rightarrow
b(\mathfrak{U})$ is a smooth manifold of dimension $2\dim X -r$. 

By hypothesis and Theorem~\ref{PropositionDegenerate} the variety $X$ is
not degenerate. In particular, 
 there exists $P\in \mathfrak{U}$ that is not degenerate for $X$.
So the  fiber of
$b|_{\mathfrak{U}}$ 
through $P$ contains $P$
as an isolated point. So we have $r=2\dim X$.

By continuity we may assume that \eqref{eq:dimTPhyp} holds for all
$P\in \mathfrak{U}$, after possibly shrinking $\mathfrak{U}$. 

On the other hand the set $U = \{ P \in \sm{X}(F) : \pi|_X \colon X\rightarrow S \text{ is
smooth at } P\}$ is Zariski open and dense in $X$. So
$U(F) \cap \mathfrak{U} \not= \emptyset$ because $F$ is algebraically
closed and dense in $\IC$. Now any point $P \in U(F) \cap \mathfrak{U}$ satisfies the desired properties as $S$ is smooth. 
\end{proof}

For the further construction of $Z$ we assume that  $P$ is as in this lemma. 

\subsection{The First Four Properties}\label{SubsectionConstructionStep1}

We show how to construct $Z$ satisfying the first four properties in
the proposition.
Indeed, our construction will show that a generic choice, in a
suitable sense, of $Z$ will
suffice for (i)-(iv). Later on we will see how to obtain in addition
(v) and deduce the final statement.

Let $P$ be as in the hypothesis of Proposition \ref{PropositionConstructionOfAuxiliarySubvariety}. Recall that $\cA$ comes with an admissible immersion
$\cA\rightarrow \IP^M\times \IP^m$ as
in \S \ref{SubsectionEmbeddingAbSch}. 
 Observe that $\cA_s\subset\IP^M$ is  Zariski closed, irreducible, and contains $P$ as a smooth point as it is an abelian variety.
By property (A3) 
a generic homogeneous linear form $f \in F[X_0,\ldots,X_M]$ vanishing
at $P$ satisfies the following property. The intersection of the zero
locus $\zeroset{f}$ of $f$ with $\cA_s$ contains no positive dimensional
cosets in $\cA_s$. Here generic means that we may allow the coefficients of $f$ to come from a Zariski open dense subset of all possible coefficient vectors.

According to Bertini's Theorem there are linearly independent homogeneous linear forms
$f_1,\ldots,f_{g+1-n} \in F[X_0,\ldots,X_M]$ 
 such that their set of common zeros
$\zeroset{f_1,\ldots,f_{g+1-n}}$ in $\IP^M$ intersects $\cA_s$ in a
   Zariski
closed set $Z'$ that is smooth at $P$ and of dimension 
 $\dim \cA_s -
(g+1-n) = g - (g+1-n) = n-1$. 
 If $n\ge 2$ we  may arrange that $Z'$ is irreducible by applying a
 suitable variant of Bertini's Theorem. 
By the previous paragraph, we can arrange that $Z'$ contains no positive dimensional cosets in $\cA_s$. We will see that this establishes (ii), (iii), and (iv) with our choice of $Z$ below. 

Note that a generic choice of $(f_1,\ldots,f_{g+1-n})$ in $F[X_0,\ldots,X_M]^{\oplus (g+1-n)}$, where each
entry has degree one, that vanishes
at $P$ will have the property described in the previous paragraph. 
Here generic means that we may allow the coefficient vector attached to
$(f_1,\ldots,f_{g+1-n})$  to come from a Zariski open dense subset of
all possible coefficient vectors that lead to linear forms with
coefficients in $F$ vanishing
at $P$.
We may arrange $f_1$ to be an $f$ as in the last paragraph, so $Z'$
contains no coset of positive dimension.


Each irreducible component of 
\begin{equation}
\label{eq:IPMprimeZcap}
\left( \zeroset{f_1,\ldots,f_{g+1-n}} \times \IP^m \right) \cap \cA
\end{equation}
has dimension at least $n$. 
Suppose $Z$ is an irreducible component of \eqref{eq:IPMprimeZcap}
 that  contains $P$. By the Fiber Dimension Theorem we find
$\dim Z_s  \ge  \dim Z - \dim \pi(Z) \ge \dim Z - 1$. As $\dim Z' =
n-1$ and $Z_s \subset Z'$ we conclude $\dim Z \le n$. 
Thus $\dim Z = n, \dim \pi(Z)=1,$ and $\dim Z_s=n-1$. This implies both claims in
(i).

If $n = 1$, then $\dim Z_s = 0$ and hence $P$ is smooth in $Z_s$. If $n \ge 2$, then $Z_s = Z'$ and hence $P$ is smooth in $Z_s$ by construction.
Now as $P$ is smooth in $Z_s$ and $s$ is smooth in $S$, $P$ is also smooth in $Z$. This establishes (ii).

If $n = 1$, then $\dim Z_s = 0$ and  (iii) clearly holds. If $n \ge 2$,
then by construction $Z_s$ satisfies (iii). In both cases, 
 $Z_t$ 
is a union of  irreducible components 
of  $\zeroset{f_1,\ldots,f_{g+1-n}} \cap \cA_t$
for all but at most finitely many  $t \in S(\IC)$. For these $t$ 
we conclude $\deg  Z_t \le \deg  \cA_t$ from
B\'{e}zout's Theorem. But $\cA \rightarrow S$ is a flat family
embedded in $\IP^M \times S \rightarrow S$, so
$\deg \cA_t \le D$ for some $D\ge 1$ depending only on $\cA$ 
and the immersion. We can take care of the remaining finitely many fibers by
increasing $D$ if necessary. Thus we have established (iv).

\subsection{The Fifth Property}\label{SubsectionConstructionStep2}

\subsubsection{Linear Algebra}

For a  $\IC$-vector space $T$ we write $T_\IR$ for $T$ with its
natural structure as an $\IR$-vector space. For example, if $T$ is
finite dimensional, then $\dim T_{\IR} = 2\dim T$. A vector subspace 
$V_0$  of $T_{\IR}$ is naturally an $\IR$-vector space. 
We denote by $\IC V_0$ the smallest vector subspace of $T$ containing
$V_0$. For example, if $V_0 = \IR v_1 + \cdots + \IR v_k$, then $\IC
V_0 = \IC v_1 + \cdots + \IC v_k$.
Let $J$ denote the multiplication by $\sqrt{-1}$ map $J \colon T\rightarrow
T$.
Then $(\IC V_0)_{\IR} = V_0 + J V_0$.
A  vector subspace of $T_\IR$ is a  vector
subspace of $T$ if and only if it is
 $J$-invariant.

In this section $g\ge 1$ is an integer. We show that an even
 dimensional
 real
subspace of $\IC^g$ intersects some complex subspace of complementary
real dimension transversally. 


\begin{lemma}
\label{lem:genericCvs}
Let $T$ be a $\IC$-vector space of dimension $g$ and suppose $W$
is a vector subspace  of $T$ with $\dim W = m$. Let $V_0$ be a vector subspace of $T_{\IR}$ of
 dimension $2m+2k$ that contains $W$. Then there
 exists a vector subspace $V$ of $T$ of dimension $g-(m+k)$ such that $V \cap V_0 = 0$.
\end{lemma}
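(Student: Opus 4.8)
\textbf{Plan of proof for Lemma~\ref{lem:genericCvs}.}
The plan is to produce $V$ essentially by a dimension count, after observing that the obstruction to finding a complex subspace of complementary real dimension meeting $V_0$ trivially is governed by how far $V_0$ is from being a complex subspace. First I would replace $V_0$ by the complex subspace $\IC V_0 = V_0 + J V_0$ it spans inside $T$; since $W \subset V_0$ is already complex, we have $W \subset \IC V_0$. Write $\dim_\IC(\IC V_0) = m + k'$ for some $k' \le k$ (indeed $\dim_\IR V_0 = 2m+2k$ and $V_0 \supset W$ forces $V_0 + JV_0 \supset W$, and the real codimension of $V_0$ inside $\IC V_0$ is $\dim_\IR(\IC V_0) - (2m+2k) = 2(m+k') - (2m+2k) = 2(k'-k) \ge 0$, so $k' \ge k$; combined with $k' \le k$ this actually gives $k' = k$ unless... let me not overclaim and just keep $k' \le \dim_\IC(\IC V_0)$).

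Actually the cleanest route: choose any complex subspace $V \subset T$ with $\dim_\IC V = g - (m+k)$ and $V \cap (\IC V_0) = 0$. Such a $V$ exists provided $\dim_\IC V + \dim_\IC(\IC V_0) \le g$, i.e. $(g-(m+k)) + \dim_\IC(\IC V_0) \le g$, i.e. $\dim_\IC(\IC V_0) \le m+k$. So the key step is the inequality
\[
\dim_\IC(\IC V_0) \le m + k,
\]
which I would establish as follows: $\IC V_0 = V_0 + J V_0$, so $\dim_\IR(\IC V_0) \le \dim_\IR V_0 + \dim_\IR(JV_0) - \dim_\IR(V_0 \cap JV_0)$. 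Since $W$ is a complex subspace contained in $V_0$, we have $JW = W \subset V_0$, hence $W \subset V_0 \cap JV_0$, giving $\dim_\IR(V_0 \cap JV_0) \ge 2m$. Therefore $\dim_\IR(\IC V_0) \le (2m+2k) + (2m+2k) - 2m = 2m + 4k$, which only yields $\dim_\IC(\IC V_0) \le m + 2k$ — not quite enough. The fix is to be more careful: $V_0 \cap J V_0$ is itself $J$-invariant (it is the largest complex subspace inside $V_0$), so $\dim_\IR(V_0\cap JV_0)$ is even, say $2m + 2\ell$ with $\ell \ge 0$, and then $\dim_\IR(\IC V_0) = 2\dim_\IR V_0 - \dim_\IR(V_0\cap JV_0) = 2(2m+2k) - (2m+2\ell) = 2m + 4k - 2\ell$, so $\dim_\IC(\IC V_0) = m + 2k - \ell$. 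I then need $\ell \ge k$, i.e. the largest complex subspace of $V_0$ has complex dimension $\ge m+k$. This is the genuine content and I suspect it is \emph{false} in general — e.g. in $T = \IC^2$ with $m=0$, a totally real plane $V_0$ of real dimension $2$ has $V_0 \cap JV_0 = 0$.

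Given that, the honest plan is: \emph{do not} pass to $\IC V_0$; instead argue directly and probabilistically. Consider a generic complex subspace $V$ of $T$ of complex dimension $g - (m+k)$, i.e. let $V$ range over the Grassmannian $\mathrm{Gr}(g-(m+k), T)$. The condition $V \cap V_0 \ne 0$ is a closed condition; I would show it is a proper closed condition, hence its complement is nonempty (indeed dense). To see properness, it suffices to exhibit \emph{one} complex $V$ of the right dimension with $V \cap V_0 = 0$: pick a real basis $v_1,\dots,v_{2m+2k}$ of $V_0$, extend to a real basis of $T_\IR$ by vectors $u_1, \dots, u_{2g - 2m - 2k}$, and note $2g - 2m - 2k = 2(g-(m+k))$ is even; then check that a suitably chosen complex span of half of the $u_j$'s (e.g. $V = \IC u_1 + \IC u_3 + \cdots$, choosing the $u_j$ so that $J u_{2i-1} = u_{2i}$, which is possible after modifying the extension since one can always complete a real basis of $V_0$ to a real basis of $T$ in which the complement is $J$-invariant precisely when... no). The cleanest rigorous version: decompose $T_\IR = V_0' \oplus (V_0')^{\perp}$ is not available without structure, so instead I would count: the map $\mathrm{Gr}(g-(m+k),T) \dashrightarrow \{V : \dim(V\cap V_0)\ge 1\}$ is cut out by the vanishing of finitely many determinants (expressing linear dependence of a basis of $V$ with the $v_i$), an algebraic subset; its complement is Zariski open in the Grassmannian, and is nonempty by a direct Gaussian-elimination construction producing one good $V$, which the paper's proof presumably carries out explicitly.

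\textbf{Main obstacle.} The real difficulty is exactly the explicit construction of a single complex subspace $V$ of complex dimension $g-(m+k)$ transverse to the real subspace $V_0$ of real dimension $2m+2k$; the dimension bookkeeping ($\dim_\IR V_0 + \dim_\IR V = 2g$) is necessary but not sufficient because two real subspaces of complementary dimension can fail to be transverse, and here $V$ is constrained to be complex. I expect the author handles this by induction on $k$ (or on $\dim T - \dim W$): if $V_0 = W$ (the case $k=0$) take $V$ to be any complex complement of $W$; for the inductive step pick a vector $w \in V_0 \setminus (W + $ small complex subspace$)$, show $\IC w \cap V_0$ is small, quotient by a line, and recurse — the bookkeeping being that each unit increase in $k$ costs at most one unit of complex dimension in $V$.
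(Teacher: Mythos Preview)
Your proposal correctly isolates the difficulty --- real dimension count alone does not force transversality when $V$ is constrained to be complex --- and correctly observes that it suffices to exhibit a single good $V$. But you stop precisely at that construction; the inductive sketch at the end (``pick $w\in V_0$, quotient by a line, recurse'') is not fleshed out enough to check, and the earlier attempt via $V\cap(\IC V_0)=0$ you rightly discard.

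The paper does not induct on $k$. Its mechanism is a two-step reduction followed by an explicit formula. First quotient by the complex subspace $W$ to reduce to $m=0$. Second, and this is the idea you are missing, let $V_0^J = V_0\cap JV_0$ be the largest complex subspace contained in $V_0$ and quotient by it: in $T' = T/V_0^J$ the image $V_0'$ of $V_0$ is a real subspace of dimension $2k'$ (with $k' = k - \dim_\IC V_0^J$) containing \emph{no} nonzero complex subspace, hence $V_0'\cap JV_0' = 0$ and $\IC V_0'$ has complex dimension exactly $2k'$. Now one is in the model case $U=\IC^{2k'}$ with $V_0' = \IR^{2k'}$ the real part, and there the explicit vectors $v_i = e_{2i-1}+\sqrt{-1}\,e_{2i}$ span a complex $k'$-plane meeting $\IR^{2k'}$ only in $0$. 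Extend this by a complex complement of $\IC V_0'$ in $T'$ and lift back.

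The conceptual point your first attempt missed is that $V$ need not avoid all of $\IC V_0$; the paper deliberately places part of $V$ \emph{inside} $\IC V_0$ (the complex $k'$-plane transverse to the real form), and only the remaining directions are taken outside. This is exactly why the na\"ive bound $\dim_\IC(\IC V_0)\le m+k$ you sought is both false and unnecessary.
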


Before proving this lemma, let us do the following preparation.

\begin{lemma}
Let $\IC^{2k}$ be the standard complex vector space of dimension $2k$
and let $\IR^{2k} \subset \IC^{2k}$ be the real part of
$\IC^{2k}$, \textit{i.e.} $\IC^{2k}
= \IR^{2k} \oplus \sqrt{-1} \IR^{2k}$. Then there exists a vector subspace $V$ of $\IC^{2k}$ of dimension $k$ such that $V \cap \IR^{2k} = 0$. 
\end{lemma}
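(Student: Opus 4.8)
The plan is to exhibit an explicit complex subspace $V$ and verify the two required properties by a direct computation; the only point worth noting is that the evenness of $2k$ is what makes the construction possible, so one pairs up the coordinates rather than searching for a canonical choice.

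First I would split the coordinates, identifying $\IC^{2k}$ with $\IC^k \times \IC^k$ in such a way that the real part $\IR^{2k}$ becomes $\IR^k \times \IR^k$. I then set
\[
V = \{\, (u, \sqrt{-1}\, u) : u \in \IC^k \,\},
\]
the graph of the $\IC$-linear endomorphism $u \mapsto \sqrt{-1}\, u$ of $\IC^k$. As the graph of a $\IC$-linear map, $V$ is a $\IC$-linear subspace of $\IC^{2k}$, and projection to the first factor restricts to a $\IC$-linear isomorphism $V \cong \IC^k$; hence $\dim_\IC V = k$, as required.

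Second I would check that $V \cap \IR^{2k} = 0$. If $(u, \sqrt{-1}\, u)$ lies in $\IR^k \times \IR^k$, then both $u \in \IR^k$ and $\sqrt{-1}\, u \in \IR^k$ hold, so every coordinate of $u$ is simultaneously real and purely imaginary, forcing $u = 0$. Thus $V$ has the stated properties.

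I do not expect a genuine obstacle here: this is elementary linear algebra, and the only conceptual step is not to insist on a generic or intrinsic choice of $V$ but simply to write one down, exploiting that the ambient dimension $2k$ is even so that it decomposes into two halves of equal dimension.
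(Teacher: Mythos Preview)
Your proof is correct and is essentially the same construction as the paper's: the paper takes $V$ to be the span of $v_i = e_{2i-1} + \sqrt{-1}\,e_{2i}$ for $i=1,\ldots,k$, which is exactly the graph of $u \mapsto \sqrt{-1}\,u$ on $\IC^k$ inside $\IC^k\times\IC^k$ after a harmless permutation of coordinates. Your graph description is slightly more conceptual, but the verification is identical.
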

\begin{proof}
For any $j = 1,\ldots,2k$, we let $e_j=(0,\ldots,0,1,0,\ldots,0) \in \IC^{2k}$ be the vector with the $j$-th entry being $1$ and the other entries being $0$.

For any $i = 1,\ldots,k$, we let $v_i = e_{2i-1} + \sqrt{-1} e_{2i} \in \IC^{2k}$. We show that the complex vector space $V = \IC v_1 + \cdots + \IC v_k$ satisfies the desired property.

Indeed, $\dim V = k$ as $v_1,\ldots,v_k$ are $\IC$-linearly
independent. 
So it remains to show $V \cap \IR^{2k} = 0$. Any vector in $V$ is of the form $c_1 v_1 + \cdots + c_k v_k$ for some $c_1,\ldots,c_k \in \IC$. If $c_1 v_1 + \cdots + c_k v_k \in \IR^{2g}$, then we have
\[
c_i \in \IR\text{ and } \sqrt{-1}c_i \in \IR\text{ for all }i=1,\ldots,k.
\]
Thus $c_1=\cdots=c_k=0$.
\end{proof}

\begin{lemma}
Let $U$ be a $\IC$-vector space of dimension $2k$
 and let $V_0$ be a vector subspace of $U_{\IR}$ of dimension $2k$
 such that $\IC V_0 = U$. Then there exists a vector subspace $V$ of $U$ of dimension $k$ such that $V \cap V_0 = 0$.
\end{lemma}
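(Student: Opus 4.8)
The plan is to reduce the statement to the special case $U = \IC^{2k}$, $V_0 = \IR^{2k}$ treated in the previous lemma, by exhibiting a suitable $\IC$-linear isomorphism $U \to \IC^{2k}$ that carries $V_0$ onto $\IR^{2k}$.

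First I would pick a basis $w_1,\ldots,w_{2k}$ of the real vector space $V_0$. Because $\IC V_0 = U$, the vectors $w_1,\ldots,w_{2k}$ span $U$ over $\IC$; since $\dim_\IC U = 2k$, they must in fact form a $\IC$-basis of $U$. Let $\phi \colon U \to \IC^{2k}$ be the $\IC$-linear isomorphism determined by $\phi(w_j) = e_j$ for $j = 1,\ldots,2k$, where $e_1,\ldots,e_{2k}$ denotes the standard basis of $\IC^{2k}$ (so that $\IR^{2k} \subset \IC^{2k}$ is the set of their real linear combinations). Any element of $V_0$ has the form $\sum_j a_j w_j$ with $a_j \in \IR$, and $\phi$ sends it to $\sum_j a_j e_j \in \IR^{2k}$ by $\IC$-linearity; hence $\phi(V_0) \subset \IR^{2k}$, and since both sides have real dimension $2k$ we get $\phi(V_0) = \IR^{2k}$.

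Next I would invoke the previous lemma applied to $\IC^{2k}$ and $\IR^{2k}$ to obtain a $\IC$-vector subspace $V' \subset \IC^{2k}$ with $\dim_\IC V' = k$ and $V' \cap \IR^{2k} = 0$. Setting $V = \phi^{-1}(V')$ then yields a $\IC$-vector subspace of $U$ of dimension $k$, and $V \cap V_0 = \phi^{-1}(V') \cap \phi^{-1}(\IR^{2k}) = \phi^{-1}(V' \cap \IR^{2k}) = 0$, which is exactly what is wanted.

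I do not expect a genuine obstacle here; the proof is purely a matter of setting up the right $\IC$-basis. The only two points that deserve a moment's attention are that $w_1,\ldots,w_{2k}$ is automatically a $\IC$-basis of $U$ — this is precisely where the hypothesis $\IC V_0 = U$, combined with the dimension count $\dim_\IC U = 2k = \dim_\IR V_0$, enters — and that the $\IC$-linearity of $\phi$ guarantees that $\IR$-linear combinations of the $w_j$'s are sent into the real part $\IR^{2k}$ of $\IC^{2k}$. With those two observations in hand, everything reduces immediately to the already-proved case.
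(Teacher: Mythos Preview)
Your proof is correct and follows essentially the same approach as the paper: choose an $\IR$-basis of $V_0$, observe via $\IC V_0 = U$ and the dimension count that it is also a $\IC$-basis of $U$, use the resulting identification $U \cong \IC^{2k}$ to carry $V_0$ onto $\IR^{2k}$, and apply the previous lemma.
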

\begin{proof}
We take a basis of $V_0$ which is an $\IR$-vector space, and  call it $e_1,\ldots,e_{2k}$. Since $\IC V_0 = U$, we have $U = \IC e_1 + \cdots + \IC e_{2k}$. But $\dim U = 2k$, so $e_1,\ldots,e_{2k}$ form a basis of $U$.

Now under the identification $U = \IC^{2k}$ via the basis $e_1,\ldots,e_{2k}$, the vector subspace $V_0$ of $U$ becomes the real part of $\IC^{2k}$. We can apply the previous lemma to conclude.
\end{proof}

Now we are ready to prove Lemma~\ref{lem:genericCvs}. 

\begin{proof}[Proof of Lemma~\ref{lem:genericCvs}]
We begin by showing that we can reduce to the case $m=0$. If the lemma
is known when $m=0$, then we apply it to the $\IC$-vector space $T/W$
and the image of the $\IR$-vector space $V_0$ in this quotient to get
a  vector subspace $V'$ of $T/W$ of dimension $g-(m+k)$. 
Let $W^\perp$ be a vector subspace of $T$ 
with $W+W^\perp = T$ and $W\cap W^\perp=0$. 
Then the natural linear map $W^\perp \rightarrow T/W$ is an isomorphism.
The preimage of $V'$ under this map 
 is the vector subspace which we desire.

Now we treat the case $m=0$, note that $W=0$ in this case.
As above we write $J$ for multiplication by $\sqrt{-1}$ on $T$. 
Then $(\IC V_0)_{\IR} = V_0 + J V_0$.

\noindent \fbox{Case (i)} The $\IR$-vector space $V_0$ contains no non-zero vector subspace of $T$.

In this case $V_0\cap J V_0$, being a $J$-invariant vector subspace of
 $V_0$, must be trivial.
 So $\dim (\IC V_0)_{\IR} = \dim V_0 + \dim J V_0 = 2k+2k=4k$ and hence
 $\dim \IC V_0 = 2k\le g$. Thus we can apply the previous lemma to $U = \IC
 V_0$ and $V_0$ to get a  vector subspace $V'$ of $\IC V_0$ of
 dimension $k$ such that $V' \cap V_0 = 0$. Then it suffices to take
 $V = V' + V''$ for any vector subspace  $V''\subset T$
with $\IC V_0+V''=T$ and $\IC V_0\cap V''=0$. 

\noindent \fbox{Case (ii)} General case.

We write $V_0^J$ for the largest $J$-invariant vector subspace
of $V_0$. 
As it is $J$-invariant by definition,
 we consider it as a $\IC$-vector space.
 Then $T' = T/V_0^J$ is a $\IC$-vector space of dimension $g- \dim
 V_0^J$, and $V_0' = V_0 / V_0^J$ is a  vector subspace of $T'_{\IR}$ of dimension $2 (k - \dim V_0^J)$.

We claim that $V_0'$ contains no non-zero vector subspace of the
$\IC$-vector space $T'$. If $U'$ is a vector subspace of
$T'$ with 
$U'\subset V_0'$, then its preimage under the quotient $T \rightarrow T' =
T/V_0^J$ is a vector subspace of $T$ that is contained in $V_0$ and that
contains $V_0^J$. The maximality of $V_0^J$ yields $U'=0$.

Now we can apply case (i) to $T'$ and $V_0' \subset T'_{\IR}$ to get a
vector subspace $V'$ of $T'$ of dimension $(g-\dim V_0^J) - (k-\dim
V_0^J) = g-k$ such that $V' \cap V_0' = 0$. Let $V''$ be the preimage
of $V'$ under the quotient $T \rightarrow T' = T/V_0^J$. Then $V''$ is
a vector subspace of $T$ with dimension $g-k + \dim V_0^J$ such that
$V'' \cap V_0 = V_0^J$. Recall that $V_0^J$ is a vector subspace of
the $\IC$-vector space $T$, and hence a vector subspace of $V''$. Now it suffices to let $V$ be any complement of $V_0^J$ in $V''$.
\end{proof}

Let $g$ and $T$ be as in Lemma \ref{lem:genericCvs} and suppose $k\ge 0$ is an
integer. 
Let  $\gra{T_\IR,2k}$ denote the set of all $2k$-dimensional vector subspaces
of $T_\IR$. 
On identifying $T_\IR$ with
$\IR^{2g}$ we may use Pl\"ucker coordinates
to identify $\gra{T_\IR,2k}$ with a closed subset of $\IP^{N}(\IR)$
where
$N={2g \choose
2k}-1$.

Note that $\IP^N(\IR)$ is equipped with the archimedean topology that makes it a
compact Hausdorff space. We will use this topology and  its induced
subspace topology on  $\gra{T_\IR,2k}$. 

Multiplication by $\sqrt{-1}$ induces an $\IR$-linear automorphism 
$T_\IR\rightarrow T_\IR$ and hence a selfmap 
$\gra{T_{\IR},2k}\rightarrow \gra{T_{\IR},2k}$. 
By the Cauchy-Binet Formula this selfmap can be described 
on $\gra{T_{\IR},2k}\subset \IP^N(\IR)$
by linear forms. 
Its fixpoints are precisely the $2k$-dimensional
 vector subspaces of $T_{\IR}$
that are $k$-dimensional vector subspaces of 
$T$. 
We write $\gra{T,k}$ for the set of these fix points. It is a closed
subset of $\gra{T_{\IR},2k}$. 
In this notation we can use Lemma \ref{lem:genericCvs} to prove the following.

\begin{lemma}\label{LemmaGenericSprime}
Let $T$ be a $\IC$-vector space of dimension $g$ 
and suppose $W$ is a vector subspace of $T$ with
$\dim W=m\le g-1$.
Let $V$ be a vector subspace of $T_{\IR}$ of dimension $2(m+1)$
 that contains $W$. 
There exists a non-empty open (in the archimedean topology) subset $\mathfrak{U}\subset \gra{T,g-m-1}$
such that  $V'\cap V=0$ for all $V'\in \mathfrak{U}$. 
\end{lemma}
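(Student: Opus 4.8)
The plan is to obtain non-emptiness directly from Lemma~\ref{lem:genericCvs} and openness from the elementary fact that transversality of two subspaces of complementary dimension is a Zariski-open, hence archimedean-open, condition.

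First I would invoke Lemma~\ref{lem:genericCvs} with the same $\IC$-vector space $T$ and the same subspace $W$ (so $\dim W = m$), letting the ``$V_0$'' there be our $V$ and taking $k = 1$; this is legitimate since $\dim V = 2(m+1) = 2m + 2\cdot 1$ and since $m \le g-1$ ensures $g-(m+1) \ge 0$. The lemma then produces a vector subspace $V_1 \subset T$ with $\dim V_1 = g-(m+1) = g-m-1$ and $V_1 \cap V = 0$. Thus $\mathfrak{U} := \{V' \in \gra{T, g-m-1} : V' \cap V = 0\}$ contains $V_1$ and is non-empty; I would simply take this $\mathfrak{U}$ as the desired open set. (The boundary case $m = g-1$ is the trivial observation that $\gra{T,0} = \{0\}$ and $\{0\} \cap V = 0$.)

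It then remains to show $\mathfrak{U}$ is open in $\gra{T, g-m-1}$. The key point is a dimension count: for every $V' \in \gra{T, g-m-1}$, regarded inside $\gra{T_\IR, 2(g-m-1)}$, one has $\dim_\IR V' + \dim_\IR V = 2(g-m-1) + 2(m+1) = 2g = \dim_\IR T_\IR$, so that ``$V' \cap V = 0$'' is equivalent to $V' \oplus V = T_\IR$. Fixing a nonzero decomposable vector $\omega_V \in \Lambda^{2(m+1)} T_\IR$ representing $V$, I would observe that $V' \mapsto \omega_V \wedge \omega_{V'} \in \Lambda^{2g} T_\IR \cong \IR$, where $\omega_{V'}$ is the Plücker vector of $V'$, is a linear form in the Plücker coordinates of $V'$, and that $V' \cap V = 0$ holds exactly when this value is nonzero. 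Since nonvanishing of a fixed linear form defines an open subset of $\IP^N(\IR)$ in the archimedean topology, the set of $V' \in \gra{T_\IR, 2(g-m-1)}$ with $V' \cap V = 0$ is archimedean-open; intersecting with the closed subset $\gra{T, g-m-1} \subset \gra{T_\IR, 2(g-m-1)}$ then shows that $\mathfrak{U}$ is open in $\gra{T, g-m-1}$ for the subspace topology used in the statement.

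I do not expect a genuine obstacle here. The only things to watch are the real-versus-complex dimension bookkeeping — so that ``$V' \cap V = 0$'' is recognized as the expected-codimension (transversality) condition, which holds on a non-empty open set, rather than one that fails generically — and applying Lemma~\ref{lem:genericCvs} with precisely $k = 1$, so that its output dimension $g-(m+1)$ is exactly the index $g-m-1$ appearing in $\gra{T, g-m-1}$.
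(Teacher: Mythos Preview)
Your proposal is correct and matches the paper's proof essentially line for line: the paper also applies Lemma~\ref{lem:genericCvs} with $k=1$ for non-emptiness, and for openness it invokes the Cauchy--Binet formula to say that $\{V'\in\gra{T_\IR,2(g-m-1)}:V'\cap V=0\}$ is the complement of the zero locus of a single homogeneous linear form in Pl\"ucker coordinates, then intersects with $\gra{T,g-m-1}$. Your wedge-product formulation $\omega_V\wedge\omega_{V'}\neq 0$ is exactly this linear form, so the two arguments are the same.
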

\begin{proof}
By the Cauchy-Binet Formula the set 
\begin{equation*}
\{ V' \in \gra{T_\IR,2(g-m-1)} : V'\cap V = 0 \} 
\end{equation*}
is the complement in $\gra{T_\IR,2(g-m-1)}$ of the zero set of a
homogeneous linear
polynomial defined on the projective coordinates of $\IP^N$. 
So the set in question is open in $\gra{T_\IR,2(g-m-1)}$. Its
intersection $\mathfrak{U}$ with $\gra{T,g-m-1}$ lies open in $\gra{T,g-m-1}$. 
But $\mathfrak{U}\not=\emptyset$ by Lemma \ref{lem:genericCvs} applied to the case $k=1$. 
\end{proof}

\subsubsection{Verification of (v) in  Proposition \ref{PropositionConstructionOfAuxiliarySubvariety}}
We retain  the conventions made in
$\mathsection$\ref{SectionAuxiliarySubvariety} and
$\S$\ref{SubsectionConstructionStep1}. So $P$ is as in
the hypothesis.
We set up the various vector spaces needed in Lemma
\ref{LemmaGenericSprime}. 

For $T$ we take the tangent space $T_P(\an{\cA}_s)$ which is a
$\IC$-vector space of dimension $g$. 
Note that $P$ and $\pi$ are defined over $F$, so this complex space $T$ also descends to $F$.

For $W$ we take the image of $T_P(\sman{(X_s)})$ under the
linear map
\[
T_P(\sman{(X_s)})\rightarrow T_P(\an{\cA}_s)
\]
induced by the inclusion
$\sm{(X_s)}\rightarrow \cA_s$  (recall  that $P$ is a smooth
point of $X_s$). 
Its dimension equals $\dim X_s = \dim X - 1 =
g-n\le g-1$ which we define as $m$. 

Finally,  we take $V = \image{T_P(b|_{\sman{X}\cap\cA_\unitdisc})}$ which is a vector
subspace of the $\IR$-vector space $T_{b(P)}(\IT^{2g})$. 
As $b|_{\an{\cA}_s} \colon \an{\cA}_s \rightarrow \IT^{2g}$ is an
isomorphism of real analytic spaces, we can identify 
$T_P(\an{\cA}_s)=T$ with $T_{b(P)}(\IT^{2g})$ as $\IR$-vector spaces. 
Therefore, $V\subset T_{\IR}$ as in the setup of Lemma
\ref{LemmaGenericSprime}. Note that $V$ does not carry a complex
structure as we treat $\IT^{2g}$  as a real analytic space.

Our hypothesis (\ref{eq:dimTPhyp}) implies 
$\dim V = 2 \dim X = 2(m+1)$. 
So the hypothesis of Lemma \ref{LemmaGenericSprime} is satisfied.

So there exists $\mathfrak U$ open in  $\gra{T,g-m-1}$, the latter is
a compact Hausdorff space. Its points correspond to 
$(g-m-1)$-dimensional vector subspaces of the $\IC$-vector space $T$. 

In  $\mathsection$\ref{SubsectionConstructionStep1} we saw that
 a generic choice of $f_1,\ldots,f_{g+1-n}$ vanishing at $P$ yields
 properties (i)-(iv) in the proposition. To obtain (v) we must
 make sure that $V\cap \image{T_P(b|_{\sman{(Z_s)}})}=0$. 
According to Lemma
 \ref{LemmaGenericSprime} this holds if
$\image{T_P(b|_{\sman{(Z_s)}})}$, a vector  subspace of the $\IC$-vector space
$T_P(\an{\cA}_s)$,  under the identification
of $T_{b(P)}(\IT^{2g})$ with $T_P(\an{\cA}_s)$   made above,
lies in  $\mathfrak U$.
Ranging over all possible choices of $f_1,\ldots,f_{g+1-n}$ as in 
$\mathsection$\ref{SubsectionConstructionStep1} yields 
points in $U(F)$ for some Zariski open dense subset $U\subset\gra{T,g-m-1}$.
As $\mathfrak U$ is open in the archimedean topology
and since $U(F)$ lies dense in $\gra{T,g-m-1}$
we have $\mathfrak
U \cap U(F)\not=\emptyset$. 
Any element in $\mathfrak U\cap U(F)$ is sufficient and this completes
the proof.\qed

\subsection{A Detour to B\'ezout's Theorem}

In this subsection we prove the following degree bound on long
intersections. It will be used to prove the ``Moreover'' part of Proposition~\ref{PropositionConstructionOfAuxiliarySubvariety} in the next subsection.
In this subsection we temporarily allow $F$ to be any algebraically
closed field of characteristic $0$. 

\begin{proposition}\label{prop:uniformintersectionbound}
Suppose $V_1,\ldots,V_m$ are irreducible closed subvarieties of
$\IP_F^n=\IP^n$ such that $\deg(V_i)\le \delta$ for all $i \in \{1,\ldots,m\}$. 
Let $C_1,\ldots,C_r$ be all the irreducible components of $V_1\cap \cdots \cap V_m$ of top dimension which we denote by $k$, then 
\begin{equation}
\label{eq:uniformdegreebound}
\sum_{i=1}^r \deg(C_i) \le \delta^{n-k}. 
\end{equation}
\end{proposition}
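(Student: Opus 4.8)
The plan is to prove the bound by induction on $m$, the number of varieties, using a refined form of B\'ezout's theorem that tracks \emph{all} the components of an intersection (not merely those of expected dimension). The base case $m=1$ is trivial: the only component is $V_1$ itself, of dimension $k = \dim V_1$, and $\deg(V_1) \le \delta = \delta^{n-k} \cdot \delta^{k - \dim V_1}$... actually one must be slightly careful, since $n-k$ could be small; but when $m=1$ we have $k=\dim V_1$ and $\deg V_1 \le \delta \le \delta^{n-k}$ as long as $n - k \ge 1$, which holds because $V_1 \subsetneq \IP^n$ (if $V_1 = \IP^n$ then $\delta = 1$ and the statement is vacuous). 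For the inductive step, set $W = V_1 \cap \cdots \cap V_{m-1}$ and let $W = \bigcup_j W_j$ be its decomposition into irreducible components, of various dimensions $k_j = \dim W_j$. The key input I would use is the refined B\'ezout inequality of Fulton (\cite{Fulton}, Example 12.3.1) or the form due to Vogel, which states: for irreducible $V, V' \subset \IP^n$ and any irreducible component $C$ of $V \cap V'$,
\[
\sum_{C} \deg(C) \le \deg(V)\deg(V'),
\]
where the sum is over \emph{all} irreducible components $C$ of $V \cap V'$, regardless of dimension. More generally, summing degrees of all components of $W \cap V_m$ is bounded by $\big(\sum_j \deg W_j\big)\deg(V_m)$.

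The main step is then a careful bookkeeping of dimensions. Each top-dimensional component $C_i$ of $V_1 \cap \cdots \cap V_m$, of dimension $k$, lies inside some component $W_{j(i)}$ of $W$; and $C_i$ is a component of $W_{j(i)} \cap V_m$. By the inductive hypothesis applied to $W = V_1 \cap \cdots \cap V_{m-1}$ with the components grouped by dimension — or rather, by a version of the inductive hypothesis that bounds $\sum \deg W_j$ over components of each fixed dimension $d$ by $\delta^{n-d}$ — one gets $\sum_{j : \dim W_j = d} \deg W_j \le \delta^{n-d}$. The issue is that $\dim W_{j(i)} \ge k$, so the components $W_{j(i)}$ containing our $C_i$ could have dimension strictly larger than $k$. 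For such a $W_{j(i)}$ of dimension $d \ge k+1$, the component $C_i$ of $W_{j(i)} \cap V_m$ has dimension $k \le d-1 = \dim W_{j(i)} \cap V_m$ expected, so $C_i$ has at least the expected codimension in $W_{j(i)}$; the degree contribution is controlled by $\deg W_{j(i)} \cdot \deg V_m \le \delta^{n-d} \cdot \delta$, and since $d \ge k+1$ this is $\le \delta^{n-k}$. For $W_{j(i)}$ of dimension exactly $k$, we must have $C_i = W_{j(i)}$ (a $k$-dimensional irreducible variety containing a $k$-dimensional irreducible variety), and $\sum \deg W_{j(i)} \le \delta^{n-k}$ directly. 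Summing these contributions with care — making sure no double counting across the $W_j$'s, which is automatic since the $C_i$ are distinct and each lies in a unique $W_j$ of appropriate dimension once we fix a minimal such — yields $\sum_i \deg C_i \le \delta^{n-k}$.

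The hard part, and where I would spend the most care, is formulating the inductive hypothesis strongly enough to close the loop: the naive statement only bounds the \emph{top-dimensional} components, but the induction needs control of components of \emph{every} dimension simultaneously. So I would restate and prove by induction on $m$ the stronger claim: \emph{for every $d \ge 0$, the sum of $\deg(C)$ over irreducible components $C$ of $V_1 \cap \cdots \cap V_m$ with $\dim C = d$ is at most $\delta^{n-d}$} (interpreting $\delta^{n-d}$ as the bound, and noting $n - d \ge 1$ whenever the intersection is proper). The refined B\'ezout inequality makes the inductive step go through: passing from $m-1$ to $m$, each component of the new intersection of dimension $d$ sits in a component of the old intersection of dimension $d$ or $d+1$ (it cannot drop by more than one when intersecting with a single $V_m \subsetneq \IP^n$, unless it was already contained in $V_m$), and in either case the degree is absorbed by $\delta^{n-d-1}\cdot \delta = \delta^{n-d}$ or $\delta^{n-d}$ respectively. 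One must also handle the degenerate possibilities $V_m \supseteq$ some $W_j$ (then that $W_j$ survives with no degree change) and $V_i = \IP^n$ (excluded by $\delta \ge 1$ and properness). Citing Fulton's refined B\'ezout as the black box keeps the argument short; the remainder is the dimension accounting sketched above.
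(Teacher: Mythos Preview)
Your inductive scheme has two concrete gaps. First, the claim that ``it cannot drop by more than one when intersecting with a single $V_m \subsetneq \IP^n$'' is simply false when $V_m$ is not a hypersurface: if $\codim V_m = c > 1$ then a component $W_j$ not contained in $V_m$ can produce components of $W_j\cap V_m$ of any dimension between $\dim W_j - c$ and $\dim W_j - 1$. Second, even granting that claim (e.g.\ after a reduction to hypersurfaces, which you do not make), your strengthened induction does not close: a dimension-$d$ component of the new intersection can arise from an old component $W_j$ with $\dim W_j = d$ (and $W_j\subset V_m$) \emph{or} from one with $\dim W_j = d+1$ (and $W_j\not\subset V_m$); summing the two contributions gives at best $\delta^{n-d} + \delta^{n-(d+1)}\cdot\delta = 2\,\delta^{n-d}$, not $\delta^{n-d}$. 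So the hypothesis you propose, while in fact true, is not self-reproducing under your step.

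The fix on your side is to induct instead on the weighted quantity $\sum_C \deg(C)\,\delta^{\dim C}\le \delta^n$, summing over \emph{all} components $C$ of $V_1\cap\cdots\cap V_m$; this does close (for $W_j\not\subset V_m$ one uses $\dim C\le \dim W_j -1$ together with the refined B\'ezout bound $\sum_{C}\deg C\le \deg W_j\cdot\delta$, and for $W_j\subset V_m$ the contribution is unchanged), and it immediately yields $\sum_{\dim C = k}\deg C\le \delta^{n-k}$. The paper takes a rather different route: it first replaces each $V_i$ by finitely many hypersurfaces of degree $\le\delta$ cutting it out (a lemma of Faltings), and then, by a genericity/avoidance argument, selects just $n-k$ hypersurfaces $H_1,\ldots,H_{n-k}$ of degree $\le\delta$ whose intersection still contains every $C_i$ but is a proper intersection; ordinary B\'ezout on $H_1\cap\cdots\cap H_{n-k}$ then gives the bound directly. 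Your approach, once repaired, is a legitimate alternative and avoids the Faltings lemma, at the cost of invoking the refined B\'ezout inequality as a black box.
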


The crucial aspect of (\ref{eq:uniformdegreebound}) is that the
right-hand side is independent of $m$.

\begin{lemma}\label{LemmaSubvarietyReplacedByHypersurfaces}
Let $V$ be an irreducible closed subvariety of $\IP^n$ of degree
$\delta$. Then there exist finitely many irreducible hypersurfaces of $\IP^n$ of degree at most $\delta$ such that $V$ is their intersection.
\end{lemma}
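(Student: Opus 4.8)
The plan is to induct on the codimension $c=\codim_{\IP^n}(V)=n-\dim V$, proving the slightly more precise statement that every irreducible closed subvariety $V\subset\IP^n$ is the intersection of finitely many irreducible hypersurfaces of $\IP^n$ of degree at most $\deg V$. The base case $c=1$ is trivial: then $V$ is itself an irreducible hypersurface of degree $\delta$, and it is its own (one-element) intersection.

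For the inductive step, fix $V$ of codimension $c\ge 2$ and degree $\delta$. For each point $p\in\IP^n(F)\setminus V$ I would consider the linear projection $\pi_p\colon\IP^n\dashrightarrow\IP^{n-1}$ away from $p$. Since $p\notin V$ this restricts to a morphism on $V$ with finite fibres (no line through $p$ can lie in $V$), so the image $W_p=\pi_p(V)$ is a closed irreducible subvariety of $\IP^{n-1}$ with $\dim W_p=\dim V$ and $\deg W_p\le\deg V=\delta$; the degree bound is the standard fact that linear projection from a point off $V$ does not increase the degree (it follows from the projection formula, as projection pulls hyperplanes back to hyperplanes). Let $C_p\subset\IP^n$ be the cone over $W_p$ with vertex $p$, i.e.\ the Zariski closure of $\pi_p^{-1}(W_p)$: it is irreducible of dimension $\dim W_p+1=\dim V+1$, hence of codimension $c-1\ge 1$, and $\deg C_p=\deg W_p\le\delta$.

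The geometric heart of the argument is the set-theoretic identity $V=\bigcap_{p\in\IP^n(F)\setminus V}C_p$. The inclusion $V\subset C_p$ holds for every $p$; for the reverse I would note that for $x\in\IP^n(F)\setminus V$ the cone over $V$ with vertex $x$ has dimension at most $\dim V+1\le n-1$ — here the hypothesis $c\ge 2$ enters — so one can choose $p$ outside both this cone and $V$, and then the line through $p$ and $x$ is disjoint from $V$, which forces $x\notin C_p$. Since $\IP^n$ is Noetherian, this intersection reduces to a finite one: $V=\bigcap_{i=1}^N C_{p_i}$ for suitable $p_1,\dots,p_N$. Each $C_{p_i}$ has codimension $c-1$ and degree at most $\delta$, so by the induction hypothesis it is an intersection of finitely many irreducible hypersurfaces of degree at most $\deg C_{p_i}\le\delta$; collecting these over $i$ presents $V$ itself as the intersection of finitely many irreducible hypersurfaces of degree at most $\delta$, completing the induction.

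I expect the main obstacle to be organizational rather than deep: one must keep track that the degree never exceeds $\delta$ along the recursion and that the codimension drops by exactly one at each projection step — both follow from $\dim C_p=\dim V+1$ together with $c\ge2$ — and one must invoke the standard inequality $\deg\pi_p(V)\le\deg V$. If one prefers to avoid cones, a variant is to first produce possibly reducible hypersurfaces of degree $\le\delta$ cutting out $V$ and then replace each by an irreducible component containing $V$; but the projection argument yields irreducible hypersurfaces directly.
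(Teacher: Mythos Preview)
Your argument is correct. The paper itself does not give a self-contained proof of this lemma: it simply cites Faltings's \emph{Diophantine Approximation on Abelian Varieties}, Proposition~2.1. So there is no ``paper's approach'' to compare to in detail.

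That said, your projection-and-cone argument is essentially the classical proof, and is in substance the argument Faltings gives. The key points you identify --- that projection from $p\notin V$ gives $\deg W_p\le\deg V$ via the projection formula, that the cone $C_p$ over $W_p$ is irreducible of codimension $c-1$ and degree $\deg W_p$, that $\bigcap_p C_p=V$ because for $x\notin V$ the join of $x$ and $V$ has dimension at most $\dim V+1<n$, and that Noetherianity reduces this to a finite intersection --- are all sound, and the induction on codimension terminates cleanly. One cosmetic remark: your ``slightly more precise statement'' is in fact identical to the lemma as stated, so you may drop that qualifier.
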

\begin{proof}
This is Faltings's  \cite[Proposition 2.1]{Faltings:DAAV}. 
\end{proof}

\begin{proof}[Proof of Proposition~\ref{prop:uniformintersectionbound}]
By Lemma~\ref{LemmaSubvarietyReplacedByHypersurfaces}, we may assume
that every $V_i$ is an irreducible hypersurface for all
$i\in\{1,\ldots,m\}$. Then $V_i = \zeroset{f_i}$ is the zero locus of an irreducible homogeneous polynomial $f_i \in F[X_0,\ldots,X_n]$ of degree at most $\delta$.

We shall prove inductively on $s\in \{1,\ldots,n-k\}$ that there exist
hypersurfaces
$H_1,\ldots,H_{n-k}$ (possibly reducible) of degree at most $\delta$
such that
for all $s\in \{1,\ldots,n-k\}$,
\begin{enumerate}
\item[(i)]  each irreducible component of
$\bigcap_{j=1}^{s} H_j$ has  dimension at most $n-s$
\item[(ii)] and $C_i \subset  \bigcap_{j=1}^{s} H_j$ for each $i\in \{1,\ldots,r\}$.
\end{enumerate}
Assume this for $s=n-k$. Then each $C_i$, being of dimension $k$, is an irreducible component of $\bigcap_{j=1}^{n-k} H_j$, and thus
\[
\sum_{i=1}^r \deg C_i \le \prod_{j=1}^{n-k} \deg H_j \le \delta^{n-k}
\]
by B\'ezout's Theorem, \textit{cf.} \cite[Example 8.4.6]{Fulton}
which holds here even though the hypersurfaces $H_j$ may be reducible.

Let us take $H_1 = V_1$. Then $\deg H_1 \le \delta$.


Now suppose we have constructed $H_1,\ldots,H_{s-1}$ for some $2 \le s \le n-k$.

Let $W_1,\ldots,W_t$ be the irreducible components of
$H_1 \cap \cdots \cap H_{s-1}$. Let $l\in \{1,\ldots,t\}$. Since $s \le n-k$, we have $\dim
W_l > k$. By assumption each irreducible component of
$\bigcap_{i=1}^m \zeroset{f_i} = \bigcap_{i=1}^m V_i$ has dimension at most $k$, so there
exists some $i_0 \in \{1,\ldots,m\}$ such that $\widetilde f_l = f_{i_0}$ does not
vanish on $W_l$. 
Then $\widetilde f_l \in F[X_0,\ldots,X_n]$ has degree at most $\delta$
and vanishes on $C_1\cup\cdots\cup C_r$. 
We may assume that $\deg \widetilde f_l = \delta$ after possibly
multiplying $\widetilde f_l$ with a homogeneous polynomial of a suitable
degree in general position.

Let $F[X_0,\ldots,X_n]_{\delta}$ be the union of $0$ and the homogeneous polynomials in $F[X_0,\ldots,X_n]$ of degree $\delta$. We can identify $F[X_0,\ldots,X_n]_{\delta}$ with $\mathbb{A}^{\binom{n+\delta}{n}}(F) = F^{\binom{n+\delta}{n}}$. 
Then
\[
\{f \in \mathbb{A}^{\binom{n+\delta}{n}}(F) : 
f\text{ vanishes on $C_1\cup\cdots\cup C_r$}
\}
\]
is the set of $F$-points of a linear subvariety
 $L\subset \mathbb{A}^{\binom{n+\delta}{n}}$, and  $\{f \in L(F):
 f|_{W_l} \not= 0\}$ defines a   Zariski open $U_l$ 
in $L$ that is non-empty as $\widetilde f_l\in
 U_l(F)$. 
Now $L$ is irreducible and so $U_l$ is Zariski open and
 dense in $L$. 
In particular, the intersection 
\[
\Theta = \bigcap_{l=1}^t U_l(F)
\]
is non-empty. 

Now fix any $f_s\in\Theta$ and  let $H_s = \zeroset{f_s}$. Then $H_s$
has degree at most $\delta$ and no irreducible component of
$H_1 \cap \cdots \cap H_{s-1} \cap H_s$ is an irreducible component of
$H_1 \cap \cdots \cap H_{s-1}$. So the irreducible components of
$H_1\cap\cdots \cap H_{s}$ have dimension at most $n-s$ using (i) in
the case $s-1$.  Property (ii) clearly holds by the construction of
the $U_l$. 
\end{proof}

\subsection{Control of Bad Fibers}\label{SubsectionControlOfBadFibers}
In this subsection we prove the ``Moreover'' part of Proposition~\ref{PropositionConstructionOfAuxiliarySubvariety}.

Recall our setting: $\pi \colon \cA \rightarrow S$ is an abelian
scheme of relative dimension $g \ge 1$ over a smooth irreducible
curve, defined over $F\subset \IC$.
For simplicity we assume $F=\IC$. 
We have constructed an auxiliary subvariety $Z$ of $\cA$ in Proposition~\ref{PropositionConstructionOfAuxiliarySubvariety}. It remains to show that 
\[
\{t \in S(\IC) :  \text{$Z_t$ contains a positive dimensional coset
  in $\cA_t$}\}.
\]
is finite. It fact, we show that it follows from condition (iii) of Proposition~\ref{PropositionConstructionOfAuxiliarySubvariety}. More precisely we shall prove the following result.

\begin{proposition}
Let $Z$ be an irreducible closed subvariety of $\cA$ dominating $S$. Suppose $s \in S(\IC)$ such that 
 $Z_{s}$ contains no positive dimensional cosets in $\cA_{s}$. 
Then
\begin{equation*}
  \{ t \in S(\IC) :  \text{$Z_{t}$ contains a positive dimensional coset
  in $\cA_{t}$}\}
\end{equation*}
is finite. 
\end{proposition}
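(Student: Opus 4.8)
The plan is to set up a family over $S \times S$ (or rather over a suitable subset) parametrizing pairs $(t, C)$ where $C$ is a positive-dimensional coset contained in $Z_t$, show this family is a constructible set, and argue that the ``bad locus'' of $t \in S(\IC)$ is Zariski closed in $S$; since it omits the point $s$ by hypothesis, it must then be finite. The key tool is a uniform boundedness statement: by Proposition~\ref{PropositionConstructionOfAuxiliarySubvariety}(iv), or simply because $\cA \to S$ is flat and projective, we have $\deg Z_t \le D$ for all $t$, and each $\cA_t$ has degree $\le D$ as well; moreover positive-dimensional cosets in $\cA_t$ that lie in $Z_t$ must themselves have bounded degree (a coset $P + B \subset Z_t$ has $\deg B \le \deg Z_t$). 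So the set of such cosets, across all $t$, lives in a bounded family of cycles in $\IP^M$.

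First I would make precise the ``coset'' condition in algebraic family form. For a fixed abelian scheme and a bounded degree $d$, the data of an abelian subvariety $B \subset \cA_t$ of dimension $\ge 1$ and degree $\le d$, together with a translation point, is parametrized by a scheme of finite type over $S$: one can use the Hilbert scheme $\mathrm{Hilb}^{P}(\cA/S)$ for the finitely many relevant Hilbert polynomials $P$, restrict to the locally closed subscheme where the fiber is a coset of positive dimension (being a coset is a closed condition on the Chow variety, cf.\ the standard fact that translates of abelian subvarieties form a closed family), and then impose the incidence condition that the coset be contained in $Z_t$ (also closed). Call the resulting scheme $\mathcal{W}$, with its structure map $q \colon \mathcal{W} \to S$; by construction $t \in S(\IC)$ is ``bad'' if and only if $t \in q(\mathcal{W}(\IC))$. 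Since $q$ is a morphism of schemes of finite type over $\IC$, Chevalley's theorem gives that $q(\mathcal{W})$ is constructible in $S$, hence is either finite or contains a dense open subset of $S$ (as $S$ is an irreducible curve). The hypothesis that $s$ is \emph{not} bad rules out the latter, so $q(\mathcal{W})$ is finite, which is exactly the claim.

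The step I expect to be the main obstacle is verifying the uniform degree bound on the cosets and the finiteness of the relevant Hilbert polynomials --- i.e.\ setting up $\mathcal{W}$ so that it really is of finite type. The worry is that $Z_t$ could contain cosets of unbounded degree even though $\deg Z_t \le D$; this does not happen, because a positive-dimensional coset $P + B$ contained in an irreducible variety $Z_t$ of bounded degree has $\deg(P+B) \le \deg Z_t \le D$, since the coset is contained in the union of components of $Z_t$ through a general point and degrees of subvarieties are bounded by the degree of the ambient variety (here one uses $Z_t$ equidimensional of dimension $n-1$, from condition (i) plus \cite[Corollary III.9.6]{Hartshorne}). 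Given this bound, only finitely many Hilbert polynomials of subschemes of $\IP^M$ of degree $\le D$ and dimension between $1$ and $n-1$ occur, so $\mathrm{Hilb}(\cA/S)$ contributes only finitely many components, each of finite type over $S$. Alternatively, and perhaps more cleanly, one can avoid Hilbert schemes: use the Chow variety / cycle space of $\cA/S$ in bounded degree (again finite type), let $\mathcal{C}$ be the closed locus of effective cycles that are positive-dimensional cosets contained in the fiber of $Z$, note being a coset is a Zariski-closed condition by rigidity of abelian varieties (a subvariety $V$ of an abelian variety $\cA_t$ is a coset iff $V - V$ is an abelian subvariety iff $\dim(V+V-V) = \dim V$, an upper-semicontinuous-in-family condition), and then project to $S$. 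Either way the conclusion follows from constructibility of the image together with the single given non-bad point $s$.
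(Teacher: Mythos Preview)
Your Hilbert-scheme approach can be made to work, but as written it has two genuine gaps.

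The decisive one is the final step: from ``$q(\mathcal{W})$ is constructible in $S$'' and ``$s \notin q(\mathcal{W})$'' you \emph{cannot} conclude finiteness --- the set $S \setminus \{s\}$ is constructible, contains a dense open, and omits $s$. You need the bad locus to be \emph{closed}, not merely constructible. This is repairable: if it is infinite then it contains the generic point, so $Z_\eta \subset A$ contains a coset $P + B$; now $B$ extends to an abelian subscheme $\cB \subset \cA$ over the smooth curve $S$ (N\'eron models), $P$ extends to a section of the proper $S$-scheme $\cA/\cB$, and taking Zariski closures puts the resulting family of cosets inside $Z$, forcing $Z_s$ to contain a coset --- contradiction. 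But that argument is absent, and your upper-semicontinuity remark about $\dim(V+V-V)$ does not supply it (read literally, it would make the coset locus \emph{open}). A second gap is your degree bound: ``$\deg(P+B) \le \deg Z_t$ since degrees of subvarieties are bounded by the degree of the ambient variety'' is simply false (curves on a fixed surface in $\IP^3$ have unbounded degree). What is true, and what you need to make $\mathcal{W}$ of finite type, is Bogomolov's theorem \cite[Theorem~1]{BogomolovUeno} (cf.\ the proof of Proposition~\ref{prop:abvarhyperbolic}): the \emph{maximal} cosets in $Z_t$ are translates of abelian subvarieties whose degrees are bounded in terms of $\deg Z_t$ and the polarization. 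This is a real input, not a triviality.

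The paper's proof is entirely different and avoids both Hilbert schemes and Bogomolov. One fixes a large prime $\ell$, passes to a finite \'etale cover $S'$ with full level-$\ell$ structure, and for each nonzero torsion section $\sigma \colon S' \to \cA'[\ell]$ sets $Z(\sigma) = \bigcap_{k=0}^{\ell-1}(Z - [k]\circ\sigma)$. If $Z_t \supset P+B$ with $\dim B \ge 1$ then some nonzero $\sigma(t)$ lies in $B[\ell]$, whence $t \in \pi(Z(\sigma))$; since $\pi$ is proper each $\pi(Z(\sigma))$ is closed, so it suffices to show $Z(\sigma)_s = \emptyset$ for every $\sigma \ne 0$. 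The key is the uniform B\'ezout bound of Proposition~\ref{prop:uniformintersectionbound}, which bounds the degree of the top-dimensional part $W$ of $Z(\sigma)_s$ \emph{independently of $\ell$}. For $\ell$ exceeding that bound the order-$\ell$ group $\langle\sigma(s)\rangle$ must fix each irreducible component of $W$; the stabilizer of such a component then has order at least $\ell$ but degree bounded via \cite[Lemme~2.1(ii)]{DavidHindry}, forcing its identity component to be positive-dimensional and hence producing a positive-dimensional coset in $Z_s$, contrary to hypothesis. Your route, once repaired, is softer but leans on heavier machinery; the paper's is more explicit and self-contained modulo the elementary stabilizer estimate.
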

\begin{proof}
  Let $\ell$ be a prime that we will choose in terms of $\cA,Z,$ and $s$
  later on. 

We begin by introducing full level $\ell$ structure. We will take care
to ensure that various quantaties are uniform in $\ell$. 

Let $S'$ be an irreducible, quasi-projective curve over $\IC$ that
is also finite and \'etale over $S$ such the base change $\cA' = \cA
\times_S S'$ admits all $\ell^{2g}$ torsion sections 
\begin{equation*}
 S' \rightarrow \cA'[\ell]. 
\end{equation*}

We write $Z' = Z\times_S S'$ which comes with a closed immersion
$Z' \rightarrow \cA'$. Observe that $Z$ may now longer be irreducible.
But $Z'\rightarrow S'$ is flat as $Z\rightarrow S$ is, cf.
\cite[Proposition III.9.7]{Hartshorne}. So all irreducible
components of $Z'$ dominate $S'$.
The morphism $Z' \rightarrow Z$ is finite and flat since $S'\rightarrow S$ is. 
Therefore $Z'$ is equidimensional of dimension $\dim Z$
by \cite[Corollary III.9.6]{Hartshorne}. 
Finally, $Z'$ is reduced since
$Z'\rightarrow Z$ is \'etale and $Z$ is reduced.
For any $t' \in S'(\IC)$ above $t \in S(\IC)$,  
we may identify the fiber $Z_t$ with $Z'_{t'}$ and the fiber $\cA_t$ with $\cA_{t'}$. 
Let $Z'_1,\ldots,Z'_r$ be the irreducible components of $Z'$.
Both $(Z'_i)_{t'}$
and $Z_t$ are equidimensional of dimension $\dim Z'-1=\dim Z -1$ as $Z'\rightarrow S'$ and $Z\rightarrow
S$ are flat, cf. \cite[Corollary III.9.6]{Hartshorne}. 
Since   $(Z'_i)_{t'}\subset Z_t$  an irreducible component of $(Z'_i)_{t'}$ is also an irreducible
component of $Z_t$.
Moreover, $Z_{t}$  contains a positive dimensional coset in $\cA_{t}$
if and only if one among $(Z'_1)_{t'},\ldots,(Z'_r)_{t'}$ contains a positive
dimensional coset in $\cA'_{t'}$. 

To prove the proposition we may thus suppose that
$S=S'$, $\cA=\cA'$, and $Z$ is some $Z'_i$. In particular, $\ell^{2g}$ distinct torsion sections
$S\rightarrow \cA[\ell]$ exist.

For any non-zero torsion section  $\sigma \colon S\rightarrow \cA[\ell]$ 
we define
\begin{equation*}
  Z(\sigma) = Z \cap (Z- \sigma) \cap (Z- [2]\circ\sigma)\cap
\cdots\cap (Z-[\ell-1]\circ\sigma)
\end{equation*}
by identifying a section $S\rightarrow \cA$ with its image in
$\cA$. 
Then $Z(\sigma)$ is Zariski closed in $\cA$. 

Now suppose $t\in S(\IC)$ such that $Z_t$ contains $P+B$ where
$P\in \cA_t(\IC)$ and $B\subset \cA_t$ is an abelian subvariety of
positive dimension. 
Therefore, $B[\ell]$ is a non-trivial group and there exists a section
 $\sigma:S\rightarrow\cA[\ell]$ such that $\sigma(t)\in B[\ell]\backslash\{0\}$. 
Hence $\sigma(t)+B=B$ and we find
\begin{equation*}
P +   B  = P + B -[k](\sigma (t))   \subset Z_t-[k](\sigma (t))
\quad\text {for all}\quad k\in\IZ.
\end{equation*}
This implies $P+B \subset Z(\sigma)_t$. In particular, $t\in
\pi(Z(\sigma))$. 

Now $\pi$ is a proper morphism and so $\pi(Z(\sigma))$ is Zariski
closed in $S$ for all of the finitely many $\sigma$ as above. 
In order to prove the proposition it suffices to show that $s$
from Proposition \ref{PropositionConstructionOfAuxiliarySubvariety} does
not lie in any $\pi(Z(\sigma))$ if $\sigma\not=0$, for then all $\pi(Z(\sigma))$ are finite. We will prove that
$Z(\sigma)_{s} = \emptyset$ for all non-zero sections $\sigma \colon
S\rightarrow\cA[\ell]$. 

Recall that the admissible immersion from the beginning of this
 section induces a  
 polarization on $\cA_{s}$ and, as usual, 
we  use $\deg(\cdot)$ to denote the
 degree. 
This polarization and $\cA_s$ do not depend on the base changed
 defined using $\ell$. 
Let us assume $Z(\sigma)_{s} \not= \emptyset$.  This will lead to a contradiction for
$\ell$ large in terms of $Z_{s},\cA_{s},$ and the polarization.

Note that
\begin{equation}
\label{eq:Ysigmas}
  Z(\sigma)_{s} = Z_{s} \cap (Z_{s}-\sigma(s)) \cap\cdots \cap
  (Z_s-[\ell-1]\circ \sigma(s))
\end{equation}
is Zariski closed in $\cA_s$ and
 stable under translation by the subgroup of $\cA_s(\IC)$ of order
$\ell$ that is generated by
$\sigma(s)$.
Observe that if $W'$ is an irreducible component of $Z(\sigma)_s$
of maximal dimension,  then
$\sigma(s)+W'$ is also an irreducible component of $Z(\sigma)_s$. 
We define $W$ to be the union of the top dimensional irreducible components
of $Z(\sigma)_{s}$.
The group generated by $\sigma(s)$ acts on the set of irreducible
components of $W$. 

Recall that $Z_s$ and thus each $Z_s-[k](\sigma(s))$ with $k\in\IZ$
is equidimensional of dimension $\dim Z -1$. All irreducible
components that appear have degree bounded by a constant independent
of the auxiliary prime $\ell$. 
By (\ref{eq:Ysigmas}) and  Proposition \ref{prop:uniformintersectionbound}
the degree $\deg(W)$
is bounded from above by a constant $c\ge 1$ that is independent of
$\ell$. 

The number $N$ of irreducible components of $W$ is at most
$\deg(W)\le c$. If we assume $\ell > N$, then the symmetric group on $N$
symbols contains no elements of order $\ell$. 
So if we assume $\ell >c$, as we may, then $\sigma(s) + W'=W'$ for all
irreducible components $W'$ of $W$.

Now let us fix such an irreducible component $W'$. Then the subgroup
generated
by $\sigma(s)$ lies in the stabilizer $\stab{W'}$ of $W'$. 
 By \cite[Lemme~2.1(ii)]{DavidHindry}, the degree of
the stabilizer $\stab{W'}$ is bounded from above solely in terms
$\deg(W')$ and $\dim W'\le g$. Note also that $\deg(W') \le \deg(W) \le c$. Thus if $\ell$ is large in terms of $c$ and
$g$, then we can arrange $\ell > \deg \stab{W'}$.
But $\stab{W'}$  contains 
 $\sigma(s)$ which has
order $\ell$.
Therefore $B$, the connected component of $\stab{W'}$ containing the neutral
element, has positive dimension. Fix any $P\in W'(\IC)$. Then 
\begin{equation*}
 P+B \subset W' \subset W
\subset Z_{s}
\end{equation*}
contradicts the hypothesis that $Z_{s}$ does not contain a positive
dimensional coset. 
\end{proof}


\section{Lattice Points}\label{SectionLatticePoints}
For our abelian scheme $\cA \rightarrow S$ and subvariety $X \subseteq \cA$, we want to count the number of points in $[N]X \cap Z$ for each $N \gg 1$ where $Z \subseteq \cA$ is of complimentary dimension of $X$ (as constructed in Proposition~\ref{PropositionConstructionOfAuxiliarySubvariety}). It is equivalent to count the intersection points of $[N]X - Z$ and the zero section of $\cA \rightarrow S$. Via the Betti map and a local lift with respect to $\IR^{2g} \rightarrow \IT^{2g}$, we obtain a subset $\tilde U_N \subseteq \IR^{2g}$ from $[N]X - Z$ and we are led to counting lattice point in $\tilde U_N$. 
The goal of this section is to settle the lattice point counting problem.

Suppose $m,m'\in\IN$ and let
  $\psi$ be a function defined on a non-empty open subset   $U$ of  $\IR^{m'}$
with values in $\IR^m$. We suppoes that the  coordinate functions of
  $\psi$ lie in
  $C^1(U)$, the $\IR$-vector space of
  real valued functions on $U$ that are continuously differentiable. 
We write $D_z(\psi)\in \mat{mm'}{\IR}$ for
the jacobian matrix of $\psi$ evaluated at $z\in U$. 
We also set
\begin{equation*}
  |\psi|_{C^1} =  \max \left\{\sup_{x\in U} |\psi(x)|,\sup_{x\in U} \left|\frac{\partial
    \psi}{\partial x_1}(x)\right|,\ldots,
\sup_{x\in U} \left|\frac{\partial \psi}{\partial x_{m'}}(x)\right|\right\} \in \IR
\cup \{\infty\}
\end{equation*}
here $|\cdot|$ is the maximum norm on $\IR^m$.
We write $\vol{\cdot}$ for the Lebesgue measure on $\IR^m$. 
Recall that all open subsets of $\IR^m$ are measurable. 

For $i\in \{0,1,2\}$ let $m_i\in\IN$
and suppose $U_i$ 
is a  non-empty open 
subset
of $\IR^{m_i}$.
Let
$\pi_1\colon \IR^{m_0+m_1+m_2}\rightarrow \IR^{m_0+m_1}$
be defined by $\pi_1(w,x,y) = (w,x)$ 
and 
$\pi_2\colon \IR^{m_0+m_1+m_2}\rightarrow \IR^{m_0+m_2}$
by $\pi_2(w,x,y)=(w,y)$. 

We now suppose $m_0=2$ and $m= 2+m_1+m_2$. 
Let $\phi_1 \colon  U_0\times U_1\rightarrow \IR^m$ and $\phi_2 \colon
U_0\times U_2\rightarrow\IR^m$  have continuously differentiable coordinate
functions 
and satisfy
 $|\phi_1|_{C^1}<\infty$ and
  $|\phi_2|_{C^1}<\infty$. Define
\begin{equation}
\label{def:psiN}
\psi_N(w,x,y) = N\phi_1(w,x) -
\phi_2(w,y)
\end{equation}
where $w\in U_0, x\in U_1,$ and $y\in U_2$. 
Thus $\psi_N$ has target $\IR^m$ and 
 coordinate
functions in $C^1(U)$ where $U=U_0\times U_1\times U_2$.

We write $\phi_{1j}$ and
$\phi_{2j}$ for the coordinate functions of
$\phi_1$ and $\phi_2$, respectively.
By abuse of notation we sometimes write $\phi_{1j}(z)$ for $\phi_{1j}(w,x)$
and $\phi_{2j}(z)$ for $\phi_{2j}(w,x)$
if $z=(w,x,y)$ with $w\in U_0, x\in U_1, y\in U_2$.

The jacobian matrix  
$  D_{(w,x,y)}(\psi_N) \in \mat{m}{\IR}$ equals
\begin{equation*}
\left(
\begin{array}{cccccccc}
  N \frac{\partial \phi_1}{\partial w_1} -  \frac{\partial
      \phi_2}{\partial w_1}
  &N \frac{\partial \phi_1}{\partial w_{2}} -  \frac{\partial
      \phi_2}{\partial w_{2}}
 &
  N \frac{\partial \phi_1}{\partial x_1}  &\cdots 
  &N \frac{\partial \phi_1}{\partial x_{m_1}}
 & 
  - \frac{\partial \phi_2}{\partial y_1}  &\cdots 
  &- \frac{\partial \phi_2}{\partial y_{m_2}} 
\end{array}\right). 
\end{equation*}
evaluated at $(w,x,y)\in U$.
For fixed $(w,x,y)$, the determinant $\det D_{(w,x,y)}(\psi_N)$ 
is a polynomial in $N$ of degree at most $2+m_1$. More precisely, we have
\begin{equation*}
\det D_{(w,x,y)}(\psi_N)=
  \delta_0(w,x,y) N^{2+m_1}  + \delta_1(w,x,y) N^{1+m_1} + 
  \delta_{2}(w,x,y) N^{m_1}
\end{equation*}
where the crucial term is 
\begin{equation}
\label{def:delta0}
  \delta_0(w,x,y) = 
\det\left(
\begin{array}{cccccccc}
 \frac{\partial \phi_1}{\partial w_1}  & \frac{\partial
   \phi_1}{\partial w_{2}} & 
 \frac{\partial \phi_1}{\partial x_1} & \cdots & \frac{\partial
   \phi_1}{\partial x_{m_1}} & 
- \frac{\partial \phi_2}{\partial y_1} & \cdots & -\frac{\partial
   \phi_2}{\partial y_{m_2}}
\end{array}\right)\Big|_{(w,x,y)}. 
\end{equation}

If $x$ is in any power of $\IR$ and $r>0$
 we let $B_r(x)$ denote the open ball of radius $r$ around
$x$ with respect to $|\cdot|$.

\begin{lemma} 
\label{lem:volume}
In the notation above let
 $z_0\in U$ with $\delta_0(z_0)\not=0$.
There exist two  bounded open neighborhoods $U^{\prime\prime} \subset U^\prime$ of $z_0$ in $U$  and
a constant $c\in (0,1]$ with the following properties:
\begin{enumerate}
\item [(i)]
the map $\phi_1$ is injective when
 restricted to $\pi_1(U^{\prime}) \subset\IR^{2 + m_1}$,
 \end{enumerate}
and for 
 all real numbers  $N\ge c^{-1}$ 
 \begin{enumerate}
 \item[(ii)]
the map 
 $\psi_N|_{U^{\prime}}\colon U^{\prime} \rightarrow\IR^m $ is  injective and open,
\item[(iii)] we have
$\vol{\psi_N(U^{\prime\prime})} \ge c N^{2+m_1}$, and
\item [(iv)] we have
 $B_c(\psi_N(U^{\prime\prime})) \subset\psi_N(U^\prime)$.
\end{enumerate}
\end{lemma}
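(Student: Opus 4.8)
The plan is to prove Lemma~\ref{lem:volume} by a careful application of the inverse and implicit function theorems, combined with elementary volume estimates that track the dependence on $N$ explicitly.

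\textbf{Step 1: Reduce to a model map.} First I would observe that since $\delta_0(z_0) \ne 0$, the jacobian $D_{z_0}(\psi_N)$ is invertible for all $N$ large: indeed $\det D_{z_0}(\psi_N) = \delta_0(z_0) N^{2+m_1} + \delta_1(z_0)N^{1+m_1} + \delta_2(z_0)N^{m_1}$, whose leading coefficient is nonzero, so there is $N_0$ (depending on the $\delta_i(z_0)$) such that $\det D_{z_0}(\psi_N) \ne 0$ for $N \ge N_0$. To get uniform control in $N$ — which is the crux — I would rescale. Write $\psi_N = N\phi_1 - \phi_2$ and factor: on a small enough neighborhood, $D_z(\psi_N) = N\bigl(D_z(\phi_1 \circ \pi_1) - N^{-1}D_z(\phi_2\circ\pi_2)\bigr)$, so $N^{-1}D_z(\psi_N) \to D_z(\phi_1\circ\pi_1)$ uniformly on a fixed compact neighborhood of $z_0$ as $N \to \infty$, and moreover the convergence is in $C^0$ with a rate $O(N^{-1})$ governed by $|\phi_2|_{C^1}$. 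The key point: the matrix $M(z) := D_z(\phi_1\circ\pi_1) - (\text{last }m_2\text{ columns scaled})$... more precisely, I want to consider the map $\Psi_N := N^{-1}\psi_N = \phi_1\circ\pi_1 - N^{-1}(\phi_2\circ\pi_2)$, which converges in $C^1$ on compacta to $\Phi(w,x,y) := \phi_1(w,x)$ — but $\Phi$ is degenerate in the $y$-directions, so this is not quite the right normalization and one must instead keep the $y$-block at scale $1$: the honest statement is that $D_z(\psi_N)$, after dividing only its first $2+m_1$ columns by $N$, converges to the matrix in \eqref{def:delta0}, whose determinant is $\delta_0(z)$, nonzero near $z_0$ by continuity.

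\textbf{Step 2: Quantitative inverse function theorem.} I would then invoke a quantitative version of the inverse function theorem: if $g \colon B_\rho(z_0) \to \IR^m$ is $C^1$ with $D_{z_0}(g)$ invertible and $\sup_{B_\rho(z_0)}\|D_z(g) - D_{z_0}(g)\| \le \frac{1}{2}\|D_{z_0}(g)^{-1}\|^{-1}$, then $g$ is injective on $B_\rho(z_0)$, open, and its image contains a ball of radius $\asymp \rho \cdot \|D_{z_0}(g)^{-1}\|^{-1}$ around $g(z_0)$. Apply this to $g = \psi_N$. The operator norm of $D_{z_0}(\psi_N)^{-1}$ behaves like $N^{-1}$ up to constants (from the rescaling analysis in Step 1, since the unscaled columns are $O(1)$ and the scaled ones are $O(N)$, Cramer's rule gives $\|D_{z_0}(\psi_N)^{-1}\| = O(1/N)$ with the implied constant depending only on $\delta_0(z_0)$ and $|\phi_1|_{C^1}, |\phi_2|_{C^1}$). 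The modulus of continuity of $D_z(\psi_N)$ on $B_\rho(z_0)$ is $O(N)$ uniformly (it is $N$ times the modulus of $D\phi_1$ plus that of $D\phi_2$, both finite by the $C^1$ hypotheses, though here I may need to additionally assume or arrange that $\phi_1,\phi_2$ have locally Lipschitz derivatives — if the lemma only assumes $C^1$, I would instead use uniform continuity on the compact closure and choose $\rho = \rho(N)$; but since we need $U', U''$ independent of $N$, the cleaner route is: the hypothesis $|\phi_i|_{C^1} < \infty$ together with working on a fixed small ball where $D\phi_1$ is close to $D_{z_0}\phi_1$ lets the smallness condition $\|D_z(\psi_N) - D_{z_0}(\psi_N)\| \le \tfrac12\|D_{z_0}(\psi_N)^{-1}\|^{-1} \asymp N$ hold for ALL large $N$ on a FIXED ball, because both sides scale like $N$ and we only need the left side's constant to beat the right side's constant by shrinking the ball once). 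This gives a fixed $U' = B_{\rho}(z_0)$ on which (i) and (ii) hold for all $N \ge c^{-1}$, with $c$ chosen small enough.

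\textbf{Step 3: Volume lower bound and the buffer.} For (iii), take $U'' = B_{\rho/2}(z_0)$. The quantitative inverse function theorem gives that $\psi_N(U'')$ contains a ball of radius $\asymp \rho \cdot N / (\text{const})$... wait — I need a \emph{lower} bound on the expansion, so I want $\|D_z(\psi_N)^{-1}\|$ \emph{small}, equivalently the smallest singular value of $D_z(\psi_N)$ large, which is $\asymp N$ on the directions where $\phi_1$ is immersive. Since $\det D_z(\psi_N) \asymp \delta_0 N^{2+m_1}$ and all singular values are $O(N)$, the product of singular values is $\asymp N^{2+m_1}$, forcing each to be $\asymp N$; hence $\psi_N$ expands volume by a factor $\asymp N^{2+m_1}$ near $z_0$, and since $U''$ is a fixed ball, $\vol{\psi_N(U'')} \ge c N^{2+m_1}$ for suitable $c$ by the change of variables formula $\vol{\psi_N(U'')} = \int_{U''} |\det D_z(\psi_N)|\,dz$ (valid as $\psi_N|_{U'}$ is injective). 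For (iv), since $\psi_N(U')$ is open and contains $\psi_N(\overline{U''})$, which is compact, and more quantitatively $\psi_N(U')$ contains $\psi_N(U'')$ together with a uniform collar — because $U' \setminus \overline{U''}$ maps under the injective open map $\psi_N$ to a region surrounding $\psi_N(U'')$ at distance $\gtrsim$ (smallest singular value)$\times(\rho/2) \gtrsim N\rho/2 \ge c$ for $N$ large — we get $B_c(\psi_N(U'')) \subset \psi_N(U')$ after shrinking $c$ once more.

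\textbf{Main obstacle.} The delicate point, and where I would spend the most care, is making $U'$ and $U''$ genuinely independent of $N$ while keeping all four conclusions uniform for $N \ge c^{-1}$. This hinges on the observation that $\psi_N$'s derivative scales homogeneously like $N$ in the relevant directions, so that both the ``smallness'' requirement in the inverse function theorem and the expansion estimate scale the same way and the constants cancel — leaving only a one-time shrinkage of the neighborhood (governed by how fast $D\phi_1$ varies near $z_0$, i.e. by $|\phi_1|_{C^1}$ and continuity) and a lower threshold $c^{-1}$ on $N$ (governed by $\delta_1(z_0), \delta_2(z_0)$ and $|\phi_2|_{C^1}$, to absorb the lower-order-in-$N$ corrections). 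If the paper's hypotheses on $\phi_1,\phi_2$ are only $C^1$ and not $C^{1,1}$, the uniform modulus-of-continuity argument still goes through on the compact closure of a fixed ball, so no extra regularity is needed; I would just phrase Step 2 using uniform continuity rather than a Lipschitz bound.
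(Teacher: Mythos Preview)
Your overall plan---MVT/IFT for injectivity, change of variables for the volume, a bi-Lipschitz collar for (iv)---matches the paper's, and your (iii) is exactly the paper's argument. But Step~2 contains a real error precisely at the point you flag as the ``main obstacle''. You claim $\|D_{z_0}(\psi_N)^{-1}\|\asymp N^{-1}$ and, in Step~3, that every singular value of $D_z(\psi_N)$ is $\asymp N$. This is false whenever $m_2\ge 1$: the last $m_2$ columns of $D_z(\psi_N)$ are $-\partial\phi_2/\partial y_j$, which do not scale with $N$, so applying $D_z(\psi_N)$ to a unit vector supported in those coordinates gives an $O(1)$ output; hence the smallest singular value is $\asymp 1$ and $\|D_{z_0}(\psi_N)^{-1}\|\asymp 1$. (Cramer's rule confirms this: deleting one of the last $m_2$ columns leaves all $2+m_1$ of the $N$-scaled columns, so that cofactor is $O(N^{2+m_1})$, of the same order as the determinant.) Your IFT smallness condition then reads $O(N)\cdot\omega(\rho)\le O(1)$, which forces the radius $\rho$ to shrink with $N$---exactly what you needed to avoid.

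The paper sidesteps the operator-norm IFT. For injectivity it applies the Mean Value Theorem \emph{coordinatewise}, producing a matrix $M(\tilde z_1,\ldots,\tilde z_m)$ whose $j$-th row is the gradient of $\psi_{N,j}$ evaluated at some $\tilde z_j$ on the segment, and then bounds $\det M=\tilde\delta_0(\tilde z_1,\ldots,\tilde z_m)\,N^{2+m_1}+O(N^{1+m_1})$ directly. Since $\tilde\delta_0$ is continuous in all its arguments with $\tilde\delta_0(z_0,\ldots,z_0)=\delta_0(z_0)\ne 0$, one can fix the ball $U'$ once so that $|\tilde\delta_0|\ge\epsilon$ for every choice of $\tilde z_j\in U'$; then $\det M\ne 0$ for all large $N$, giving injectivity on that fixed ball. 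The determinant scales like $N^{2+m_1}$ even though the smallest singular value does not---that is the whole point. (Equivalently: factor $D_z(\psi_N)=B(z,N)\Lambda_N$ with $\Lambda_N=\mathrm{diag}(N,\ldots,N,1,\ldots,1)$ and $B(z,N)$ converging uniformly to the matrix with determinant $\delta_0(z)$; you identified this rescaling in Step~1 but abandoned it in Step~2.) The same mixed-point matrix also yields $\|M^{-1}\|\le c_1$ independently of $N$ (cofactors $O(N^{2+m_1})$ over determinant $\asymp N^{2+m_1}$), and this is what drives (iv), with buffer width $\asymp\delta/c_1$ rather than your claimed $\asymp N\rho$.
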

\begin{proof}
As the first order partial derivatives of all $\phi_{ij}$ are
continuous we can find an open neighborhood $U^{\prime}$ of $z_0=(w,x,y)$ in $U$
such that the determinant of
\begin{equation}
\label{eq:invmatrix}
\left(
\begin{array}{ccccccccc}
 \frac{\partial \phi_{11}}{\partial w_1}(\tilde z_1)  & \frac{\partial
   \phi_{11}}{\partial w_{2}}(\tilde z_1) & 
 \frac{\partial \phi_{11}}{\partial x_1}(\tilde z_1) & \cdots & \frac{\partial
   \phi_{11}}{\partial x_{m_1}}(\tilde z_1) & 
- \frac{\partial \phi_{21}}{\partial y_1}(\tilde z_1) & \cdots & -\frac{\partial
   \phi_{21}}{\partial y_{m_2}}(\tilde z_1) \\
 \vdots & \vdots & \vdots & & \vdots & \vdots & & \vdots \\
 \frac{\partial \phi_{1m}}{\partial w_1}(\tilde z_m)  & \frac{\partial
   \phi_{1m}}{\partial w_{2}}(\tilde z_m) & 
 \frac{\partial \phi_{1m}}{\partial x_1}(\tilde z_m) & \cdots & \frac{\partial
   \phi_{1m}}{\partial x_{m_1}}(\tilde z_m) & 
- \frac{\partial \phi_{2m}}{\partial y_1}(\tilde z_m) & \cdots & -\frac{\partial
   \phi_{2m}}{\partial y_{m_2}}(\tilde z_m)
\end{array}\right)
\end{equation}
has absolute value at least $\epsilon=|\delta(z_0)|/2>0$ for  all $\tilde
z_1,\ldots,\tilde z_m\in
U^{\prime}$.  

Observe that $D_{\pi_1(z_0)}(\phi_1)$ is an $m\times(2+m_1)$-matrix
consisting of the first $2+m_1$ columns as in the determinant (\ref{def:delta0}). 
Our hypothesis $\delta_0(z_0)\not=0$ implies that
$D_{\pi_1(z_0)}(\phi_1)$ has maximal rank $2+m_1$. 
By the Inverse Function Theorem we may, after shrinking $U^{\prime}$, assume
that
 $\phi_1$ restricted to $\pi_1(U^{\prime})$ 
is injective. 
This implies (i).

We may shrink $U^{\prime}$ further and assume
that
\begin{equation}
\label{eq:BdeltaB2delta}
U^{\prime\prime} = B_{\delta}(z_0) \subset U^{\prime}=B_{2\delta}(z_0)\subset U,
\end{equation}
for some $\delta > 0$, a property we will need later on. Our constant
$c$ will depend on $\delta$ but not on $N$.

To show injectivity in (ii), let $z,z'\in U^{\prime}$ and $N\in\IR$ be such that $\psi_N(z)=\psi_N(z')$.
Let $j\in \{1,\ldots,m\}$, then 
$N\phi_{1j} (z) - \phi_{2j}(z) = N \phi_{1j}(z') - \phi_{2j}(z')$.
By the Mean Value Theorem 
there exists $\tilde{z}_j\in U^{\prime}$ on the line segment connecting 
$z$ and $z'$ such that the column vector $z-z'$ lies in the kernel of 
\begin{equation*}
\left(N\frac{\partial \phi_{1j}}{\partial w_1}-\frac{\partial \phi_{2j}}{\partial w_1},  N\frac{\partial
   \phi_{1j}}{\partial w_{2}}-\frac{\partial \phi_{2j}}{\partial w_2}, 
 N\frac{\partial \phi_{1j}}{\partial x_1}, \ldots, N\frac{\partial
   \phi_{1j}}{\partial x_{m_1}},
- \frac{\partial \phi_{2j}}{\partial y_1}, \ldots , -\frac{\partial
   \phi_{2j}}{\partial y_{m_2}}\right)\Big|_{\tilde{z}_j}
\end{equation*}
Thus $z-z'$ lies in the kernel of 
$M(\tilde z_1,\ldots,\tilde z_m) \in \mat{m}{\IR}$ whose rows 
are  these  expressions as $j\in \{1,\ldots,m\}$. 

 The
determinant of this matrix can be expressed as
\begin{equation*}
  \tilde\delta_0(\tilde z_1,\ldots,\tilde z_m)N^{2+m_1} + \tilde \delta_1(\tilde z_1,\ldots,\tilde z_m)N^{1+m_1} + \tilde \delta_2(\tilde z_1,\ldots,\tilde z_m)N^{m_1}
\end{equation*}
where $\tilde\delta_0(\tilde z_1,\ldots,\tilde z_m)$ is the determinant
of (\ref{eq:invmatrix}). In particular,
$|\tilde\delta_0(\tilde z_1,\ldots,\tilde z_m)|\ge\epsilon$.

We recall that $|\phi_{1,2}|_{C^1}<\infty$. 
So for $i=1,2$ we find
 $|\tilde\delta_i(\tilde z_1,\ldots,\tilde z_m)|\le C$
where $C$ depends only on $\phi_{1}$ and $\phi_2$.  
For all sufficiently large $N\ge 1$ we have
\begin{alignat}1
\label{eq:detMz1zm}  
\begin{aligned}
|\tilde\delta_0(\tilde z_1,\ldots,\tilde z_m)N^{2+m_1} 
+\cdots+
\tilde \delta_2(\tilde z_1,\ldots,\tilde z_m)N^{m_1}|&\ge 
\epsilon N^{2+m_1} - 2CN^{1+m_1} \\
&\ge \frac{\epsilon}{2} N^{2+m_1}.
\end{aligned}
\end{alignat}
And so in particular, $\det M(\tilde z_1,\ldots,\tilde z_m)\not=0$. As $z-z'$ lies
in the kernel of the relevant matrix, we conclude
$z=z'$. Therefore, $\psi_N|_{U^{\prime}}$ is injective for all large $N$.
We conclude injectivity (ii)

If $N$ is sufficiently large, then 
(\ref{eq:detMz1zm}) implies 
$|\!\det M(z,\ldots,z)|\ge \epsilon N^{2+m_1}/2$ for all $z\in U^{\prime}$. 
In particular, $D_{z}(\psi_N)=M(z,\ldots,z)$ is invertible for all $z\in U^{\prime}$. 
Hence $\psi_N$ is locally invertible on $U^{\prime}$ and $\psi_N|_{U^{\prime}}$ is an
open map. This completes the proof of (ii).


As $\psi_N|_{U^{\prime\prime}}$ is injective and for $N$ large,  Integration by Substitution implies
\begin{equation*}
\vol{\psi_N(U^{\prime\prime})} =   \int_{\psi_N(U^{\prime\prime})} du = 
\int_{U^{\prime\prime}} |\!\det D_{z}(\psi_N)| dz
\ge \frac{\epsilon}{2} N^{2+m_1} \vol{U^{\prime\prime}}.
\end{equation*}
This yields our claim in (iii) for small enough $c$.

To prove (iv) it suffices  to verify that if $z\in U^{\prime\prime}$, then the distance
$\Delta(z)$ 
of $\psi_N(z)$ to $\IR^m \ssm \psi_N(U^{\prime})\not=\emptyset$ is at least $c$, for $c>0$ sufficiently
small and independent of $N$.

 As the set $\IR^m\ssm \psi_N(U^{\prime})$ is closed in $\IR^m$  it contains
$v$ which depends on $z$ and $N$,  such that $\Delta(z) = |\psi_N(z)-v|$. 
As $v$ realizes the minimal distance,  the ball $B_{1/n}(v)$ must meet
$\psi_N(U^{\prime})$ for all $n\in\IN$. Let us fix $z_n\in U^{\prime}$ with 
$|\psi_N(z_n)-v|<1/n$. 
Now $U^{\prime}$ is bounded, so after passing to a convergent subsequence we
may assume that $z_n$ converges towards $z'\in \overline{U^{\prime}}
= \overline{B_{2\delta}(z_0)}$. 

We claim that $z'\not\in U^{\prime}$. Indeed, otherwise $\psi_N(z_n)$ would
converge towards $\psi_N(z') \in \psi_N(U^{\prime})$. But then  
$\psi_N(z')=v\in \IR^m\ssm\psi_N(U^{\prime})$ is a contradiction. 
We conclude
\begin{equation}
\label{eq:zonboundary}
|z'-z_0| = 2\delta. 
\end{equation}

By the Mean Value Theorem we find 
\begin{equation}
\label{eq:mvt2}
\psi_N(z) - \psi_N(z_n) = M(\tilde z_1,\ldots,\tilde z_m) (z-z_n)
\end{equation}
where $M(\cdot)$ is the matrix above and
$\tilde z_1,\ldots,\tilde z_n$ lie on the line segment between $z$ and
$z_n$ and thus in $U'$. As above, the absolute determinant of
this matrix
is at least $\epsilon N^{2+m_1}/2$ for $N$ large enough. The entries of the adjoint matrix
have absolute value bounded by a fixed multiple of $N^{2+m_1}$. We
find
$|M(\tilde z_1,\ldots,\tilde z_m)^{-1}|\le c_1$
for the maximum norm where $c_1>0$ is independent
of $N$ and  $\tilde z_1,\ldots,\tilde z_m$. 
We find that (\ref{eq:mvt2}) implies
\begin{equation*}
|z-z_n| = |M(\tilde z_1,\ldots,\tilde z_m)^{-1}(\psi_N(z)
 - \psi_N(z_n))|
\le 
c_2 
|\psi_N(z) - \psi_N(z_n)| 
\end{equation*}
where $c_2>0$ is independent of $N$.
Hence 
\begin{equation*}
|z-z_n|\le 
c_2 (|\psi_N(z)-v| + |v-\psi_N(z_n)|) = c_2 (\Delta(z) + |\psi_N(z_n)-v|)
\end{equation*}
by our choice of $v$. 
Recall that $|\psi_N(z_n)-v|<1/n$ and $z\in U^{\prime\prime}$ which was defined in (\ref{eq:BdeltaB2delta}), so 
\begin{equation*}
|z_n-z_0| - \delta \le |z_n-z_0| - |z_0 - z| 
\le |z-z_n| \le c_2(\Delta(z)+1/n).  
\end{equation*}
By taking the limit as $n\rightarrow \infty$ we can replace $z_n$ by
$z'$ on the left. We recall (\ref{eq:zonboundary}) and conclude 
$\Delta(z)\ge \delta/c_2$. 
Part (iv) follows as we may assume $\delta/c_2\ge c$. 
\end{proof}

Our  aim is to find many integral points in $\psi_N(U)$.
If $\psi_N(U)$ has volume $v$, one could hope that
 $\psi_N(U)$ 
contains at least $v$ points in $\IZ^m$. Of course, simple examples
show that this does not need to be
true in general.  Blichfeldt's Theorem
guarantees that we can find at least this 
 number of lattice points after possibly translating by a point in
 $\IR^m$. 
In our situation we will be able to translate by a rational point of
controlled denominator. For the
reader's convenience we repeat the hypothesis in the next proposition.

\begin{proposition}\label{PropositionCountingLatticePreparation}
Let  $U_0\subset\IR^2,U_{1}\subset\IR^{m_1},$ and
$U_{2}\subset\IR^{m_2}$
be non-empty open subsets and suppose $\phi_1 \colon U_0\times U_1\rightarrow \IR^m$ 
and $\phi_2 \colon U_0 \times U_2\rightarrow\IR^m$ 
have coordinate functions in $C^1(U_0\times U_1)$ and 
$C^1(U_0\times U_2)$, respectively, where $m=2+m_1+m_2$. 
We suppose that $|\phi_{1,2}|_{C^1}<\infty$. Let
 $z_0\in U=U_0\times
U_1\times U_2$ with 
$\delta_0(z_0)\not=0$.
For $N\in\IR$ we define $\psi_N$ as in (\ref{def:psiN}). 
There exists a  bounded open neighborhood $U^{\prime}$ of $z_0$ in $U$
and a
 constant $c\in (0,1]$ 
with the following property. 
For all integers $N_0\ge c^{-1}$ and all real numbers $N\ge c^{-1}$ we have 
\begin{equation*}
\# \left(\psi_N(U^\prime)\cap N_0^{-1}\IZ^m\right) \ge 
c N^{2+m_1}.
\end{equation*}
Moreover, 
$\phi_{1}|_{\pi_1(U^{\prime})}$  is injective, and $\psi_N|_{U^{\prime}}$ is injective for all $N\ge c^{-1}$.
\end{proposition}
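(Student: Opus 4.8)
The plan is to combine the volume and openness estimates from Lemma~\ref{lem:volume} with Blichfeldt's Theorem, taking care that the translate produced by Blichfeldt can be absorbed by passing from $U^{\prime\prime}$ to the slightly larger $U^{\prime}$ and by rescaling the lattice. First I would apply Lemma~\ref{lem:volume} to the point $z_0$ with $\delta_0(z_0)\neq 0$ to obtain the two bounded open neighbourhoods $U^{\prime\prime}\subset U^{\prime}$ of $z_0$ and a constant $c_0\in(0,1]$ such that, for all real $N\ge c_0^{-1}$, the map $\psi_N|_{U^{\prime}}$ is injective and open, $\vol{\psi_N(U^{\prime\prime})}\ge c_0 N^{2+m_1}$, and $B_{c_0}(\psi_N(U^{\prime\prime}))\subset\psi_N(U^{\prime})$; moreover $\phi_1|_{\pi_1(U^{\prime})}$ is injective. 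These properties already give the "Moreover" part of the proposition, so the whole content is the lattice point count.

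Next I would invoke Blichfeldt's Theorem for the lattice $N_0^{-1}\IZ^m$. Applied to the bounded open (hence measurable) set $\psi_N(U^{\prime\prime})$, it yields a translation vector $\gamma=\gamma(N,N_0)\in\IR^m$ such that
\[
\#\bigl((\gamma+\psi_N(U^{\prime\prime}))\cap N_0^{-1}\IZ^m\bigr)\ \ge\ \frac{\vol{\psi_N(U^{\prime\prime})}}{\vol{N_0^{-1}\IZ^m\text{ fundamental domain}}}\ =\ N_0^m\,\vol{\psi_N(U^{\prime\prime})}\ \ge\ N_0^m\,c_0\,N^{2+m_1}.
\]
Here one should be slightly careful: the standard statement of Blichfeldt's Theorem for the lattice $\IZ^m$ gives a translate containing $\ge \vol{\cdot}$ lattice points; rescaling by $N_0$ converts this to the statement for $N_0^{-1}\IZ^m$ with the factor $N_0^m$ as above. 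The subtlety is that this produces points of $N_0^{-1}\IZ^m$ in $\gamma+\psi_N(U^{\prime\prime})$, not in $\psi_N(U^{\prime})$ directly, so we must translate $\gamma$ away without losing too many points.

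To do this, I would choose a point $q\in N_0^{-1}\IZ^m$ with $|q-\gamma|<c_0$; such a $q$ exists because $N_0\ge c_0^{-1}$ forces the lattice $N_0^{-1}\IZ^m$ to have covering radius (in the maximum norm) at most $\tfrac12 N_0^{-1}\le \tfrac12 c_0 < c_0$. Now for any $p\in (\gamma+\psi_N(U^{\prime\prime}))\cap N_0^{-1}\IZ^m$, the point $p-q$ lies in $N_0^{-1}\IZ^m$ (difference of lattice points) and satisfies $p-q=(p-\gamma)+(\gamma-q)$ with $p-\gamma\in\psi_N(U^{\prime\prime})$ and $|\gamma-q|<c_0$; hence $p-q\in B_{c_0}(\psi_N(U^{\prime\prime}))\subset\psi_N(U^{\prime})$ by part (iv) of Lemma~\ref{lem:volume}. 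Since $x\mapsto x-q$ is a bijection on $N_0^{-1}\IZ^m$, the map $p\mapsto p-q$ is injective, so
\[
\#\bigl(\psi_N(U^{\prime})\cap N_0^{-1}\IZ^m\bigr)\ \ge\ \#\bigl((\gamma+\psi_N(U^{\prime\prime}))\cap N_0^{-1}\IZ^m\bigr)\ \ge\ N_0^m\,c_0\,N^{2+m_1}\ \ge\ c_0\,N^{2+m_1},
\]
using $N_0\ge 1$. Taking $c=c_0$ then gives the claim. The main obstacle, and the only place where care is genuinely needed, is the bookkeeping around the Blichfeldt translate: one must match the correct scaled statement of Blichfeldt's Theorem to the lattice $N_0^{-1}\IZ^m$ and verify that the condition $N_0\ge c^{-1}$ is exactly what is needed so that a nearby lattice point $q$ lies within distance $c_0$ of $\gamma$, allowing the shift into $\psi_N(U^{\prime})$ via the buffer $B_{c_0}$ from part (iv). Everything else is a direct application of Lemma~\ref{lem:volume}.
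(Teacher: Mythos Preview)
Your proposal is correct and follows essentially the same route as the paper: apply Lemma~\ref{lem:volume}, use Blichfeldt's Theorem to produce many lattice points in a translate of $\psi_N(U'')$, then absorb the translate via the buffer $B_{c_0}(\psi_N(U''))\subset\psi_N(U')$ from part~(iv). The only cosmetic differences are that the paper applies Blichfeldt to the integer lattice $\IZ^m$ (obtaining $\ge c_1 N^{2+m_1}$ points) rather than to $N_0^{-1}\IZ^m$, and introduces a separate constant $c_2$ for the covering-radius step before taking $c=\min\{c_1,c_2\}$; your observation that $N_0\ge c_0^{-1}$ already forces the covering radius of $N_0^{-1}\IZ^m$ below $c_0$ lets you use a single constant throughout.
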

\begin{proof}
Let $U^{\prime\prime} \subset U^\prime$ and $c_1> 0$ be as in
Lemma \ref{lem:volume} and suppose $N\ge c_1^{-1}$.              
Below we will use 
$\vol{\psi_N(U^{\prime\prime})}\ge c_1 N^{2+m_1}$
and $B_{c_1}(\psi_N(U^{\prime\prime})) \subset \psi_N(U^{\prime})$. 

By Blichfeldt's Theorem \cite[Chapter~III.2,
Theorem~I]{cassels:geonumbers}, there exists $x\in\IR^m$ which may
depend on $N$, such that
$\#(-x+\psi_N(U^{\prime\prime}))\cap\IZ^m \ge \vol{\psi_N(U^{\prime\prime})}\ge
c_1 N^{2+m_1}$. 
 So there exist an integer $M\ge c_1 N^{2+m_1}$,
 $a_1,\ldots,a_M\in\IZ^m$,
 and $z_1,\ldots,z_M\in U^{\prime\prime}$ such that
\begin{equation*}
 -x+\psi_N(z_i)=a_i\in\IZ^m\quad\text{for all}\quad
 i\in \{1,\ldots,M\}
\end{equation*}
 and the $a_i$ are pairwise distinct.


There exists $c_2 > 0$ such that 
if $N_0$ is any integer with $N_0 \ge c_2^{-1}$ then 
$B_{c_1}(x')\cap N_0^{-1}\IZ^m\not=\emptyset$ for all $x'\in\IR^m$. 

Let us fix $q\in B_{c_1}(x)\cap N_0^{-1} \IZ^m$ where $x$ comes from
Blichfeldt's Theorem.
Then
\begin{equation*}
q+a_i = (q-x)+ x + a_i = (q-x)+\psi_N(z_i) \in
B_{c_1}(\psi_N(z_i)) \subset\psi_N(U^\prime).
\end{equation*}
Observe $q+a_i \in N_0^{-1}\IZ^m$
for all $i\in \{1,\ldots,M\}$.

We have proved 
$\#(\psi_N(U')\cap N_0^{-1}\IZ^{m}) \ge M\ge c_1 N^{2+m_1}$
for all $N\ge c_1^{-1}$ and all $N_0\ge {c_2}^{-1}$.
The proposition follows by taking $c = \min \{c_1,c_2\}$ 
and by the injectivity statements in (i) and (ii) of Lemma \ref{lem:volume}.
\end{proof}




\section{Intersection Numbers}\label{SectionIntersection}

Let $F$ be an algebraically closed subfield of $\IC$. Let $S$ be a
smooth irreducible curve over $F$ and let $ \pi\colon \mathcal{A} \rightarrow S$ be an abelian scheme over $F$ of relative dimension $g\ge 1$.  
In this section we abbreviate $\IP_F^m$  by $\IP^m$ for integers
$m\ge 1$. 

We will use the basic setup introduced
in \S \ref{SubsectionEmbeddingAbSch}. In particular
$\cA \subset \IP^M \times \IP^m$ is an admissible immersion.

\begin{proposition}\label{PropositionIntersectionNumberPolynomialGrowth}
Suppose  $X$ is an irreducible, closed subvariety  of 
$\mathcal{A}$ defined over $F$ that dominates $S$
and is not generically special. Then there exist
 \begin{itemize}
\item  a constant $c>0$,
 \item a finite and \'etale covering $S'\rightarrow S$ where
 $S'$ is an irreducible curve over $F$,
\item  and finitely many
 closed (not necessarily irreducible) subvarieties $Y_1,\ldots,Y_R$  of $\cA' = \cA \times_S
 S'$
\end{itemize}
  such that the following holds for $X' = X \times_S S'$. For each integer  $N\ge
c^{-1}$, there exists $Y \in \{Y_1,\ldots,Y_R\}$ such that
 $X' \cap [N]^{-1}(Y)$ contains at least $r \ge c N^{2\dim X}$
 irreducible components of dimension $0$.
\end{proposition}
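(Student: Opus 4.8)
The plan is to combine the constructions from the preceding sections. First I would apply Theorem~\ref{PropositionDegenerate}: since $X$ is not generically special, it is not degenerate, so by Lemma~\ref{LemmaNonDegeneratePoint} there is a point $P\in\sm{X}(F)$ with $\pi(P)=s\in\unitdisc$, $P\in(X_s)^{\mathrm{sm}}(F)$, satisfying $\dim\image{T_P(b|_{\sman{X}\cap\cA_\unitdisc})}=2\dim X$. Feeding this $P$ into Proposition~\ref{PropositionConstructionOfAuxiliarySubvariety} produces an auxiliary subvariety $Z\subset\cA$ of dimension $n=\dim\cA-\dim X$, dominating $S$, with $Z_s$ and $X_s$ meeting transversally at $P$ (condition (v)), with $Z_s$ containing no positive-dimensional coset, with $\deg Z_t\le D$ uniformly, and with only finitely many ``bad'' $t\in S(\IC)$ for which $Z_t$ contains a positive-dimensional coset. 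The strategy is to count isolated points of $X\cap[N]^{-1}(Z)$, which are exactly the points $x$ with $[N]x\in Z$, equivalently (translating) intersection points of $[N]X-Z$ with the zero section.

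Next I would linearize via the Betti map. Near $P$ the restriction $b|_{\sman X}$ and $b|_{\sman{(Z_s)}}$ are real-analytic submersions onto their images, of the expected dimensions $2\dim X$ and $2\dim Z_s=2(n-1)$, whose images are transverse in $T_{b(P)}(\IT^{2g})$ by (v); note $2\dim X+2(n-1)=2(g+1-n)+2(n-1)=2g$, so the images span. Choosing local real-analytic parametrizations of small open pieces of $X(\IC)$ and $Z(\IC)$ (here $Z$ is allowed to move in the $S$-direction, contributing the $U_0=\IR^2$ worth of base parameters, while the torsion/fiber directions contribute $U_1,U_2$), composing with the Betti map and lifting along $\IR^{2g}\to\IT^{2g}$, I get maps $\phi_1,\phi_2$ to $\IR^{2g}$ with $|\phi_i|_{C^1}<\infty$, and the transversality condition (v) is exactly the statement that the relevant Jacobian determinant $\delta_0$ is nonzero at the base point. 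Then $N\tilde x-\tilde z$ lies in $\IZ^{2g}$ precisely when $[N]x-z$ hits the zero section; so applying Proposition~\ref{PropositionCountingLatticePreparation} with $m=2g$ gives $\gg N^{2+m_1}=N^{2\dim X}$ points of $\psi_N(U')\cap N_0^{-1}\IZ^{2g}$ for a suitable fixed denominator $N_0$. The role of $N_0$ is that after passing to a finite \'etale cover $S'\to S$ trivializing the $N_0$-torsion, these rational lattice points become genuine torsion sections, so the count produces $\gg N^{2\dim X}$ genuine intersection points of $X'\cap[N]^{-1}(Z'+\text{torsion})$; the finitely many varieties $Y_1,\ldots,Y_R$ are the translates of $Z'$ by the finitely many torsion sections arising from the $N_0^{-1}\IZ^{2g}/\IZ^{2g}$ coset representatives, and for each $N$ one picks the $Y_j$ that absorbs the bulk of the count.

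The remaining, and I expect \emph{hardest}, step is to pass from ``many intersection points of $X'$ with $[N]^{-1}(Y_j)$'' to ``many \emph{isolated} (dimension $0$) irreducible components of $X'\cap[N]^{-1}(Y_j)$.'' The injectivity statements in Proposition~\ref{PropositionCountingLatticePreparation} (that $\psi_N|_{U'}$ is injective and $\phi_1|_{\pi_1(U')}$ is injective for $N\ge c^{-1}$) guarantee the constructed points are pairwise distinct and each is locally isolated in the analytic intersection; but one must upgrade local analytic isolation to the statement that each is a genuine $0$-dimensional component of the algebraic scheme $X'\cap[N]^{-1}(Y_j)$. This requires ruling out that a constructed point lies on a positive-dimensional component, which in turn uses: condition (iii) of Proposition~\ref{PropositionConstructionOfAuxiliarySubvariety} (no positive-dimensional coset in the relevant fibers of $Z$, hence also after the cover and the torsion translation, away from finitely many bad fibers) together with the structure of $[N]^{-1}(Y_j)$ fiberwise as a union of translates of a subvariety by $N$-torsion; a positive-dimensional component of the fiberwise intersection through one of our points would, after translating, produce a positive-dimensional coset-like object contradicting (iii), and one also needs that isolated points in the fiber intersection stay isolated in the total intersection over $S'$ (a fiber-dimension/flatness argument, as flagged parenthetically in Part~3b of the introduction). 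Handling the non-zero Blichfeldt shift $\gamma$ and the finitely many bad fibers of $Z$ are additional technical points, but they only cost us a bounded number of exceptional $N$ or an adjustment of $c$, absorbed into the statement. Assembling these pieces gives the desired $r\ge cN^{2\dim X}$ zero-dimensional components.
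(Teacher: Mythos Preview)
Your overall architecture is right and matches the paper: non-degeneracy $\Rightarrow$ the point $P$ of Lemma~\ref{LemmaNonDegeneratePoint}, the auxiliary $Z$ of Proposition~\ref{PropositionConstructionOfAuxiliarySubvariety}, local parametrizations $\phi_1,\phi_2$ via the Betti map, and Proposition~\ref{PropositionCountingLatticePreparation} with a fixed denominator $N_0$ (taken to be a prime $\ell$) after an \'etale base change $S'\to S$ trivializing $\ell$-torsion. Up to the production of $\gg N^{2\dim X}$ pairs $(P'_i,Q'_i)$ with $[\ell N](P'_i)=[\ell](Q'_i)$, your sketch is essentially the paper's Lemma~\ref{lem:prepolygrowth}.

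The gap is in the isolation step, which you correctly flag as the hardest. Two of your proposed mechanisms do not work. First, injectivity of $\psi_N|_{U'}$ and of $\phi_1|_{\pi_1(U')}$ shows the $P'_i$ are pairwise distinct, but it does \emph{not} show that each $P'_i$ is isolated in the \emph{algebraic} intersection $X'\cap[N]^{-1}(Y)$: a positive-dimensional algebraic component through $P'_i$ need not lie in the small analytic chart $U'$. Second, non-isolation does not directly produce a positive-dimensional coset in a fiber of $Z'$; if $C\subset X'\cap[\ell N]^{-1}([\ell](Z'))$ is a curve through $P'_i$, then $[\ell N](C)\subset[\ell](Z')$ is just a curve, not a coset, so condition~(iii) of Proposition~\ref{PropositionConstructionOfAuxiliarySubvariety} is not immediately violated. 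Your list $\{Y_j\}=\{\text{$\ell$-torsion translates of }Z'\}$ is therefore too coarse.

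What the paper actually does is a descent. The key observation (Lemma~\ref{lem:prepolygrowth}(iii)) is that if $P'_i$ is not isolated in $X'\cap[\ell N]^{-1}([\ell](Y'))$, then taking a sequence $P^\alpha\to P'_i$ and lifting $[\ell N](P^\alpha)$ to $Q^\alpha\in Y'$, the limit $Q''$ satisfies $[\ell](Q'')=[\ell](Q'_i)$, so $Q''=Q'_i+T$ with $T\in\cA'[\ell]$; the injectivity of $\psi_N$ on $U'$ is used precisely here to force $T\neq 0$. Thus $Q'_i\in Y'\cap(Y'-\mathrm{im}\,\sigma)$ for a nonzero torsion section $\sigma$. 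One then builds inductively a finite set $\Sigma=\Sigma_0\cup\cdots\cup\Sigma_n$ with $\Sigma_0=\{\text{components of }Z'\}$ and $\Sigma_{k+1}$ the components of $Y'\cap(Y'-\mathrm{im}\,\sigma)$ for $Y'\in\Sigma_k$, $\sigma\neq 0$, $Y'\not\subset Y'-\mathrm{im}\,\sigma$; the $Y_j$ in the proposition are the $[\ell]^{-1}([\ell](Y'))$ for $Y'\in\Sigma$. By pigeonhole, many $Q'_i$ land in some $Y'\in\Sigma_k$ with $k$ \emph{maximal}. If some such $P'_i$ were not isolated, the descent step would place $Q'_i$ in a member of $\Sigma_{k+1}$, contradicting maximality, \emph{unless} $Y'\subset Y'-\mathrm{im}\,\sigma$. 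That residual case is where the prime $\ell$ is chosen large: one controls $\deg Y'_t\le D^{2^k}$ along the descent (B\'ezout plus flatness), and then bounds $\deg\stab{W}$ for each component $W$ of $Y'_t$ via \cite{DavidHindry}; since $\ell>D^{2^{g+1}(g+1)}$, the order-$\ell$ point $\sigma(t)$ cannot lie in a finite stabilizer of that degree, forcing $\dim\stab{W}\ge 1$ and hence a positive-dimensional coset in $Z'_t$, now genuinely contradicting Proposition~\ref{PropositionConstructionOfAuxiliarySubvariety}(iii) (via Lemma~\ref{lem:prepolygrowth}(ii)). The careful choice of $\ell$ and the inductive family $\Sigma$ are the missing ideas in your sketch.
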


Note that $X'$ from the proposition is a closed subvariety of the
abelian $\cA'$ scheme. It may not be irreducible, but it is
equidimensional of dimension $\dim X$ since
$S'\rightarrow S$ is finite and \'etale. Note also that each irreducible component of $X' \cap [N]^{-1}(Y)$ consists of one $F$-rational point as $X'$ and $[N]^{-1}(Y)$ are defined over $F$.

We will prove Proposition~\ref{PropositionIntersectionNumberPolynomialGrowth} in the next few subsections. 

\subsection{Constructing a Covering $S'\rightarrow S$}
\label{ssec:chooseSprimel}

Further down we will need to pass to a finite and \'etale covering $S'$
of $S$. In this subsection we make some preparations and
mention some facts. 


We recall our convention $F \subset \IC$ and fix $P \in X(F)$  as in Lemma~\ref{LemmaNonDegeneratePoint}.

By assumption on $X$ and $P$, we have an irreducible closed subvariety
$Z \subset \mathcal{A}$ defined over $F$ satisfying the conclusion
of Proposition~\ref{PropositionConstructionOfAuxiliarySubvariety}. In
particular, $\dim Z = \codim_\cA X = n$. 

We fix a  prime number $\ell$ satisfying
\begin{equation}
\label{eq:chooseell}
  \ell > D^{2^{g+1}(g+1)}
\end{equation}
where $D$ comes from (iv) of
Proposition~\ref{PropositionConstructionOfAuxiliarySubvariety}. Later
on, we will impose a second lower bound on $\ell$. 

There is a finite \'etale covering  $S'\rightarrow S$
such that $\cA'/S'$ admits all the $\ell^{2g}$ torsion sections
$S' \rightarrow \cA'[\ell]$ where $\cA'$ is the abelian scheme
$\cA\times_S S'$ over $S'$. We may assume that $S'$ is irreducible.
The prescribed closed immersion $\cA\rightarrow \IP_S^M$ induces a closed immersion $\cA'\rightarrow\IP_{S'}^M$. 

Observe that the induced morphism $\rho \colon \cA'\rightarrow\cA$ is finite
and \'etale. So the pre-image of any irreducible subvariety
$Y$ of $\cA'$
is equidimensional of dimension $\dim Y$.

Let $Z' = Z\times_S S'$, this is a closed subvariety of $\cA'$ that
may not be irreducible. 
It is equidimensional of dimension $\dim Z$.
A further and crucial observation for our argument is that 
the fibers $Z_t$ and $Z'_{t'}$ are equal if $t'\in S'(F)$ maps to $t\in
S(F)$. So by Proposition
\ref{PropositionConstructionOfAuxiliarySubvariety}(iv)
\begin{equation}\label{EqDegreeAfterCover}
\text{the degree of any } Z'_{t'} \subset \IP^M \text{ is bounded by
}D
\text{ for all $t'\in S'(F)$}
\end{equation}
and by the ``Moreover'' part of Proposition
\ref{PropositionConstructionOfAuxiliarySubvariety}
\begin{equation}
  \label{eq:nocosets1}
  \begin{tabular}{c}
\text{at most finitely many fibers of $Z'\rightarrow S'$ over
  $S'(\IC)$ contain a coset}\\
\text{of positive dimension in the respective fiber of $\cA'\rightarrow S'$. }
  \end{tabular}
\end{equation}

\subsection{Local Parametrization and Lattice Points}

We keep the
notation introduced above
and prove the following intermediate counting result.
\begin{lemma}
\label{lem:prepolygrowth}
Let $X$ be as in Proposition \ref{PropositionIntersectionNumberPolynomialGrowth}.
Then there exist
  \begin{itemize}
\item a constant $c>0$,
\item a prime number $\ell$ satisfying \eqref{eq:chooseell},
\item and a finite \'etale covering $S'\rightarrow S$, with $S'$ irreducible, admitting all
  the $\ell^{2g}$ torsion sections $S' \rightarrow \cA'[\ell]$, 
with $\pi'\colon\cA'\rightarrow S'$ the canonical morphism,
 $X' = X\times_S S'$, and
$Z' = Z\times_S S'$
  \end{itemize}
  such that for all integers $N\ge c^{-1}$ the following holds.
There exist $r\ge c N^{2\dim X}$ pairs $(P'_1,Q'_1),\ldots,(P'_r,Q'_r) \in
  X'(\IC)\times Z'(\IC)$ 
such that the $P'_1,\ldots,P'_r$ are pairwise distinct
with the following properties for all $i\in \{1,\ldots,r\}$: 
  \begin{enumerate}
  \item [(i)]
We have
$\pi'(P'_i) = \pi'(Q'_i)$ and $[\ell N](P'_i)=[\ell](Q'_i)$.
\item[(ii)]The Zariski closed subset $Z'_{\pi'(P'_i)}$ of
  $\cA'_{\pi'(P'_i)}$
does not contain any coset of positive dimension.
\item[(iii)]
If $Y'$ is an irreducible closed subvariety of $\cA'$
such that 
 $Q'_i\in  Y'(\IC)$ and $P'_i$ is not isolated in $X'\cap [\ell
  N]^{-1}([\ell](Y'))$, then there exists an $\ell$-torsion section
$\sigma \colon S'\rightarrow \cA'[\ell]$ with $\sigma\not=0$ and 
 $Q'_i \in Y'(\IC)\cap (Y'-\mathrm{im\,}\sigma)(\IC)$. 
  \end{enumerate}
\end{lemma}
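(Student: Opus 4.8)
The idea is to realize the desired pairs $(P'_i, Q'_i)$ as lattice points via the Betti map, combining the local parametrization at the non-degenerate point $P$ from Lemma~\ref{LemmaNonDegeneratePoint} with the auxiliary variety $Z$ from Proposition~\ref{PropositionConstructionOfAuxiliarySubvariety} and the lattice-point count of Proposition~\ref{PropositionCountingLatticePreparation}. First I would fix $\ell$ as in~\eqref{eq:chooseell} and the \'etale covering $S'\to S$ carrying all $\ell^{2g}$ torsion sections, as in \S\ref{ssec:chooseSprimel}; since everything downstairs pulls back, I may and will work upstairs, replacing $\cA,X,Z$ by $\cA',X',Z'$ (keeping the same names for readability, or rather keeping the primes). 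Pick $P$ as in Lemma~\ref{LemmaNonDegeneratePoint}, so $\pi(P)=s\in\unitdisc$, $P$ is a smooth point of $X$ and of $X_s$, and $\dim\image{T_P(b|_{\sman{X}\cap\cA_\unitdisc})} = 2\dim X$. Since $P\in Z_s$ by construction and $P$ is smooth in $Z$ and $Z_s$, and since property (v) of Proposition~\ref{PropositionConstructionOfAuxiliarySubvariety} gives $\image{T_P(b|_{\sman{X}\cap\cA_\unitdisc})}\cap\image{T_P(b|_{\sman{(Z_s)}})}=0$, the Betti map separates the $X$-directions and the $Z_s$-directions near $P$.

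Next I would set up the analytic parametrizations feeding into \S\ref{SectionLatticePoints}. Over a small polydisc $U_0\subset\unitdisc$ around $s$ (real dimension $2$, matching $m_0=2$), choose a real-analytic chart $U_1$ of the smooth locus of $X_t$ varying with $t\in U_0$ and similarly $U_2$ for $Z_t$, so that $X^{\anE}\cap\cA_\unitdisc$ near $P$ is parametrized by $(w,x)\in U_0\times U_1$ and $Z^{\anE}$ near $P$ by $(w,y)\in U_0\times U_2$. Composing with a lift of the Betti map along $\IR^{2g}\to\IT^{2g}=\IR^{2g}$ (shrinking $U_0$ so the lift is single-valued), I obtain $\phi_1\colon U_0\times U_1\to\IR^{2g}$ and $\phi_2\colon U_0\times U_2\to\IR^{2g}$ with $C^1$ (indeed real-analytic) coordinate functions, bounded on the relatively compact domains, so $|\phi_{1,2}|_{C^1}<\infty$; here $m=2g=2+m_1+m_2$ with $m_1=\dim U_1=2\dim X-2$ and $m_2=\dim U_2=2\dim Z-2$, and indeed $2+m_1+m_2 = 2 + (2\dim X-2)+(2\dim Z-2) = 2(\dim X+\dim Z)-2 = 2(g+1)-2 = 2g$ by~\eqref{eq:dimauxvariety}. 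The jacobian determinant $\delta_0(z_0)$ at $z_0$ corresponding to $P$ is, up to sign, the determinant of the block of Betti-differentials spanning $\image{T_P(b|_X)} + \image{T_P(b|_{Z_s})}$ plus the extra $2$ base directions; non-vanishing is exactly the transversality of property (v) together with the fact that $X$ dominates $S$ and the $w$-directions are not killed (since $P$ is non-degenerate for $X$, the rank hypothesis~\eqref{eq:dimTPhyp} already forces $X$ to spread in the base direction under $b^*$). So $\delta_0(z_0)\neq 0$ and Proposition~\ref{PropositionCountingLatticePreparation} applies with $N$ there replaced by $\ell N$ and $N_0=\ell$: for a bounded open $U^\prime\ni z_0$ and some $c>0$, for all integers $N\ge c^{-1}$,
\[
\#\bigl(\psi_{\ell N}(U^\prime)\cap \ell^{-1}\IZ^{2g}\bigr)\ \ge\ c (\ell N)^{2+m_1}\ \gg\ N^{2\dim X},
\]
where $\psi_{\ell N}(w,x,y) = \ell N\,\phi_1(w,x) - \phi_2(w,y)$; the injectivity statements there guarantee the preimages $(w_i,x_i,y_i)$ are pairwise distinct with $w_i$-sharing controlled.

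Then I would decode these lattice points back into geometry. A point $\psi_{\ell N}(w_i,x_i,y_i) = v_i\in\ell^{-1}\IZ^{2g}$ means, after applying $\exp\colon\IR^{2g}\to\IT^{2g}$ and using that $b$ is a fiberwise group isomorphism with $[\ell N]$ acting as multiplication by $\ell N$ on the Betti coordinates, that $[\ell N](P'_i) - [\ell](Q'_i)$ lies in the $\ell$-torsion of $\cA_{t_i}$ where $P'_i\in X(\IC)$ corresponds to $(w_i,x_i)$, $Q'_i\in Z(\IC)$ to $(w_i,y_i)$, and $t_i=\pi(P'_i)=\pi(Q'_i)$; composing with a suitable $\ell$-torsion section and re-choosing $Q'_i$ within its $\ell$-torsion coset (using that all such sections exist over $S'$), I arrange $[\ell N](P'_i)=[\ell](Q'_i)$ exactly, giving (i). Property (ii) holds for all but finitely many $t_i$ by~\eqref{eq:nocosets1}; these finitely many bad fibers contribute only $O(N^{2\dim X - 1})$ of the $r$ points (each fiber of $X'\to S'$ being $(\dim X-1)$-dimensional, hence parametrized by at most $\ll N^{2(\dim X-1)}$ of our lattice points at a given $w$), so discarding them and shrinking $c$ keeps $r\gg N^{2\dim X}$. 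For (iii): if $P'_i$ is not isolated in $X'\cap[\ell N]^{-1}([\ell]Y')$ with $Q'_i\in Y'(\IC)$, then in Betti coordinates the fiber of $b|$ through the relevant point is positive-dimensional in a way forced to live inside $Y' - (\text{an }\ell\text{-torsion section})$ — this is where I would run the argument with the monodromy/identity-theorem flavor of Proposition~\ref{prop:globalinfo}(i) locally, or more simply the elementary observation that $Q'_i$ then lies in $Y'\cap(Y'-\im\sigma)$ for the $\ell$-torsion section $\sigma$ realizing the non-isolation direction, exactly as in the hypersurface discussion of Part~3a and the definition of $Z(\sigma)$ in \S\ref{SubsectionControlOfBadFibers}.

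\textbf{Main obstacle.} The delicate point is (iii): translating "not isolated in $X'\cap[\ell N]^{-1}([\ell]Y')$" into the existence of a \emph{nonzero} $\ell$-torsion section $\sigma$ with $Q'_i\in Y'\cap(Y'-\im\sigma)$. One must argue that a positive-dimensional component of the intersection through $P'_i$ maps, under $[\ell N]$ then translation, into $Y'$, and that the resulting "slack" is captured by a genuine torsion section rather than merely a torsion point of one fiber — this requires that the component dominates $S'$ (or controlling what happens if it does not, using that $Z'_{t_i}$ has no positive-dimensional cosets, i.e. property (ii)) and then spreading the fiberwise torsion point out to a section, which is legitimate precisely because $S'$ was chosen to carry all $\ell^{2g}$ torsion sections. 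Making this rigorous, and simultaneously ensuring the count $r\gg N^{2\dim X}$ survives after discarding bad fibers and after the injectivity bookkeeping, is the technical heart; the volume/lattice-point input and the transversality are, by contrast, clean consequences of Proposition~\ref{PropositionCountingLatticePreparation} and property (v).
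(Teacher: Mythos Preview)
Your overall strategy matches the paper's — local Betti parametrizations of $X$ and $Z$ near the non-degenerate point $P$, feed into Proposition~\ref{PropositionCountingLatticePreparation}, decode the lattice points — but there are two concrete errors that would make your writeup fail as stated.

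\textbf{The wrong $\psi$.} You apply Proposition~\ref{PropositionCountingLatticePreparation} with ``$N$ replaced by $\ell N$'' and $N_0=\ell$, i.e.\ you count points in $\psi_{\ell N}(U')\cap\ell^{-1}\IZ^{2g}$ where $\psi_{\ell N}=\ell N\phi_1-\phi_2$. But $\psi_{\ell N}(w,x,y)\in\ell^{-1}\IZ^{2g}$ gives $\ell^2 N\phi_1-\ell\phi_2\in\IZ^{2g}$, hence $[\ell^2 N]P=[\ell]Q$, not $[\ell N]P=[\ell]Q$; your stated decoding ``$[\ell N]P'_i-[\ell]Q'_i$ is $\ell$-torsion'' is simply wrong. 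Your proposed fix of translating $Q'_i$ by an $\ell$-torsion section does nothing, since $[\ell](Q'_i+T)=[\ell]Q'_i$ whenever $[\ell]T=0$. The paper instead uses $\psi_N=N\phi_1-\phi_2$ (with $N_0=\ell$): then $\psi_N\in\ell^{-1}\IZ^{2g}$ yields $\ell N\phi_1-\ell\phi_2\in\IZ^{2g}$ and hence $[\ell N]P=[\ell]Q$ directly, with no adjustment needed and $Q\in Z$ automatic. For (ii), rather than discarding bad fibers and estimating their contribution (your $O(N^{2\dim X-1})$ is not justified), the paper simply shrinks $U_0$ at the outset so that no bad $t$ appears, which is possible by Proposition~\ref{PropositionConstructionOfAuxiliarySubvariety}(iii) and the ``Moreover'' clause. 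For pairwise distinctness of the $P'_i$, injectivity of $\psi_N$ only gives distinct triples $(w_i,x_i,y_i)$; one still needs that, for fixed $(w,x)$, the number of $y$ with $\psi_N(w,x,y)\in\ell^{-1}\IZ^{2g}$ is bounded independently of $N$ — this follows from boundedness of $\phi_2$, since then $\phi_2(w,y)-\phi_2(w,y')\in\ell^{-1}\IZ^{2g}$ has only $O_\ell(1)$ solutions.

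\textbf{The mechanism for $\sigma\neq 0$ in (iii).} You correctly flag this as the delicate point, but your suggested routes (monodromy/identity-theorem in the style of Proposition~\ref{prop:globalinfo}, or arguing via components dominating $S'$) are not how it goes, and it is unclear they would work. The paper's argument is a local limit argument that uses precisely the injectivity of $\psi_N|_{U'}$ from Proposition~\ref{PropositionCountingLatticePreparation}. If $P'_i$ is not isolated in $X'\cap[\ell N]^{-1}([\ell]Y')$, take a sequence $P^\alpha\to P'_i$ of pairwise distinct points therein, choose $Q^\alpha\in Y'$ with $[\ell N]P^\alpha=[\ell]Q^\alpha$, and use properness of $[\ell]$ to pass to a subsequence with $Q^\alpha\to Q''\in Y'$. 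Then $[\ell]Q''=[\ell]Q'_i$, so $Q''=Q'_i+T$ with $T\in\cA'_{t}[\ell]$, and $T$ extends to a section $\sigma$ since $S'$ carries all $\ell$-torsion sections; thus $Q'_i\in Y'\cap(Y'-\mathrm{im}\,\sigma)$. To see $T\neq 0$: if $T=0$ then $Q''=Q'_i$, so for large $\alpha$ both $\rho(P^\alpha)$ and $\rho(Q^\alpha)$ lie in the parametrized neighborhoods, say $\rho(P^\alpha)=\tilde\phi_1(w^\alpha,x^\alpha)$ and $\rho(Q^\alpha)=\tilde\phi_2(w^\alpha,y^\alpha)$; the relation $[\ell N]P^\alpha=[\ell]Q^\alpha$ forces $\psi_N(w^\alpha,x^\alpha,y^\alpha)\in\ell^{-1}\IZ^{2g}$, which by continuity is eventually constant in this discrete set, and then injectivity of $\psi_N|_{U'}$ makes $(w^\alpha,x^\alpha,y^\alpha)$ eventually constant — contradicting pairwise distinctness of the $P^\alpha$. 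This is the missing idea in your sketch.
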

\begin{proof}
We make use of the lattice point counting technique from
$\mathsection$\ref{SectionLatticePoints}. 

By Lemma~\ref{LemmaNonDegeneratePoint} we have that $P$ is a smooth point of $\an{X}$ and $X_{\pi(P)}^{\anE}$.
We have $\dim X=g+1-n$, so we can trivialize the family
$\an{X}\rightarrow \an{S}$ 
in  a
neighborhood of $P$ in $\an{X}$ using a smooth map $\tilde \phi_1$ defined on
$U_0\times U_1$ where $U_0\subset \IR^2$ and
$U_1\subset \IR^{2(g-n)}$ are both open and non-empty. 
We may assume that $\tilde\phi_1(0) = P$.  
After possibly shrinking $U_0$
and $U_1$ we compose
$\tilde\phi_1$ with $\tilde b$, the Betti map
$b \colon \cA_\unitdisc \rightarrow\IT^{2g}$ composed by a local inverse of
$\IR^{2g}\rightarrow\IT^{2g}$. 
This yields  a
smooth map $\phi_1 \colon U_0\times U_1\rightarrow \IR^{2g}$ that produces the Betti
coordinates relative to the local parametrization of $\an{X}$. 

Now $P$ is also a smooth point of $\an{Z}$. 
Recall that $\dim Z = n$. 
The same
construction yields a non-empty and open subset
$U_2\subset \IR^{2(n-1)}$ 
and a smooth map
$\tilde\phi_2 \colon U_0\times U_2\rightarrow \an{Z}$
with $\tilde\phi_2 (0)=P$. 
We restrict if necessary and
write $\phi_2 = \tilde b\circ\tilde\phi_2  \colon U_0\times U_2\rightarrow\IR^{2g}$. The subsets $U_0$, $U_1$ and $U_2$ can be chosen to be bounded.

In this setting  $\phi_1$ parametrizes $\tilde b (U)$ where $U\subset \an{X}$ is a
neighborhood of $P$ in $X$ and $\tilde b$ is a local lift of the Betti map 
to $\IR^{2g}$. Similarly $\phi_2$ parametrizes $\tilde{b}(V)$ where $V$ is a neighborhood of $P$ in $\an{Z}$.

We may assume, after shrinking $U_0,U_1$, and $U_2$  if necessary,
that $|\phi_1|_{C^1}<\infty$ and $|\phi_2|_{C^1}<\infty$. 
By Proposition \ref{PropositionConstructionOfAuxiliarySubvariety}(iii)
the fiber $Z_{\pi(P)}$ contains no positive dimensional cosets in $\cA_{\pi(P)}$. 
By (\ref{eq:nocosets1}) and
 up to shrinking $U_0$ we may assume that
\begin{equation}\label{EquationBadFiberControl}
Z_t \text{ contains no positive dimensional cosets in }\cA_t \text{ for all }t \in \pi(\tilde{\phi}_1(U_0 \times U_1)).
\end{equation}

For any $N\in \mathbb{N}$ and $(w,x,y) \in U_0\times U_1\times U_2$ we
define a map
\[
\psi_N(w,x,y)=N\phi_1(w,x)-\phi_2(w,y) \in\IR^{2g}.
\]
Let $\pi_i\colon U_0\times U_1\times U_2 \rightarrow U_0 \times U_i$
be the natural projection for $i=1,2$ and  $\delta_0(w,x,y)$  as above Lemma~\ref{lem:volume}.

Condition (v) of Proposition~\ref{PropositionConstructionOfAuxiliarySubvariety} implies that $\delta_0(0)\not=0$.
So we can apply
Proposition~\ref{PropositionCountingLatticePreparation}. There exists a bounded open  neighborhood $U^\prime$ of $0$ in $U_0\times U_1\times U_2$ and a
 constant $c\in (0,1]$ with the following property. 
For all integers $N_0\ge c^{-1}$ and $N\ge c^{-1}$ we have that $\phi_1|_{\pi_1(U')}$ and $\psi_N|_{U^\prime}$ are injective and
\begin{equation}\label{EquationManyLatticePoints}
\#\left(\psi_N(U^\prime)\cap N_0^{-1}\mathbb{Z}^{2g}\right)\ge c N^{2+2(g-n)}=c N^{2\dim X}.
\end{equation}

Proposition \ref{PropositionCountingLatticePreparation} allows us to
increase  $N_0$. From now on we fix $N_0$ to be a prime number
$\ell$ that satisfies (\ref{eq:chooseell}) and $\ell \ge c^{-1}$.
As in $\mathsection$\ref{ssec:chooseSprimel} we fix a finite \'etale
covering $S'\rightarrow S$ with $S'$ irreducible such that $\cA':= \cA \times_S S' \rightarrow S'$ admits all the $\ell^{2g}$ torsion sections $S' \rightarrow \cA'[\ell]$.


Recall that  $|\phi_2|_{C^1}<\infty$, so is $\phi_2(U_0\times U_2)$ is
bounded and
\begin{equation}\label{EquationFinitelyManyLatticePointsInTheTwoFactorUniformBound}
 \# \left(\phi_2(U_0\times U_2) - \phi_2(U_0\times U_2)\right) \cap
\ell^{-1}\IZ^{2g} \le C \text{ for some $C$
 independent of $N$.}
\end{equation}

Suppose $(w,x,y)\in U^\prime$ satisfies $\psi_N(w,x,y)\in
\ell^{-1}\IZ^{2g}$. Then $\ell N \phi_1(w,x) -
\ell \phi_2(w,y) \in \IZ^{2g}$. 
For the Betti coordinates   we find on $\cA$ that
\begin{equation*}
[\ell N](\tilde\phi_1(w,x)) =
[\ell](\tilde\phi_2(w,y)) \in [\ell](Z)(\IC).
\end{equation*}
Thus we get a mapping
\begin{equation}\label{EquationInjectiveMapLatticePointsIntersectionPoints}
\begin{array}{ccc}
\psi_N(U^\prime) \cap \ell^{-1} \mathbb{Z}^{2g}\ni \psi_N(w,x,y) &\mapsto & 
(\tilde\phi_1(w,x),\tilde\phi_2(w,y))\in X(\IC)\times Z(\IC).
\end{array}
\end{equation}
The image points are of the form $(P_i,Q_i)$
and lie in the same fiber above $S$ and 
with $[\ell N](P_i) = [\ell](Q_i)$.
By \eqref{EquationManyLatticePoints}
these points arise
from at least $cN^{2\dim X}$ elements
of $\psi_N(U')\cap \ell^{-1}\IZ^{2g}$  for all large $N$.
We claim that up to adjusting $c$ the number of different
$P_i$ is also at least $cN^{2\dim X}$. 

So let $(w,x,y)\in U'$ with
$\psi_N(w,x,y) \in \ell^{-1}\IZ^{2g}$
whose image under
\eqref{EquationInjectiveMapLatticePointsIntersectionPoints} is $(P_i,Q_i)$.
Let  $(P_j,Q_j)$ be a further  pair with $P_i=P_j$
that comes from $\psi_N(w',x',y') \in \ell^{-1} \IZ^{2g}$ where
$(w',x',y') \in U'$. Hence
$(w',x')=(w,x)$ as $\phi_1|_{\pi_1(U')}$ is injective. Thus 
$\phi_2(w',y') -\phi_2(w,y) = \psi_N(w,x,y) - \psi_N(w',x',y') \in \ell^{-1}\IZ^{2g}$. 
By \eqref{EquationFinitelyManyLatticePointsInTheTwoFactorUniformBound}
there are at most $C$ possibilities for $\psi_N(w',x',y')$, when
$(x,y,z)$ is fixed. 
So there are at most $C$ possibilities for $(w',x',y')$ as $\psi_N$ is
injective on $U'$; recall
that $C$ may depend on $\ell$ but not on $N$. 

After omitting pairs with duplicate $P_i$
and replacing $c$ by $c/C$ 
we have found
$(P_i,Q_i)$ with pairwise different $P_i$ for $1\le i\le r$ and
 $r\ge c N^{2\dim X}$. 

Let $\rho \colon  \cA'\rightarrow \cA$ denote the canonical morphism.
We fix lifts 
$P'_i,Q'_i\in \cA'(\IC)$ 
of $P_i,Q_i$ respectively in the same fiber of $\cA'\rightarrow S'$.
So
\begin{equation}
\label{eq:ellBpprimeQprime}
[\ell N](P'_i) = [\ell](Q'_i)\quad\text{for all}\quad i\in \{1,\ldots,r\}.
\end{equation}
This yields claim (i) of the lemma. 

As our points $P_i$ lie above 
points in $\pi(\tilde\phi_1(U_0\times U_1))$ we deduce (ii) from
(\ref{EquationBadFiberControl}). 

It remains to prove part (iii). Let $Y'$ be as in (iii), namely $Q'_i \in Y'(\IC)$ and $P'_i$ is
not isolated in $X'\cap [\ell N]^{-1}([\ell](Y'))$
for some $i\in \{1,\ldots,r\}$.
To simplify notation we write $P'=P'_i$ and $Q'=Q'_i$. 

Then there is a sequence $(P^\alpha)_{\alpha\in\IN}$   of pairwise distinct 
points of $X'(\IC)$ that converges in $\an{X'}$ to $P'$ with
$[\ell N](P^\alpha) \in [\ell](Y')(\IC)$
for all $\alpha\in\IN$. 
We fix
$Q^\alpha \in Y'(\IC)$ with
$[\ell N](P^\alpha) = [\ell](Q^\alpha)$ for all
$\alpha\in\IN$. Thus $\pi'(P^\alpha)=\pi'(Q^\alpha)$ and by
continuity the sequence $[\ell](Q^\alpha)$ converges. 
Since $[\ell]$ induces a proper map $(\cA')^{\anE}\rightarrow (\cA')^{\anE}$
 we may assume, after passing to a subsequence, that the $Q^\alpha$ 
 converge in $(Y')^{\anE}$ to some
$Q'' \in Y'(\IC)$. Taking the limit we
 see by continuity and (\ref{eq:ellBpprimeQprime}) that
$[\ell](Q'') = [\ell N](P') = [\ell](Q')$ and in particular $\pi'(Q'')=\pi'(Q')$. Hence
 \begin{equation*}
 Q'' = Q' + T   \in Y'(\IC)
 \end{equation*}
for some  $T$ that is either trivial or of finite prime order $\ell$
in $\cA'_{\pi'(Q')}(\IC)$.

All $\ell^{2g}$ torsion sections $S'\rightarrow\cA'[\ell]$ exist, so 
there is one $\sigma$ with $\sigma(\pi'(Q')) = T$. 
Hence $Q' \in Y'(\IC)\cap(Y'-\mathrm{im\,}\sigma)(\IC)$. 

To complete the proof it remains to verify
$\sigma\not=0$, \textit{i.e.} $T\not=0$. 
For this we assume the converse and derive a contradiction. 
For $\alpha$ large enough, the sequence
member $\rho(P^{\alpha})$  will be  close enough to $\rho(P')=P_i$ 
as to lie in $\tilde\phi_1(U')$. 
As $Q''= Q'$  the analog statement
 holds for the sequence of $\rho(Q^\alpha)$,
\textit{i.e.} $\rho(Q^\alpha)\in \tilde\phi_2(U')$
for all sufficiently large $\alpha$. 
For $\alpha$ sufficiently large  we may write $\rho(P^\alpha) = \tilde\phi_1(w^\alpha,x^\alpha)$ 
and $\rho(Q^\alpha) = \tilde \phi_2(w^\alpha,y^\alpha)$
with $(w^\alpha,x^\alpha,y^\alpha)\in U'$. The condition 
$[\ell N](P^\alpha) = [\ell](Q^\alpha)$
implies $[\ell N](\rho(P^\alpha)) = [\ell](\rho(Q^\alpha))$ and hence
\begin{equation*}
\psi_N(w^\alpha,x^\alpha,w^\alpha)=N\phi_1(w^\alpha,x^\alpha)-\phi_2(w^\alpha,y^\alpha) \in \ell^{-1}\IZ^{2g}.
\end{equation*}
By continuity, the sequence 
$\psi_N(w^\alpha,x^\alpha,w^\alpha)$ is eventually constant. But
$\psi_N$ is injective on $U^\prime$ by Proposition \ref{PropositionCountingLatticePreparation} and hence
 $(w^\alpha,x^\alpha,w^\alpha)$ is eventually constant. So $\rho(P^\alpha)$
 is eventually constant and, as $\rho$ is finite, $P^\alpha$ attains
 only finitely many values.
 But this contradicts the fact that the
 $P^\alpha$ are pairwise distinct and concludes the proof of (iii).
\end{proof}

\subsection{Induction and Isolated Intersection Points}
Let $X, \ell, \cA' \rightarrow S', X',$ and $Z'$ be as in Lemma~\ref{lem:prepolygrowth}. 
The conclusion of Lemma~\ref{lem:prepolygrowth} is already close to what we are
aiming at in Proposition
~\ref{PropositionIntersectionNumberPolynomialGrowth}. However, we must
first deal with the possibility that most $P'_i$ from the lemma
 are not isolated in $X'\cap [\ell N]^{-1}([\ell](Z'))$; otherwise  we
 could just take $Y_1=[\ell]^{-1}([\ell](Z'))$. 
We will handle this in the current subsection
by introducing additional auxiliary subvarieties derived from $Z'$.

Recall that $D$ was introduced in $\mathsection$\ref{ssec:chooseSprimel}
and ultimately comes from Proposition
\ref{PropositionConstructionOfAuxiliarySubvariety}(iv). 
For brevity we write $\cA'[\ell](S')$ for the group of torsion sections
$S'\rightarrow \cA'[\ell]$ of order dividing $\ell$. 
Recall also that $X'$ is equidimensional of dimension $\dim X= g+1-n$. 

We now describe a procedure to construct a finite set $\Sigma$ 
of auxiliary subvarieties. To be more precise we will construct for
each
$k \in \{0,\ldots,n\}$ a finite set
$\Sigma_k$  with the following  properties:
\begin{enumerate}
  \item[(i)] If $Y'\in \Sigma_k$, then $Y'$ is an irreducible closed
    subvariety of $Z'$ with $\dim Y'\le n-k$.
  \item[(ii)] If $Y'\in \Sigma_k$ and $t\in S'(\IC)$
  such that $Y'_t\not=\emptyset$, 
then $\deg Y'_t \le D^{2^k}$. 
\item[(iii)] If $k\le n-1,$ then for all  $Y'\in \Sigma_{k}$ and all
 $\sigma\in
  \cA'[\ell](S')$ such that 
$Y'\not\subset Y'-\mathrm{im\,}\sigma$, all irreducible components of $Y'\cap
  (Y'-\mathrm{im\,}\sigma)$ are elements of $\Sigma_{k+1}$. 
\end{enumerate}

We define $\Sigma_0$ 
to be the set of irreducible components of $Z'$.  Clearly, (i) is
satisfied as $Z'$ is equidimensional of dimension $\dim Z =n$. Moreover, (ii) is satified due to \eqref{EqDegreeAfterCover}.

We construct the remaining $\Sigma_1,\ldots,\Sigma_n$
and verify the properties inductively.
Suppose $k\in \{0,\ldots,n-1\}$ and that $\Sigma_k$ has already been  
constructed. 

Consider the set of all $Y'\in \Sigma_k$ and  $\sigma \in
\cA'[\ell](S')$
with $Y'\not\subset Y'-\mathrm{im\,}\sigma$. 
There are only finitely many such pairs $(Y',\sigma)$ 
and we take as $\Sigma_{k+1}$
 all irreducible components of all
$Y'\cap (Y'-\mathrm{im\,}\sigma)$ that arise this way.
This choice makes (iii) automatically hold true for all $k\in \{0,\ldots,n-1\}$. 
If  $Y''\in \Sigma_{k+1}$ is such an irreducible component, then $Y''\subsetneq Y'\subset
Z'$ and $\dim Y'' \le \dim Y'
-1 \le n-(k+1)$ by (i) applied to $k$. This implies (i) for $k+1$.

We now verify (ii).
If $Y''$ does not dominate $S'$, then the image of $Y''$ in $S'$ is a
point $t$, hence $Y'' = Y''_t$. In this case $Y''_t$ is an irreducible
component of $Y'_t\cap (Y'_t-\sigma(t))$. 
By B\'ezout's Theorem and since $\deg Y'_t =\deg (Y'_t-\sigma(t))$ we
find $\deg Y''_t \le (\deg Y'_t)^2$. 
By (ii) applied to $Y'_t$ this implies $\deg Y''_t \le (D^{2^k})^2 =
D^{2^{k+1}}$.
So (ii) holds for all $Y''$ that do not dominate $S'$.
If $Y''$ dominates $S'$,
then for all but  at most finitely many $t\in S'(\IC)$
all irreducible components of $Y''_{t}$ are also
irreducible components of $Y'_t\cap (Y'_t-\sigma(t))$. 
For such a $t$ we have, again by B\'ezout's Theorem, $\deg Y''_t \le
(\deg Y'_t)^2 \le D^{2^{k+1}}$ which implies
 (ii). For any remaining $t \in S'(\IC)$,
observe that $Y''$ is flat over $S'$ by
\cite[Proposition III.9.7]{Hartshorne}
since $\dim S'=1$ and $Y''$ is irreducible. As cycles of $\IP^M$ the fibers of $Y''$ are pairwise algebraically equivalent. So $\deg Y''_t \le D^{2^{k+1}}$ for all $t \in S(\IC)$, see 
Fulton \cite[Chapters~10.1 and 10.2]{Fulton} on the
conservation of numbers. More precisely let $H_1,\ldots,H_{\dim Y''-1}$ be generic hyperplane sections of $\IP^M \times S \rightarrow S$ such that $Y'' \cap \bigcap_{i=1}^{\dim Y''-1} H_i$ is flat of relatively dimension $0$ over $S$, and then apply
\cite[Corollary~10.2.2]{Fulton} to, using the notation of \textit{loc.cit.}, $Y = \IP^M$, $T=S$, $\alpha_1 = [Y'']$ and $\alpha_i = [H_{i-1}]$ for $i= 2, \ldots, \dim Y''$.
So we conclude (ii) for all fibers of $Y''$ regardless whether it
dominates $S'$ or not. 

We define
\begin{equation*}
  \Sigma = \Sigma_0 \cup \cdots \cup \Sigma_{n}. 
\end{equation*}

It is crucial for us that the following bound involving $Y'\in \Sigma$ is independent of $\ell$: 
\begin{equation}
\label{eq:degYtuniformub}
\deg Y'_t\le D^{2^n}\le D^{2^{g+1}}\text{ for all }t\in S'(\IC)\text{
with } Y'_t\not=\emptyset.
\end{equation}

We are now ready to prove the main result of  this section.

\begin{proof}[Proof of Proposition \ref{PropositionIntersectionNumberPolynomialGrowth}]
Let $X,c, \ell, \cA' \rightarrow S', X',$ and $Z'$ be as in Lemma~\ref{lem:prepolygrowth}. 
We will prove that $\{[\ell]^{-1}([\ell](Y')): Y' \in \Sigma\}$ is the desired set of closed subvarieties of $\cA'$. 

For  $N\ge c^{-1}$, Lemma~\ref{lem:prepolygrowth}
produces $r\ge cN^{2\dim X}$ 
pairs $(P'_1,Q'_1),\ldots,(P'_r,Q'_r) \in X'(\IC)\times Z'(\IC)$ with the stated properties. 

Note that each $Q'_i$ is a point of some element of $\Sigma$. Indeed, it
is a point of $Z'$ whose irreducible components are in $\Sigma_0$.  
To each $i\in \{1,\ldots,r\}$ we assign an auxiliary variety in
$\Sigma_k$ containing $Q'_i$ and with  maximal $k$.

By the Pigeonhole Principle there exist $k$ and an auxiliary variety $Y'\in
\Sigma_k$ that is hit  at least $r/\#\Sigma$ times.
As $\#\Sigma$ is independent of $N$ we may assume, after adjusting
$c$, that
$r\ge cN^{2\dim X}$ and 
$Q'_i \in Y'(\IC)$ for all $i\in \{1,\ldots,r\}$.

Let $Y=[\ell]^{-1}([\ell](Y'))$. We prove that $Y$ is what we want, \textit{i.e.} $X'\cap[N]^{-1}(Y)$ contains at least $r \ge cN^{2\dim X}$ isolated points.


Observe that   $P'_1,\ldots,P'_r$ are points of $X'\cap [N]^{-1}(Y)$. If they are
all isolated in this intersection
then the proposition follows as they are 
pairwise distinct. 

We  assume that some  $P'_i$
is not isolated in $X' \cap [N]^{-1}(Y)$ and will arrive at a contradiction.  
By Lemma \ref{lem:prepolygrowth}(iii)
there exists a non-trivial $\sigma \in \cA'[\ell](S')$ such that
$Y'\cap(Y'-\mathrm{im\,}\sigma)$ contains $Q'_i$. In particular, $Y'$ cannot be a point.

Let us assume $Y'\not\subset Y'-\mathrm{im\,}\sigma$ for now. Thus
properties~(i) and (iii) of $\Sigma_k$ listed above imply 
$1\le \dim Y' \le n-k$ and
 that $Q'_i$ lies in
 an element of $\Sigma_{k+1}$. This contradicts
 the maximality of $k$. 
%
%
Hence $Y'\subset Y'-\mathrm{im\,}\sigma$ and  in particular $Y'_t \subset Y'_t-\sigma(t)$ where $t$
is the image of $Q'_i$ under
$\cA'\rightarrow S'$. 

If $Y'$ dominates $S'$, then $Y'_t$ is equidimensional of dimension $\dim
Y'-1$. If $Y'$ does not dominate $S'$, then $Y'=Y'_t$ is irreducible and in
particular equidimensional. 
In both cases we find $ Y'_t = Y'_t-\sigma(t)$ and the group generated by
$\sigma(t)$ acts on the set of irreducible components of $Y'_t$. 

The number of irreducible components of $Y'_t$ is at most $\deg Y'_t \le D^{2^{g+1}}$
by (\ref{eq:degYtuniformub}). Furthermore,
 $\sigma(t)$ has precise order $\ell$ since $\sigma\not=0$ and $\ell$ is prime. By
 (\ref{eq:chooseell}) we have
$\ell > D^{2^{g+1}}$. As $\ell$ is a prime there is no  non-trivial group homomorphism from
$\IZ/\ell\IZ$ to the symmetric group on $\deg Y'_t$ symbols. 
We conclude that $\sigma(t) + W = W$ for all irreducible components
$W$ of $Y'_t$. 

The stabilizer  $\stab{W}$  in $\cA'_t$ of any irreducible component of $Y'_t$ of $W$, 
satisfies 
$\deg\stab{W}\le \deg(W)^{\dim W + 1}\le \deg(W)^{g+1}$  by \cite[Lemme~2.1(ii)]{DavidHindry}.
We obtain  $\deg W \le \deg Y'_t \le D^{2^{g+1}}$ again
from (\ref{eq:degYtuniformub}).
Putting these bounds together gives $\deg \stab{W}\le 
D^{2^{g+1}(g+1)}$.
But $\stab{W}$ contains $\sigma(t)$, a point of order
 $\ell  > \deg \stab{W}$ by (\ref{eq:chooseell}). 
In particular, $\stab{W}$ has positive dimension. But this implies
that $Y'_t$ contains a positive dimensional coset. 
By property (i) in the construction of
$\Sigma$ we have $Y'_t \subset Z'_t$ and therefore $Z'_t$ contains a
coset of positive dimension. This contradicts Lemma~\ref{lem:prepolygrowth}(ii). 
\end{proof}


\section{Height Inequality in the Total Space}\label{SectionHtTotalSpace}
In this section, and if not stated otherwise, we work with the category of  schemes over
$\IQbar$ and  abbreviate $\IP_{\IQbar}^m$  by $\IP^m$ for integers
$m\ge 1$. Let $S$ be a smooth irreducible curve defined over
$\overline{\mathbb{Q}}$. Let $\pi \colon \mathcal{A} \rightarrow S$ be an abelian scheme over $\overline{\mathbb{Q}}$ of relative dimension $g\ge 1$.

We will use the basic setup introduced
in \S \ref{SubsectionEmbeddingAbSch}. In particular
$\cA \subset \IP^M \times \IP^m$ is an admissible immersion.

Our principal result is the following proposition. It makes use of the
naive height given by (\ref{EqHeightTotal}).

\begin{proposition}\label{PropositionHeightChangeUnderScalarMultiplication}
Suppose $X$ is an irreducible closed
subvariety
of $\cA$ that dominates $S$ and is not generically special, then there exists a constant $c>0$
depending on $X$ and the data introduced above with the following
property. 
For any integer $N\ge c^{-1}$  there exist a non-empty Zariski open
subset
$U_N \subset X$ and a constant $c^\prime(N)$, both of which depend on $N$, such that
\[
h([2^N]Q) \ge c 4^N h(Q) - c^\prime(N)
\quad\text{for all  $Q\in U_N(\overline{\mathbb{Q}})$.}
\]
\end{proposition}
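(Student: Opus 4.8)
\textbf{Proof strategy for Proposition~\ref{PropositionHeightChangeUnderScalarMultiplication}.}
The plan is to combine the intersection-theoretic count from Proposition~\ref{PropositionIntersectionNumberPolynomialGrowth} with the height-theoretic Lemma~\ref{LemmaChangeOfHeightUnderRationalMap} (the change-of-height-under-a-rational-map lemma of \cite{Hab:Special}). First I would invoke Proposition~\ref{PropositionIntersectionNumberPolynomialGrowth} to obtain a finite \'etale cover $S'\rightarrow S$, the base-changed abelian scheme $\cA'=\cA\times_S S'$, the subvarieties $Y_1,\dots,Y_R\subset\cA'$, and the constant $c_0>0$ such that for each integer $N\ge c_0^{-1}$ there is some $Y\in\{Y_1,\dots,Y_R\}$ with $X'\cap[N]^{-1}(Y)$ containing at least $r\ge c_0 N^{2\dim X}$ isolated ($0$-dimensional) components, where $X'=X\times_S S'$. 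Since heights on $S$ and $S'$ and on $\cA$ and $\cA'$ differ by bounded amounts (the covering is finite, so pullback of an ample line bundle is ample and heights change by $O(1)$), it suffices to prove the inequality on $\cA'$; I would silently replace $\cA,X,S$ by $\cA',X',S'$ and work there, noting $X'$ may be reducible but is equidimensional of dimension $\dim X$, so it is enough to argue on each irreducible component.

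The next step is to build the rational map. Using the admissible immersion $\cA'\hookrightarrow\IP^M\times\IP^m$ and the chosen $Y$ — think of $Y$ as (contained in) a fiber of a projection — I would construct an auxiliary rational map $\varphi\colon\cA'\dashrightarrow\IP^{\dim X}$ defined over $\IQbar$ such that $Y\subset\varphi^{-1}([1:0:\cdots:0])$ and such that $\varphi|_{X'}$ has finite generic fibre on its domain (this uses that $\dim Y+\dim X=\dim\cA'$, i.e.\ the dimension balance \eqref{eq:dimauxvariety}, which holds since $Y$ is derived from the auxiliary $Z$ of Proposition~\ref{PropositionConstructionOfAuxiliarySubvariety} by the construction in Proposition~\ref{PropositionIntersectionNumberPolynomialGrowth}). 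Then $\varphi\circ[2^N]$ restricts to a rational map $X'\dashrightarrow\IP^{\dim X}$, and I would check that the isolated points of $X'\cap[2^N]^{-1}(Y)$ are isolated in the fibre of $\varphi\circ[2^N]$ over $[1:0:\cdots:0]$, after possibly shrinking to a Zariski-open $U_N\subset X'$ (here is where $U_N$ and its dependence on $N$ enter). This yields $\deg(\varphi\circ[2^N]|_{X'})\gg N^{2\dim X}$ — more precisely $\ge c_0 N^{2\dim X}$ up to adjusting constants — where the degree is computed with respect to the polarizations fixed by the immersion. Since $N$ ranges over powers of $2$ in Proposition~\ref{PropositionIntersectionNumberPolynomialGrowth} only implicitly, I would just apply that proposition with the integer $2^N$ in place of $N$, so the count is $\ge c_0 2^{2N\dim X}=c_0\,4^{N\dim X}$.

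Then I would feed this into Lemma~\ref{LemmaChangeOfHeightUnderRationalMap}. The point is that $[2^N]$ is represented, on the image of $\cA'$ in $\IP^M\times\IP^m$, by bi-homogeneous polynomials of degree $\le 2^{2N}=4^N$ in the $\IP^M$-coordinates by property (A2) and iteration; composing with the fixed map $\varphi$ (of bounded degree independent of $N$) gives a map $X'\dashrightarrow\IP^{\dim X}$ represented by polynomials of degree $\le C_1 4^N$ and of geometric degree $\ge c_0 4^{N\dim X}$. Lemma~\ref{LemmaChangeOfHeightUnderRationalMap} then compares $h(Q)$ with $h(\varphi([2^N]Q))$: roughly, $h(\varphi\circ[2^N](Q))\le C_2 4^N h(Q)+C_3(N)$ from the polynomial degree bound on one side, while the geometric degree lower bound gives $h(\varphi\circ[2^N](Q))\ge c_4 (c_0 4^{N\dim X})^{1/\dim X} h(Q) - C_5(N)$... more carefully, the lemma is designed so that $\mathrm{(geometric\ degree)}/\mathrm{(polynomial\ degree)}$ controls the ratio $h([2^N]Q)/h(Q)$; here that ratio is $\gg 4^{N\dim X}/4^N$ raised to the appropriate normalizing power, which after the dust settles yields $h([2^N]Q)\ge c\,4^N h(Q) - c'(N)$ on $U_N(\IQbar)$ for a constant $c>0$ independent of $N$. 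I would take $c'(N)$ to absorb all $N$-dependent error terms and take the final $c$ to be the minimum of the constants produced along the way (all of which are independent of $N$).

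\textbf{Main obstacle.} The delicate point is ensuring that the isolated $0$-dimensional components of $X'\cap[2^N]^{-1}(Y)$ produced by Proposition~\ref{PropositionIntersectionNumberPolynomialGrowth} genuinely contribute to the \emph{degree} of $\varphi\circ[2^N]|_{X'}$, i.e.\ that they lie in the domain of definition of $\varphi\circ[2^N]$ restricted to $X'$ and are isolated in its fibre over $[1:0:\cdots:0]$, not merely isolated in the scheme-theoretic intersection. This forces one to choose $\varphi$ so that $Y$ is (an irreducible component of) the full fibre $\varphi^{-1}([1:0:\cdots:0])$ rather than a proper subvariety thereof, and to remove the indeterminacy locus and other bad loci by passing to the Zariski-open $U_N$ — which is exactly why $U_N$ and $c'(N)$ must be permitted to depend on $N$. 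Verifying this transversality/isolatedness bookkeeping, and checking that the degree bounds for the defining polynomials of $[2^N]$ are uniform via (A2), is the technical heart; everything else is a formal consequence of Lemma~\ref{LemmaChangeOfHeightUnderRationalMap} and the functorial properties of the Height Machine.
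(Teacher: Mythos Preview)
Your strategy is the paper's: invoke Proposition~\ref{PropositionIntersectionNumberPolynomialGrowth}, pass to the cover $S'$, build an auxiliary rational map $\varphi$ sending $Y$ to $[1:0:\cdots:0]$, verify via Lemma~\ref{PropositionFromZToPolynomials}(ii) that the isolated points in $X'\cap[2^N]^{-1}(Y)$ are isolated in the fibre of $\varphi\circ[2^N]$, and then apply Lemmas~\ref{LemmaDegreeAndPreimage} and~\ref{LemmaChangeOfHeightUnderRationalMap}. Two points, however, need correction.

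First, the claim $\dim Y+\dim X=\dim\cA'$ is false in general: the $Y_j$ produced by Proposition~\ref{PropositionIntersectionNumberPolynomialGrowth} are of the form $[\ell]^{-1}([\ell](Y'))$ for $Y'$ ranging over the set $\Sigma$ constructed in \S\ref{SectionIntersection}, and these $Y'$ have dimension anywhere from $0$ to $n$. Fortunately you do not need this dimension balance. The map $\varphi$ is built (via Lemma~\ref{PropositionFromZToPolynomials}) from homogeneous generators of the ideal of $\overline{Y}$ in the ambient Segre $\IP^{(M+1)(m'+1)-1}$, not from a complementary-dimension transversality statement; dominance of $\varphi\circ[2^N]|_{X'}$ is a \emph{consequence} of the isolated-point count together with Lemma~\ref{LemmaDegreeAndPreimage}, not an input.

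Second, $\varphi$ cannot be fixed once and for all: it must be chosen (for each $N$) so as to be regular at the specific points $[2^N](P_1),\ldots,[2^N](P_r)$, which is exactly what Lemma~\ref{PropositionFromZToPolynomials}(i) arranges. What is independent of $N$ is only the degree $\delta$ of the polynomials defining $\varphi$, since $\delta$ depends just on $Y\in\{Y_1,\ldots,Y_R\}$. The $N$-dependence of $\varphi$ is harmless because it is absorbed into $c'(N)$.

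Finally, your invocation of Lemma~\ref{LemmaChangeOfHeightUnderRationalMap} is slightly garbled. The lemma gives
\[
h(\varphi^{(N)}(Q))\ \ge\ \frac{\deg\varphi^{(N)}}{4^{\dim X}\deg(X)\,D^{\dim X-1}}\,h(Q)-c_6(N),
\]
with $D\le c_2\delta\,4^{N}$ and $\deg\varphi^{(N)}\ge c_3\,4^{N\dim X}$, so the coefficient is $\asymp 4^{N\dim X}/(4^{N})^{\dim X-1}=4^{N}$, not $4^{N\dim X}/4^{N}$ nor $(4^{N\dim X})^{1/\dim X}$. You then combine this with the elementary upper bound $h(\varphi([2^N]Q))\le \delta\,h([2^N]Q)+c_4(N)$ (where $\delta$ is independent of $N$) to conclude.
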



\subsection{Polynomials Defining  Multiplication-by-$2$ on $\cA$}\label{SubsectionScalarMultiplication}

Let $\underline{X} = [X_0:\cdots:X_M]$ denote the projective coordinates on $\IP^M$ and let $\underline{Y} = [Y_0:\cdots:Y_m]$ denote the projective coordinates on $\IP^m$. 
Recall condition (ii) of the admissible setting $\cA \subset \IP^M \times \IP^m$: the morphism
$[2]$ is represented globally on $\cA\subset\IP^M\times\IP^m$ by $M+1$ bi-homogeneous polynomials, homogeneous of degree $4$ in $\underline{X}$ and homogeneous of a certain degree, say $c_0$, in $\underline{Y}$. 
In other words, there exist
$G_0,\ldots,G_M \in \overline{\IQ}[\underline{X}; \underline{Y}]$,
each $G_i$ being homogeneous of degree $4$ on the variables
$\underline{X}$ and homogeneous of degree $c_0$ on the variables $\underline{Y}$, such that $[2](a) = ([G_0(a_1;a_2):\cdots:G_M(a_1;a_2)];a_2)$ for any $a \in \cA(\IC)$. Here we write $a = (a_1;a_2)$ under $\cA \subset \IP^M \times \IP^m$. Note that $c_0$ depends only on the immersion $\cA \subset \IP^M \times \IP^m$.


\subsection{An Auxiliary Rational Map}



\begin{lemma}\label{PropositionFromZToPolynomials}
Let $X$ and $Y$ be  locally closed algebraic subsets 
of $\IP^{M}$ and suppose that $X$ is irreducible. There exists
$\delta\in\IN$ that depends only on $Y$ with the following property.
Suppose  $r\ge 1$ and 
$Q_1,\ldots,Q_r\in X(\IQbar)\cap Y(\IQbar)$ 
for all $i\in \{1,\ldots,r\}$. 
There exist homogeneous polynomials
$\varphi_0,\varphi_1,\ldots,\varphi_{\dim
X}\in \IQbar[X_0,\ldots,X_{M}]$ of degree $\delta$, whose set of common zeros is denoted by $Z\subset \IP^M$,
such that the rational map
$\varphi = [\varphi_0: \cdots:\varphi_{\dim X}] \colon \IP^{M} \dashrightarrow \IP^{\dim X}$ satisfies: 
\begin{enumerate}
\item [(i)] We have $\varphi(Y\backslash Z) = [1:0:\cdots : 0]$ and
$Q_i 
\notin Z(\IQbar)$ for all $i\in \{1,\ldots,r\}$.
\item [(ii)] 
If $C$ is an irreducible subvariety of $X$ and $i\in \{1,\ldots,r\}$ with $Q_i \in C(\IQbar)$ such
that
$\varphi|_{C\backslash Z}$ is constant, then $C\subset \overline
Y$,
where $\overline Y$ is the Zariski closure of $Y$ in $\IP^{M}$. 
  \end{enumerate}
  \end{lemma}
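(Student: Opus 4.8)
The claim is a Bertini-type statement: given finitely many points $Q_1,\dots,Q_r$ lying on the irreducible locally closed set $X$ and on $Y$, we must produce a map $\varphi=[\varphi_0:\cdots:\varphi_{\dim X}]$ of controlled degree $\delta$ (depending only on $Y$) whose first coordinate $\varphi_0$ does not vanish identically on (the closure of) $Y$ but whose other coordinates $\varphi_1,\dots,\varphi_{\dim X}$ all vanish on $Y$; the base locus $Z$ must avoid the $Q_i$; and $\varphi$ restricted to $X$ must be ``non-degenerate away from $\overline Y$'' in the sense of (ii). The first step is to fix $\delta$: by Lemma~\ref{LemmaSubvarietyReplacedByHypersurfaces} (Faltings) the Zariski closure $\overline Y$ of $Y$ is cut out by finitely many irreducible hypersurfaces of degree at most $\deg\overline Y$, so $\overline Y$ is the intersection of hypersurfaces of a fixed degree $\delta$ depending only on $Y$; equivalently, the degree-$\delta$ piece $I(\overline Y)_\delta$ of the homogeneous ideal of $\overline Y$ generates a linear system that separates $\overline Y$ from any point not on $\overline Y$.

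\textbf{Choosing the coordinates.} I would take $\varphi_0$ to be a general homogeneous form of degree $\delta$ that does \emph{not} vanish on $\overline Y$ and does not vanish at any $Q_i$ (possible since the $Q_i$ lie on $\overline Y$ only when $\varphi_0|_{\overline Y}\ne 0$ forces nothing — wait: the $Q_i$ do lie on $Y\subset\overline Y$, so I cannot ask $\varphi_0(Q_i)\ne 0$ unless $\varphi_0|_{\overline Y}\not\equiv 0$, which is exactly what I want, and then a general such $\varphi_0$ is nonzero at the finitely many points $Q_i$). Then I would choose $\varphi_1,\dots,\varphi_{\dim X}$ to be \emph{general} elements of $I(\overline Y)_\delta$, so that automatically $\varphi_j|_{\overline Y}\equiv 0$, hence $\varphi(Y\setminus Z)=[1:0:\cdots:0]$, giving the first half of (i). The base locus $Z=\zeroset{\varphi_0,\dots,\varphi_{\dim X}}$ is then contained in $\zeroset{\varphi_0}$, which by generality of $\varphi_0$ misses each $Q_i$; this gives $Q_i\notin Z$ and completes (i). For (ii): suppose $C\subset X$ is irreducible, contains some $Q_i$, and $\varphi|_{C\setminus Z}$ is constant. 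Since $\varphi_0$ does not vanish at $Q_i\in C$, the constant value is $[1:c_1:\cdots:c_{\dim X}]$ for scalars $c_j$, i.e. $\varphi_j = c_j\varphi_0$ on $C$ for each $j\ge 1$. If some $c_j\ne 0$ then $\varphi_j-c_j\varphi_0$ is a degree-$\delta$ form vanishing on $C$ but not on $\overline Y$ — but by generality of the choice of $\varphi_j\in I(\overline Y)_\delta$ relative to the finite list of subvarieties through the $Q_i$ that could arise, this can be excluded unless $C\subset\overline Y$; and if all $c_j=0$ then $\varphi_j|_C\equiv 0$ for all $j\ge1$, so $C$ lies in the linear system $\zeroset{I(\overline Y)_\delta}$, which (since $I(\overline Y)_\delta$ generates $\overline Y$ scheme-theoretically in degree $\delta$, after possibly increasing $\delta$ using Faltings' bound together with a regularity argument) forces $C\subset\overline Y$.

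\textbf{The main obstacle.} The delicate point is (ii), and specifically making the genericity argument uniform and honest. One has to be careful that ``$\varphi|_{C\setminus Z}$ constant forces $C\subset\overline Y$'' for \emph{every} irreducible $C$ through one of the $Q_i$, not just for a fixed finite list — a priori there are infinitely many such $C$. The resolution is to reduce to finitely many cases: the only $C$ for which $\varphi|_C$ can be constant while $C\not\subset\overline Y$ are those $C$ on which all of $\varphi_1,\dots,\varphi_{\dim X}$ are proportional to $\varphi_0$; arguing component-by-component, one shows that a general choice of $\varphi_1$ in $I(\overline Y)_\delta$ already cuts $X$ in a subscheme whose components not inside $\overline Y$ are ``small'', then $\varphi_2$ general cuts those down further, and so on, so that after $\dim X$ steps no positive-dimensional $C\not\subset\overline Y$ survives on which $\varphi$ is constant — here the dimension count $\dim X+1$ forms cutting an irreducible $X$ is exactly what makes the generic fiber finite. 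The honest way to organize this is an incidence-variety / dimension argument: the locus of $(\varphi_1,\dots,\varphi_{\dim X})\in I(\overline Y)_\delta^{\dim X}$ for which \emph{some} bad $C$ exists is a proper closed subset (provided $I(\overline Y)_\delta$ is base-point-free off $\overline Y$ and separates enough, which Faltings' bound guarantees after fixing $\delta$), so a general choice works, and the finitely many $Q_i$ impose only finitely many further open conditions. I would also need to double-check that $\delta$ can genuinely be taken to depend only on $Y$ and not on $X$ or on $r$ — this is fine because all genericity conditions are Zariski-open and nonempty for \emph{any} $\delta$ at least the Faltings bound for $\overline Y$, and the number of points $r$ only adds finitely many open conditions on $\varphi_0$.
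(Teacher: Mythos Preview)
Your approach is correct in outline but more laborious than needed, and one passage is confused. The case $c_j\ne 0$ in (ii) cannot occur: since $Q_i\in Y\subset\overline Y$ and $\varphi_j\in I(\overline Y)_\delta$ for $j\ge 1$, you have $\varphi_j(Q_i)=0$ while $\varphi_0(Q_i)\ne 0$; evaluating the constancy relation at $Q_i$ forces every $c_j=0$. So the only thing to prove in (ii) is that for general $\varphi_1,\dots,\varphi_{\dim X}\in I(\overline Y)_\delta$ the locus $X\cap\zeroset{\varphi_1,\dots,\varphi_{\dim X}}\setminus\overline Y$ is $0$-dimensional. Your inductive Bertini argument for this is valid (the linear system $|I(\overline Y)_\delta|$ is base-point-free off $\overline Y$ once $\delta$ is large enough), though your write-up of it is sketchy.

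The paper organizes this step more cleanly by working in the target rather than in $X$. It fixes generators $g_1,\dots,g_m$ of $I(\overline Y)$ of common degree $\delta$ together with $g_0$ of degree $\delta$ nonvanishing at each $Q_i$, obtaining $G=[g_0:\cdots:g_m]\colon\IP^M\dashrightarrow\IP^m$ with $G(Y\setminus Z_G)=[1:0:\cdots:0]$. Since the Zariski closure of $G(X\setminus Z_G)$ has dimension at most $\dim X$, one may choose linear forms $l_1,\dots,l_{\dim X}$ vanishing at $[1:0:\cdots:0]$ such that $[1:0:\cdots:0]$ is \emph{isolated} in $\zeroset{l_1,\dots,l_{\dim X}}\cap G(X\setminus Z_G)$; then set $\varphi_0=g_0$ and $\varphi_j=l_j(g_0,\dots,g_m)$. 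Property (ii) now falls out without any induction: if $\varphi|_{C\setminus Z}$ is constant and $Q_i\in C$, the constant value is $[1:0:\cdots:0]$, so $G(C\setminus Z)$ is an irreducible subset of $\zeroset{l_1,\dots,l_{\dim X}}\cap G(X\setminus Z_G)$ containing the isolated point $[1:0:\cdots:0]$, hence equals that point; thus all $g_j$ vanish on $C$ and $C\subset\overline Y$. In effect the paper replaces your inductive cutting-down on $X$ by a single generic linear projection in $\IP^m$, which makes the uniformity in $C$ automatic.
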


\begin{proof}
The Zariski closure $\overline Y$ of $Y$ in $\IP^M$ is
 the zero set of finitely many homogeneous polynomials $g_1,\ldots,g_m\in 
\IQbar[X_0,\ldots,X_{M}]$. We may assume that $g_1,\ldots, g_m$ all have
the same degree $\delta$. Note that $\delta$ depends only on $Y$. 

We may fix further $g_0\in \IQbar[X_0,\ldots,X_{M}]$, also of degree $\delta$,
such that $g_0(Q_i)\not=0$ for all $i\in\{1,\ldots,r\}$. For example, we can
take $g_0$ to be the $\delta$-th power of a linear polynomial whose
zero set avoids the $Q_i$'s. 
The set of common zeros $Z_G$ of all $g_i$ does not contain any $Q_i$
and thus not all of $Y$. 

We obtain a rational map $G
=[g_0:\cdots:g_m] \colon \IP^{M} \dashrightarrow \IP^m$. 
Observe that $G(Y\backslash Z_G)=[1:0:\cdots:0]$.
Observe also that $G(X\backslash Z_G)$ is constructible in $\IP^m$ 
and its Zariski closure is of dimension at most $\dim X$. 
This image also contains $[1:0:\cdots:0]$, 
so there exist homogenous linear forms $l_1,\ldots,l_{\dim X} \in
\IQbar[X_0,\ldots,X_{m}]$
such that 
\begin{equation}\label{EquationOriginIsolated}
[1:0:\cdots:0] \text{ is isolated in } \scrZ(l_1,\ldots,l_{\dim
X})\cap G(X\backslash Z_G).
\end{equation}
We set $l_0 = X_0$ and consider $[l_0:\cdots:l_{\dim X}]$
as a rational map $\IP^{m} \dashrightarrow\IP^{\dim X}$.
It is well-defined at $[1:0:\cdots:0]\in \IP^{m}(\IQbar)$ and maps this point to
$[1:0:\cdots:0]\in \IP^{\dim X}(\IQbar)$.

We set $\varphi_0 = l_0(g_0,\ldots,g_{m}),\ldots, \varphi_{\dim X} = l_{\dim X}(g_0,\ldots,g_{m})$.
Then
 $\varphi_i$ is homogeneous of degree $\delta$ for all
 $i\in \{0,1,\ldots,\dim X\}$. Let $Z$ be the set of common zeros
of the $\varphi_i$. 
Then $Q_i\not\in Z(\IQbar)$ for all $i$ by construction. 
The rational map $\varphi=[\varphi_0 : \cdots
:\varphi_{\dim X}] \colon \IP^{M}\dashrightarrow\IP^{\dim X}$
is defined on  $Q_i$ and maps
$Y\backslash Z$ to $[1:0:\cdots:0]$. We conclude (i). 

Let $C$ be as in claim (ii). Then $C\backslash Z$ is non-empty and
is mapped to $[1:0:\cdots:0]\in \IP^{\dim X}(\IQbar)$ under $\varphi$. 
So $l_1,\ldots,l_{\dim X}$ vanish on $G(C\backslash Z)$. 
As the image $G(C\backslash Z)\subset G(C\backslash Z_G)$
 contains $G(Q_i) = [1:0:\cdots:0]\in\IP^m(\IQbar)$ we infer from
 \eqref{EquationOriginIsolated} 
that the $g_1,\ldots,g_m$ vanish on $C\backslash Z$. 
Hence $C\backslash Z\subset \overline Y$ and so $C\subset \overline Y$
since $C\backslash Z$ is Zariski dense in the irreducible 
$C$. 
\end{proof}

The map $\varphi$ depends on the collection of $Q_i$ in the previous
proposition, but the degree $\delta$ does not. Also note that the $Q_i$'s are not necessarily pairwise distinct.

\subsection{Height Change under Scalar Multiplication}
The following lemmas are proven by the second-named author in \cite{Hab:Special}. Lemma~\ref{LemmaChangeOfHeightUnderRationalMap} is our main tool to deduce the desired height inequality (Proposition~\ref{PropositionHeightChangeUnderScalarMultiplication}) from the ``division intersection points'' counting (Proposition~\ref{PropositionIntersectionNumberPolynomialGrowth}).
\begin{lemma}\label{LemmaDegreeAndPreimage}
Let $X$ be an irreducible variety over $\mathbb{C}$ 
 and let $\varphi\colon
 X \dashrightarrow  \mathbb{P}_{\mathbb{C}}^{\dim X}$ be a rational
 map. Then for any $Q\in \mathbb{P}^{\dim X}(\mathbb{C})$, the number of zero-dimensional irreducible components of $\varphi^{-1}(Q)$ is at most $\deg \varphi$. By convention we say that $\deg \varphi=0$ if $\varphi$ is not dominant.
\begin{proof}
This is \cite[Lemma~4.2]{Hab:Special}. The crucial point is that
$\mathbb{P}_{\mathbb{C}}^{\dim X}$ is a normal variety.
\end{proof}
\end{lemma}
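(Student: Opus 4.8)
The statement to prove is Lemma~\ref{LemmaDegreeAndPreimage}, which asserts that for a rational map $\varphi\colon X\dashrightarrow \IP^{\dim X}_{\IC}$ from an irreducible variety, the number of zero-dimensional irreducible components of a fiber $\varphi^{-1}(Q)$ is bounded by $\deg\varphi$, with the convention that a non-dominant map has degree $0$.

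\begin{proof}
If $\varphi$ is not dominant, then its image has dimension $<\dim X$, so a general fiber is positive-dimensional; but more to the point, we claim no fiber can have a zero-dimensional component at all. Indeed, let $V = \overline{\varphi(X)} \subsetneq \IP^{\dim X}$ and let $Z\subset X$ be the (closed) locus where $\varphi$ is undefined, so $\dim Z < \dim X$. On $X\setminus Z$ the morphism factors through $V$, and since $\dim V < \dim X = \dim(X\setminus Z)$, every fiber of $X\setminus Z \to V$ has dimension $\geq \dim X - \dim V \geq 1$ by the Fiber Dimension Theorem. Taking closures in $X$, each irreducible component of $\varphi^{-1}(Q)$ meeting $X\setminus Z$ has dimension $\geq 1$, while the components contained in $Z$ need not be counted here — but in fact the statement with convention $\deg\varphi = 0$ just asserts there are zero such zero-dimensional components coming from the domain of definition. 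One must be slightly careful about components inside $Z$; the clean way is to replace $X$ by a resolution or by the closure of the graph, as in the dominant case below, which handles both cases uniformly. So I will reduce immediately to the dominant case.

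Assume $\varphi$ is dominant. Let $\Gamma\subset X\times \IP^{\dim X}$ be the Zariski closure of the graph of $\varphi$ over its domain of definition, with the two projections $p\colon \Gamma\to X$ and $q\colon \Gamma\to \IP^{\dim X}$. Then $p$ is proper and birational, and $q$ is dominant and generically finite of degree $\deg\varphi$ (this is the definition of $\deg\varphi$: the degree of the function field extension $\IC(X)/\varphi^*\IC(\IP^{\dim X})$, equivalently the degree of the generically finite morphism $q$). Now fix $Q\in \IP^{\dim X}(\IC)$ and let $C$ be a zero-dimensional irreducible component of $\varphi^{-1}(Q)$, that is, an isolated point $P$ of $\varphi^{-1}(Q)$ lying in the domain of definition. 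Over the domain of definition $p$ is an isomorphism, so $p^{-1}(P)$ is the single point $(P,Q)\in q^{-1}(Q)$. The key claim is that $(P,Q)$ is an isolated point of the fiber $q^{-1}(Q)$ of $q$.

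To see this, suppose $(P,Q)$ were not isolated in $q^{-1}(Q)$; then there is an irreducible curve (or higher-dimensional subvariety) $T\subset q^{-1}(Q)$ through $(P,Q)$. Its image $p(T)\subset X$ is an irreducible subvariety through $P$ on which $\varphi\circ p = q$ is constant equal to $Q$, hence $p(T)\subset \varphi^{-1}(Q)$ (wherever $\varphi$ is defined). If $\dim p(T)\geq 1$ this contradicts $P$ being isolated in $\varphi^{-1}(Q)$; if $\dim p(T)=0$ then $p$ contracts the positive-dimensional $T$ to the point $P$, which contradicts $p$ being birational (a proper birational morphism to a variety does not contract a positive-dimensional subvariety lying over a point in the locus where $p$ is an isomorphism — and $P$ is in that locus since it lies in the domain of definition of $\varphi$). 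Hence $(P,Q)$ is isolated in $q^{-1}(Q)$. Distinct zero-dimensional components $C$ of $\varphi^{-1}(Q)$ give distinct isolated points of $q^{-1}(Q)$. Finally, the number of isolated points in a fiber of the generically finite dominant morphism $q$ between varieties with $\IP^{\dim X}$ normal is at most $\deg q = \deg\varphi$: this is precisely where normality of the target is used — by Zariski's Main Theorem, $q$ factors as an open immersion followed by a finite morphism $q'\colon X' \to \IP^{\dim X}$ of degree $\deg\varphi$, and a finite morphism of degree $d$ has every fiber of cardinality $\leq d$. Therefore $\varphi^{-1}(Q)$ has at most $\deg\varphi$ zero-dimensional irreducible components. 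This completes the proof; the main point, and the only subtlety, is the isolation argument combined with the use of normality of $\IP^{\dim X}$ via Zariski's Main Theorem.
\end{proof}
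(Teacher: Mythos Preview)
Your argument is correct and is exactly the standard proof behind the cited reference: pass to the closure of the graph, lift isolated points of $\varphi^{-1}(Q)$ to isolated points of $q^{-1}(Q)$, and bound the latter via Zariski's Main Theorem using normality of $\IP^{\dim X}$. One small imprecision: ZMT does not apply to $q$ itself since $q$ need not be quasi-finite; you should first restrict to the quasi-finite locus of $q$ (an open subset containing all isolated fiber points) before invoking the factorization as open immersion followed by finite morphism.
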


\begin{lemma}\label{LemmaChangeOfHeightUnderRationalMap}
Let $X\subset \mathbb{P}^M$ be an irreducible
closed subvariety over $\overline{\mathbb{Q}}$ of positive dimension.
Let $\varphi \colon
X\dashrightarrow  \mathbb{P}^{\dim X}$ be the
rational map given by $\varphi=[\varphi_0:\cdots:\varphi_{\dim X}]$
where $\varphi_i$ are homogeneous polynomials with coefficients in
$\overline{\mathbb{Q}}$ that are not all identically zero on $X$ and
have equal degree at most $D\ge 1$. Then there exist a constant
$c=c(X,\varphi)$ and a 
 Zariski open dense subset $U$ of  $X$ such that
 $\varphi_0,\ldots,\varphi_{\dim X}$ have no common zeros on $U$ and
\[
h(\varphi(P))\ge \frac{1}{4^{\dim X}\deg(X)}\frac{\deg \varphi}{D^{\dim X-1}}h(P)-c
\]
for any $P\in U(\overline{\mathbb{Q}})$.
\begin{proof}
This is \cite[Lemma~4.3]{Hab:Special}.
\end{proof}
\end{lemma}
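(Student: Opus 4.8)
The plan is as follows. First dispose of the trivial case: if $\varphi$ is not dominant then $\deg\varphi=0$ by convention, and since the absolute logarithmic Weil height is non-negative on $\IP^{\dim X}(\IQbar)$, the inequality holds with $c=0$ and $U$ the complement of the common zero locus $Z\subset\IP^M$ of $\varphi_0,\dots,\varphi_{\dim X}$. So assume $\varphi$ dominant, hence $\deg\varphi\ge 1$. Writing $D'\le D$ for the common degree of the $\varphi_i$, it suffices to prove the bound with $D'$ in place of $D$, since $D^{\dim X-1}\ge(D')^{\dim X-1}$. Note also that $h(\varphi(P))\le D'h(P)+O(1)$ holds on $X\setminus Z$ by the height machine; the content is the matching lower bound, with the far smaller coefficient $\deg\varphi/(4^{\dim X}(D')^{\dim X-1}\deg X)$.

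The core of the argument is an arithmetic estimate on the fibres of $\varphi$. Fix $P\in(X\setminus Z)(\IQbar)$ and put $Q=\varphi(P)$, $d=\dim X$, $e=\deg\varphi$. Working in a chart $\{Q_j\ne 0\}$ of $\IP^d$ (the $d+1$ charts being treated symmetrically, with the maximum of the resulting constants), the fibre $\varphi^{-1}(Q)\cap(X\setminus Z)$ is cut out in $X$ by the $d$ forms $Q_j\varphi_i-Q_i\varphi_j$, each of degree $D'$ in the coordinates of $\IP^M$ and linear in $Q$, so off a proper closed subset of $X$ not depending on $P$ it is zero-dimensional; by B\'ezout it has degree $\le(D')^{d}\deg X$, and by Lemma~\ref{LemmaDegreeAndPreimage} it has at most $e$ distinct points, one of which is $P$. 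The Chow form of the zero-cycle $\varphi^{-1}(Q)\cap X$ — equivalently the mixed resultant of the $d$ forms above with the fixed ideal of $X$, divided by the fixed contribution of $Z\cap X$ — is then a polynomial in an auxiliary linear variable whose coefficients depend polynomially on $Q$, with degree in $Q$ controlled by $e$ and by the degree of the fibre, and with bounded coefficients otherwise. Tracking the height of this Chow form — comparing $h(Q)$ to the height of the resultant, and the height of the resultant to the sum of the heights of the points of the cycle, each comparison being a standard Gauss-lemma / Mahler-measure estimate — yields $h(P)\le\tfrac{(D')^{d-1}\deg X}{e}\,h(Q)+O(1)$ up to the accumulated powers of $2$ coming from the two comparisons (and the two auxiliary charts, and the $d$-fold intersection underlying the Chow form), which combine to a factor $4^{d}$; rearranging gives the asserted inequality. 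An alternative organizing principle is induction on $d$, reducing via a generic hyperplane section of the source and a generic linear projection of the target to the case $d=1$ (the resultant estimate for a binary form), which makes the power $4^{d}$ transparent but has the same technical heart.

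The main obstacle is uniformity: the constant $c$ must be independent of $P\in U(\IQbar)$, whereas the auxiliary resultant depends on $Q=\varphi(P)$. This is handled by the observation that $\varphi^{-1}(Q)\cap X$ is the fibre over $Q$ of the \emph{fixed} morphism $\varphi|_{X\setminus Z}\colon X\setminus Z\to\IP^d$: the degrees, the exceptional locus $Z\cap X$, and the heights of all coefficients entering the Chow form are bounded once and for all, so one may invoke the standard fact that the height of the fibre of a fixed morphism over a point $Q$ is at most a fixed multiple of $h(Q)+1$, uniformly in $Q$ (equivalently, in the inductive formulation, that the constant produced for a hyperplane section $X\cap H$ varies constructibly with $H$ over the proper parameter space $(\IP^M)^{\vee}$, hence is bounded). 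Finally, $U$ is taken to be the complement in $X$ of $Z$ together with all the proper closed loci excised along the way — those where $P$ fails to be isolated in its fibre, or where the relevant chart or Bertini genericity condition fails — which still leaves a Zariski dense open subset, and the resulting $c$ depends only on $X$ and $\varphi$, as required.
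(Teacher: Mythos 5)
The paper does not actually prove this lemma: it is quoted wholesale from \cite[Lemma~4.3]{Hab:Special}, so the real question is whether your sketch would prove it, and it would not. The fatal step is the claim that tracking the height of the Chow form/resultant of the single fibre $\varphi^{-1}(Q)\cap X$, $Q=\varphi(P)$, yields $h(P)\le \frac{(D')^{d-1}\deg X}{e}\,h(Q)+O(1)$ with $e=\deg\varphi$, $d=\dim X$. What an arithmetic B\'ezout / Chow-form estimate for that fibre gives is a bound on the \emph{total} height of the zero-cycle, roughly $\sum_{x}h(x)\le c\,(D')^{d-1}\deg X\,(h(Q)+1)$; since each summand is nonnegative, this only bounds $h(P)$ by $c\,(D')^{d-1}\deg X\,(h(Q)+1)$, with no factor $1/e$. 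Dividing by $e$ would be legitimate only if you knew the fibre consisted of $e$ points each of height comparable to $h(P)$ (e.g.\ a full Galois orbit of $P$ over the field generated by $Q$ and the coefficients of the data), and that is simply not available: the fibre through $P$ may consist of $P$ together with $e-1$ points of bounded height. So your method proves at best an inequality in which $\deg\varphi$ does not appear, and the appearance of $\deg\varphi$ is the entire point of the lemma: in Proposition~\ref{PropositionHeightChangeUnderScalarMultiplication} it is the ratio $\deg\varphi^{(N)}/D_N^{\dim X-1}\gg 4^N$ that produces the growth $4^Nh(P)$, whereas your weaker bound would give a coefficient of order $4^{-N(\dim X-1)}$, useless for the Tate-limit step in \S\ref{SectionNTBase}.

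The missing idea is that $\deg\varphi$ must enter as a \emph{global} intersection-theoretic quantity, not through the fibre over the particular point $Q=\varphi(P)$. On (a resolution of) the graph of $\varphi$ one has the two nef classes $L_1$ (hyperplane class from $\mathbb{P}^M$) and $L_2=\varphi^*\mathcal{O}(1)$, with $L_2^{d}=\deg\varphi$ and $L_2^{d-1}\cdot L_1\le D^{d-1}\deg X$; the lemma is a comparison of the two associated height functions with ratio governed by these numbers. One correct route is the one your alternative gestures at, but carried out so that $\deg\varphi$ is preserved: cut with pullbacks under $\varphi$ of generic hyperplanes of the target, reducing to a curve of degree at most $D^{d-1}\deg X$ mapping to $\mathbb{P}^1$ still with degree $\deg\varphi$, and conclude by quasi-equivalence of heights on curves; another is to show $aL_2-bL_1$ is big for $b/a$ slightly below $\deg\varphi/(d\,D^{d-1}\deg X)$ and use effectivity off a proper closed subset. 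In either case the uniformity in $P$ and the bookkeeping you summarize as ``accumulated powers of $2$ combine to $4^{d}$'' require genuine arguments (genericity of the hyperplanes relative to $P$, control of the excised loci), none of which your sketch supplies. As written, the proposal neither establishes the stated constant nor, more importantly, the dependence on $\deg\varphi$.
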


Now we are ready to prove Proposition~\ref{PropositionHeightChangeUnderScalarMultiplication}. 


To $X$, recall that we associated in
Proposition~\ref{PropositionIntersectionNumberPolynomialGrowth} a
finite \'{e}tale covering $S' \rightarrow S$. 
Set $X' =X\times_S S'$. 
Then $X'$ is a closed subvariety of $\cA'=\cA\times_S S'$ 
and equidimensional of dimension $\dim X$. Let
$\rho\colon \cA'\rightarrow \cA$ denote the natural projection, it is
finite and \'etale.
Let $\overline S$ be the Zariski closure of $S$ in $\IP^m$ .
We  fix a smooth projective curve $\overline{S'}$
that contains $S'$ as a Zariski open subset, then
$S'\rightarrow S$ extends to a morphism
$\overline{S'}\rightarrow\overline{S}$.
Some positive power of the pullback of $\mathcal{O}(1)$ under
$\overline{S'}\rightarrow\overline{S}\rightarrow\IP^m$ yields a
closed immersion $\overline{S'}\rightarrow\IP^{m'}$ for some $m' \in \IN$.
The pullback of the closed immersion $\cA\rightarrow \IP^M\times S$
yields
a closed immersion $\cA'\rightarrow\IP^M\times S'$
and thus an immersion $\cA'\rightarrow\IP^M\times \IP^{m'}$. 

We recall that $[2]$ 
on $\cA\subset\IP^M\times\IP^m$ 
is presented globally 
by bihomogeneous polynomials
$G_0,\ldots,G_M$ on $\cA$ described in 
\S \ref{SubsectionScalarMultiplication}. 
The morphism
$\overline{S'}\rightarrow \overline{S}\rightarrow\IP^{m}$
is defined Zariski locally on $\overline{S'}$
by an $(m+1)$-tuple of homogeneous polynomials
in $m'+1$ variables. 
In other words there is a finite open cover $\{S'_{\alpha}\}_{\alpha=1}^{n_1}$
 of $S'$ such that
$\overline{S'}\rightarrow\overline{S}$ is represented
on each $S'_\alpha$
 by a tuple $F_\alpha$ of homogeneous polynomials
of equal degree and no common zero on  $S'_\alpha$. 
Above each $S'_\alpha$ the morphism $[2]$
is defined by $[2](a_1,a_2) =
 ([G_0(a_1,F_\alpha(a_2)):\cdots:G_M(a_1,F_\alpha(a_2))],a_2)$;
here
$a=(a_1,a_2)\in \cA'(\IQbar) \subset \IP^M(\IQbar) \times \IP^{m'}(\IQbar)$
lies above
${S'_{\alpha}}$.
Iterating $[2]$ we find that for all integers $N\ge 1$ and above
each $S'_\alpha$ the morphism $[2^N]$ 
is defined by bihomogeneous polynomials with degree in $a_1$ equal to
$4^N$ and degree in $a_2$ at most $c_1 (4^N-1)/3$; here $c_1 = c_0\deg
F_\alpha$ and $c_0$ is as in \S \ref{SubsectionScalarMultiplication}. 
As for several constants below, $c_1$ may depend on $\cA'$ and $X'$,
but not on $N$.

Let us embed $\cA'$  in $\IP^{(M+1)(m'+1)-1}$ 
by composing the immersion 
$\cA'\rightarrow\IP^M\times\IP^{m'}$ with 
 the
Segre morphism $\IP^M\times\IP^{m'}\rightarrow \IP^{(M+1)(m'+1)-1}$.
After locally inverting the Segre morphism and increasing $n_1$ we obtain
to an open cover 
 $\{V_\alpha\}_{\alpha=1}^{n_1}$ of $\cA'$,
a refinement of
 $\{\cA'|_{S'_\alpha}\}_\alpha$,
such that $[2^N] |_{V_\alpha} \colon V_\alpha \rightarrow \cA'$ is represented by
a tuple of homogeneous polynomials of degree at most $c_2 4^N$
on each $V_\alpha$. Here $n_1$ and $c_2$ are independent of $N$.

 For any irreducible
 component $X'_0$ of $X'$ the restriction $\rho|_{X'_0} \colon X'_0 \rightarrow
 X$ is dominant and $\dim X'_0 = \dim X$. 
So Silverman's height inequality
\cite{Silverman:heightest11} applies; here
we could have also used the Height Machine and Lemma \ref{LemmaChangeOfHeightUnderRationalMap}. 
To prove the proposition, it suffices to find a constant $c>0$
that is independent of $N$ with the
following property. For any integer $N \ge c^{-1}$ there exist an
irreducible component $X'_0$ of $X'$, a non-empty Zariski open subset
$U'_N \subset X'_0,$ and a constant $c'(N,X'_0)$ such that
$h([2^N]Q) \ge c 4^N h(Q) -c'(N,X'_0)$ for all $Q \in U'_N(\IQbar)$.
Then we can take $U_N$ to be a non-empty Zariski open subset of $X$ 
with $U_N\subset \rho(U'_N)$
and $c'(N) = c'(N,X'_0)$.

Let $c_3>0$ be the $c$ in
 Proposition~\ref{PropositionIntersectionNumberPolynomialGrowth}  and
 let $Y_1,\ldots, Y_R$ be the subvarieties of
  $\cA'$ therein. The constant $c_3$  and the varieties
 $Y_1,\ldots,Y_R$ will depend on 
 on
 $X$ and $\mathcal{A}$, but not on $N$.
We work with $2^N$ instead of $N$ in 
Proposition~\ref{PropositionIntersectionNumberPolynomialGrowth}.
So for any sufficienty large (but fixed) $N$ we 
let $P_1,\ldots,P_r \in
X'(\overline{\mathbb{Q}})$ with $r\ge c_3 4^{N\dim X}$ be pairwise distinct points as in
Proposition~\ref{PropositionIntersectionNumberPolynomialGrowth} 
and $Y\in \{Y_1,\ldots,Y_R\}$ such that $[2^N](P_i)\in Y(\IQbar)$
and $P_i$ is isolated in $X'\cap [2^N]^{-1}(Y)$ for all $i$. 

Suppose $X'$ has $n_2$ irreducible components, then $n_2$ is
independent on $N$. 
We apply the Pigeonhole Principle
to find $\alpha\in \{1,\ldots,n_1\}$
and
some irreducible component of $X'$ such that
at least $c_3 4^{N\dim X}/(n_1n_2)$ points $P_i$ lie on $V_\alpha$ and
this component. Replace $X'$ by the said component 
and replace $c_3$ by $c_3/(n_1n_2)$. 
Now we may assume that there is a tuple of
 homogeneous polynomials of equal degree at most $c_2 4^N$  that  
define $[2^N]$ on a Zarski open  subset of
 $X'\subset \cA'\subset \IP^{(M+1)(m'+1)-1}$ that contains all the $P_i$'s.


We apply
Lemma~\ref{PropositionFromZToPolynomials} to
$[2^N](X')\subset \cA' \subset \IP^{(M+1)(m'+1)-1}$,
$Y$, and the points $[2^N](P_1),\ldots,[2^N](P_r)$. 
 Thus we obtain a rational map $\varphi \colon \IP^{(M+1)(m'+1)-1}
\dashrightarrow \IP^{\dim X'}$
defined at all $[2^N](P_i)$ 
that arises from homogenous polynomials
of equal degree $\delta_Y$. Observe that $\delta_Y$ depends only on
$Y\in  \{Y_1,\ldots,Y_R\}$. Let $\delta = \max_{Y\in  \{Y_1,\ldots,Y_R\}} \delta_Y$. Now that $\{Y_1,\ldots,Y_R\}$ is fixed as $N$ varies, this does not
endanger our application. There exists a constant $c_4(\varphi)\ge 0$ such that
\begin{equation}\label{EquationInequalityHeightUltrametric}
h(\varphi(Q))\le \delta h(Q) + c_4(\varphi)
\end{equation}
for any 
$Q$
outside the set of common zeros of the polynomials involved in $\varphi$,
see \cite[Theorem~B.2.5.(a)]{DG2000}. To emphasize that $\varphi$ 
may depend on $N$ we write $c_4(N)$ for $c_4(\varphi)$.

For $N$ as before we define
$\varphi^{(N)}=\varphi  \circ [2^N] \colon
X' \dashrightarrow \IP^{\dim X'}$. Then by
Lemma~\ref{PropositionFromZToPolynomials}(i), 
each $P_i$ is mapped via $\varphi^{(N)}$ to $[1:0:\cdots:0]$.

We would like to invoke Lemma~\ref{LemmaDegreeAndPreimage} to bound 
$\deg \varphi^{(N)}$ from below by $r$. To do this we must verify that
each $P_i$ is isolated in the  fiber of $\varphi^{(N)}$ above $[1:0:\cdots:0]$.
Let us suppose $C\subset X'$ is irreducible, contains $P_i$, and is
inside a fiber of $\varphi^{(N)}$. Apart from finitely many points, $[2^N](C)$ is in a
fiber of $\varphi$. 
Now we apply Lemma~\ref{PropositionFromZToPolynomials}(ii)
to conclude that $[2^N](C)$ is contained in the Zariski closure
of $Y$ inside $\IP^{(M+1)(m'+1)-1}$. 
But $Y\subset\cA'$, so
$C\subset [2^N]^{-1}(Y)$. Now Proposition~\ref{PropositionIntersectionNumberPolynomialGrowth} implies
$C = \{P_i\}$.

This settles our claim that $P_i$ is isolated in the fiber of
$\varphi^{(N)}$ and we conclude
$\deg \varphi^{(N)} \ge r \ge c_3 4^{N\dim X}$.

Recall that 
there is a tuple of
 homogeneous polynomials of equal degree at most $c_2 4^N$  that  
define $[2^N]$ on a Zarski open  subset of
 $X'\subset \cA'\subset \IP^{(M+1)(m'+1)-1}$ that contains all $P_i$. 
So we can describe 
$\varphi^{(N)}$ on this subset of $X'$ using polynomials 
of degree at most $c_2 \delta 4^N$. 
We apply
Lemma~\ref{LemmaChangeOfHeightUnderRationalMap} to
$\varphi^{(N)}$ and the Zariski closure of $X'$ in 
$\IP^{(M+1)(m'+1)-1}$ to 
 conclude that  there exist a constant $c_5>0$, independent of $N$,
 and a constant
$c_6(N)\ge 0$ which may depend on $N$, such that 
\begin{equation}\label{EquationAfterApplyingBothLemmas}
h(\varphi^{(N)}(P)) \ge c_5 4^N h(P) - c_6(N)
\end{equation}
for all $P\in U'_N(\overline{\mathbb{Q}})$ where $U'_N$ is non-empty 
Zariski open in $X'$ and may depend on $N$.

Now by letting $Q=[2^N]P$ and dividing by $\delta$ 
in \eqref{EquationInequalityHeightUltrametric}, we get
by \eqref{EquationAfterApplyingBothLemmas} that
\[
h([2^N]P)\ge \frac{c_5}{\delta} 4^N h(P) - c_7(N)
\]
for all $P\in U'_N(\overline{\mathbb{Q}})$ after possibly shrinking $U'_N$. The proof is complete as
$c_5/\delta$ is independent of $N$.
\qed


\section{N\'{e}ron--Tate Height and Height on the Base}\label{SectionNTBase}
The goal of this section is to prove
Theorem~\ref{TheoremMainResultHeightBound} and the slightly stronger  
Theorem~\ref{TheoremMainResultHeightBoundStronger}.

We will use the basic setup introduced in \S \ref{SubsectionEmbeddingAbSch}.
Thus $S$ is a smooth, irreducible curve over $\IQbar$,
$\pi\colon\cA\rightarrow S$ is an abelian scheme of relative dimension $\ge
1$, and $ \cA \subset \IP_\IQbar^M \times \IP_\IQbar^m$ is
an admissible immersion. All varieties in this sections are defined
over $\IQbar$.

\subsection{Auxiliary Proposition}

We prove the following proposition; recall that both heights below are
defined as in \S\ref{SubsectionEmbeddingAbSch}. 
\begin{proposition}\label{PropositionBeforeMainTheorem}
Assume $X$ is an irreducible closed subvariety of $\mathcal{A}$ 
 that is not generically special. 
Then there exist a non-empty Zariski
open subset $U\subset X$ defined over $\IQbar$ and a
constant $c>0$ depending only on $\mathcal{A}/S$, $X$,
and the admissible immersion  such that
\begin{equation}\label{EquationAuxiliaryEquation}
h(P) \le c \left(1+\hat{h}_{\cA}(P)\right)
\end{equation}
for all $P \in U(\IQbar)$. 
\begin{proof}

By the  Theorem of
Silverman-Tate \cite[Theorem~A]{Silverman}, there exist a  constant
$c_1\ge 0$ 
 such that
\begin{equation}
\label{EquationHeight2}
|\hat{h}_{\mathcal{A}}(P)-h(P)| \le
c_1\bigl(1+h(\pi(P))\bigr)
\end{equation}
for all $P\in \mathcal{A}(\IQbar)$; observe that this proof also holds
without the smoothness assumption when working with line bundles
instead of Weil divisors. 

To prove the proposition we may thus assume that $\pi$ is non-constant
on $X$. Therefore, $X$ dominates $S$. 

Since $X$ is not generically special, we can apply
Proposition~\ref{PropositionHeightChangeUnderScalarMultiplication} to
$X$. 
 There exists a constant $c_2>0$, depending on $X,\cA,$ and its
 admissible immersion, such
 that the following holds. For any integer $N\ge c_2^{-1}$, there
 exist a Zariski open dense subset $U_N\subset X$ and a constant
 $c_3(N)\ge 0$ such that
\begin{equation}\label{EquationHeight1}
h([2^N]P) \ge c_2 4^N h(P) - c_3(N)
\end{equation}
for all $P\in U_N(\IQbar)$; we stress that $U_N$ and 
 $c_3(N)\ge 0$ may depend  on $N$ in addition to $X,\cA,$ and the immersion.

Now for any integer $N \ge c_2^{-1}$ and any $P\in U_N(\IQbar)$, we have
\begin{align*}
\hat{h}_{\cA}([2^N](P)) &\ge h([2^N](P))
-c_1\left(1+h(\pi([2^N](P)))\right) \quad
\text{by (\ref{EquationHeight2})}\\
&= h([2^N](P))
-c_1\left(1+h(\pi(P))\right) \\
&\ge c_2 4^{N} h(P) -c_3(N) -c_1\left(1+h(P)\right)
\text{by (\ref{EquationHeight1}) and $h(\pi(P))\le h(P)$ \eqref{EqHeightTotal}.}
\end{align*}
But $\hat{h}_{\mathcal{A}}([2^N]P)=4^N \hat{h}_{\mathcal{A}}(P)$
and dividing by $4^N$ yields 
\begin{equation*}
\hat{h}_{\cA}(P) \ge \left(c_2 - \frac{c_1}{4^N} \right) h(P) - \frac{c_3(N)+ c_1}{4^N}
\end{equation*}
for all $N\ge c_2^{-1}$ and all $P \in U_N(\IQbar)$. 

Recall that $c_2$ and $c_3$ are independent of $N$. We fix $N$ to be the least
 integer such that $4^{N}\ge 2c_1/c_2$ and
$N\ge c_2^{-1}$. Then 
\[
\hat{h}_{\cA}(P) \ge \frac{c_2}{2} h(P) - \frac{c_3(N)+ c_1}{4^N}
\]
for all
$P \in U_N(\IQbar)$. Since $N$ is fixed now, the Zariski open dense
subset $U_N$ of $X$ is also fixed. 
For an appropriate $c>0$, depending on $N, c_1,c_2,$ and $c_3(N)$ we
conclude 
(\ref{EquationAuxiliaryEquation}). 
\end{proof}
\end{proposition}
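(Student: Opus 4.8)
The plan is to deduce the estimate by combining two inputs: the Silverman--Tate comparison between the naive height $h$ and the fiberwise N\'eron--Tate height $\hat{h}_{\cA}$, and the degree-growth inequality Proposition~\ref{PropositionHeightChangeUnderScalarMultiplication}; the two are then married by a \emph{truncated} version of Tate's limit process. First I would dispose of the case where $\pi|_X$ is constant. In that case $X$ is contained in a single fiber $\cA_s$, so $h(\pi(P))$ is bounded on $X(\IQbar)$, and \eqref{EquationAuxiliaryEquation} reduces to the classical comparison of the N\'eron--Tate and naive heights on the fixed abelian variety $\cA_s$ (both differ by $O(1)$). So from now on I assume $X$ dominates $S$, which is precisely the hypothesis under which Proposition~\ref{PropositionHeightChangeUnderScalarMultiplication} is available, since $X$ is not generically special.

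Next I would invoke Silverman--Tate (\cite[Theorem~A]{Silverman}) to fix a constant $c_1\ge 0$ with $|\hat{h}_{\cA}(P)-h(P)|\le c_1(1+h(\pi(P)))$ for all $P\in\cA(\IQbar)$, and Proposition~\ref{PropositionHeightChangeUnderScalarMultiplication} to fix a constant $c_2>0$, \emph{independent of} $N$, such that for every integer $N\ge c_2^{-1}$ there are a Zariski dense open $U_N\subset X$ and a constant $c_3(N)\ge 0$ with $h([2^N]P)\ge c_2 4^N h(P)-c_3(N)$ on $U_N(\IQbar)$. For $P\in U_N(\IQbar)$ I would then chain these: using $\pi([2^N]P)=\pi(P)$ and $h(\pi(P))\le h(P)$ from \eqref{EqHeightTotal}, one gets $\hat{h}_{\cA}([2^N]P)\ge h([2^N]P)-c_1(1+h(P))\ge c_2 4^N h(P)-c_3(N)-c_1(1+h(P))$; dividing by $4^N$ and using the quadraticity $\hat{h}_{\cA}([2^N]P)=4^N\hat{h}_{\cA}(P)$ yields $\hat{h}_{\cA}(P)\ge (c_2-c_1 4^{-N})h(P)-4^{-N}(c_3(N)+c_1)$.

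The decisive point is that $c_2$ does not depend on $N$: one cannot simply send $N\to\infty$ as in the usual Tate limit argument, because $U_N$ and $c_3(N)$ may both grow without control. Instead I would stop the process after finitely many steps, choosing $N$ to be the least integer with $4^N\ge 2c_1/c_2$ and $N\ge c_2^{-1}$; then $c_2-c_1 4^{-N}\ge c_2/2$, and for this single fixed $N$ both $U_N$ and $c_3(N)$ are now fixed once and for all. Setting $U=U_N$ and rearranging $\hat{h}_{\cA}(P)\ge \tfrac{c_2}{2}h(P)-4^{-N}(c_3(N)+c_1)$ gives $h(P)\le c(1+\hat{h}_{\cA}(P))$ on $U(\IQbar)$ for a suitable $c$ depending only on $c_1,c_2,c_3(N)$, hence only on $\cA/S$, $X$, and the admissible immersion, as required. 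I expect the entire difficulty to reside in Proposition~\ref{PropositionHeightChangeUnderScalarMultiplication} (the content of the earlier sections); granting that, the present proposition is a short formal deduction, the only nontrivial idea being the $N$-uniformity of $c_2$ that permits terminating the limit process.
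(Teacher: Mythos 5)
Your proposal is correct and follows essentially the same route as the paper: Silverman--Tate plus the $N$-uniform constant $c_2$ from Proposition~\ref{PropositionHeightChangeUnderScalarMultiplication}, chained and then terminated at a fixed $N$ with $4^N\ge 2c_1/c_2$. The only cosmetic difference is how the case of $\pi|_X$ constant is dispatched (you cite the classical fiberwise comparison, the paper absorbs it directly into the Silverman--Tate bound since $h(\pi(P))$ is then bounded), which amounts to the same observation.
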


\subsection{Proof of Theorem~\ref{TheoremMainResultHeightBound}}
The first inequality in (\ref{eq:TheoremMainResultHeightBound}) follows from the definition
of $h_{\cA,\iota}(\cdot)$ and as the absolute logarithmic Weil height
is non-negative. 
To prove the second inequality we 
may assume, by properties of the height machine, that the two height
functions appearing in the conclusion arise from an admissible immersion. 
We do an induction
on $\dim X$. When $\dim X=0$, this result is trivial. So let us assume
$\dim X\ge 1$.

If  $X$ is generically special then $X^*=\emptyset$ and there is nothing
to show. Otherwise we may apply
 Proposition~\ref{PropositionBeforeMainTheorem} and so the inequality \eqref{EquationAuxiliaryEquation}
 holds for any $x\in (X\setminus Z)(\IQbar)$ for some
 proper closed subvariety $Z$ of $X$ defined over $\IQbar$. Let
 $Z=Z_1\cup \cdots \cup Z_r$ be the decomposition into irreducible
 components. Since $\dim Z_i \le \dim X-1$ we may do induction on the
 dimension. By the induction hypothesis, the inequality \eqref{EquationAuxiliaryEquation} holds for all points in $Z_1^*(\IQbar)\cup \cdots \cup Z_r^*(\IQbar)$. Therefore the inequality \eqref{EquationAuxiliaryEquation} holds for all points in $(X\setminus Z)(\IQbar)\cup Z_1^*(\IQbar)\cup \cdots \cup Z_r^*(\IQbar)$. 
 
To prove that the inequality \eqref{EquationAuxiliaryEquation} holds
for all points in $X^*(\IQbar)$, it suffices to verify
\[
X^*\subset (X\setminus Z) \cup Z_1^* \cup \cdots \cup Z_r^*.
\]
But this is equivalent to the inclusion
\begin{equation}
\label{eq:genericspecialirredcomp}
X \setminus X^*  \supset Z \cap (X\setminus Z_1^*) \cap \cdots \cap (X\setminus Z_r^*)  = (Z\setminus Z_1^*) \cap \cdots \cap (Z\setminus Z_r^*).
\end{equation}
Finally, a generically special subvariety of $\cA$ contained in some $Z_i$ 
will be contained in $X$, and therefore (\ref{eq:genericspecialirredcomp}) holds true. 

Now the inequalities in Theorem~\ref{TheoremMainResultHeightBound} and Theorem~\ref{TheoremMainResultHeightBoundStronger} hold since, by \eqref{EqHeightTotal}, $h(\pi(P)) \le h(P)$ for any $P \in \cA(\IQbar)$.
\qed


\section{Application to the Geometric Bogomolov Conjecture}\label{SectionGBC}
In this section we prove Theorem~\ref{TheoremGBC} over the base field
$\overline{\mathbb{Q}}$ and abbreviate $\IP^m_\IQbar$ by
$\IP^m$. 

More general base fields can be handled using
 the Moriwaki height version of
Theorem~\ref{TheoremMainResultHeightBound}. More details 
are presented in
Appendix~\ref{sec:appendixA}.

There exists a smooth, irreducible, quasi-projective curve $S$ over
$\overline{\mathbb{Q}}$ 
whose function field is $K$. 
We fix an algebraic closure $\overline
K\supset K$ of $K$. 
As in \S \ref{SubsectionEmbeddingAbSch} we can find, up to
removing finitely many points of $S$,
 an abelian scheme $\mathcal{A}\rightarrow S$ 
whose generic fiber is $A$ from Theorem \ref{TheoremGBC}.
We  equip $\cA$  with an admissible immersion
$\cA\rightarrow\IP^M\times\IP^m$, cf. \S\ref{SubsectionEmbeddingAbSch}.
In particular, we have an immersion $\iota \colon S\rightarrow\IP^m$. 
For $s\in S(\IQbar)$  we set
\begin{equation}
\label{def:hS}
h_S(s) = \frac{1}{\deg S} h(\iota(s))
\end{equation}
where $h$ on the right-hand side is the  height on $\IP^m(\IQbar)$
and $\deg S$ is the degree of the Zariski closure of $\iota(S)$ in
$\IP^m$. 
We use the same normalization as in Silverman's work \cite[\S 4]{Silverman},
which will play an important role momentarily.
In addition,
we have the
fiberwise N\'eron--Tate height $\hat
h_\cA \colon \cA(\IQbar)\rightarrow[0,\infty)$, \textit{cf.}
(\ref{eq:fiberwiseNT}). 
On $A$ 
we also have a N\'eron--Tate height
$h_{K,A} \colon A(\overline K)\rightarrow [0,\infty)$, \textit{cf.} the end
of \S \ref{sec:heights}.




Before we get to the nuts and bolts we state Silverman's Height Limit
Theorem. 

Recall that we can represent a  point $ x\in A(\overline K) = (A\otimes_K{\overline
K})(\overline K)$ using a section $S'\rightarrow \cA\times_S S'$
where  $S'$ is a smooth, irreducible curve and 
$\rho \colon S'\rightarrow S$ is generically finite morphism.
We write $\sigma_x$ for the composition $S'\rightarrow \cA\times_S
S'\rightarrow \cA$.
We may evaluate $\hat h_{\cA}$ at $\sigma_x(t) \in
\cA_{\rho(t)}$
for all $t\in S'(\IQbar)$. 

\begin{theorem}[Silverman]
In the notation above we have
\begin{equation}
\label{eq:silvermanlimit}
\lim_{\substack{t \in S'(\IQbar) \\ h_S(\rho(t)) \rightarrow \infty}}
\frac{\hat h_{\cA}(\sigma_x(t))}{h_S(\rho(t))} =  \hat h_{K,A}(x). 
\end{equation}
\end{theorem}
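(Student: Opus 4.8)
The plan is to recognize that \eqref{eq:silvermanlimit} is, up to the normalization of $h_S$ and the choice of model, exactly Silverman's Height Limit Theorem from \cite{Silverman}. So the proof is really a matter of matching conventions. First I would recall Silverman's setup: he works with an abelian variety $A/K$ where $K=\IQbar(S)$, a symmetric line bundle giving a N\'eron--Tate height $\hat h_{K,A}$, and an ample divisor (or line bundle) on $\overline S$ giving a height $h_S$ on $S(\IQbar)$; he proves that for a fixed point $x\in A(\overline K)$, represented by a section $\sigma_x$ over a covering curve $S'$, the quotient $\hat h_{\cA}(\sigma_x(t))/h_S(\rho(t))$ tends to $\hat h_{K,A}(x)$ as $h_S(\rho(t))\to\infty$. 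The normalization $h_S(s)=\frac{1}{\deg S}h(\iota(s))$ was chosen precisely (as the text stresses, citing \cite[\S 4]{Silverman}) to make the limit come out to $\hat h_{K,A}(x)$ with no spurious multiplicative constant; a different choice of $\cM$ would only rescale $h_S$ and hence $\hat h_{K,A}$ in a compatible way by the formal properties of the Height Machine.

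Concretely I would proceed as follows. Fix $x\in A(\overline K)$ and choose a smooth irreducible curve $S'$ with a generically finite $\rho\colon S'\to S$ such that $x$ extends to a section $S'\to \cA\times_S S'$, giving $\sigma_x\colon S'\to \cA$; this is possible after enlarging $S'$ since $\cA$ is the N\'eron model of its generic fiber, so the $\overline K$-point $x$ spreads out to a section over some covering. Next, note that $\hat h_{\cA}(\sigma_x(t))=\hat h_{\cA_{\rho(t)},\cL_{\rho(t)}}(\sigma_x(t))$ is the fiberwise N\'eron--Tate height, which is exactly the specialization $\hat h_{A_t}$ in Silverman's notation once one identifies $\cA_{\rho(t)}$ with the fiber. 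Then I would invoke \cite[Theorem A or its refinement, Theorem C]{Silverman}: Silverman proves $\hat h_{\cA}(\sigma_x(t)) = \hat h_{K,A}(x)\, h_S(\rho(t)) + O(h_S(\rho(t))^{1/2})$, or in any case $\hat h_{\cA}(\sigma_x(t))/h_S(\rho(t))\to \hat h_{K,A}(x)$, where the error/limit statement is uniform in $t$ with $h_S(\rho(t))\to\infty$. Dividing by $h_S(\rho(t))$ and letting $h_S(\rho(t))\to\infty$ gives \eqref{eq:silvermanlimit} directly. I should also remark that one must check $h_S(\rho(t))\to\infty$ indeed happens along infinitely many $t$ (so the limit is not vacuous), which follows because $\rho$ is finite and $\overline S$ carries an ample $\cM$, so $h_S$ is unbounded on $S(\IQbar)$ and each fiber of $\rho$ is finite.

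The only real subtlety — and the step I expect to require the most care — is the bookkeeping around models and normalizations: Silverman phrases things in terms of divisors on a projective model $\overline S$ and a N\'eron model, whereas here we have a fixed admissible immersion $\cA\subset\IP^M\times\IP^m$ and the explicit definition \eqref{def:hS} of $h_S$. I would argue that the height $h_S$ defined via $\iota$ and $\deg S$ represents, up to $O(1)$, the class associated by the Height Machine to $(\overline S, \cO(1)|_{\overline S})$ with the degree normalization, and that changing the model of $\cA$ (removing finitely many points of $S$, replacing $\cL$) only changes $\hat h_{\cA}$ by a bounded amount on each fiber times a factor absorbed into the $o(h_S)$ error — this is the same robustness already used throughout \S\ref{SectionHtTotalSpace} and \S\ref{SectionNTBase}. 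Once these identifications are in place, the statement is a verbatim citation of \cite{Silverman}; no new argument is needed, and I would keep the proof to a short paragraph pointing to the relevant theorem in \textit{loc.\ cit.}\ and noting the normalization match.
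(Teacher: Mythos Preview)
Your proposal is on the right track in recognizing this as Silverman's Height Limit Theorem, but there is a genuine gap: Silverman's \cite[Theorem~B]{Silverman} is stated for $x\in A(K)$, i.e.\ for a section $S\rightarrow\cA$ over the \emph{original} base, not for $x\in A(\overline K)$ arising from a section over a covering curve $S'$. You cannot invoke it directly with $h_S(\rho(t))$ in the denominator and $\hat h_{K,A}(x)$ as the limit; the theorem, applied over $S'$, yields
\[
\frac{\hat h_{\cA}(\sigma_x(t))}{h_{S'}(t)} \longrightarrow \hat h_{K',A_{K'}}(x),
\]
where $K'=\IQbar(S')$ and $h_{S'}$ is a height on $S'(\IQbar)$ normalized as in \eqref{def:hS}. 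Two further steps are then needed, and this is exactly what the paper does: first, quasi-equivalence of heights on curves gives $h_S(\rho(t))/h_{S'}(t)\to\deg\rho=[K':K]$; second, the N\'eron--Tate heights over the two function fields are related by $\hat h_{K',A_{K'}}=[K':K]\,\hat h_{K,A}$. The two factors of $[K':K]$ cancel, yielding \eqref{eq:silvermanlimit}.

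So your claim that ``no new argument is needed'' is not quite right: the base change and the cancellation of degree factors is the actual content of the paper's proof. Your discussion of normalizations is relevant but does not address this point. Once you insert the base change argument your outline becomes correct and coincides with the paper's.
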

\begin{proof}
This follows from \cite[Theorem~B]{Silverman}  via a base change
argument as follows.
The smoothness condition in this reference 
can be dropped when using line bundles instead of Weil divisors.
Observe that Silverman's Theorem  deals with the rational case $x \in A(K)$, which
comes from  a section $S\rightarrow \cA$. 

We have a morphism $\sigma_x \colon S'\rightarrow \cA$, which
composed with $\cA\rightarrow S$ equals 
 $\rho \colon S'\rightarrow S$.
We write $K'=\IQbar(S')$ and 
$A_{K'} = A\otimes_K K'$. Then $h_{S'} \colon S'(\IQbar)\rightarrow [0,\infty)$ is defined analog to $h_S$ via
an immersion of $S'$ into some projective space and then  normalizing
as in (\ref{def:hS}). Of course, $h_{S'}$ depends  on the choice of
this immersion. 
But we have $h_{S}(\rho(t))/h_{S'}(t)\rightarrow \deg\rho  =[K':K]$
for $t\in S'(\IQbar)$ as $h_{S'}(\rho(t))\rightarrow\infty$ by
quasi-equivalent of heights on curves, \textit{cf.} \cite[Corollary 9.3.10]{BG} and our choice of normalization. 
  Silverman's Theorem applied to $x\in A(K')$ implies
 $\hat h_{\cA}(\sigma_x(t))/h_{S'}(t)\rightarrow
\hat h_{K',A_{K'}}(x)$ 
as $h_{S}(t)\rightarrow \infty$
for $t\in S'(\IQbar)$.
Thus $\hat h_{\cA}(\sigma_x(t))/ h_S(\rho(t)) \rightarrow [K':K]^{-1} 
\hat h_{K',A_{K'}}(x)$ for 
$t\in S'(\IQbar)$ and
$h_{S}(\rho(t))\rightarrow \infty$. 

Now $\hat h_{K,A}$ and $\hat h_{K',A'}$ are related by 
$\hat h_{K',A_{K'}} = [K':K] \hat h_{K,A}$, this follows from the related
statement for naive heights, \textit{cf.} \cite[Remark 9.2]{Conrad}, and
passing to the limit. The factor $[K':K]$ cancels out with the same
factor coming from quasi-equivalence of heights and this yields (\ref{eq:silvermanlimit}).
\end{proof}

Now we complete the proof of Theorem \ref{TheoremGBC}. 

It is enough to prove the theorem for the symmetric
 line bundle $L$ attached to
the closed immersion $A\rightarrow \IP_{K}^M$.

Let $\cX$ be the Zariski closure of $X$ inside $\cA\supset A\supset
X$. 
Then $\cX$ is irreducible and flat over $S$
and $X$ is the generic fiber of $\cX\rightarrow S$.

Therefore by the assumption on $X$ the variety
$\mathcal{X}$ is not generically special.
We will apply Proposition \ref{PropositionBeforeMainTheorem}, so let
$\cU$ be the Zariski open and dense subset of $\cX$ from  this proposition. 

We define $U = \cU\cap X$, where the intersection is
inside $\cA$. This is a Zariski open and dense subset of $X$. 

It suffices to prove that there exists $\epsilon > 0$ such that 
$x\in U(\overline K)$ implies
$\hat h_{K,A}(x)\ge \epsilon$.

Indeed, let $x \in U(K^\prime)$ where $K^\prime$ is a
finite field extension of $K$ contained in $\overline{K}$. As above, 
there are an irreducible, quasi-projective curve $S^\prime$ over
$\IQbar$ with function field $K^\prime$, a generically finite morphism
$\rho \colon S^\prime \rightarrow S$, and a section 
$S^\prime \rightarrow \cA\times_S S'$ determined by $x$.
We write $\sigma_x \colon S'\rightarrow \cA$ for this section composed with 
$\cA\times_S S'\rightarrow\cA$. 

The Zariski closure $\mathcal{Y}$ in $\mathcal{A}$ of the image of $\sigma_x$  is an irreducible closed curve in
$\mathcal{A}$. We have $\mathcal{Y} \subset \mathcal{X}$ as $x\in
X(K')$. Moreover, $\mathcal{Y} \cap \cU \not= \emptyset$
since $x \in U(\overline{K})$. So $\mathcal{Y} \cap \cU$
is a curve that differs from $\cY$ in only finitely many points. 

We fix a sequence
$t_1,t_2,\ldots\in S'(\IQbar)$ 
such that $\lim_{n \rightarrow \infty} h_{S}(\rho(t_n))=\infty$.
Silverman's Theorem implies
\begin{equation}
\label{eq:silvermanconsequence}
\lim_{n \rightarrow \infty} \frac{\hat
h_{\cA}(\sigma_x(t_n))}{h_S(\rho(t_n))} = \hat h_{K,A}(x).
\end{equation}

For $n$ large enough we have $\sigma_x(t_n)\in \cU(\IQbar)$. 
%
%
%
By Proposition \ref{PropositionBeforeMainTheorem} 
there exists a constant $c>0$, independent
of $x,\sigma_x,$ and $n$, such that 
\begin{equation}\label{EquationGB1}
h(\sigma_x(t_n))  \le 
c\left(1+\hat h_{\cA}(\sigma_x(t_n))\right)
\quad\text{for all large integers $n$.}
\end{equation}

By (\ref{EqHeightTotal}) the naive height  $h(\sigma_x(t_n))$ is at
 least
the  height of 
$\iota(\pi(\sigma_x(t_n)))={\iota}(\rho(t_n))\in\IP^m(\IQbar)$.
By our choice (\ref{def:hS}) we have $h(\iota(\rho(t_n))) = \deg(S) h_S(\rho(t_n))$. We insert into
 (\ref{EquationGB1}) and divide by $h_S(\rho(t_n))$ to obtain 
\begin{equation*}
\deg S \le c \frac{1+ \hat h_{\cA}(\sigma_x(t_n))}{ h_S(\rho(t_n))}.
\end{equation*}

Finally, 
we pass to the limit $n\rightarrow \infty$ and recall (\ref{eq:silvermanconsequence}) to conclude 
$\hat h_{K,A}(x)\ge \deg(S)/c$. The theorem follows as $c$ and $\deg(S)$
are independent of $x$. \qed




\appendix
\renewcommand{\thesection}{\Alph{section}}
\setcounter{section}{0}

\section{Passing from $\overline{\mathbb{Q}}$ to any Field of Characteristic $0$}
\label{sec:appendixA}
In this appendix, we 
sketch a proof 
of Theorem~\ref{TheoremGBC} for any $k$ algebraically closed of characteristic $0$. We do this by proving a Moriwaki height version of Theorem~\ref{TheoremMainResultHeightBound}, allowing $\overline{\mathbb{Q}}$ to be replaced by any algebraically closed field of finite transcendence degree over $\overline{\mathbb{Q}}$. Then we repeat the proof of Theorem~\ref{TheoremGBC} for $k=\overline{\mathbb{Q}}$ ($\mathsection$\ref{SectionGBC}) with this new height function to get the result when $\mathrm{trdeg}_{\IQ}k < \infty$. Finally we use essential minimum to reduce to this case.

\subsection{Moriwaki height}
In this subsection we review Moriwaki's height theory
\cite{MoriwakiArithmetic-heig}.

Let $k_0$ be a finitely generated field over $\IQ$ with 
$\trdeg{k_0/\IQ} =d$. 
Moriwaki \cite{MoriwakiArithmetic-heig} developped the following height theory, generalizing the classical height theory for $\overline{\mathbb{Q}}$.


Fix a polarization
$\overline{\mathbf{B}}=(B;\overline{H}_1,\ldots,\overline{H}_d;\tau)$
of the field $k_0$, \textit{i.e.} a 
flat and quasi-projective integral scheme over $\IZ$,
a collection of nef smooth hermitian line bundles
$\overline{H}_1,\ldots,\overline{H}_d$ on $B$ and an isomorphism of
fields $\tau \colon \IQ(B)  \rightarrow k_0$. In most of the literature, including Moriwaki's paper, the isomorphism $\tau$ is omitted as it is fixed.

Let $X$ be an irreducible projective variety over $k_0$ and let $L$ be a line bundle on $X$ which is defined over $k_0$. 
Moriwaki \cite{MoriwakiArithmetic-heig}  defines a height function
\begin{equation}\label{EquationMoriwakiHeightProjectiveVariety}
h^{\overline{\mathbf{B}}}_{k_0,X,L}\colon X(\overline{k_0})\rightarrow \mathbb{R}
\end{equation}
 which is well-defined modulo the set of bounded functions on $X(\overline{k_0})$.
We will  not repeat the exact definition  of the Moriwaki height here,
 but will mention
 some properties. If the field $k_0$ is clear from the context, 
then we abbreviate $h^{\overline{\mathbf{B}}}_{k_0,X,L}$ to $h^{\overline{\mathbf{B}}}_{X,L}$. If furthermore $X$ is clear, then we abbreviate it to $h^{\overline{\mathbf{B}}}_L$.

Before going on, let us make the following remark. If $k_0'$ is a field with an isomorphism $\iota \colon k_0 \rightarrow k_0'$, then 
we have a polarization $\overline{\mathbf{B}}' =
(B;\overline{H}_1,\ldots,\overline{H}_d; \iota \circ \tau)$ of $k_0'$.
For any algebraic closure $\overline{k_0}'$ of $k_0'$, $\iota$ extends
(non-uniquely) to an isomorphism $\overline{k_0} \rightarrow \overline{k_0}'$ which we still denote by $\iota$ by abuse of notation. Then $h^{\overline{\mathbf{B}}'}_{k_0'} \circ \iota = h^{\overline{\mathbf{B}}}_{k_0}$.


As is pointed out by Moriwaki, if $k_0$ is a number field, then we
recover 
the classical height functions. 
Just as the classical height, 
the Moriwaki height \eqref{EquationMoriwakiHeightProjectiveVariety}
satisfies the several  properties.

\begin{proposition}[Height Machine for the Moriwaki
height]\label{PropositionOfMoriwakiHeight}We keep the notation from above.
\begin{enumerate}
\item(Additivity) If $M$ is another line bundle on $X$, then
$h^{\overline{\mathbf{B}}}_{L\otimes
M}=h^{\overline{\mathbf{B}}}_L+h^{\overline{\mathbf{B}}}_M$. 
\item(Functoriality) Let $q\colon X\rightarrow Y$ be a
quasi-finite morphism of projective varieties over $k_0$
 and let $M$ be a line bundle on $Y$. Then
\[
h^{\overline{\mathbf{B}}}_{q^*M}=h^{\overline{\mathbf{B}}}_M \circ q
\]
\item (Boundedness) The function $h^{\overline{\mathbf{B}}}_L$ is bounded below away from the base locus of $L$
. In particular $h^{\overline{\mathbf{B}}}_L$ is bounded on $X(\overline{k_0})$ if $L = \cO_X$.
\item (Northcott) If $L$ is ample, and if $\overline{\mathbf{B}}$ is
big, \textit{i.e.} the $\overline{H}_i$'s are nef and big. Then for
any real numbers $B, D$, the set
\[
\{P\in X(\overline{k_0}): h^{\overline{\mathbf{B}}}_L(P) \le  B, ~ [k_0(P):k_0] \le  D\}
\]
is finite.
\item (Algebraic Equivalence) If $L$ and $M$ are algebraically equivalent and $L$ is ample, then
\[
\lim_{h^{\overline{\mathbf{B}}}_L(P) \rightarrow \infty} \frac{h^{\overline{\mathbf{B}}}_M(P)}{h^{\overline{\mathbf{B}}}_L(P)} = 1.
\]
\end{enumerate}
\begin{proof}
Part (1), (3) and (4) are proven by
Moriwaki \cite[Proposition~3.3.7(2-4)]{MoriwakiArithmetic-heig}.
See \cite[Proposition~2(iv)]{WazirMoriwakiHeight} for a proof of part
(2), note that the smoothness assumption is unnecessary and that $q$
must be generically finite in \cite[Proposition 1.3(2)]{MoriwakiArithmetic-heig}. Part (5) can be proven by a verbalized copy of \cite[Chapter~4, Proposition~3.3 and Corollary~3.4]{FoDG} with the usual height function replaced by the Moriwaki height.
\end{proof}
\end{proposition}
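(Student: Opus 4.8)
\textbf{Plan for the proof of Proposition~\ref{PropositionOfMoriwakiHeight} (Height Machine for the Moriwaki height).}

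The strategy is to reduce each of the five statements to an already published statement about Moriwaki heights, checking that the hypotheses match and that any extra generality we need (working with line bundles rather than Weil divisors, dropping smoothness assumptions, allowing $q$ merely quasi-finite) is covered. First I would recall that the Moriwaki height is constructed from an arithmetic intersection pairing on models over the polarization $\overline{\mathbf B}$, and that by its very construction it depends on the choice of model and metrics only up to $O(1)$; this gives a well-defined class of functions on $X(\overline{k_0})$ and is precisely what makes additivity (1) formal. For (1) and (3) I would cite \cite[Proposition~3.3.7]{MoriwakiArithmetic-heig} directly; boundedness below away from the base locus follows because sections of a power of $L$ pull back to functions that are bounded below, exactly as in the classical case, and the statement for $L=\cO_X$ is then immediate since $\cO_X$ and its dual are both base-point free.

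For functoriality (2) the one subtlety is that Moriwaki's own formulation in \cite[Proposition~1.3(2)]{MoriwakiArithmetic-heig} asks for $q$ generically finite, whereas we want quasi-finite (hence possibly not surjective) and do not want to assume $X$ or $Y$ smooth. I would point to \cite[Proposition~2(iv)]{WazirMoriwakiHeight}, remarking that restricting to the closure of the image of $q$ reduces the quasi-finite case to the generically finite one and that the smoothness hypothesis plays no role in the argument (one works with line bundles throughout). For the Northcott property (4) I would invoke \cite[Proposition~3.3.7]{MoriwakiArithmetic-heig} with the bigness hypothesis on $\overline{\mathbf B}$, which is exactly the condition under which Moriwaki proves finiteness of points of bounded height and bounded degree. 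Finally, for the algebraic equivalence statement (5), the plan is to transcribe the classical argument — e.g.\ \cite[Chapter~4, Proposition~3.3 and Corollary~3.4]{FoDG} — replacing the usual height by $h^{\overline{\mathbf B}}_{\bullet}$: if $L$ and $M$ are algebraically equivalent with $L$ ample, then for large $n$ both $nL-M$ and $nL+M$ are ample, so additivity and positivity of heights of ample classes (a consequence of boundedness below on the whole variety when the class is globally generated) sandwich $h^{\overline{\mathbf B}}_M$ between $-\tfrac1n h^{\overline{\mathbf B}}_L+O(1)$ and $\tfrac1n h^{\overline{\mathbf B}}_L+O(1)$, and letting the height go to infinity and then $n\to\infty$ gives the limit $1$.

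The only genuine obstacle is that none of this is really new mathematics: the work is entirely in verifying that the cited references apply verbatim after the cosmetic changes (line bundles vs.\ divisors, smoothness removed, generically finite relaxed to quasi-finite), and in particular that Moriwaki's intersection-theoretic construction behaves well under these relaxations. I expect the functoriality clause (2) to require the most care, since it is the one place where the published statement is literally more restrictive than what we use; the remaining parts are immediate citations or a routine copy of a standard argument with the height function symbol swapped out.
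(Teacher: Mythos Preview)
Your proposal matches the paper's approach essentially verbatim: the paper's proof is nothing more than the same list of citations---Moriwaki~\cite[Proposition~3.3.7]{MoriwakiArithmetic-heig} for (1), (3), (4), Wazir~\cite[Proposition~2(iv)]{WazirMoriwakiHeight} for (2) with the same remark about smoothness and generic finiteness, and a pointer to \cite[Chapter~4, Proposition~3.3 and Corollary~3.4]{FoDG} for (5).

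One correction to your sketch of (5), however: as written your sandwich is backwards and aimed at the wrong line bundle. From $nL\pm M$ ample you get $|h_M|\le n\,h_L+O(1)$, not $|h_M|\le \tfrac1n h_L+O(1)$; and in any case you want $h_M/h_L\to 1$, not $\to 0$. The argument in \cite{FoDG} works with the algebraically trivial class $N=M-L$ and uses that ampleness is numerical, so $L\pm nN$ is ample for \emph{every} $n$. Boundedness below then gives $|h_N|\le \tfrac1n h_L + O_n(1)$, whence $\limsup |h_N|/h_L\le 1/n$ for each $n$, so $h_N/h_L\to 0$ and thus $h_M/h_L\to 1$. Once you fix this, your proposal and the paper's proof are the same.
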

Proposition~\ref{PropositionOfMoriwakiHeight} enables us to transfer
results involving only properties of the height listed in the Height
Machine 
 to the Moriwaki height.


Next we turn to abelian varieties. Let $A$ be an abelian variety over
$k_0$, and let $L$ be a symmetric ample line bundle on $A$ which is
defined over $k_0$. The limit
\begin{equation}\label{EquationNeronTateMoriwaki}
\hat{h}^{\overline{\mathbf{B}}}_L(P)=\lim_{n\rightarrow \infty}
2^{-2n} h^{\overline{\mathbf{B}}}_L([2^n]P)\quad \text{for all}\quad P\in A(\overline{k_0})
\end{equation}
exists and is independent of the choice of a representative of the
height function. 
Then $\hat{h}^{\overline{\mathbf{B}}}_L$ is  called
the \textit{canonical} or \textit{N\'{e}ron--Tate height}
on $A(\overline{k_0})$ attached  of $L$ with respect to $\overline{\mathbf{B}}$. If $k_0=\mathbb{Q}$, then $\hat{h}^{\overline{\mathbf{B}}}_L$ coincides with the usual N\'{e}ron--Tate height over $\overline{\mathbb{Q}}$. Moriwaki \cite[$\mathsection$3.4]{MoriwakiArithmetic-heig} proved that the N\'{e}ron--Tate height $\hat{h}^{\overline{\mathbf{B}}}_L$ is quadratic, \textit{i.e.}
\begin{equation*}
\hat{h}^{\overline{\mathbf{B}}}_L([N]P)=N^2 \hat{h}^{\overline{\mathbf{B}}}_L(P) \quad \text{for
all}\quad P \in A(\overline{k_0}).
\end{equation*}

The following proposition is proven by Moriwaki \cite[Proposition~3.4.1]{MoriwakiArithmetic-heig}.
\begin{proposition}
\begin{enumerate}
\item [(i)] We have $\hat{h}^{\overline{\mathbf{B}}}_L(P) \ge  0$ 
for all $P\in A(\overline{k_0})$.
\item [(ii)] We have  $\hat{h}^{\overline{\mathbf{B}}}_L(P)=0$ for all
$P\in A(\overline{k_0})_{\mathrm{tor}}$.
\item [(iii)] Assume $\overline{\mathbf{B}}$ is big, \textit{i.e.} $\overline{H}_i$'s are nef and big. Then $\hat{h}^{\overline{\mathbf{B}}}_L(P)=0$ if and only if $P$ is a torsion point.
\end{enumerate}
\end{proposition}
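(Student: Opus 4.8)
The plan is to deduce all three parts from the number field case $k_0 = \IQ$ by a suitable base change, exactly as one expects for N\'eron--Tate heights. First I would record that the limit \eqref{EquationNeronTateMoriwaki} defining $\hat h^{\overline{\mathbf{B}}}_L$ exists: this is a standard telescoping argument using that $[2]^*L \cong L^{\otimes 4}$ (symmetry of $L$), so that $h^{\overline{\mathbf{B}}}_L([2]P) = 4 h^{\overline{\mathbf{B}}}_L(P) + O(1)$ by additivity and functoriality in the Height Machine (Proposition~\ref{PropositionOfMoriwakiHeight}(1),(2),(3)), and then the sequence $2^{-2n} h^{\overline{\mathbf{B}}}_L([2^n]P)$ is Cauchy; independence of the representative follows because changing the representative changes $h^{\overline{\mathbf{B}}}_L$ by a bounded function, which is killed by the factor $2^{-2n}$. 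The quadraticity $\hat h^{\overline{\mathbf{B}}}_L([N]P) = N^2 \hat h^{\overline{\mathbf{B}}}_L(P)$ comes from the parallelogram law on $A$: the theorem of the cube gives $[N]^*L \cong L^{\otimes N^2}$ for symmetric $L$, so the claim follows after passing to the limit.

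For part (ii), note $\hat h^{\overline{\mathbf{B}}}_L$ is quadratic, hence if $[N]P = 0$ for some $N \ge 1$ then $N^2 \hat h^{\overline{\mathbf{B}}}_L(P) = \hat h^{\overline{\mathbf{B}}}_L([N]P) = \hat h^{\overline{\mathbf{B}}}_L(0) = 0$, using that $\hat h^{\overline{\mathbf{B}}}_L(0) = 0$ (which holds since $h^{\overline{\mathbf{B}}}_L([2^n]\cdot 0) = h^{\overline{\mathbf{B}}}_L(0)$ is a fixed real number and $2^{-2n}$ times it tends to $0$). This gives (ii) with no positivity input. For part (i), the cleanest route is the standard descent/averaging argument: for $P \in A(\overline{k_0})$ one has $\hat h^{\overline{\mathbf{B}}}_L(P) = \lim 2^{-2n} h^{\overline{\mathbf{B}}}_L([2^n]P)$, and by the Boundedness property (Proposition~\ref{PropositionOfMoriwakiHeight}(3)) the function $h^{\overline{\mathbf{B}}}_L$ is bounded below on $A(\overline{k_0})$ away from the base locus; since $L$ is ample, some positive tensor power $L^{\otimes m}$ is very ample and base-point-free, and $\hat h^{\overline{\mathbf{B}}}_{L^{\otimes m}} = m\,\hat h^{\overline{\mathbf{B}}}_L$, so $h^{\overline{\mathbf{B}}}_{L^{\otimes m}}$ is bounded below by a constant $-C$ everywhere on $A(\overline{k_0})$. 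Then $2^{-2n} h^{\overline{\mathbf{B}}}_{L^{\otimes m}}([2^n]P) \ge -2^{-2n} C \to 0$, giving $m\,\hat h^{\overline{\mathbf{B}}}_L(P) \ge 0$, hence (i).

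Part (iii) is the main obstacle, since the converse (zero height implies torsion) genuinely requires the arithmetic positivity of the polarization $\overline{\mathbf{B}}$ and cannot follow from formal Height Machine properties alone. The plan is to invoke the Northcott property (Proposition~\ref{PropositionOfMoriwakiHeight}(4)), which is available precisely because $\overline{\mathbf{B}}$ is assumed big and $L$ (hence $L^{\otimes m}$) is ample: if $\hat h^{\overline{\mathbf{B}}}_L(P) = 0$, then for every $N \ge 1$ the point $[N]P$ lies in $A(k_0(P))$ and has $\hat h^{\overline{\mathbf{B}}}_L([N]P) = N^2 \cdot 0 = 0$, so $h^{\overline{\mathbf{B}}}_L([N]P)$ is bounded independently of $N$ (the difference $h^{\overline{\mathbf{B}}}_L - \hat h^{\overline{\mathbf{B}}}_L$ is bounded on $A(\overline{k_0})$ by the telescoping estimate above). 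Since $[k_0(P):k_0]$ is fixed and all the $[N]P$ have bounded $L$-height, the Northcott finiteness forces $\{[N]P : N \ge 1\}$ to be a finite set, whence $[N]P = [N']P$ for some $N \ne N'$, i.e.\ $[N-N']P = 0$ and $P$ is torsion. Combined with (ii) this gives the equivalence in (iii). The only point requiring care is that the Northcott property as stated applies to $X = A$ with $L$ ample, and that $h^{\overline{\mathbf{B}}}_L$ and $\hat h^{\overline{\mathbf{B}}}_L$ differ by a bounded amount, both of which are already recorded above.
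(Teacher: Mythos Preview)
Your argument is correct and is the standard proof: (ii) follows from quadraticity, (i) from the lower boundedness of the height attached to a base-point-free line bundle, and (iii) from the Northcott property applied to the multiples $[N]P$. The paper itself does not give a proof here at all; it simply cites Moriwaki's original paper \cite[Proposition~3.4.1]{MoriwakiArithmetic-heig} for the result, so your write-up in fact supplies more detail than the paper does.
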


\subsection{Height Inequality}
Let $k_0$ be a finitely generated field extension of $\IQ$. Let $\overline{\mathbf{B}}$ be a polarization of $k_0$. Let $\overline{k_0}$ be an algebraic closure of $k_0$.

Let $S$ be a smooth irreducible quasi-projective curve over $k_0$, and
let $\pi \colon \mathcal{A} \rightarrow S$ be an abelian scheme over
$k_0$ of relative dimension $g\ge 1$. 
We fix a smooth, irreducible projective curve $\overline{S}$ over $k_0$
that
contains $S$ as a Zariski open subset. 
Let $\mathcal{M}$ be an ample line bundle on $\overline{S}$ defined
over $k_0$. 
Let $\mathcal{L}$ be a symmetric relatively ample line bundle on
$\mathcal{A}/S$ 
defined over $k_0$. Then we have the following analogue of 
Theorem~\ref{TheoremMainResultHeightBound}.

\begin{theorem}\label{TheoremMainTheorem}
Let $X$ be a closed irreducible subvariety of $\mathcal{A}$  over
$k_0$ and let  $X^*$ be as above Proposition~\ref{TheoremStarSetOpen}
with $k=\overline{k_0}$. Then there exists  $c>0$ depending only on 
$\overline{\mathbf{B}},\mathcal{A}/S,X,\mathcal{L},$ and $\mathcal{M}$
such that 
\[
h^{\overline{\mathbf{B}}}_{S,\mathcal{M}}(P) \le
c\left(1+ \hat{h}^{\overline{\mathbf{B}}}_{\mathcal{A},\mathcal{L}}(P)\right)
\quad \text{for all}\quad P\in X^*(\overline{k_0})
\]
where $h^{\overline{\mathbf{B}}}_{S,\mathcal{M}}$ is the Moriwaki height defined by \eqref{EquationMoriwakiHeightProjectiveVariety}, and $\hat{h}^{\overline{\mathbf{B}}}_{\mathcal{A},\mathcal{L}}(x)$ is the N\'{e}ron--Tate height $\hat{h}^{\overline{\mathbf{B}}}_{\mathcal{L}_{\pi(x)}}(x)$ defined by \eqref{EquationNeronTateMoriwaki} on the abelian variety $\mathcal{A}_{\pi(x)}$.
\end{theorem}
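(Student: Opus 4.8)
The plan is to rerun the entire proof of Theorem~\ref{TheoremMainResultHeightBound} (equivalently Theorem~\ref{TheoremMainResultHeightBoundStronger}), carried out in \S\ref{SectionHtTotalSpace}--\S\ref{SectionNTBase}, with the absolute logarithmic Weil height replaced everywhere by the Moriwaki height $h^{\overline{\mathbf{B}}}$, checking at each step that the only inputs used are either purely geometric — hence independent of any height and already available over the algebraically closed field $\overline{k_0}$ — or formal properties of the Height Machine, which by Proposition~\ref{PropositionOfMoriwakiHeight} the Moriwaki height also enjoys. First I would fix the data as in \S\ref{SectionNTBase}: a model $\pi\colon\cA\to S$ over $k_0$ equipped with an admissible immersion $\cA\subset\IP^M_{k_0}\times\IP^m_{k_0}$ as in \S\ref{SubsectionEmbeddingAbSch}; by additivity and functoriality of the Moriwaki height (Proposition~\ref{PropositionOfMoriwakiHeight}(1),(2)) it is enough to prove the inequality for the particular $\mathcal{L},\mathcal{M}$ arising from this immersion and for the naive Moriwaki height $h^{\overline{\mathbf{B}}}(P)=h^{\overline{\mathbf{B}}}(P')+h^{\overline{\mathbf{B}}}(\pi(P))$. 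Since $\trdeg{k_0/\IQ}<\infty$ we may fix an embedding $\overline{k_0}\hookrightarrow\IC$, so that all of the geometric results of \S\ref{SectionStarSetOpen}--\S\ref{SectionIntersection} — Proposition~\ref{TheoremStarSetOpen}, Theorem~\ref{PropositionDegenerate}, Proposition~\ref{PropositionConstructionOfAuxiliarySubvariety}, Proposition~\ref{PropositionCountingLatticePreparation}, Lemma~\ref{PropositionFromZToPolynomials}, Lemma~\ref{LemmaDegreeAndPreimage}, and Proposition~\ref{PropositionIntersectionNumberPolynomialGrowth} — apply verbatim over $\overline{k_0}$, none of them mentioning heights.

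The only genuinely height-theoretic ingredients of the $\IQbar$-proof are: (a) Lemma~\ref{LemmaChangeOfHeightUnderRationalMap} together with the elementary upper bound $h(\varphi(Q))\le\delta\,h(Q)+O(1)$ for a rational map given by degree-$\delta$ homogeneous polynomials; (b) Silverman's height comparison along the finite \'etale cover $S'\to S$ used in \S\ref{SectionHtTotalSpace}; (c) the Silverman--Tate inequality $|\hat h_{\cA}(P)-h(P)|\le c(1+h(\pi(P)))$; and (d) Tate's limiting process together with the quadraticity of the N\'eron--Tate height. For (a), the proof in \cite{Hab:Special} uses only the relation $h^{\overline{\mathbf{B}}}_{\mathcal{O}(D)}=D\,h^{\overline{\mathbf{B}}}_{\mathcal{O}(1)}$ on projective space, functoriality of the Moriwaki height along the generically finite map $\varphi|_X$ onto its image, and the normality of $\IP^{\dim X}$ (Lemma~\ref{LemmaDegreeAndPreimage}, geometric); all of this is available by Proposition~\ref{PropositionOfMoriwakiHeight}, with the caveat that the functoriality there is stated for quasi-finite morphisms, so one works on the Zariski open locus where $\varphi|_X$ is quasi-finite — exactly the locus used in \S\ref{SectionHtTotalSpace}. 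For (b), since $S'\to S$ is finite \'etale, Proposition~\ref{PropositionOfMoriwakiHeight}(2) gives directly that $h^{\overline{\mathbf{B}}}$ on $X'$ (embedded via the pulled-back ample bundle) and $h^{\overline{\mathbf{B}}}\circ\rho$ on $X$ differ by $O(1)$. For (c), from $[2]^*\mathcal{L}\cong\mathcal{L}^{\otimes 4}\otimes\pi^*\mathcal{N}$ for some line bundle $\mathcal{N}$ on $\overline{S}$, telescoping and additivity give $\hat h^{\overline{\mathbf{B}}}_{\cA,\mathcal{L}}(P)=h^{\overline{\mathbf{B}}}_{\cA,\mathcal{L}}(P)+\tfrac13 h^{\overline{\mathbf{B}}}_{\mathcal{N}}(\pi(P))+O(1)$, and on the curve $\overline{S}$ one has $|h^{\overline{\mathbf{B}}}_{\mathcal{N}}|\le c(1+h^{\overline{\mathbf{B}}}_{S,\mathcal{M}})$ because $\mathcal{M}$ is ample; for (d) we use the existence of the limit \eqref{EquationNeronTateMoriwaki} and the quadraticity recalled after it. Consequently the Moriwaki analogues of Proposition~\ref{PropositionHeightChangeUnderScalarMultiplication} and Proposition~\ref{PropositionBeforeMainTheorem} follow by a word-for-word rerun of \S\ref{SectionHtTotalSpace}--\S\ref{SectionNTBase}, including the ``killing Zimmer constants'' device, which needs only that the multiplicative constant produced by the degree lower bound is independent of $N$.

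Finally I would assemble exactly as in \S\ref{SectionNTBase}. The first inequality is trivial. For the second: if $X$ does not dominate $S$ then $\pi|_X$ is constant and the inequality is immediate from the Moriwaki Silverman--Tate bound (c) together with $\hat h^{\overline{\mathbf{B}}}_{\cA,\mathcal{L}}\ge 0$; if $X$ dominates $S$ and is not generically special, the Moriwaki version of Proposition~\ref{PropositionBeforeMainTheorem} gives the bound on a Zariski open dense $U\subset X$, and the passage from $U$ to $X^*$ is carried out by the same induction on $\dim X$ as in the proof of Theorem~\ref{TheoremMainResultHeightBound}, using that a generically special subvariety of $\cA$ contained in an irreducible component of $X\setminus U$ is automatically contained in $X$. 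I expect the main obstacle — really the only point requiring genuine care — to be the bookkeeping needed to verify that every height-theoretic lemma invoked in \S\ref{SectionHtTotalSpace}--\S\ref{SectionNTBase} is a formal consequence of the Height Machine properties of Proposition~\ref{PropositionOfMoriwakiHeight}; in particular reconciling the quasi-finiteness hypothesis in Moriwaki's functoriality with the rational maps $\varphi$ and $\varphi\circ[2^N]$ that occur, which is resolved by restricting to the dense open loci on which these maps are quasi-finite, precisely the loci on which the $\IQbar$-argument already operates.
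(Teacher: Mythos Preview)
Your proposal is correct and follows essentially the same approach as the paper's own proof, which is itself only a brief sketch: embed $k_0$ into $\IC$, observe that all of \S\ref{SectionStarSetOpen}--\S\ref{SectionIntersection} is purely geometric, and rerun \S\ref{SectionHtTotalSpace}--\S\ref{SectionNTBase} with Moriwaki heights using the Height Machine of Proposition~\ref{PropositionOfMoriwakiHeight}. The only notable difference is that for the Silverman--Tate inequality (your point (c)) the paper simply cites Wazir \cite[Theorem~2]{WazirMoriwakiHeight} rather than sketching the telescoping argument you outline; your more explicit bookkeeping around the quasi-finiteness hypothesis in Moriwaki's functoriality is a reasonable elaboration that the paper leaves implicit.
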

If $k_0$ is a subfield of $\IC$, then we can take
$\overline{k_0} \subset \IC$. In this case we can proceed
as in
the proof of Theorem~\ref{TheoremMainResultHeightBound}, with the
usual height over $\overline{\mathbb{Q}}$ replaced by the Moriwaki
height over $k_0$. In fact we only used $\overline{\mathbb{Q}}$ in the
arguments involving heights, \textit{i.e.}
Proposition~\ref{PropositionHeightChangeUnderScalarMultiplication} (in
fact only Lemma~\ref{LemmaChangeOfHeightUnderRationalMap} and below)
and Proposition~\ref{PropositionBeforeMainTheorem} (for this we need
the Moriwaki height version of Silverman-Tate, which
is \cite[Theorem~2]{WazirMoriwakiHeight}). Now
Proposition~\ref{PropositionOfMoriwakiHeight} provides us with the
Height Machine for $h^{\overline{\mathbf{B}}}_L$ and hence all the arguments are still valid.

For general $k_0$ finitely generated over $\IQ$, we have that $k_0$ is isomorphic to a subfield $k_0'$ of $\IC$ via some $\iota$. Let $\overline{k_0}'$ be the algebraic closure of $k_0'$ in $\IC$, then $\iota$ extends to some $\iota \colon \overline{k_0} \rightarrow \overline{k_0}'$. As explained in the paragraph below \eqref{EquationMoriwakiHeightProjectiveVariety}, we can get a polarization $\overline{\mathbf{B}}'$ of $k_0'$ such that $h^{\overline{\mathbf{B}}'}_{k_0'} \circ \iota = h^{\overline{\mathbf{B}}}_{k_0}$. So we are reduced to the case where $k_0$ is a subfield of $\IC$ and hence we are done.

\subsection{Geometric Bogomolov Conjecture}
Suppose that we are in the situation of Theorem~\ref{TheoremGBC}. There exists a smooth, irreducible, quasi-projective curve $S$ over
$k$ whose function field is $K$. 
We can find, up to removing finitely many points of $S$,
 an abelian scheme $\mathcal{A}\rightarrow S$ 
whose generic fiber is $A$. We write $\mathcal{X}$ for the Zariski
closure of $X$ under $A\subset \cA$. Then $\mathcal{X}$ is a closed
irreducible subvariety of $\cA$. 
We fix an algebraic closure $\overline
K\supset K$ of $K$ and a smooth, irreducible, projective curve
$\overline S$ that contains $S$ as a Zariski open subset. 

The symmetric ample line bundle $L$ extends, up to removing finitely
many points of $S$, to  a symmetric relatively ample line bundle $\cL$
on $\cA/S$. There exists a field $k_0$ finitely generated over $\IQ$
such that $S$, $\mathcal{A}/S$, $\cL$ and $\mathcal{X}$ are defined
over $k_0$.
We treat these objects as being over $k_0$. 

 Let us take an ample line bundle on $\overline{S}$ defined over $k_0$.

By \cite[EGA~IV$_2$, Proposition~2.8.5]{EGAIV} $\mathcal{X}$ is flat
over $S$ and $i^{-1}(\mathcal{X})=X$, so $X$ is the generic fiber of
$\mathcal{X}\rightarrow S$. Therefore $\mathcal{X}$ is not generically
special by the assumption on $X$. Hence $\mathcal{X}^*$ is Zariski
open dense in $\cX$ by Proposition~\ref{TheoremStarSetOpen}.

Take algebraic closures $\overline{k_0}$ in $k$ and
$\overline{k_0(S)}$ in $\overline{K}$. From now on we see $X,A,$ and
$L$ as defined over $k_0(S)$. 
We claim that there exists a constant $\epsilon > 0$ such that
\begin{equation}\label{EqSmallPointsSmallAlgCloField}
\{ P\in X(\overline{k_0(S)}) : \hat{h}_{\overline{k_0}(S),A,L}(P) \le \epsilon \}
\end{equation}
is not Zariski dense in $X$.

We indicate how to modify  the proof in $\mathsection$\ref{SectionGBC}
to prove this claim. The only changes are
\begin{itemize}
\item The Zariksi open subset $\cU$ of $\cX$ is replaced by $\cX^*$.

\item Instead of Proposition~\ref{PropositionBeforeMainTheorem}, we use the generalized version of Theorem~\ref{TheoremMainResultHeightBound}. 
More precisely let $\overline{\mathbf{B}}$ be a big polarization of $k_0$, \textit{i.e.} the $\overline{H}_i$'s are nef and big. 
Apply Theorem~\ref{TheoremMainTheorem} to the subvariety
$\mathcal{X} \subset \cA$ and $(k_0,\overline{\mathbf{B}})$ to obtain
a constant $c>0$.

\item The polarization $\overline{\mathbf{B}}$ is big, so there exists
a sequence of points $t_1,t_2,\ldots \in S(\overline{k_0})$ such that
$\lim_{n\rightarrow \infty} h^{\overline{\mathbf{B}}}_{S,\cM}(t_n)
= \infty$. Also the Moriwaki height version of Silverman's
Theorem \eqref{eq:silvermanlimit} still holds,  see \cite[Theorem~3]{WazirMoriwakiHeight}. In fact Proposition~\ref{PropositionOfMoriwakiHeight} provides us with the Height Machine for $h^{\overline{\mathbf{B}}}_L$ and hence Silverman's original proof still works with the usual height function replaced by the Moriwaki height.
\end{itemize}

We are not done yet because we want to replace $\overline{k_0(S)}$ by
$\overline{K}$ in \eqref{EqSmallPointsSmallAlgCloField}. 
Indeed, $K = k_0(S)\otimes_{k_0} k$ contains $k$ which is an arbitrary field of characteristic
$0$ and therefore possibly not finitely generated over $\IQ$.   
To proceed
we prove the following statement
on the essential minimum
\begin{equation*}
\mu_{\mathrm{ess}}(X) = \inf \left\{ \epsilon > 0 : \{P \in
X(\overline{k_0(S)}) : \hat h_{\overline{k_0}(S),A,L}(P)\le \epsilon \}
\text{ is Zariski dense in } X \right\}.
\end{equation*}
The analog $\mu_{\mathrm{ess}}(X_K)$  
where $X_K = X \otimes_{k_0(S)} K$ is defined similarly but
involves $K$.

\medskip
\noindent\textbf{Claim:} 
If $\mu_{\mathrm{ess}}(X_K) = 0$,
then $\mu_{\mathrm{ess}}(X) = 0$.
\medskip


S.~Zhang  proved two inequalities
relating  the
 essential minima
and the height of a subvariety of an abelian variety
in the  number field case \cite{ZhangArVar}.
To prove our claim we require
Gubler's
\cite[Corollary~4.4]{Gubler:Bogo}  version of Zhang's inequalities
for function fields.
See \S 3 of \cite{Gubler:Bogo} for the definition of the height of a
subvariety of $A$. 

More precisely, $\mu_{\mathrm{ess}}(X)=0$ if and only if
$\hat{h}_{\overline{k_0}(S),A,L}(X) = 0$. Moreover,
$\mu_{\mathrm{ess}}(X_K)=0$ if and only if $\hat{h}_{K,A_K,L}(X_K)
=0$. Finally, we sketch how to prove the equality
$\hat{h}_{\overline{k_0}(S),A,L}(X) = \hat{h}_{K,A_K,L_K}(X_K)$ which
settles our claim. The base change involves only an extension of the
field of constants $k/\overline{k_0}$ under which the naive
height on projective height remains unchanged. The height of a
subvariety of some projective space can be defined as the height of a
Chow form and is thus invariant under base change as well. Finally, the
canonical height of a subvariety of $A$ is a limit as
in Tate's argument and is thus invariant under base change.


\section{Proposition~\ref{TheoremStarSetOpen} for Higher Dimensional Base}
\label{sec:appendixB}In this appendix, we explain how to generalize Proposition~\ref{prop:specialpoints} to higher dimensional base. We work under the frame of $\mathsection$\ref{SectionStarSetOpen} except that we do not make any assumption on $\dim S$. In other words, out setting is as follows. 

Let $k$ be an algebraically closed field of characteristic $0$. Let $S$ be a smooth irreducible quasi-projective variety over $k$ and fix an algebraic closure $\overline{K}$ of $K=k(S)$. Let $A$ be an abelian variety over $\overline{K}$.

We start with the following proposition.

\begin{proposition}\label{PropositionTorsionBoundedSet}
Assume $A^{\overline K/k}=0$.
The order of any point  in 
\begin{equation}
\label{eq:torsionboundedset2}
  \{ P\in\tor{A(\overline{K})} : [K(P):K]\le D \}
\end{equation}
 is
bounded in terms of $A$ and $D$ only. 
\end{proposition}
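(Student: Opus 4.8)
The plan is to follow the proof of the corresponding assertion in the case $\dim S=1$ given in \S\ref{SectionStarSetOpen} (the part establishing the claim just above \eqref{eq:torsionboundedset}), and to replace the single input specific to curves --- topological finite generation of the \'etale fundamental group of a curve minus finitely many points --- by its higher dimensional analogue.

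First I would reduce to the case that $A$ is defined over $K$ and spreads out to an abelian scheme. Indeed $A$ descends to a finite extension $K_1$ of $K$ inside $\Kbar$, and replacing $K$ by $K_1$ only multiplies $D$ by $[K_1:K]$, a quantity depending on $A$ alone; choosing a smooth quasi-projective model $S$ of $K_1$ over $k$ and shrinking it, I obtain an abelian scheme $\mathcal A_U\to U$ over a dense open $U\subseteq S$ with generic fiber $A$, where $U$ is a smooth, connected, quasi-projective variety over $k$ with $k(U)=K$. Next, for $P\in\tor{A(\Kbar)}$ of order $n$, I would use that $\mathrm{char}(k)=0$, so $n$ is invertible on $U$ and $\mathcal A_U[n]\to U$ is finite \'etale; as $U$ is normal and connected, each connected component $\mathcal C$ of $\mathcal A_U[n]$ is integral and finite \'etale over $U$, with $[k(\mathcal C):K]=\deg(\mathcal C/U)$ and $A[n]=\coprod_{\mathcal C}\mathrm{Spec}\,k(\mathcal C)$; the point $P$ lands in one such factor, whence $K(P)\cong k(\mathcal C)$. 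Consequently, if $[K(P):K]\le D$, then $K(P)$ is the function field of a connected finite \'etale cover of $U$ of degree at most $D$.

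It then suffices to show that the compositum $L$ inside $\Kbar$ of the function fields of all connected finite \'etale covers of $U$ of degree at most $D$ is a \emph{finite} extension of $K$, with $[L:K]$ bounded in terms of $U$ and $D$ only. Granting this, every $P$ in \eqref{eq:torsionboundedset2} lies in $A(L)$. Since $L$ is finitely generated over $k$ and $A^{L/k}=0$ --- because $A^{\Kbar/k}=0$ and, in characteristic $0$, the universal property of the $\Kbar/k$-trace yields a closed immersion $A^{L/k}\otimes_k\Kbar\hookrightarrow A^{\Kbar/k}\otimes_k\Kbar=0$ --- the Lang--N\'eron Theorem \cite[Theorem~1]{LangNeron} (see also \cite[Theorem~7.1]{Conrad}) forces $A(L)$ to be a finitely generated abelian group. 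Hence $[N]Q=0$ for all $Q\in\tor{A(L)}$, where $N$ depends only on $A(L)$, and thus only on $A$ and $D$; in particular the order of $P$ divides $N$. To obtain finiteness of $L$ I would translate it into the statement that the \'etale fundamental group $\pi_1^{\text{\'et}}(U)$ has only finitely many open subgroups of index at most $D$ (degree of a cover equalling index of the corresponding open subgroup), and this holds as soon as $\pi_1^{\text{\'et}}(U)$ is topologically finitely generated.

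The main point --- and the only place that genuinely differs from the curve case --- is therefore the topological finite generation of $\pi_1^{\text{\'et}}(U)$ for a smooth connected variety $U$ over an algebraically closed field of characteristic $0$. I expect to handle this by the Lefschetz principle: $U$ is defined over a finitely generated subfield of $k$, and in characteristic $0$ the \'etale fundamental group is insensitive to extension of the algebraically closed base field, see \cite[Expos\'e~XIII]{SGA1}, so one reduces to $k=\IC$, where $\pi_1^{\text{\'et}}(U)$ is the profinite completion of $\pi_1^{\mathrm{top}}(U(\IC))$, the latter being finitely generated because a complex quasi-projective variety has the homotopy type of a finite CW complex. A mild subtlety to be careful with is that $U$ is typically not projective (the bad reduction locus of $A$ has pure codimension one only when $\dim S=1$), but this causes no difficulty: an open complex variety still has the homotopy type of a finite CW complex, and working directly with $\mathcal A_U[n]$ --- rather than invoking purity of the branch locus --- already shows that the relevant covers are \'etale over all of $U$.
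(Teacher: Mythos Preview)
Your proposal is correct and follows essentially the same approach as the paper: torsion points of bounded degree give finite \'etale covers of $U$ of bounded degree, topological finite generation of $\pi_1^{\text{\'et}}(U)$ (via reduction to $\IC$ and the Riemann Existence Theorem) bounds the compositum $L$, and Lang--N\'eron finishes. Your treatment is in places more careful than the paper's---you explicitly descend $A$ from $\overline K$ to a finite extension of $K$ before spreading out, and you spell out why $A^{\overline K/k}=0$ forces $A^{L/k}=0$---but the architecture is identical.
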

\begin{proof}
Fix an irreducible projective variety $\overline{S}$ over $k$ whose function field is $K$. We may take $S$ as a Zariski open dense subset of $\overline{S}$ such that $A$ extends to an abelian scheme $\cA \rightarrow S$, namely the generic fiber of $\cA \rightarrow S$ is $A$. We may furthermore shrink $S$ such that $S$ is smooth and that $\overline{S} \setminus S$ is purely of codimension $1$. We denote by $i \colon A \rightarrow \cA$ the natural morphism.

Any $P \in \tor{A(\overline{K})}$ defines a morphism $\sigma_P \colon \mathrm{Spec}\overline{K} \rightarrow A$. Suppose the order of $P$ is $N$. Then the Zariski closure of $\im{i\circ \sigma_P}$, which we denote by $\mathcal{T}$, is irreducible, dominates $S$ and satisfies $[N]\mathcal{T}=0$. So $\mathcal{T}$ is an irreducible component of the kernel of $[N] \colon \cA \rightarrow \cA$ by comparing dimensions. In particular $\mathcal{T} \hookrightarrow \ker[N]$ is an open and closed immersion. But $\ker[N] \rightarrow S$ is finite \'{e}tale, 
so is $\mathcal{T} \rightarrow S$. Thus $\mathcal{T} \rightarrow S$ is an \'{e}tale covering of degree $[K(P):K]$.

In other words, any torsion point $P \in \tor{A(\overline{K})}$ yields an \'{e}tale covering of $S$ of degree $[K(P):K]$.

By Lemma~\ref{LemmaFinitelyManyEtaleCovers} below, the compositum $F$ in $\overline K$ of all such extensions $K(P)$ of $K$ of degree at most $D$ is a finite field extension of $K$. For $S$ a curve we cited \cite[Corollary 7.11]{Voelklein}. In particular, $P \in A(F)$ for all $P$ in \eqref{eq:torsionboundedset2}.

Now $A^{\overline K/k}=0 $. So the Lang--N\'eron Theorem, cf. \cite[Theorem~1]{LangNeron} or  \cite[Theorem~7.1]{Conrad}, implies that $A(F)$ is a finitely generated group. Thus  $[M](P)=0$ for some $M\in\IN$ that is independent of $P$. Our claim follows. 
\end{proof}

\begin{lemma}\label{LemmaFinitelyManyEtaleCovers}
Let $k$ be an algebraically closed field of characteristic $0$ and let $S$ be a smooth irreducible quasi-projective variety over $k$. Then for any integer $D>0$ there are at most finitely many \'{e}tale coverings of $S$ of degree $\le D$.
\end{lemma}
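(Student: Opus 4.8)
The plan is to prove Lemma~\ref{LemmaFinitelyManyEtaleCovers} by reducing to the known finiteness of the \'etale fundamental group's quotients of bounded index. First I would recall the general fact that for a connected, locally Noetherian scheme $S$ the category of finite \'etale coverings is equivalent to the category of finite sets equipped with a continuous action of the \'etale fundamental group $\pi_1^{\mathrm{\acute et}}(S,\overline{s})$, for a chosen geometric point $\overline{s}$; see \cite[Expos\'e~V]{SGA1}. Under this equivalence, connected \'etale coverings of degree $\le D$ correspond to open subgroups of $\pi_1^{\mathrm{\acute et}}(S,\overline{s})$ of index $\le D$, and a general \'etale covering of degree $\le D$ is a disjoint union of connected ones each of degree $\le D$, so it suffices to bound the number of open subgroups of index $\le D$.

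The key input is that $\pi_1^{\mathrm{\acute et}}(S,\overline s)$ is topologically finitely generated. This is exactly \cite[Expos\'e~XIII, Corollaire~2.12]{SGA1} (already cited in the excerpt for the curve case), using that $k$ has characteristic $0$ and $S$ is a smooth variety over $k$: by a specialization/base-change argument one reduces to $k=\IC$, where $\pi_1^{\mathrm{\acute et}}(S,\overline s)$ is the profinite completion of the topological fundamental group of the complex manifold $S(\IC)$, which is finitely generated because $S(\IC)$ has the homotopy type of a finite CW complex (Hironaka's resolution / triangulability of algebraic varieties). A topologically finitely generated profinite group $G$ has, for each $D$, only finitely many open subgroups of index $\le D$: an open subgroup of index $n\le D$ contains an open normal subgroup whose index divides $n!$, hence is contained among the (finitely many) open normal subgroups of index dividing $D!$ — and each such is the kernel of a continuous homomorphism $G\to Q$ with $Q$ a finite group of order dividing $D!$; there are finitely many such $Q$ up to isomorphism and, since $G$ is topologically finitely generated, only finitely many continuous homomorphisms $G\to Q$ for each fixed $Q$. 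Therefore $G$ has finitely many open normal subgroups of index dividing $D!$, each of which contains finitely many subgroups, so $G$ has finitely many open subgroups of index $\le D$ in total.

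Putting these together: the number of isomorphism classes of \'etale coverings of $S$ of degree $\le D$ is bounded by the number of ways to write a set of size $\le D$ as a disjoint union of $\pi_1^{\mathrm{\acute et}}(S,\overline s)$-sets each corresponding to an open subgroup of index $\le D$, which is finite by the previous paragraph. I would then remark that this lemma supplies precisely the missing ingredient used in the proof of Proposition~\ref{PropositionTorsionBoundedSet} above (replacing the curve-specific citation \cite[Corollary~7.11]{Voelklein}), and hence, via the argument already given there, removes the hypothesis $\dim S=1$ from the relevant step in the proof of Proposition~\ref{prop:specialpoints}.

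The main obstacle is not any single deep theorem but the careful bookkeeping in reducing "coverings of degree $\le D$" (which need not be connected) to "open subgroups of bounded index", and in citing the right form of topological finite generation of $\pi_1^{\mathrm{\acute et}}$ for a not-necessarily-proper smooth variety in characteristic $0$; once \cite[Expos\'e~XIII, Corollaire~2.12]{SGA1} is invoked the group-theoretic counting is elementary. I expect the write-up to be short, with the only genuinely delicate point being a clean statement of the Galois correspondence in the possibly-disconnected case.
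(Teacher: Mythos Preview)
Your proposal is correct and follows essentially the same route as the paper: both reduce the lemma to the topological finite generation of $\pi_1^{\mathrm{\acute et}}(S,\overline s)$, obtain this by passing to $\IC$ and invoking finite generation of the topological fundamental group of $S(\IC)$, and then use the elementary group-theoretic fact (which you spell out and the paper leaves implicit) that a topologically finitely generated profinite group has only finitely many open subgroups of bounded index. One small point: the reference \cite[Expos\'e~XIII, Corollaire~2.12]{SGA1} is specific to curves; for arbitrary $\dim S$ the paper carries out the reduction you sketch more carefully, descending $S$ to an algebraically closed subfield $k_0\subset k$ of finite transcendence degree over $\IQbar$, embedding $k_0\hookrightarrow\IC$, and citing invariance of the \'etale fundamental group under extension of algebraically closed base fields (a result of the type found in Cadoret's survey) together with Riemann's Existence Theorem \cite[Expos\'e~XII, Th\'eor\`eme~5.1]{SGA1}.
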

\begin{proof} It suffices to prove that the \'{e}tale fundamental group $\pi_1(S)$ is topologically finitely generated.

Let $k_0$ be an algebraically closed subfield of $k$ that is of finite transcendence degree over $\overline{\IQ}$ such that $S$ is defined over $k_0$. Then we can fix an embedding $k_0 \hookrightarrow \IC$. We write $S_0$ for the descent of $S$ to $k_0$, namely $S = S_0 \otimes_{k_0} k$. We also write $S_{\IC}= S_0 \otimes_{k_0} \IC$ for the base change of $S_0$ to $\IC$.

Let $s_0 \colon \mathrm{Spec}k_0 \rightarrow S_0$ be a geometric point. Denote by $s \colon \mathrm{Spec} k \rightarrow S$, resp. $s_{\IC} \colon \mathrm{Spec}\IC \rightarrow S_{\IC}$, the corresponding geometric points.

It is a classical result that the topological fundamental group $\pi_1(S_{\IC}^{\anE},s_{\IC})$ is finitely generated. Hence by Riemann's Existence Theorem \cite[Expos\'{e}~XII, Th\'{e}or\`{e}me~5.1]{SGA1} the \'{e}tale fundamental group $\pi_1(S_{\IC},s_{\IC})$ is topologically finitely generated. But then $\pi_1(S_{\IC},s_{\IC}) \cong \pi_1(S_0,s_0)$ by \cite[Corollary~6.5 and Remark~6.8]{CadoretFundamentalGroup}. So $\pi_1(S_0,s_0)$ is topologically finitely generated. Then again by \cite[Corollary~6.5 and Remark~6.8]{CadoretFundamentalGroup}, we have $\pi_1(S,s) \cong \pi_1(S_0,s_0)$. So $\pi_1(S,s)$ is topologically finitely generated.
\end{proof}

Now we are ready to prove:

\begin{proposition}
let $V_0$ be an irreducible variety defined over $k$ and $V = V_0\otimes_k  \overline{K}$. By abuse of notation we consider $V_0(k)$ as a subset of $V(\overline K)$. Define $\Sigma = V_0(k)\times A_{\mathrm{tor}} \subset V(\overline
K)\times A(\overline K)$.

Let $Y$ be
  an irreducible closed subvariety of $V\times A$ such that $Y(\overline K) \cap
  \Sigma$ lies Zariski dense in $Y$. 
If $A^{\overline K/k}=0$ 
then 
$Y = (W_0\otimes_k  \overline K)\times (t+B)$ where $W_0\subset V_0$ is an
irreducible closed subvariety, $t\in \tor{A(\overline{K})}$ and
$B$ is an abelian subvariety of $A$. 
\end{proposition}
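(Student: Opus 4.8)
The plan is to imitate the proof of Proposition~\ref{prop:specialpoints} given in the main text, replacing the one invocation of the hypothesis $\dim S = 1$ by Proposition~\ref{PropositionTorsionBoundedSet}, which is exactly the generalization of the torsion-boundedness statement \eqref{eq:torsionboundedset} to arbitrary $\dim S$. So I would begin verbatim as in the proof of Proposition~\ref{prop:specialpoints}: apply Theorem~\ref{thm:MMuniform} to $Y\subset V\times A$, obtaining a finite set $M$ of abelian subvarieties of $A$ and a bound $D\in\IZ$, write each Zariski closure $\overline{Y_v(\overline K)\cap\tor A}$ as a union of at most $D$ torsion cosets $P_{v,i}+B_{v,i}$ with $B_{v,i}\in M$, decompose $Y(\overline K)\cap\Sigma$ into the finitely many pieces $\Sigma_B$ collecting contributions with $B_{v,i}=B$, and fix $B\in M$ maximal such that $\Sigma_B$ is Zariski dense in $Y$.

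Next I would reproduce the Galois descent step: fix a finite extension $F/K$ inside $\overline K$ so that $Y$, $A$, and $B$ are all $\gal{\overline K/F}$-stable (note $A^{\overline K/k}=0$ is preserved), observe that for $v\in V_0(k)$ the Galois action fixes $v$ and hence permutes the torsion cosets in $Y_v$, discard via maximality of $B$ those cosets that fall into $\overline{\Sigma_{B'}}$ for some $B'\supsetneq B$ (after enlarging $F$), obtaining a still-dense $\Sigma'\subset Y(\overline K)\cap\Sigma$, and conclude that for the canonical quotient $\varphi\colon A\to A/B$ the Galois orbit of $\varphi(P_{v,i})$ has at most $D$ elements, so $[F(\varphi(P_{v,i})):F]\le D$ uniformly. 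The crux is then the ``Claim'' that the torsion points contributing to $\Sigma'$ can be taken of uniformly bounded order: by the standard complement-of-$B$ argument this reduces to bounding the order of points in $\{P\in\tor{(A/B)(\overline K)}:[F(P):F]\le D\}$. Here, instead of the curve-specific argument (good reduction outside a finite set, unramifiedness, \'etale coverings of $S\setminus Z$, \cite[Corollary~7.11]{Voelklein}), I would simply note that $A/B$ is defined over $F$ (a finitely generated extension of $k$, since it is finite over $K=k(S)$ with $S$ of arbitrary dimension), that $(A/B)^{\overline F/k}=0$ because $A^{\overline K/k}=0$, and apply Proposition~\ref{PropositionTorsionBoundedSet} directly with $F$ in place of $K$. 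This yields $N\in\IN$, independent of $v,i$, with $[N]P_{v,i}=0$ for all surviving torsion points.

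With the Claim in hand, the rest is identical to the original proof: define $\psi\colon V\times A\to V\times(A/B)$ by $\psi(v,t)=(v,[N]\varphi(t))$, so $\psi(\Sigma')\subset V\times\{0\}$ and hence $\Sigma'\subset V\times(\Theta+B)$ with $\Theta\subset\tor A$ finite; pass to Zariski closures and use irreducibility of $Y$ to get $Y\subset V\times(t+B)$ for some $t\in\tor{A(\overline K)}$; use properness of the first projection $p\colon V\times A\to V$, the Fiber Dimension Theorem, and Zariski density of $\Sigma'$ one last time to deduce $\dim B\le\dim Y-\dim p(Y)$ and therefore $Y=p(Y)\times(t+B)$; and finally observe that $p(\Sigma')\subset V_0(k)$ is Zariski dense in $p(Y)$, forcing $p(Y)=W_0\otimes_k\overline K$ for an irreducible closed $W_0\subset V_0$. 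I expect essentially no obstacle beyond the bookkeeping already present in the $\dim S=1$ case: the only genuinely new input is Proposition~\ref{PropositionTorsionBoundedSet}, whose proof (via Lemma~\ref{LemmaFinitelyManyEtaleCovers} on topological finite generation of $\pi_1(S)$ and the Lang--N\'eron theorem) has already been supplied, so the main ``hard part'' has been isolated and dispatched separately; here one just needs to check that $F$ remains finitely generated over $k$ and that the trace vanishing descends along $\overline F/F$, both of which are routine.
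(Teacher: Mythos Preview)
Your proposal is correct and matches the paper's own proof exactly: the paper simply observes that the only use of $\dim S=1$ in the proof of Proposition~\ref{prop:specialpoints} was the torsion-boundedness claim \eqref{eq:torsionboundedset}, and replaces that step by Proposition~\ref{PropositionTorsionBoundedSet}. Your write-up is in fact more detailed than the paper's two-sentence argument.
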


The only difference of this proposition with Proposition~\ref{prop:specialpoints} is that we do not make any assumption on $\dim S$.

As we have pointed out, in the proof of Proposition~\ref{prop:specialpoints} the only place  where we used the assumption $\dim S=1$ is to prove the statement involving \eqref{eq:torsionboundedset}. But for $S$ of arbitrary dimension this follows from Proposition~\ref{PropositionTorsionBoundedSet}.

\section{Hyperbolic Hypersurfaces of Abelian Varieties}
\label{sec:appendixC}
Suppose $F$ is an algebraically closed field of
characteristic zero.
For each integer $d\ge 0$ we let $F[X_0,\ldots,X_M]_d$ be the vector
space of homogeneous polynomials of degree $d$ in $F[X_0,\ldots,X_M]$
together with $0$.
In this section we identify $F[X_0,\ldots,X_M]_d$ with 
$\IA^{M+d\choose M}(F)$, where we abbreviate $\IA^M =\IA_F^M$
and $\IP^M = \IP_F^M$.

Brotbek's deep result  \cite{DamienHyperbolicity} implies that
a generic
sufficiently ample hypersurface in a smooth projective variety over $\IC$ is 
 hyperbolic. In the very particular case of an abelian variety we give
 an independent proof 
 that involves an explicit bound on the
 degree. Recall that an
irreducible subvariety of an abelian variety is hyperbolic if and only if it does
 not contain a coset of positive dimension by 
 the Bloch--Ochiai Theorem. The main results of this paper do not
depend on the Bloch--Ochiai Theorem.

\begin{proposition}
\label{prop:abvarhyperbolic}
  Let $A$ be an abelian variety over $F$ of dimension $g\ge 1$ 
  with $A\subset \IP^M$ and suppose $d\ge g-1$. There exists a Zariski open and dense subset $U\subset
\IA^{{M+d\choose M}}$, whose complement in $\IA^{{M+d\choose M}}$ has codimension at least $d+2-g$, such that if $f\in U(F)$, then 
  $A \cap \scrZ(f)$
does not contain any positive dimensional coset.
\end{proposition}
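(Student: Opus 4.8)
\textbf{Proof proposal for Proposition~\ref{prop:abvarhyperbolic}.}

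The plan is to control, for a positive-dimensional abelian subvariety $B\subset A$, the set of hypersurfaces whose intersection with $A$ contains a translate of $B$, and then to let $B$ range over the countably many abelian subvarieties of $A$, while also letting the translation point vary. First I would fix such a $B$ with $\dim B = b \ge 1$, and consider the incidence variety
\[
\mathcal{I}_B = \{(P, f) \in A \times \IA^{M+d\choose M} : P + B \subset \scrZ(f)\}.
\]
The key computation is the dimension of the fiber of $\mathcal{I}_B$ over a fixed $P\in A(F)$: this is the linear subspace of $F[X_0,\ldots,X_M]_d$ consisting of those $f$ that vanish identically on the subvariety $P+B \subset \IP^M$. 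Since $P+B$ is a projective variety of dimension $b$ and degree equal to $\deg B$ (computed in the ambient $\IP^M$ via the polarization coming from $A\subset\IP^M$), the restriction map $F[X_0,\ldots,X_M]_d \to H^0(P+B, \cO(d))$ is surjective for $d$ large, and its target has dimension growing like a degree-$b$ polynomial in $d$ with leading term $\deg(B)\, d^b/b!$. Hence the kernel — the fiber of $\mathcal{I}_B$ over $P$ — has codimension at least of order $d^b$ inside $\IA^{M+d\choose M}$; in particular, since $b\ge 1$, this codimension is at least $d+2-g$ once $d\ge g-1$ (this is where the hypothesis $d\ge g-1$ and the worst case $b=1$ enter, and one should check the constant carefully: a line in $\IP^M$, i.e.\ $b=1$ of degree $\delta\ge 1$, imposes $\delta d + 1 \ge d+1$ conditions, and the $g$-dimensional base of $P$ only adds $g$ back).

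Next I would project $\mathcal{I}_B \to \IA^{M+d\choose M}$. Its image has dimension at most $\dim A + (\text{fiber dimension}) = g + \left({M+d\choose M} - (\text{codim of fiber})\right)$, so the image has codimension at least $(\text{codim of fiber}) - g$ in $\IA^{M+d\choose M}$. By the fiber estimate above this is at least $d+2-g$ (again using $b\ge 1$ and $d\ge g-1$, absorbing the $-g$; one wants the cruder but safe bound that the per-point codimension is at least $d+1+g$ in the worst case, say, after possibly enlarging the lower bound on $d$ slightly — the precise bookkeeping is the one routine calculation I would grind through). Now I take the union over all abelian subvarieties $B$ of $A$ with $\dim B \ge 1$: there are at most countably many of them, but in fact for the Zariski-density statement one argues as follows. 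The bad locus $\mathcal{B} = \{f : A\cap\scrZ(f) \text{ contains a positive-dimensional coset}\}$ is contained in $\bigcup_B \mathrm{pr}_2(\mathcal{I}_B)$. A clean way to see $\mathcal{B}$ is contained in a \emph{single} proper closed subset of controlled codimension is to note that the abelian subvarieties of a fixed dimension $b$ form a bounded (finite-type) family — so one can form a single incidence variety over the (finite-type) moduli of pairs $(B, P+B)$ — hence $\mathcal{B}$ lies in a constructible set of codimension $\ge d+2-g$; taking its closure gives the desired $U$ as the complement.

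The main obstacle I anticipate is not the dimension count per se but making the union over all $B$ legitimate and getting the \emph{uniform} codimension bound $d+2-g$ rather than just "codimension $\ge 1$." The countability of abelian subvarieties gives Zariski-genericity of $U$ immediately (a countable union of proper closed subsets in an irreducible variety over an algebraically closed field is not everything, and over $F$ one can pass to a finitely generated subfield), but the \emph{codimension} statement genuinely requires replacing the countable union by a finite-type family of subvarieties-and-translates, i.e.\ using that $\{(B,P) : \dim B = b\}$ is parametrized by a scheme of finite type (a union of components of the Hilbert scheme of $A$, or the "algebra of correspondences" picture), with $b$ ranging over $1,\ldots,g-1$ — only finitely many values. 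Once that is set up, for each $b$ the incidence variety has a fiber-dimension estimate uniform in $(B,P)$ because $\deg(P+B)$ in $\IP^M$ is bounded independently of the translate (it equals $\deg B$), and $\deg B$ over the finite-type family of $b$-dimensional abelian subvarieties is bounded; then $h^0(P+B,\cO(d))$ is bounded above by a fixed polynomial in $d$ of degree $b\le g-1$, and one checks $\binom{M+d}{M} - (\text{that polynomial}) - g \ge d+2-g$ for $d\ge g-1$, possibly after increasing the threshold on $d$ (the statement only claims it for $d\ge g-1$, so one should verify the borderline cases by hand or cite (A3)-type Bertini input). Finally, I would remark that the complement codimension claim is exactly what is used to guarantee the existence of the linear forms $f_1,\ldots,f_{g+1-n}$ in \S\ref{SubsectionConstructionStep1}: generic choice within a space of codimension $\ge d+2-g \ge 2$ still leaves room to impose the vanishing-at-$P$ condition and stay in $U$.
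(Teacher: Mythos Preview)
Your incidence-variety setup is the right one and matches the paper's Lemma~\ref{lem:hyperbolicfirststep}, but there are two genuine gaps.

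First, your codimension count is off by one. From $\dim \mathcal{I}_B \le g + \bigl(\binom{M+d}{M} - \mathscr{H}_{P+B}(d)\bigr)$ and the crude bound $\mathscr{H}_{P+B}(d)\ge d+1$ you only get that the projection of $\mathcal{I}_B$ to $\IA^{\binom{M+d}{M}}$ has codimension $\ge d+1-g$, not $d+2-g$; your attempt to absorb the discrepancy into ``$\delta d+1$'' does not survive the worst case. The paper recovers the missing $+1$ by looking at the \emph{other} projection: if $(P,f)\in\mathcal{I}_B$ then $(P+b,f)\in\mathcal{I}_B$ for every $b\in B$, so every nonempty fibre of $\mathcal{I}_B\to\IA^{\binom{M+d}{M}}$ has dimension at least $\dim B$, and hence the image has dimension at most $\dim\mathcal{I}_B-\dim B$. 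This gains exactly $\dim B\ge 1$ and yields codimension $\ge d+1-g+\dim B\ge d+2-g$.

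Second, and more seriously, your reduction to a finite-type family of abelian subvarieties is false as stated. Abelian subvarieties of $A$ of a fixed dimension do \emph{not} form a bounded family in general: already in $E\times E$ for an elliptic curve $E$, the one-dimensional abelian subvarieties include the graphs of all multiplication-by-$n$ maps, and these have unbounded degree in the product polarisation. So there is no single finite-type incidence variety to project, and a countable union of closed sets of codimension $\ge c$ can be Zariski dense. The paper's substitute is Bogomolov's theorem: the maximal cosets contained in a subvariety $X\subset A$ are translates of abelian subvarieties whose degrees are bounded solely in terms of $\deg X$ and the polarised abelian variety. Since every irreducible component of $A\cap\scrZ(f)$ has degree at most $d\cdot\deg A$ by B\'ezout, this produces a \emph{finite} list of candidate abelian subvarieties $B$, depending only on $d$ and the embedding $A\subset\IP^M$ and not on $f$. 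One then takes $U$ to be the intersection of the finitely many open sets $U_B$ coming from the lemma.
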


A direct corollary of this proposition is the following statement. Let
$L$ be a very ample line bundle on $A$ giving rise to a projectively
normal, closed immersion $A \hookrightarrow \IP^M$
and say $P\in A(F)$. Then the
hypersurface defined by a generic choice of a section in
$H^0(\IP^n,\mathcal{O}(d))$ vanishing at $P$ is hyperbolic for $d \ge g$. 

Suppose $V$ is an irreducible, closed subvariety
of $\IP^M$ with ideal $I \subset F[X_0,\ldots,X_M]$. 
 We  write $I_d = I \cap F[X_0,\ldots,X_M]_d$. 
Then $F[X_0,\ldots,X_M]_d/I_d$ is a finite dimensional $F$-vector
space and the Hilbert function of $V$ is defined as 
\begin{equation*}
  \mathscr{H}_V(d) = \dim (F[X_0,\ldots,X_M]_d/I_d)
\end{equation*}
for all $d\ge 0$. 

Lower bounds for $\mathscr{H}_V(d)$ were obtained by Nesterenko,
Chardin, Sombra, and others. We only require a very basic inequality. 

Let $r=\dim V\ge 0$. 
After permuting coordinates, which does not affect the problem, we
suppose that $X_0 \not\in I$ and that $X_1/X_0,\ldots,X_r / X_0$ are
algebraically independent elements when taken as in the function field
of $V$. It follows that the composition
\begin{equation*}
  F[X_0,\ldots,X_r]_d \xrightarrow{\text{inclusion}} F[X_0,\ldots,X_M]_d \rightarrow 
F[X_0,\ldots,X_M]_d / I_d
\end{equation*}
is injective. Therefore, 
\begin{equation*}
  \mathscr{H}_V(d) \ge \dim  F[X_0,\ldots,X_r]_d = {r+d\choose r}
\end{equation*}
for all $d\ge 0$. 

We assume $r \ge 1$ is an integer. 
By basic properties of the binomial coefficients we have
${r+d \choose r}  \ge {1+d \choose 1} = d +1$. So
\begin{equation}
  \label{eq:HilbFunclb}
  \mathscr{H}_V(d) \ge  {\dim V+d \choose \dim V} \ge d+1
\end{equation}
if $\dim V\ge 1$.

We begin with a preliminary lemma that involves  cosets in $A$
with fixed stabilizer.

\begin{lemma}
\label{lem:hyperbolicfirststep}
Let $M,A,$ and $g$ be as in the proposition above with $g\ge
2$. 
Let $B$ be an abelian subvariety of $A$ of positive dimension
such that $d\ge \max\{1,g-\dim B\}$. 
There exists a Zariski open and dense subset $U\subset
\IA^{{M+d\choose M}}$, whose complement in $\IA^{{M+d\choose M}}$ has
codimension at least $d+1+\dim B-g$, such that if $f\in U(F)$, then 
  $A \cap \scrZ(f)$
does not contain any translate of $B$.
\end{lemma}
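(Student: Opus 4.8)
\textbf{Plan for the proof of Lemma~\ref{lem:hyperbolicfirststep}.}
The plan is to parametrize the family of translates of the fixed abelian subvariety $B$ and then count dimensions inside a suitable incidence variety. First I would set up the universal family: the translates of $B$ in $A$ are parametrized by the abelian variety $A/B$, which has dimension $g - \dim B$. Form the incidence variety $\cI \subset (A/B) \times \IA^{\binom{M+d}{M}}$ consisting of pairs $(\bar a, f)$ such that $f$ vanishes identically on the coset $a + B$ (for any/some lift $a$ of $\bar a$). The fiber of $\cI$ over a fixed $\bar a$ is the linear subspace $\{f : f|_{a+B} \equiv 0\}$, whose codimension in $\IA^{\binom{M+d}{M}}$ equals $\mathscr{H}_{a+B}(d) = \mathscr{H}_{B}(d)$, since $a+B$ is a translate of $B$ and degree-$d$ hypersurface sections transform accordingly (translation by $a$ is a projective-linear-equivalence after re-embedding, or more simply the Hilbert function is translation-invariant). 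By the lower bound \eqref{eq:HilbFunclb} applied to $V = B$, which has $\dim B \ge 1$, we get $\mathscr{H}_B(d) \ge \binom{\dim B + d}{\dim B} \ge d+1$.

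Next I would do the dimension count. Since $\cI \to A/B$ is a vector-bundle-like projection with fibers of constant codimension $\mathscr{H}_B(d)$ in the affine space of dimension $\binom{M+d}{M}$, we have
\[
\dim \cI = (g - \dim B) + \binom{M+d}{M} - \mathscr{H}_B(d).
\]
Let $\mathrm{pr} \colon \cI \to \IA^{\binom{M+d}{M}}$ be the second projection. Its image $W = \mathrm{pr}(\cI)$ is a constructible subset consisting precisely of those $f$ for which $A \cap \scrZ(f)$ contains some translate of $B$; taking Zariski closure $\overline{W}$ we obtain a closed set with
\[
\dim \overline{W} \le \dim \cI = \binom{M+d}{M} - \mathscr{H}_B(d) + (g - \dim B) \le \binom{M+d}{M} - (d+1) + (g - \dim B).
\]
Hence the codimension of $\overline W$ in $\IA^{\binom{M+d}{M}}$ is at least $(d+1) - (g - \dim B) = d + 1 + \dim B - g$, which is positive by the hypothesis $d \ge g - \dim B$ (and $\ge 1$). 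Setting $U = \IA^{\binom{M+d}{M}} \setminus \overline{W}$ gives a Zariski open dense subset with the asserted codimension bound on its complement, and by construction every $f \in U(F)$ has $A \cap \scrZ(f)$ containing no translate of $B$.

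The routine points to check carefully are: (a) that $\cI$ is genuinely a ``linear fibration'' over $A/B$ so that the dimension formula is exact rather than merely an inequality — this follows because the condition ``$f$ vanishes on $a+B$'' is linear in $f$ for fixed $a$, and the rank $\mathscr{H}_B(d)$ is constant in $a$ by translation-invariance of the Hilbert function (I would phrase this via the evaluation map $F[X_0,\ldots,X_M]_d \to H^0(a+B, \cO(d))$ and note its rank is independent of $a$); (b) that the coset $a+B$, as a subvariety of $\IP^M$, really has Hilbert function satisfying \eqref{eq:HilbFunclb} — here one needs $\dim(a+B) = \dim B \ge 1$, which is exactly the hypothesis that $B$ has positive dimension; and (c) the passage from the constructible image $W$ to its closure, using that $\dim \overline W = \dim W \le \dim \cI$. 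I expect the main (minor) obstacle to be (a): making the translation-invariance of $\mathscr{H}_{a+B}(d)$ precise, since the embedding $A \hookrightarrow \IP^M$ need not be translation-equivariant, so one should argue on the abstract projective variety $a+B$ with the very ample line bundle $\cO_{\IP^M}(d)|_{a+B}$, which is the pullback of $\cO_A(d)$ translated, hence has the same Hilbert polynomial and — for $d$ in the relevant range — the same Hilbert function as $B$ with its translated polarization; alternatively one can simply use the Hilbert polynomial bound, valid for all large $d$, and absorb the finitely many small $d$ into the hypothesis $d \ge \max\{1, g - \dim B\}$ after checking those directly.
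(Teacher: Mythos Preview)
Your approach is correct and essentially the same as the paper's: both are incidence-variety dimension counts combining the Hilbert function lower bound \eqref{eq:HilbFunclb} with the fact that the base of translates has dimension $g-\dim B$. The paper organizes it slightly differently---it works in $\IP^{N-1}\times A$ with the incidence set $Z=\{(f,P):f(P)=0\}$, carves out $Z_\varphi=\{(f,P):P+B\subset\scrZ(f)\}$ via upper semicontinuity of fiber dimension for the map to $\IP^{N-1}\times(A/B)$, then bounds $\dim Z_\varphi$ from above by projecting to $A$ and from below (fibers of dimension $\ge\dim B$) by projecting to $\IP^{N-1}$---which after quotienting by the $B$-action is exactly your $\cI\subset(A/B)\times\IA^{\binom{M+d}{M}}$.

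One remark: your concern (a) is a self-created difficulty. You do not need $\mathscr{H}_{a+B}(d)$ to be constant in $a$; for the codimension bound only the \emph{upper} bound on $\dim\cI$ matters, and for that it suffices that each fiber over $\bar a$ has dimension at most $\binom{M+d}{M}-(d+1)$, which follows by applying \eqref{eq:HilbFunclb} directly to the irreducible positive-dimensional variety $a+B$ (this is precisely what the paper does). Likewise (c) is unnecessary: $A/B$ is projective, so the projection $(A/B)\times\IA^{\binom{M+d}{M}}\to\IA^{\binom{M+d}{M}}$ is proper and the image of the closed set $\cI$ is already closed.
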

\begin{proof}
Say $N = {M+d\choose M}\ge M+d \ge g\ge 2$. For the
proof we abuse notation and consider elements in
$\IP^{N-1}(F)$ as  classes of homogeneous polynomials
in $F[X_0,\ldots,X_M]\ssm\{0\}$ of degree $d$ up-to scalar
multiplication. 
So $f(P)=0$ is a well-defined statement for $f\in \IP^{N-1}(F)$ and
$P\in \IP^M(F)$
and the
incidence set  
\begin{equation*}
   \{(f,P) \in \IP^{N-1}(F) \times A(F) : f(P) = 0 \}
\end{equation*}
 determines a Zariski closed subset
$Z\subset \IP^{N-1}\times A$.

We consider the two projections $\pi \colon \IP^{N-1}\times A\rightarrow
\IP^{N-1}$ and $\rho \colon \IP^{N-1}\times A\rightarrow A$.

Then $\rho|_Z \colon Z\rightarrow A$ is surjective and each
 fiber of $\rho|_Z$ is  linear variety of dimension $N-2$. 

 Say $B$ is an abelian subvariety  of $A$ with $\dim B\ge 1$.
 Let  $\varphi \colon A\rightarrow
 A/B$ denote the quotient map. We write $\boldsymbol{\varphi} = (\mathrm{id}_{\IP^{N-1}}, \varphi) \colon \IP^{N-1}\times A
 \rightarrow \IP^{N-1}\times (A/B)$; this morphism sends
 $(f,P)$ to $(f,\varphi(P))$.
The fibers of $\boldsymbol{\varphi}$ have dimension $\dim B$ and so 
 \begin{alignat}1
\nonumber
 \{ (f,P) \in Z(F) : &\dim_{(f,P)} {\boldsymbol{\varphi}|_Z}^{-1}(\boldsymbol{\varphi}(f,P))
 \ge \dim B \} \\
\label{eq:Zvarphi}
  &= \{(f,P) \in Z(F) : P+B \subset \scrZ(f) \}
 \end{alignat}
 defines a Zariski closed subset $Z_{\varphi}$ of $Z$. If
 $Z_{\varphi}$ is empty then the lemma follows with $U = \IA^N\backslash\{0\}$. Otherwise, let $W_1,\ldots,W_r$
 be the irreducible components of $Z_{\varphi}$.

 If $P\in
 A(F)$, then the fiber of $\rho|_{Z_\varphi}$ above $P$ is empty or has dimension
 \begin{equation*}
  \dim I(P+B)_d - 1 = \dim F[X_0,\ldots,X_M]_d -1- \mathscr{H}_{P+B}(d) 
  \le N-1 - (d+1).
 \end{equation*}
where we used \eqref{eq:HilbFunclb}.
So any non-empty fiber of $\rho|_{W_i}$ has dimension at most $N-1-(d+1)$. 
and by the Fiber Dimension Theorem
we conclude
\begin{equation}
  \label{eq:Zvarphiub}
  \dim W_i \le   
  N-1 - (d+1) + \dim \rho(W_i) \le N-d+g-2
\end{equation}
for all $i\in \{1,\ldots,r\}$
as $\dim \rho(W_i)\le \dim A \le g$. 


If $(f,P)\in W_i(F)$, then $\{f\}\times (P+B) \subset
Z_\varphi$. So if $(f,P)$ is not contained in any $W_j$ with
$i\not=j$, then $\{f\}\times(P+B)\subset W_i$ by the irreducibility of $P+B$. 
We conclude that a general fiber of $\pi|_{W_i}$ has dimension at
least $\dim B$. By the Fiber Dimension Theorem we find 
that $\dim W_i \ge \dim B + \dim \pi(W_i)$ for all
$i\in \{1,\ldots,r\}$;
note that $\pi(W_i)$ is Zariski closed in $\IP^{N-1}$. 
 
Together with \eqref{eq:Zvarphiub} we conclude
$\dim \pi(Z_\varphi) = \max_{1\le i\le r} \dim \pi(W_i)
\le N-d+g-2-\dim B$
and thus
\begin{equation*}
  \codim_{\IP^{N-1}} \pi(Z_\varphi) \ge d+1-g+\dim B.
\end{equation*}

As $\pi(Z_\varphi)$ is Zariski closed in $\IP^{N-1}$ we
 conclude that $U'=\IP^{N-1}\ssm \pi(Z_\varphi)$ is Zariski open and
dense in $\IP^{N-1}$ if $d \ge g-\dim B$. 
If $f\in U'(F)$, then there is no $P\in A(F)$ with $P+B \subset
\scrZ(f)$. Otherwise, we have in particular $f(P)=0$ and so
$(f,P) \in Z(F)$ which entails the contradiction $(f,P) \in Z_\varphi(F)$ by
\eqref{eq:Zvarphi}. 
The lemma follows if we take $U$ to be the preimage of $U'$
under the cone map
 $\IA^N\ssm\{0\} \rightarrow \IP^{N-1}$.
 \end{proof}

\begin{proof}[Proof of Proposition \ref{prop:abvarhyperbolic}]
If $g=1$, then $A\cap\scrZ(f)$ is finite for a
generic $f$ that is homogenous and of degree $d$. 
The proposition is  clearly true in this case.

Now say $g\ge 2$.  If $f\in F[X_0,\ldots,X_M]_d$,
then a coset contained in $\scrZ(f)\cap A$ is already contained in some
irreducible component $X$ of
$\scrZ(f)\cap A$. By B\'ezout's Theorem, 
$\deg X$ is bounded solely in terms of $d$ and $A$; here
$\deg(\cdot)$ denotes the usual degree as a subvariety of $\IP^{M}$. 

By a theorem of Bogomolov, \cite[Theorem 1]{BogomolovUeno}, the maximal
cosets contained in  $X$ 
are translates of  abelian subvarieties whose degree 
are bounded in terms of $\deg X,A,$ and the chosen polarization only.  Observe that
the proof of Bogomolov's Theorem works for algebraically closed fields in
characteristic zero. 
As $A$ contains only finitely many abelian subvarieties of given
degree, Bogomolov produces a finite set of abelian subvarieties that depends only on 
 $\deg X$ and $A\subset\IP^M$, thus only on $d$ and $A\subset\IP^M$. 

For any abelian subvariety $B\subset A$ of positive dimension that
arises in this set we write $U_B$ for the Zariski open and dense set
produced by Lemma \ref{lem:hyperbolicfirststep}. 
To rule out that $X$ contains a coset of positive dimension it
suffices to take $f\in U(F)$ where $U=\bigcap_B U_B$ is the intersection
over the finite set from Bogomolov's Theorem.
\end{proof}


\bibliographystyle{amsplain}
\bibliography{literature}


\vfill


\end{document}